%


\documentclass[11pt]{amsart}
\usepackage[hmargin=3cm,vmargin=3cm]{geometry}

\usepackage[latin1]{inputenc}
\usepackage{xspace,amssymb,amsfonts,euscript}
\usepackage{amsthm,amsmath,amscd,stmaryrd,latexsym}


\usepackage{graphicx}
\usepackage[all]{xy}
\SelectTips{cm}{}

\usepackage{tikz, tikz-cd}
\usetikzlibrary{matrix, arrows, calc}

\usepackage[dvips]{epsfig}
\usepackage{pinlabel}
\usepackage{psfrag}

\newcommand{\ig}[2]{\vcenter{\xy (0,0)*{\includegraphics[scale=#1]{fig/#2}} \endxy}}

\newcommand{\igc}[2]{\begin{center} \includegraphics[scale=#1]{fig/#2} \end{center}}


\IfFileExists{comments.sty}{\usepackage{comments}}


\RequirePackage{color}
\definecolor{myred}{rgb}{0.75,0,0}
\definecolor{mygreen}{rgb}{0,0.5,0}
\definecolor{myblue}{rgb}{0,0,0.65}

\RequirePackage{ifpdf}
\ifpdf
  \IfFileExists{pdfsync.sty}{\RequirePackage{pdfsync}}{}
  \RequirePackage[pdftex,
   colorlinks = true,
   urlcolor = myblue, 
   citecolor = mygreen, 
   linkcolor = myred, 
   pagebackref,
   bookmarksopen=true]{hyperref}
\else
  \RequirePackage[hypertex]{hyperref}
\fi



\newtheorem{thm}{Theorem}[section]
\newtheorem{lemma}[thm]{Lemma}

\newtheorem{prop}[thm]{Proposition}
\newtheorem{cor}[thm]{Corollary}
\newtheorem{claim}[thm]{Claim}

\newtheorem*{prop*}{Proposition}

\theoremstyle{definition}
\newtheorem{defn}[thm]{Definition}

\newtheorem{notation}[thm]{Notation}

\newtheorem{example}[thm]{Example}
\newtheorem{assumption}[thm]{Assumption}

\theoremstyle{remark}
\newtheorem{remark}[thm]{Remark}
\newtheorem{rmk}[thm]{Remark}

\numberwithin{equation}{section}




    \def\BM{{\mathbb{B}}}
    \def\CM{{\mathbb{C}}}
\def\DG{{\mathfrak D}}    
    
\def\FG{{\mathfrak F}}    \def\FM{{\mathbb{F}}}
    
\def\HG{{\mathfrak H}}  \def\hg{{\mathfrak h}}

    \def\LM{{\mathbb{L}}}

    \def\RM{{\mathbb{R}}}
    \def\SM{{\mathbb{S}}}
    \def\TM{{\mathbb{T}}}

    \def\ZM{{\mathbb{Z}}}


    \def\CC{{\mathcal{C}}}
    \def\DC{{\mathcal{D}}}
    
    \def\FC{{\mathcal{F}}}
    
\def\HB{{\mathbf H}}

    \def\LC{{\mathcal{L}}}

    \def\SC{{\mathcal{S}}}



\def\a{\alpha}

\def\d{\delta}

\def\e{\varepsilon}

\def\l{\lambda}

\def\w{\omega}

\def\z{\zeta}

\let\phi=\varphi


\usepackage{bbm}
\def\C{{\mathbbm C}}
\def\N{{\mathbbm N}}
\def\R{{\mathbbm R}}
\def\Z{{\mathbbm Z}}
\def\Q{{\mathbbm Q}}
\def\1{\mathbbm{1}}


\newcommand{\ul}{\underline}
\newcommand{\ubr}[2]{\underbrace{#1}_{#2}}
\newcommand{\mf}[1]{\mathfrak{#1}}

\newcommand{\ot}{\otimes}
\newcommand{\pa}{\partial}
\newcommand{\co}{\colon}

\renewcommand{\to}{\rightarrow}
\newcommand{\into}{\hookrightarrow}

\newcommand{\sumset}{\stackrel{\scriptstyle{\oplus}}{\scriptstyle{\subset}}}

\newcommand{\define}{\stackrel{\mbox{\scriptsize{def}}}{=}}

\renewcommand{\sl}{\mathfrak{sl}}


\newcommand{\Bim}{\textbf{Bim}}
\newcommand{\Kar}{\textbf{Kar}}
\newcommand{\Hom}{{\rm Hom}}
\newcommand{\HOM}{{\rm HOM}}
\newcommand{\End}{{\rm End}}

\newcommand{\Res}{{\rm Res}}
\newcommand{\Ind}{{\rm Ind}}

\newcommand{\grdrk}{{\rm grdrk}}

\renewcommand{\Im}{\textrm{Im}}


\newcommand{\ii}{\underline{\textbf{\textit{i}}}}

\newcommand{\mTL}{\mathcal{TL}}
\newcommand{\mTTL}{\mathfrak{2}\mathcal{TL}}
\newcommand{\DCti}{\DC(\infty)}
\newcommand{\DGti}{\DG(\infty)}

\newcommand{\qtwo}{(v+v^{-1})}
\newcommand{\qJ}{\left[J\right]}

\newcommand{\qW}{\left[W\right]}

\newcommand{\negl}{{\rm negl}}

\newcommand{\Zvv}{\ensuremath{\mathbbm{Z}\left[v^{\pm 1}\right]}}

\newcommand{\Zqq}{\ensuremath{\mathbbm{Z}\left[q^{\pm 1}\right]}}

\newcommand{\SBim}{\SM\textrm{Bim}}
\newcommand{\BSBim}{\BM\SM\textrm{Bim}}
\newcommand{\SSBim}{\SC\SM\textrm{Bim}}
\newcommand{\SBSBim}{\SC\BM\SM\textrm{Bim}}
\newcommand{\fooBim}{g\BM\SM\textrm{Bim}}

\newcommand{\DCfoo}{g\DC}

\renewcommand{\hat}{\widehat}


\psfrag{Suma1ton}{$\displaystyle \sum_{a=1}^n$}
\psfrag{JW1}{$JW_1$}
\psfrag{JW2}{$JW_2$}
\psfrag{JW3sm}{\tiny{$JW_3$}}
\psfrag{JW3}{$JW_3$}
\psfrag{JWn}{$JW_n$}
\psfrag{JWnm1}{$JW_{n-1}$}
\psfrag{JWmm1}{$JW_{m-1}$}
\psfrag{JWnp1}{$JW_{n+1}$}
\psfrag{1over2}{\Large{$\frac{1}{[2]}$}}
\psfrag{1over3}{\Large{$\frac{1}{[3]}$}}
\psfrag{2over3}{\Large{$\frac{[2]}{[3]}$}}
\psfrag{novernp1}{\Large{$\frac{[n]}{[n+1]}$}}
\psfrag{aovernp1}{\Large{$\frac{[a]}{[n+1]}$}}
\psfrag{1overx}{\Large{$\frac{1}{x}$}}
\psfrag{1overxym1}{\Large{$\frac{1}{xy-1}$}}
\psfrag{xoverxym1}{\Large{$\frac{x}{xy-1}$}}
\psfrag{yoverxym1}{\Large{$\frac{y}{xy-1}$}}
\psfrag{a}{$a$}
\psfrag{x}{$x$}
\psfrag{b}{$b$}
\psfrag{y}{$y$}
\psfrag{f}{$f$}
\psfrag{fs}{$\a_s$}
\psfrag{ft}{$\a_t$}
\psfrag{sif}{$s(f)$}
\psfrag{paif}{$\pa_s(f)$}
\psfrag{pasf}{$\pa_s(f)$}
\psfrag{1half}{\Large{$\frac{1}{2}$}}
\psfrag{Deltas}{$\Delta_s$}
\psfrag{DeltasW}{$\Delta^s_W$}
\psfrag{DeltaW}{$\Delta_W$}
\psfrag{pasWf}{$\pa^s_W(f)$}
\psfrag{musW}{$\frac{\LM}{\a_s}$}
\psfrag{paDeltast}{$\pa\Delta_{st}$}
\psfrag{musWpat}{$\frac{\LM}{\a_s \a_t}$}
\psfrag{vk}{$v_k$}
\psfrag{vkm1}{$v_{k-1}$}
\psfrag{vkp1}{$v_{k+1}$}
\psfrag{vm}{$v_m$}
\psfrag{vmp1}{$v_{m+1}$}
\psfrag{cm}{$C_m$}
\psfrag{qtwo}{$[2]$}
\psfrag{mqtwo}{$-[2]$}

%
%

\title[The Dihedral Cathedral]{The Two-Color Soergel Calculus}
\author[Ben Elias]{or: The Dihedral Cathedral \\ \\ Ben Elias}
 
\begin{document}

\maketitle

\begin{abstract} We give a diagrammatic presentation for the category of Soergel bimodules for the dihedral group $W$. The (two-colored) Temperley-Lieb category is embedded inside this
category as the degree $0$ morphisms between color-alternating objects. The indecomposable Soergel bimodules are the images of Jones-Wenzl projectors. When $W$ is infinite, the parameter
$q$ of the Temperley-Lieb algebra may be generic, yielding a quantum version of the geometric Satake equivalence for $\sl_2$. When $W$ is finite, $q$ must be specialized to an appropriate
root of unity, and the negligible Jones-Wenzl projector yields the Soergel bimodule for the longest element of $W$.\end{abstract}

\setcounter{tocdepth}{1}
\tableofcontents

\newpage

\section{Introduction} 

\subsection{{\bf Overview}}

Let $(W,S)$ be any Coxeter group, and let $\HB = \HB_W$ be its associated Hecke algebra. Kazhdan and Lusztig \cite{KaLu1} introduced a particular basis of $\HB$, now known as the
Kazhdan-Lusztig basis or the \emph{KL basis}. This basis was conjectured to have certain positivity properties. One way to prove this positivity would be to construct an additive
monoidal category whose Grothendieck ring is isomorphic to $\HB$, and whose indecomposable objects descend to the KL basis. When $W$ is a Weyl group, Kazhdan and Lusztig \cite{KaLu2}
constructed such a categorification using geometric techniques, by considering perverse sheaves on the flag variety. Similar methods can be used for other crystallographic Coxeter
groups \cite{Harterich}, but for a general Coxeter group there are as yet no geometric tools available.

In the early 1990s, Soergel constructed an algebraic categorification of the Hecke algebra, using certain bimodules over the coordinate ring $R$ of the reflection representation of $W$.
These bimodules, now called \emph{Soergel bimodules}, can be defined for any Coxeter group, and can be studied using combinatorial and algebraic methods. When $W$ is a Weyl group, the
category of Soergel bimodules agrees with the (equivariant) hypercohomology of the perverse sheaves used in the categorification of Kazhdan and Lusztig, and thus geometric techniques can
be used to study Soergel bimodules as well. Soergel conjectured that the indecomposable Soergel bimodules should descend to the KL basis, when defined over a field of characteristic zero,
though in the absence of geometric tools there is no a priori reason this should be true. This conjecture was recently proven by the author and Geordie Williamson in \cite{EWHodge}. We
refer the reader to \cite{Soe5} for a purely algebraic account of Soergel bimodules, and to numerous other papers \cite{Soe1,Soe2,Soe3,Soe4} for the complete story.

In this paper we study Soergel bimodules for dihedral groups in great detail, and present several new results. An arbitrary Coxeter group is in some sense built out of dihedral groups, so
this is an appropriate place to begin. The Kazhdan-Lusztig theory of dihedral groups is well-understood: Soergel himself proved that the indecomposable Soergel bimodules descend to the KL
basis (under certain assumptions). Perhaps because of the simplicity of dihedral groups, there are few resources available for their study. Our priority in this paper is to provide a
thorough discussion and understanding of dihedral groups. This paper will be a springboard for future works, including several in progress by the author and G. Williamson.

\subsection{{\bf Soergel bimodules and Bott-Samelson bimodules}}

To ease the introduction, we postpone some of the subtleties to the next section. For now, let us define $\hg$ to be the reflection representation of $W$ over $\RM$, as defined say in
\cite{Humphreys}. Soon, we will allow other similar representations, called realizations, to take the place of $\hg$.

Let $R$ be the symmetric algebra of $\hg$, graded with linear terms in degree $2$. It is equipped with an action of $W$, and therefore an action of each simple reflection $s \in S$. Let
$R^s$ denote the subring of $s$-invariants. We define a graded $R$ bimodule by \[B_s \define R \ot_{R^s} R (1) \] where $(1)$ denotes a grading shift. Tensor products of the bimodules
$B_s$ for various $s \in S$ are known as \emph{Bott-Samelson bimodules}, and they form a monoidal category $\BSBim$. By definition, a \emph{Soergel bimodule} is an element of the graded,
additive, Karoubian category $\SBim$ generated by the Bott-Samelson bimodules. Concretely, an indecomposable Soergel bimodule is an indecomposable summand of a Bott-Samelson bimodule.

Soergel has proven that there is an indecomposable bimodule $B_w$ for $w \in W$, appearing as a direct summand (with multiplicity one) inside \[B_w \sumset B_{s_1} \ot \cdots \ot B_{s_d}
\] when $s_1 \cdots s_d$ is a reduced expression for $w$. Up to grading shift, these indecomposable bimodules $\{B_w\}_{w \in W}$ form a complete list of non-isomorphic indecomposable
objects. The Grothendieck group of $\SBim$ is isomorphic to the Hecke algebra of $W$. Soergel also gave a formula for the graded dimensions of morphism spaces between Soergel bimodules.
These results are collectively packaged as the \emph{Soergel Categorification Theorem} or \emph{SCT}. In fact, Soergel proved the SCT for representations $\hg$ which are
``reflection-faithful," a class of representations which need not include the reflection representation. Libedinsky \cite{LibRR} showed that the SCT holds for the reflection representation
regardless. Remember that the SCT does not entail the stronger statement known as the \emph{Soergel conjecture}, saying that the indecomposable bimodules descend in the Grothendieck group
to the KL basis.

The primary result of this paper is a presentation of the morphisms in the category $\BSBim$ by generators and relations, in the case when $W$ is a dihedral group. That is, we define a
monoidal category $\DC$ by generators and relations, whose morphisms are linear combinations of planar diagrams. We construct a monoidal functor $\DC \to \BSBim$, and prove that the
functor is an equivalence. We also give an explicit description of the idempotents which pick out each indecomposable $B_w$, thus implying the SCT and the Soergel conjecture.

Let $S=\{s,t\}$ be the set of simple reflections, and $m=m_{s,t}$ be the order of $st$. The same presentation has been given before by Libedinsky \cite{LibRA} for the right-angled cases
$m=2,\infty$. His work is complimentary, as he does not discuss idempotents or connections to the Temperley-Lieb algebra (see below), and his proofs are entirely different.

A morphism in $\DC$ will be represented by a graph with boundary, properly embedded in the planar strip $\R \times [0,1]$. The edges of this graph are labeled by elements of $S$, which we
call ``colors." The only vertices appearing are univalent vertices, trivalent vertices joining three edges of the same color, and if $m$ is finite, vertices of valence $2m$ whose edge
labels alternate between the two colors. We call these \emph{Soergel graphs}. A number of relations are placed on Soergel graphs, which (after some abstraction) can be represented in a way
independent of $m$ (when $m$ is finite).

A more significant goal would be to find a diagrammatic presentation for $\BSBim$ in the case of an arbitrary Coxeter group. For type $A$, the presentation was found by the author and M.
Khovanov in \cite{EKh}. The general case is accomplished in forthcoming work between the author and Geordie Williamson \cite{EWGR4SB}, which relies heavily on this paper. The form of the
presentation is revealing. There is one generating object for each $s \in S$, or more verbosely, for each rank 1 finitary parabolic subgroup. The generating morphisms are associated to
finitary subgroups of rank 1 (univalent and trivalent vertices) and rank 2 ($2m$-valent vertices), and the relations are associated to finitary subgroups of rank $\le 3$. This paper
tackles the generators and relations of rank $\le 2$.

Having a diagrammatic presentation in type $A$ has led the author to numerous other results, such as: \begin{itemize} \item Categorifications of induced trivial modules \cite{EInduced}.
\item A ``thick calculus" for partial idempotent completions \cite{EInduced}. \item A categorification of the Temperley-Lieb quotient of $\HB$ \cite{ETemperley}. \item A conjectural
presentation of the 2-category of Singular Soergel bimodules, joint with G. Williamson. \end{itemize} The dihedral analogs of these results will be proven in this paper as well. The
exposition of these results will be self-contained, so the reader will not need to consult these other works. The final result, a presentation of Singular Soergel bimodules, is essential
to this work, and is described in full detail. The remaining proofs are only sketched, but the details are easy to fill in after consulting the other papers.

\subsection{{\bf Diagrams are better than bimodules}}

A \emph{realization} of a Coxeter group $W$ is a certain kind of representation $\hg$ of $W$ over some commutative base ring $\Bbbk$, equipped with a choice of simple roots and co-roots.
For example, the reflection representation yields a faithful realization over $\RM$. However, $\Bbbk$ can be an arbitrary commutative ring, and the realization need not be faithful. One of
our goals in this paper is to extend ``the study of Soergel bimodules" to arbitrary realizations (caveat: satisfying very minor assumptions).

Here is a crucial example to keep in mind. Let $W$ be the infinite dihedral group. In this case, the additive category of Soergel bimodules for the reflection representation over $\RM$ is
equivalent to some additive category of semisimple equivariant perverse sheaves on a Kac-Moody group, and both categorify the Hecke algebra of $W$. Suppose that we work instead over a field
of finite characteristic. The appropriate geometric object of study is now the category of parity sheaves, as defined in \cite{JMW}, and it also categorifies the Hecke algebra of $W$. On
the other hand, the reflection representation itself now factors through a finite dihedral quotient $W \to W_m$, and the algebraically-defined Soergel bimodule category only depends on the
representation (not the additional choices of roots and coroots). By Soergel's results, one expects $\SBim$ to categorify the Hecke algebra of $W_m$ instead. To give a morphism-theoretic
statement, there is an extra morphism $B_s \ot B_t \ot \ldots \to B_t \ot B_s \ot \ldots$ between the Bott-Samelson bimodules for the two reduced expressions of the longest element of
$W_m$; there is no corresponding morphism between parity sheaves. The category $\SBim$ is no longer the correct object of study for $W$; in general, the SCT could not possibly hold for a
realization of $W$ which factors through a (non-trivial) quotient Coxeter group.

However, the category $\DC$ (which depends on the realization, not just the representation) is a more natural object of study for degenerate realizations where $\BSBim$ does not behave
well. For crystallographic Coxeter groups, $\DC$ will be equivalent to the corresponding category of parity sheaves. The appropriate analog of the SCT will hold for $\DC$ even when it fails
for $\BSBim$. This is proven in the next paper \cite{EWGR4SB}, and is an important motivation for the diagrammatic approach. In less degenerate realizations where the categories $\DC$ and
$\BSBim$ are equivalent, and both satisfy the SCT, it can still be the case that the indecomposable Soergel bimodules do not descend to the KL basis. For instance, this occurs even for Weyl
groups when the realization is defined over a field of positive characteristic. The diagrammatic approach will allow one to study algebraically which indecomposables have the wrong size,
which idempotents are missing, and so forth (see \cite{WilPcan}).

On a different note, the author has also constructed a quantum version of the geometric Satake equivalence in type $\tilde{A}$, coming from a realization defined over $\Zqq$. The case
$\tilde{A}_1$ is discussed in this paper.

By pairing the simple roots and co-roots, one obtains the Cartan matrix of a realization (which need not have integer coefficients). Most familiar Cartan matrices over $\ZM$ have the
property that whenever $m_{st}$ is odd for two simple reflections $s,t \in S$, the corresponding Cartan entries $a_{st}$ and $a_{ts}$ are equal and negative (so that the angle between
simple roots is obtuse). To define $\DC$ in the utmost generality, one should also consider more degenerate, ``unbalanced" realizations. The author's quantum Satake equivalence will
require an unbalanced Cartan matrix over $\Zqq$ in order to study $\tilde{A_n}$ for $n \ge 2$ (although the case $n=1$ is balanced). Using unbalanced Cartan matrices adds a great deal of
bookkeeping and complexity; as such, we develop this theory in the Appendix. In the main body of the paper, we will only work with symmetric, balanced realizations.

For a dihedral group with a balanced symmetric Cartan matrix, we write the off-diagonal entry as $-\d$, living in an algebra $\Bbbk$ over the polynomial ring $\Z[\d] = \Z[q+q^{-1}]$. The
representation will factor through the finite dihedral group $W_m$ of size $2m$ precisely when the $m$-th quantum number, a polynomial in $\d$, vanishes. This is essentially the statement
that $q$ is a $2m$-th root of unity, with $q \ne \pm 1$. It will be a faithful representation of $W_m$ when $q$ is a primitive $2m$-th root of unity.

We devote a great deal of energy to defining $\DC$ and discussing $\SBim$ for arbitrary realizations of dihedral groups, and working in this natural level of generality. For this reason,
this paper is not the easiest introduction to Soergel theory in general, or to the diagrammatic style of the results we present. The novice should perhaps begin by reading about Soergel
bimodules in type $A$ in \cite{EKh}.

The literature about Soergel bimodules cares mostly about the reflection representation (or similar representations), and the interesting choice in this context is what field to define the
representation over. That is, the literature phrases its results in terms of assumptions about the base ring $\Bbbk$, such as its characteristic. The situation in this paper is different.
The properties of $\DC$ we discuss here will depend on conditions intrinsic to the realization as a whole, and not intrinsic to the choice of base ring $\Bbbk$. For example, one assumption
we will make is that the realization satisfies what we call Demazure Surjectivity, which is to say that pairing against each co-root (resp. root) is a surjective map from $\hg^*$ (resp.
$\hg$) to $\Bbbk$. Demazure surjectivity can always be achieved by enlarging $\hg$ and $\hg^*$ without changing the Cartan matrix; this is independent of the choice of base ring $\Bbbk$ or
its characteristic.

We never assume that $\Bbbk$ has a given characteristic, or even that it is a domain. We do not assume that the realization is faithful. However, we expect many readers to prefer domains or
faithful realizations, so we make various comments about the simplifications that occur under these assumptions, but they are not essential. We will need to assume that the realization is
even-balanced, a technical condition discussed in the Appendix. When we go beyond the results discussed in this introduction to define the 2-category $\DG_m$ for a finite dihedral group, we
will need to assume faithfulness of the realization, though this is not required for the monoidal category $\DC$. Whenever we discuss results which connect diagrammatic categories like
$\DC$ to algebraic categories like $\SBim$, we will need to make further assumptions on the realization in order that the algebraic setting (e.g. $\SBim$) is well-behaved.

Let us briefly mention the division of labor and ideas between this paper and its sequel \cite{EWGR4SB}. This paper is based on the author's PhD thesis \cite{EThesis}, which was mostly
concerned with faithful realizations where the Soergel conjecture holds. However, many of the technical results necessary to study more general realizations were already present in
\cite{EThesis}, needing only some reformatting and new terminology to become more generally relevant. Thus we have reformatted the paper, borrowing a lot of terminology from
\cite{EWGR4SB}, and expanding the background section slightly so that it contains all the results needed eventually by \cite{EWGR4SB}. The notion of a realization and the proper approach
to studying general realizations owe a great deal to the wisdom of G. Williamson. As in the original thesis, the approach taken here (to proving the SCT and the Soergel conjecture for
dihedral groups) will work only for some faithful realizations. In \cite{EWGR4SB}, additional technology is developed to prove the SCT in the correct generality.

It is also worth mentioning that, while this paper tackles the algebraic theory of realizations, this is not the end of the story. For example, one can not distinguish algebraically
between $q+q^{-1}$ and $u+u^{-1}$ for two primitive $2m$-th roots of unity. In a realization over $\RM$, however, the positivity properties of quantum numbers depend strongly on the choice
of primitive root of unity. These positivity properties will play a key role in \cite{EWHodge}, and are discussed further in that paper.

\subsection{{\bf Connections with Temperley-Lieb theory}}

The \emph{Temperley-Lieb category} $\mTL$ is a monoidal category governing the representations of the quantum group $U_q(\sl_2)$. It first appeared in \cite{TemLie}, and was used for the
study of subfactors by Jones in \cite{Jon2}. Most useful is the diagrammatic description given by Kauffman \cite{Kau}. Let $\d$ be an indeterminate. In Kauffman's description, the objects
are $n \in \N$, and the morphisms from $n$ to $m$ are the $\Z[\d]$-linear span of the set of $(n,m)$-\emph{crossingless matchings}. There are no morphisms unless $n$ and $m$ have the same
parity. The endomorphism ring of an object $n$ is known as the Temperley-Lieb algebra $TL_n$, and is a quotient of the Hecke algebra in type $A_{n-1}$. See section \ref{sec-TL} for more
details.

It is well-known that, after base change to $\Q(q)$ under the map $\d \mapsto q+q^{-1}$, the category $\mTL$ is equivalent to the full subcategory of $U_q(\sl_2)$-representations whose
objects are tensor powers of the standard representation $V$. Any indecomposable representation $V_n$ appears as a direct summand (with multiplicity one) inside $V^{\ot n}$, so that there
is a canonical idempotent \[ JW_n \in \End_{\mTL}(n) \ot \Q(q) = \End_{U_q(\sl_2)}(V^{\ot n}) \] which projects to this summand. This is called the \emph{Jones-Wenzl projector}
\cite{Jon3,Wenzl}, and it can be defined so long as the $n$-th quantum number and certain other quantum binomial coefficients are invertible. In any $\Z[\d]$-algebra $\Bbbk$ where all
quantum numbers are invertible, the Karoubi envelope of $\mTL \ot \Bbbk$ will be equivalent to (a $\Bbbk$-form of) the category of representations of $U_q(\sl_2)$.

One can draw an immediate analogy: we study an interesting category ($\SBim$ or $U_q(\sl_2)$-rep) by looking inside it at a subcategory ($\BSBim$ or $\mTL$) which admits a combinatorial
and diagrammatic description. We recover the original category by taking the Karoubi envelope. In fact, this analogy is almost perfect.

\begin{prop} \label{introTLprop} Let $W$ be the infinite dihedral group with simple reflections $\{s,t\}$, and let $\DC$ be defined as above for some realization over a $\Z[\d]$-algebra
$\Bbbk$. We will define a faithful $\Bbbk$-linear (non-monoidal) functor $\FC_s \co \mTL \to \BSBim$. It sends the object $n$ to $BS(\ul{\hat{n+1}}_s)$, defined to be the tensor product
$\cdots \ot B_s \ot B_t \ot B_s$ which alternates, ends with $s$, and has length $n+1$. This functor will be defined diagrammatically. There is a separate functor $\FC_t$ which reverses
the roles of $s$ and $t$, sending $n$ to $BS(\ul{\hat{n+1}}_t)$. Moreover, the following facts hold for morphisms in $\DC$: \begin{itemize} \item The graded vector space
$\Hom(BS(\ul{\hat{n}}_s),BS(\ul{\hat{k}}_s))$ is concentrated in non-negative degrees (for $n,k \ge 1$). When $n$ and $k$ share the same parity, every degree zero morphism is in the image
of the functor from $\mTL$. When $n$ and $k$ have different parities, there are no degree zero maps. \item The same holds with $s$ and $t$ reversed. \item The graded vector space
$\Hom(BS(\ul{\hat{n}}_s),BS(\ul{\hat{k}}_t))$ is concentrated in strictly positive degrees (for $n,k \ge 1$). \end{itemize} \end{prop}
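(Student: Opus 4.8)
The plan is to build the functors $\FC_s$ and $\FC_t$ explicitly on generators of $\mTL$ (cups, caps, and the identity strands), then verify the Temperley-Lieb relations hold in $\DC$, and finally extract the degree and image statements from a spanning-set analysis of morphism spaces between color-alternating Bott-Samelson objects. First I would define $\FC_s$ on objects by $n \mapsto BS(\ul{\hat{n+1}}_s)$ as stated, and on the generating cap $\cup\co 2 \to 0$ and cup $\cap \co 0 \to 2$ by the appropriate diagrams built from univalent and trivalent vertices: a cup or cap in $\mTL$ becomes a "pitchfork"-type Soergel graph that merges two adjacent color-alternating strands using a trivalent vertex of the appropriate color capped off by a univalent vertex, with the remaining strands carried along by identity. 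One must check that these assignments are well-defined, i.e.\ that the isotopy relations of crossingless matchings and the circle-evaluation relation ($\cap \circ \cup = \d \cdot \id_0$, or more precisely the relation producing the factor $[2]=\d$) are consequences of the Soergel-graph relations already imposed on $\DC$ — the needle relation, the Frobenius/associativity relations for trivalent vertices, and the one-color relations involving $\a_s$ will supply exactly the scalar $\d$. This gives a monoidal-up-to-the-shift functor; faithfulness will follow once we know the degree-zero part of $\Hom(BS(\ul{\hat n}_s), BS(\ul{\hat k}_s))$ is spanned by images of crossingless matchings and that $\mTL$ itself has no relations beyond the ones we imposed (which is classical, since $TL$ is free on crossingless matchings).

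The heart of the argument is the structural claim about $\Hom$-spaces. Here I would argue by a double induction on the lengths $n$ and $k$, using the fact (available from the generators-and-relations description of $\DC$) that any Soergel graph between two color-alternating boundaries can be reduced, modulo lower-length pieces and the one-color relations, to a normal form. Concretely: pick the rightmost strand on the bottom, which is colored $s$; it either connects to the rightmost boundary strand on top (also $s$, if parities match) via a bare strand — contributing degree $0$ — or it terminates at a univalent vertex or merges via a trivalent vertex, each of which costs positive degree, or it meets a $2m$-valent vertex, which does not occur here since $W$ is infinite (there is no $2m$-valent generator). Stripping off this rightmost strand reduces the boundary lengths and lets induction take over. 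The parity bookkeeping is automatic: a color-alternating word ending in $s$ has length of a fixed parity, so $BS(\ul{\hat n}_s)$ and $BS(\ul{\hat k}_s)$ admit a degree-zero map only when $n \equiv k$, and $BS(\ul{\hat n}_s)$ versus $BS(\ul{\hat k}_t)$ always has mismatched rightmost colors, forcing the rightmost bottom strand to merge or cap, hence strictly positive degree. The non-negativity of $\Hom(BS(\ul{\hat n}_s), BS(\ul{\hat k}_s))$ then follows because every generating morphism in $\DC$ has non-negative degree, and any diagram is a composite of these.

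The main obstacle I anticipate is the normal-form/spanning step: showing that after using the relations, the degree-zero morphisms between same-color-ending Bott-Samelson objects are \emph{exactly} the span of images of crossingless matchings, with no extra degree-zero morphisms and no linear dependencies beyond those in $\mTL$. Proving surjectivity onto degree-zero morphisms requires a careful "light leaves" or path-counting style argument, peeling strands off the right and tracking that only isotopy-type moves preserve degree. Proving injectivity (faithfulness of $\FC_s$) is the genuinely delicate part: one must show the induced map on degree-zero morphism spaces is not just surjective but bijective, which amounts to a dimension count — the rank of $\Hom^0(BS(\ul{\hat n}_s), BS(\ul{\hat k}_s))$ must equal the number of $(n-1,k-1)$ crossingless matchings. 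I would deduce this from Soergel's dimension formula (the SCT, or at least the Libedinsky version for the reflection representation) once it is established, or bootstrap it from the explicit idempotent decomposition via Jones-Wenzl projectors; this is where the paper's later machinery on idempotents becomes essential, so in a first pass I would state the faithfulness as resting on the forthcoming computation of $\End(B_w)$ and the match with $\dim TL$.
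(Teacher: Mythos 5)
Your high-level plan matches the paper's strategy in outline (define the functor on generators, check the Temperley-Lieb relations follow from the Soergel graph relations, then analyze the degree-zero part of $\Hom$ between alternating objects, with faithfulness flagged as the delicate step). But there is a concrete error in the degree argument, and your fallback for faithfulness is circular.

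The error: you assert that non-negativity of $\Hom(BS(\ul{\hat{n}}_s),BS(\ul{\hat{k}}_s))$ ``follows because every generating morphism in $\DC$ has non-negative degree.'' This is false. In $\DC$ the trivalent vertex has degree $-1$ (the dot has degree $+1$), so arbitrary composites can a priori have negative degree; the needle composite in \eqref{needle} has degree $-2$ before the relation kills it. Your case analysis for the rightmost strand also misstates the cost of a trivalent merge as positive, and omits the possibility that the rightmost $s$-strand wanders left to meet a non-adjacent $s$ on the same side, so the induction does not close. The non-negativity and parity claims are genuine, but what carries them in the paper is the spanning-set reduction of Proposition \ref{treesspan} (every morphism is a linear combination of disjoint simple trees with polynomials) combined with Claim \ref{maxconisminimal}: for a fixed disk boundary, the minimum degree is $2$ minus the number of color-repetitions around the circle, and this minimum is achieved exactly by the maximally connected diagrams. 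With both boundary words ending in $s$ and parities matching, the two corners both give repetitions, yielding degree $0$; mismatched parities give one repetition and degree $1$; and $\ul{\hat{n}}_s$ against $\ul{\hat{k}}_t$ gives at most one repetition, hence strictly positive degree. That count replaces, and corrects, the peeling argument you sketch.

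On faithfulness, you propose to invoke Soergel's (or Libedinsky's) dimension formula, but the SCT is precisely what the paper is aiming to prove via this proposition, so that route is circular. The paper's argument (carried out in Theorem \ref{mainthminfty} and its setup) instead post-composes with the evaluation map into bimodules and uses Proposition \ref{coxeterlinesprop}, which shows that a Jones-Wenzl projector evaluates to a nonzero product of positive roots; together with freeness of $R$-actions on bimodule Hom spaces this gives the required lower bound on ranks that matches the spanning-set upper bound, with no appeal to Soergel's theorem.
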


The moral is that every degree zero morphism between color-alternating Bott-Samelson bimodules comes from the Temperley-Lieb category, and in particular so does every idempotent. Therefore,
the Jones-Wenzl projectors (when they exist) yield idempotents inside $\DC$, whose images are indecomposable. These images will be the Soergel bimodules $B_w$.

Proposition \ref{introTLprop} is awkwardly stated, using a pair of non-monoidal functors. This is only because we have avoided describing in this introduction the 2-categories which
underlie both sides of the story. In truth, we define a 2-functor $\FG$ from a two-colored version of the Temperley-Lieb category to the 2-category of Singular Soergel bimodules (or its
diagrammatic version $\DG$), which is fully faithful onto degree zero 2-morphisms.

The 2-functor $\FG$ is a (quantum) algebraic version of the geometric Satake equivalence. We mean that certain Singular Soergel bimodules for the infinite dihedral group correspond to a
($q$-analog of a) 2-category of equivariant perverse sheaves on loop group of $SL_2$ (or more precisely, on the Kac-Moody group for affine $SL_2$). Setting $q = 1$ or equivalently $\d =
2$, we recover the geometric Satake equivalence. We will not discuss this quantum algebraic Satake equivalence any further in this paper; see \cite{EQAGS} for more details, and for
generalizations in type $A$. However, we do study the 2-functor $\FG$ in detail.

Now consider a base ring $\Bbbk$ containing a primitive $2m$-th root of unity $q$, and make $\Bbbk$ a $\Z[\d]$-algebra via $\d \mapsto q + q^{-1} \in \Bbbk$. The Karoubi envelope of $\mTL
\ot \Bbbk$ is now equivalent to the category of tilting modules over a form of $U_q(\sl_2)$ at that root of unity. The $m$-th quantum number vanishes, and $JW_m$ is not well-defined. More
interestingly, $JW_{m-1}$ is well-defined, and is \emph{negligible}, i.e. it is in the kernel of a certain invariant form on $\mTL$. In fact, it generates the ideal of negligible morphisms.
It is also common to study the (Karoubi envelope of the) category $\mTL_{\negl}$ obtained by killing all negligible maps. This category is semisimple, and is equivalent to the fusion
category attached to $U_q(\sl_2)$ at a root of unity. Jones' original application of the Temperley-Lieb category to subfactors \cite{Jon2} also used the negligible quotient. For more on
killing negligible morphisms in general, see \cite[Chapter 2]{BarWes} and \cite{Tur}.

When $q$ is a primitive $2m$-th root of unity, the negligible Jones-Wenzl projector $JW_{m-1}$ is actually rotation-invariant. This fact, though fairly trivial, is crucial in this paper.
Rotation in the Temperley-Lieb algebra has been studied before (see \cite{Jon1,GraLehAffineTL}), but typically in the negligible quotient. Other Jones-Wenzl projectors $JW_k$ for $k \le
m-2$ are not rotation-invariant.

\begin{prop} Let $W_m$ be the finite dihedral group of size $2m$, and $\BSBim$ be its category of Bott-Samelson bimodules defined over a $\Z[\d]$-algebra $\Bbbk$ where $\d \mapsto
q+q^{-1}$ for $q$ a primitive $2m$-th root of unity. We can define a functor $\FC_s \co \mTL \ot \Bbbk \to \BSBim$ as before. The facts stated in Proposition \ref{introTLprop} hold for
alternating tensors $BS(\ul{\hat{n}}_s)$ of length $n \le m-1$. However, there is a new degree zero morphism $BS(\ul{\hat{m}}_s) \to BS(\ul{\hat{m}}_t)$ in $\BSBim$, and another in the
reverse direction. These are the $2m$-valent vertices mentioned previously, and they descend to inverse isomorphisms on the images of the respective Jones-Wenzl projectors $JW_{m-1}$.
These maps, in conjunction with the images of $\FC_s$ and $\FC_t$, generate all the morphisms of degree $0$ between alternating tensors of length $\le m$, and there are no negative degree
morphisms. \end{prop}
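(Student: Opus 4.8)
The plan is to bootstrap everything from the infinite dihedral case (Proposition~\ref{introTLprop}) together with a single genuinely new input at length $m$: the interaction of the $2m$-valent vertex with the negligible projector $JW_{m-1}$. Throughout I work inside the diagrammatic category $\DC$, identified with $\BSBim$ by the equivalence proved in this paper, and use the explicit bases for its morphism spaces (the double leaves). The realization of $W_m$ at a primitive $2m$-th root of unity is the specialization of the generic balanced realization at $\d\mapsto q+q^{-1}$, and the double leaves basis is defined integrally, so all the dimension counts below are obtained by base change.

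\emph{Step 1: transfer from the infinite case.} The functor $\FC_s\co\mTL\ot\Bbbk\to\BSBim$ is given by the same diagrammatic formulas as in Proposition~\ref{introTLprop}: a cup or cap goes to a composite of trivalent vertices, and the only relation needing a check, that a monochromatic circle evaluates to $\d=q+q^{-1}$, is a local computation with dots and trivalent vertices that does not see $m$. The non-$2m$-valent relations of $\DCti$ hold verbatim for the specialized realization, so there is a functor $\DCti\to\DC$ which is the identity on dots and trivalent vertices. On $\Hom$-spaces between alternating words of length $\le m-1$ this functor is an isomorphism: every element of $W_m$ of length $\le m-1$ has a unique reduced expression, so no double leaf between such boundary words uses a $2m$-valent vertex, and the double leaves bases on the two sides coincide. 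Hence all assertions of Proposition~\ref{introTLprop} — non-negativity of degrees, identification of the degree-zero part with the image of $\mTL$, vanishing of degree-zero maps in the different-parity case — hold for $W_m$ for lengths $\le m-1$; in particular, among objects of length $\le m-1$ the functors $\FC_s,\FC_t$ already generate the degree-zero morphisms and there are no negative ones.

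\emph{Step 2: the new vertex and $JW_{m-1}$.} For length exactly $m$ the words $\ul{\hat{m}}_s$ and $\ul{\hat{m}}_t$ are the two reduced expressions of $w_0\in W_m$, and the $2m$-valent vertices $\rho_{st}\co BS(\ul{\hat{m}}_s)\to BS(\ul{\hat{m}}_t)$ and $\rho_{ts}$ in the other direction are degree-zero generators of $\DC$. I claim $\rho_{st}$ intertwines the idempotents $\FC_s(JW_{m-1})$ and $\FC_t(JW_{m-1})$. This is exactly where rotation-invariance of $JW_{m-1}$ at a primitive $2m$-th root of unity enters: the cups and caps built into $\FC_s(JW_{m-1})$ can be dragged across the $2m$-valent vertex using the two-colour associativity (``Zamolodchikov'') relations, exhibiting $\rho_{st}\circ\FC_s(JW_{m-1})$ as $\FC_t$ applied to a one-notch rotation of $JW_{m-1}$, precomposed with cup/cap data; rotation-invariance identifies that rotation with $JW_{m-1}$ itself, giving $\rho_{st}\circ\FC_s(JW_{m-1})=\FC_t(JW_{m-1})\circ\rho_{st}$, and symmetrically for $\rho_{ts}$. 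Therefore $\rho_{ts}\rho_{st}$ and $\rho_{st}\rho_{ts}$ commute with the respective Jones--Wenzl idempotents and restrict to degree-zero endomorphisms of $\Im\FC_s(JW_{m-1})$ and $\Im\FC_t(JW_{m-1})$; since (granting the categorification theorem proved in the body) these images are copies of the indecomposable $B_{w_0}$, their degree-zero endomorphism algebras are one-dimensional, so the restrictions are scalars. Evaluating the $2m$-valent-vertex relation of the presentation of $\DC$, which writes $\rho_{ts}\rho_{st}$ as the identity plus diagrams annihilated by $JW_{m-1}$, shows the scalar is $1$, so $\rho_{st}$ and $\rho_{ts}$ restrict to mutually inverse isomorphisms on the images of $JW_{m-1}$.

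\emph{Step 3: morphisms at length $m$.} Finally, for $\bullet\in\{s,t\}$ I analyse $\Hom(BS(\ul{\hat{m}}_s),BS(\ul{\hat{m}}_\bullet))$ by double leaves. The degree of a double leaf is controlled by the defect formula, and for these boundary words its minimum value is $0$ (immediate from the short Bruhat order of the dihedral group), so there are no negative-degree morphisms. The degree-zero double leaves split into those factoring through elements of length $\le m-1$, which by Step~1 lie in the image of $\FC_s$ (resp.\ $\FC_t$), and — only when $\bullet\neq s$ — the single light leaf climbing to $w_0$ and back down, which by construction equals $\FC_t(\text{TL cap})\circ\rho_{st}\circ\FC_s(\text{TL cup})$. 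Hence $\FC_s$, $\FC_t$ and the two $2m$-valent vertices generate all degree-zero morphisms between alternating words of length $\le m$, completing the proof. The main obstacle is Step~2: making precise the dictionary between rotating Jones--Wenzl projectors in the two-coloured Temperley--Lieb category and the abstract $2m$-valent generator of $\DC$, and extracting the normalising scalar $1$, is the one place where real work rather than transfer or basis bookkeeping is required.
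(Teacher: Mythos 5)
The architecture of your argument is the right one and closely parallels the paper's logic, but three things deviate enough to be worth flagging.

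\emph{Use of double leaves.} You invoke the light/double leaves basis and the defect formula in Steps 1 and 3, but these are not developed in this paper (they come from Libedinsky's work and \cite{EWGR4SB}). The paper's own route to these dimension counts is purely graph-theoretic: Proposition \ref{reducessans2m} shows that any morphism in $\DC_m$ on a disk whose alternating boundary has total length $<2m$ is a linear combination of $\infty$-graphs, so the forgetful functor from $\DCti$ to $\DC_m$ is full on such Hom-spaces; faithfulness follows because $\FC_\infty$ and $\FC_m$ agree there and $\FC_\infty$ is faithful. In Step 3 the paper similarly argues via pitchforks and the evaluation map that $\Hom(B_w,B_e)$ is free of rank one over $R$. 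Your double-leaves argument is a legitimate external route to the same facts, but it is not what this paper does, and the base-change remark at the start of your proof should be treated with care: the paper explicitly warns that base change does not commute with the Karoubi envelope and introduces new morphisms (the $2m$-valent vertex itself being the example), so ``obtained by base change'' needs to be restricted to the length-$\le m-1$ range exactly as you do in Step 1, not asserted globally.

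\emph{Terminology.} What you call the ``Zamolodchikov'' relations is the rank-$3$ relation; the relevant dihedral relation is two-color associativity, \eqref{assoc2}. The drag-the-cups-and-caps picture you describe is really the content of the dotting relation \eqref{dot2m} (pitchforks kill the vertex) together with \eqref{vJWisv} ($JW\circ v=v$ because only the identity summand of $JW$ survives), not a Zamolodchikov-type computation.

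\emph{Circularity and redundancy in Step 2.} You first establish that $\rho_{ts}\rho_{st}$ restricts to a \emph{scalar} on $\Im(JW_{m-1})$ by appealing to $\End^0(B_{w_0})\cong\Bbbk$, attributing this to ``the categorification theorem proved in the body,'' and then identify the scalar as $1$ from the presentation. But Theorem \ref{mainthmfinite} (the SCT for $\DC_m$) is \emph{proved using} the fact that the $2m$-valent vertex gives an isomorphism $B_{\hat{m}_s}\cong B_{\hat{m}_t}$ — see section \ref{grothgrpfinite}, where \eqref{w0decomp} is cited as the input making $\Kar(\DC_m)$ a potential categorification of $\HB_m$. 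As written, your 1-dimensionality step therefore presupposes what is being proved. It is also unnecessary: relation \eqref{w0decomp}, which the paper derives purely diagrammatically from \eqref{unit}, \eqref{tricommutes}, \eqref{dot2m}, and \eqref{vJWisv} (or, alternatively, by applying a dot to \eqref{assoc2}), directly exhibits $\rho_{ts}\rho_{st}\circ\FC_s(JW_{m-1})=\FC_s(JW_{m-1})$ with no appeal to indecomposability at all. Drop the 1-dimensionality detour and cite \eqref{w0decomp} — that is exactly the paper's proof, and it keeps the logic non-circular.
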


There is a nice slogan for this proposition, though it is mathematically nonsensical: singular Soergel bimodules for the finite dihedral group are obtained from the two-colored
Temperley-Lieb 2-category by adjoining ``square roots" of the negligible Jones-Wenzl projectors.

There is another slogan, equally nonsensical but more intriguing: (an extension of the) non-semisimplified representations of quantum $\sl_2$ at a $2m$-th root of unity are
Satake-equivalent to perverse sheaves on the flag variety of the finite dihedral group (though such a geometric object does not exist for $m \notin \{2,3,4,6\}$).

There is another slogan, which makes sense but is ill-fated nonetheless. The category $\mTL_{\negl}$ maps fully-faithfully (in degree zero) to the quotient of $\BSBim$ by the ideal of
morphisms which factor through $B_{w_0}$, where $w_0 \in W_m$ is the longest element. This categorifies the quotient of $\HB$ by the KL basis element $b_{w_0}$. The literature refers to
this quotient as the \emph{generalized Temperley-Lieb algebra} associated to the finite dihedral group (when $m>2$) \cite{GraThesis, GreTL}. This terminology is unfortunate for us, but
leads to the slogan: the Temperley-Lieb algebra categorifies the Temperley-Lieb algebra! More precisely, the (two-color) Temperley-Lieb algebra (in type $A$, at a root of unity, modulo
negligible morphisms) categorifies the (generalized) Temperley-Lieb algebra (of the dihedral group).

\subsection{Structure of the paper}

This paper is intended to be an omnibus of all things dihedral: a Dihedral Cathedral. We provide a reasonable level of detail, leaving some simple calculations to the reader. Slightly more
computational detail can be found in the author's thesis \cite{EThesis}, which contains some minor errors and uses slightly different conventions.

We assume little outside knowledge. An introduction to diagrammatics for 2-categories can be found in \cite{LauSL2}, chapter 4. An introduction to Karoubi envelopes can be found in
\cite{BarMor}. We draw upon \cite{EWFrob} heavily for general facts about Frobenius extensions, but that paper is quite short. References to the author's earlier work occur only when the
computation is simple enough to be left as an exercise.

Chapters \ref{sec-dihedral} through \ref{sec-TL} are background material with a lot of elaboration. In chapter \ref{sec-dihedral} we discuss presentations of the Hecke algebra and the
Hecke algebroid in terms of the Kazhdan-Lusztig generators, as well as numerous other features. We also fix some basic notation. Notable is section \ref{techniques} where we discuss
potential categorifications of the Hecke algebra, and some of the standard tricks played in categorification theory. In chapter \ref{sec-frobenius} we begin by discussing the
technicalities of realizations. Then we describe the Soergel bimodules and the Frobenius hypercube structure on invariant subrings of $R$. Chapter \ref{sec-TL} contains an introduction to
Jones-Wenzl idempotents and their analogs for the two-colored Temperley-Lieb category. Counting colored regions in a Jones-Wenzl projector will yield a polynomial which cuts out all the
reflection lines in $\hg$.

In sections \ref{sec-singinfty} and \ref{sec-mdti} we provide diagrammatics for (singular) Bott-Samelson bimodules for the case $m=\infty$. In sections \ref{sec-singfinite} and
\ref{sec-mdm} we provide the additional generators and relations for the case $m<\infty$. Sections \ref{sec-thick} and \ref{sec-TLcatsTL} are simple consequences, giving respectively a
diagrammatic presentation for the so-called generalized Bott-Samelson bimodules, and a categorification of the generalized Temperley-Lieb algebra of the dihedral group.

Finally, the appendix explains how to modify the constructions above to handle non-symmetric, unbalanced, and unfaithful realizations. We include enough detail to deal with this situation
for arbitrary Coxeter groups, not just dihedral groups. It is designed to be read in parallel with the corresponding sections in Chapter \ref{sec-frobenius} and \ref{sec-TL}. Most of the
work goes into defining the Frobenius hypercube structure on invariant subrings, when it exists. Once this is accomplished, the rest of the paper will apply almost verbatim.

\vspace{0.06in}

{\bf Acknowledgments.}

These results are repackaged from the author's PhD thesis.

The author would like to thank Noah Snyder and Geordie Williamson for fruitful conversations without number; Geordie Williamson and Nicolas Libedinsky for comments on an earlier version of
this paper; and Mikhail Khovanov for his unflagging support. The author was supported by NSF grants DMS-524460 and DMS-524124 and DMS-1103862.

\section{The dihedral group and its Hecke algebra}
\label{sec-dihedral}

%

We refer the reader to \cite{Humphreys} and \cite{LusUnequal} for additional background information, and for the proofs of any uncited statements in this chapter.

\subsection{{\bf Notation for the dihedral group}} \label{introtocoxeter}

The \emph{infinite dihedral group} $W_\infty$ is the group freely generated by two involutions $s$ and $t$. It has a length function $\ell$ and a Bruhat order $\le$.

The words \emph{index} and \emph{color} refer to an element of the set of \emph{simple reflections} $S=\{s,t\}$. An \emph{expression} is a finite sequence of indices. Our convention is that an expression will be denoted
by an underlined symbol $\ul{w}$, and removing that underline indicates the corresponding element $w \in W_\infty$. We use shorthand for certain expressions of length $k \ge 0$:
\begin{equation} {}_s \ul{\hat{k}} = \ubr{sts\ldots}{k}, \qquad \qquad {}_t \ul{\hat{k}} = \ubr{tst\ldots}{k}. \label{eq:def-1k-2k} \end{equation} Such an expression will be called
\emph{alternating} when $k>0$. An expression beginning with $s$, such as ${}_s \ul{\hat{k}}$ for $k>0$, will be called \emph{left-$s$-aligned}. In similar fashion, we write
$\ul{\hat{k}}_s$ for the alternating length $k$ expression which is right-$s$-aligned. Without the underline, $\hat{k}_s$ represents the corresponding element in $W$. We write $e=\hat{0}_s
= \hat{0}_t$ for the identity of $W$.

For any integer $m \ge 2$, the \emph{finite dihedral group} $W_m$ is the quotient of $W_\infty$ by the relation \begin{equation} \hat{m}_s = \hat{m}_t. \label{eq:braid} \end{equation} It
is a finite group of size $2m$, and the longest element $\hat{m}_s = \hat{m}_t$ will also be denoted $w_0$.

In this paper, the letter $m$ will always be either $\infty$ or an integer in $\Z_{\ge 2}$, and will refer to (half) the size of the dihedral group $W=W_m$. Our conventions and notation will apply to infinite and finite dihedral groups alike.

The \emph{Poincare polynomial} $\tilde{\pi}(W)$ of a Coxeter group $W$ is $\sum_{w \in W} v^{2\ell(w)}$, an element of $\Z[[v]]$. For finite Coxeter groups, the \emph{balanced Poincare
polynomial} $\qW$ is $\frac{\tilde{\pi}(W)}{v^{\ell(w_0)}}$, an element of $\Z[v^\pm]$ which is invariant under flipping $v$ and $v^{-1}$. A \emph{parabolic subset} is a subset $J \subset
S$, and it is \emph{finitary} when the corresponding parabolic subgroup $W_J$ is finite. For $J$ finitary, we write $\qJ$ for the balanced Poincare polynomial of $W_J$, and $\ell(J)$ for
the length of the longest element $w_J \in W_J$. Note that $\qtwo$ is the balanced Poincare polynomial of any singleton.

By convention, Poincare polynomials like $\qJ$ will always use the variable $v$. The quantum numbers $[n]$ for $n \ge 0$ in this chapter will also use the variable $v$. In subsequent
chapters, quantum numbers $[n]$ will always use the variable $q$. (The variable $v$ is in the Grothendieck group, i.e. the Hecke algebra. The variable $q$ is a scalar in the
categorification.)

Any statement in this paper will hold with the ``colors reversed," that is, with $s$ and $t$ switched.

\subsection{{\bf The Hecke Algebra}} \label{introtohecke}

\subsubsection{{\bf Definitions}}

The Hecke algebra $\HB = \HB_m$ is a $\Zvv$-algebra with several useful presentations. The \emph{standard presentation} has generators $T_i$ for $i \in \{s,t\}$. For an expression
$\ul{w}=i_1 i_2 \ldots i_d$ we let $T_{\ul{w}}$ denote the product $T_{i_1} T_{i_2} \cdots T_{i_d}$. The relations are \begin{subequations} \begin{eqnarray} T_i^2 & = & (v^{-2} - 1) T_i +
v^{-2} 1, \label{tisq}\\ T_{\ul{\hat{m}}_s} & = & T_{\ul{\hat{m}}_t} \label{coxreln} \end{eqnarray} \end{subequations} This second relation is suppressed when $m=\infty$. We define \[T_w
\define T_{\ul{w}}\] whenever $\ul{w}$ is a reduced expression, and note that this does not depend on the choice of reduced expression. The identity of $\HB$ is $T_e$. These $T_w$, for $w
\in W$, form the \emph{standard basis} of $\HB$ as a free $\Zvv$-module. (A related basis is $H_w = v^{\ell(w)} T_w$, which we do not use in this paper, but use in \cite{EWGR4SB}.)

A $\Zvv$-linear map $\mu \colon \HB \to \Zvv$ satisfying $\mu(ab)=\mu(ba)$ is called a \emph{trace}. We also allow traces to take values in $\Z((v))$. One can show that the
map $\epsilon$ given by $\epsilon(T_w) = \delta_{w,1}$ is a trace, called the \emph{standard trace}.

The Hecke algebra also has a \emph{KL basis} $\{b_w\}$, which is defined implicitly as the unique basis satisfying certain criteria. For dihedral groups, the solution to these criteria is easy.

\begin{claim} \label{simpleKLbasis} For all $w \in W$, $b_w = v^{l(w)} \sum_{x \le w} T_x$. This holds for $m$ finite or infinite. \end{claim}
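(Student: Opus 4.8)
The plan is to verify that the proposed elements $c_w \define v^{\ell(w)} \sum_{x \le w} T_x$ satisfy the defining axioms of the Kazhdan--Lusztig basis, and then invoke uniqueness. Recall the two axioms: (i) $c_w$ is fixed by the bar involution $\overline{\phantom{x}}$ on $\HB$ (the ring involution sending $v \mapsto v^{-1}$ and $T_w \mapsto T_{w^{-1}}^{-1}$), and (ii) $c_w \in T_w + \sum_{x < w} v\Z[v] \, T_x$, i.e.\ $c_w$ is congruent to $T_w$ modulo the span of the $T_x$ with strictly positive-degree coefficients. Axiom (ii) is immediate from the formula: the leading term is $v^{\ell(w)} T_w$, which is not yet $T_w$, so strictly speaking one should first normalize --- but in the dihedral case every $x \le w$ has $\ell(x) \le \ell(w)$ with equality only for $x = w$, and actually in the dihedral group the Bruhat interval below $w$ is a chain, so $\ell(x)$ takes each value $0, 1, \dots, \ell(w)$ exactly once. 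Hence $c_w = T_w + \sum_{x<w} v^{\ell(w)} T_x$ and the exponent $\ell(w) \ge 1 > 0$ on each lower term, giving (ii). (For $w = e$ we have $c_e = T_e$, consistent with $b_e = 1$.)

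The real content is axiom (i), bar-invariance. First I would record the inverse standard generators: from \eqref{tisq} one gets $T_i^{-1} = v^2 T_i + (v^2 - 1)$, hence $\overline{T_i} = T_i^{-1}$ as required, and more generally $\overline{T_w} = T_{w^{-1}}^{-1}$. I would then prove bar-invariance of $c_w$ by induction on $\ell(w)$, using the standard multiplication rule: for a simple reflection $i$ with $\ell(wi) > \ell(w)$, set $b_i \define c_{s_i} = v(T_{s_i} + T_e) = v T_i + v$; then one checks the absorption identity $b_i \cdot c_w = c_{wi} + (\text{correction})$, where the correction is a combination of lower $c_x$'s with bar-invariant coefficients. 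Concretely, using $T_i T_x = T_{xi}$ when $\ell(xi) > \ell(x)$ and $T_i T_x = (v^{-2}-1)T_x + v^{-2} T_{xi}$ otherwise, one computes $b_i \cdot \big(\sum_{x \le w} T_x\big)$ and groups terms; because the Bruhat order below $w$ in a dihedral group is a single chain $e < i' < i'i'' < \cdots < w$, the bookkeeping collapses to a short explicit sum. One finds $b_i \cdot c_w = c_{wi}$ when $wi$ is longer than $w$ and $w$'s reduced word does not already "saturate" the braid relation, and $b_i \cdot c_w = c_{wi} + c_{w'}$ (for an appropriate shorter $w'$) in the borderline cases near the longest element when $m < \infty$. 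Since $b_i$ is bar-invariant and, by induction, so are $c_w$ and $c_{w'}$, it follows that $c_{wi}$ is bar-invariant. The base case $c_e = T_e = 1$ is trivially bar-invariant, and $c_s = vT_s + v$, $c_t = vT_t+v$ are checked directly from $\overline{T_i} = T_i^{-1} = v^2 T_i + (v^2-1)$.

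The main obstacle is the finite case near the longest element: when $m < \infty$ one must handle the braid relation \eqref{coxreln} carefully, since the two reduced expressions ${}_s\ul{\hat m}$ and ${}_t\ul{\hat m}$ for $w_0$ must give the same $c_{w_0}$, and the inductive step $b_i \cdot c_w$ for $w$ of length $m-1$ produces the extra term $c_{\hat{m-2}}$ rather than landing cleanly on $c_{w_0}$ --- one must verify that the formula $c_{w_0} = v^m \sum_{x \in W} T_x$ is internally consistent and bar-invariant by a direct computation using $\overline{T_{w_0}} = T_{w_0}^{-1}$ together with the known expansion of $T_{w_0}^{-1}$ in the standard basis. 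This is a finite, purely mechanical check once the multiplication rules are in hand; I would isolate it as the one place where the structure of $W_m$ (as opposed to $W_\infty$) genuinely enters. Everything else is uniform in $m$. Finally, by uniqueness of the KL basis, $b_w = c_w$ for all $w$, proving the claim.
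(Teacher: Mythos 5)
The paper offers no proof of this claim, deferring (via the remark at the head of the chapter) to the standard references, so your proposal must stand on its own. Your overall strategy is sound and standard: verify that $c_w \define v^{\ell(w)} \sum_{x \le w} T_x$ satisfies the two defining axioms of the KL basis and invoke uniqueness. However, the write-up contains two factually false statements that would have surfaced had the computation actually been carried out, and both need fixing.

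First, the Bruhat interval $[e,w]$ in a dihedral group is \emph{not} a chain: for $\ell(w)=k\ge 2$, it contains two elements of every length $1,\dots,k-1$, hence $2k$ elements in total. This happens not to endanger the degree axiom, which only needs $\ell(x)<\ell(w)$ for $x<w$; in the basis $H_x = v^{\ell(x)}T_x$ one has $c_w = H_w + \sum_{x<w} v^{\ell(w)-\ell(x)}H_x$ with each exponent $\ell(w)-\ell(x)\ge 1$, as required. But the displayed $c_w = T_w + \sum_{x<w} v^{\ell(w)}T_x$ in your write-up is simply not what the formula gives. Second, and more seriously, your assertion that $b_i\cdot c_w = c_{wi}$ holds except ``in the borderline cases near the longest element when $m<\infty$'' is wrong: the correction term is present for \emph{every} $\ell(w)\ge 2$, independently of $m$. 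Claim~\ref{KLrecursion} of the paper records exactly $b_{\hat{k}_s}b_t = b_{\hat{k+1}_t} + b_{\hat{k-1}_t}$ for all $2\le k<m$, with $k=1$ the lone exception $b_{\hat{1}_s}b_t = b_{\hat{2}_t}$; the two-term structure is produced precisely by the diamond shape of the interval that your chain claim denies. Your inductive scheme does go through once these are corrected --- expand $c_{\hat{k}_s}b_t$ in the $T$-basis, verify directly that it equals $c_{\hat{k+1}_t}+c_{\hat{k-1}_t}$, and induct from the bar-invariant base cases $c_e$, $c_s$, $c_t$ --- but as written the argument is not correct.
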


Clearly $\epsilon(b_w)=v^{\ell(w)}$. For $i \in \{s,t\}$, we call $b_i = v(T_i + 1)$ a \emph{KL generator}. When $W$ is finite, we have \[b_{w_0}=v^m \sum_{w \in W} T_w.\]

Let $\omega$ be the $v$-antilinear antiinvolution defined by $\omega(b_i)=b_i$ for $i \in \{s,t\}$. This allows one to define the \emph{standard pairing} on $\HB$ via $(x,y) \define
\epsilon(\omega(x)y)$. Conversely, $\epsilon(x)=(1,x)$. Note that $(b_i x,y) = (x,b_i y)$ and $(x b_i,y) = (x,y b_i)$, so that the KL generator $b_i$ is \emph{self-biadjoint} with respect to the standard pairing.

\begin{rmk} Arbitrary traces $\mu$ are in bijection with semi-linear pairings for which $b_i$ is self-biadjoint, by replacing $\epsilon$ with $\mu$ in the above formulas. Since a trace is
determined by its values at each $b_w$, the corresponding semi-linear pairing is determined by the values $(1,b_w)$. \end{rmk}

For an expression $\ul{w}=i_1 \ldots i_d$ we write $b_{\ul{w}}$ for the product $b_{i_1} \cdots b_{i_d}$. Note that $b_{\ul{w}} \ne b_w$ in general. We write $\omega(\ul{w})$ for the sequence in reverse, so that $b_{\omega(\ul{w})} = \omega(b_{\ul{w}})$. Clearly $b_{\ul{w}}$ is biadjoint to $b_{\omega(\ul{w})}$.

The KL generators do, in fact, generate $\HB$ as a $\Zvv$-algebra, according to the \emph{KL presentation}. The quadratic relation, analogous to \eqref{tisq}, is
\begin{equation} b_i^2 = \qtwo b_i. \label{bisq} \end{equation}
When $m=\infty$ this relation suffices. In the finite case there is one more relation, analogous to \eqref{coxreln}.  We shall give this relation below in \eqref{twocolorbreln}. 

\subsubsection{{\bf Three related recursions}}\label{threerecursions}

Remember that $b_{\hat{k}_s}$ denotes a KL basis element, while $b_{\ul{\hat{k}}_s}$ denotes a product of KL generators. The following formulas indicate how the KL generators act on the KL basis. The product of $b_{\hat{k}_s}$ with $b_s$ is relatively boring.
\begin{claim} \label{boringproduct} For $m \ge k \ge 1$ we have $b_{\hat{k}_s} b_s = (v+v^{-1}) b_{\hat{k}_s}$. However, $b_{\hat{0}_s} b_s = b_s$. \end{claim}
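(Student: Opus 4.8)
The plan is to prove this by a direct computation in the standard basis, leveraging the explicit description of the Kazhdan--Lusztig basis from Claim~\ref{simpleKLbasis}. The case $k=0$ is trivial, since $b_{\hat 0_s}=b_e=T_e=1$ and hence $b_{\hat 0_s}b_s=b_s$; so I would immediately reduce to the range $1\le k\le m$.

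For $1\le k\le m$ set $w=\hat k_s$. By Claim~\ref{simpleKLbasis}, $b_w=v^{k}\sum_{x\le w}T_x$, while $b_s=v(T_s+1)$. The first key observation I would establish is that the lower Bruhat interval $L=\{x\in W: x\le w\}$ is stable under right multiplication by $s$. This is the lifting property of the Bruhat order (applicable since $ws<w$), and in the dihedral case it can simply be read off the explicit list of elements below $\hat k_s$: either $\{e,\hat 1_s,\hat1_t,\dots,\hat{k-1}_s,\hat{k-1}_t,\hat k_s\}$ when $k<m$, or all of $W$ when $k=m$. Consequently $x\mapsto xs$ is a fixed-point-free involution on $L$, which I would use to group $\sum_{x\le w}T_x$ into pairs $\{y,ys\}$ with $\ell(y)<\ell(ys)$.

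The second step is a mechanical expansion using the quadratic relation \eqref{tisq} in the form $T_s^2=(v^{-2}-1)T_s+v^{-2}$: one finds $T_x(T_s+1)=T_x+T_{xs}$ when $xs>x$, and $T_x(T_s+1)=v^{-2}(T_x+T_{xs})$ when $xs<x$. Adding the two contributions of a pair $\{y,ys\}$ gives $(1+v^{-2})(T_y+T_{ys})$, so summing over all pairs yields $\bigl(\sum_{x\le w}T_x\bigr)(T_s+1)=(1+v^{-2})\sum_{x\le w}T_x$. Multiplying through by $v^{k+1}$ then gives $b_wb_s=v^{k+1}(1+v^{-2})\sum_{x\le w}T_x=(v+v^{-1})\,v^{k}\sum_{x\le w}T_x=(v+v^{-1})b_w$, which is the claim.

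The only step carrying any real content is the $s$-stability of $[e,w]$; beyond that the argument is pure bookkeeping, so I do not anticipate a genuine obstacle. (The statement is of course a special case of the general Kazhdan--Lusztig identity $b_wb_s=(v+v^{-1})b_w$ whenever $ws<w$, but the dihedral computation above is entirely self-contained and fits the spirit of this section.)
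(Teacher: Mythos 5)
Your proof is correct and complete. The paper itself gives no proof of this claim, deferring (per the opening of Section~\ref{sec-dihedral}) to standard references such as \cite{Humphreys} and \cite{LusBook}; the argument you give — checking that the Bruhat interval $[e,\hat k_s]$ is $s$-stable (the lifting property, or just read off the dihedral Bruhat poset), pairing it into orbits $\{y,ys\}$, and applying the quadratic relation to get the factor $1+v^{-2}$ — is exactly the standard proof of the general identity $b_wb_s=(v+v^{-1})b_w$ whenever $ws<w$, specialized here to $w=\hat k_s$ for $1\le k\le m$, with the $k=0$ case handled separately since $b_e=1$.
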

The product of $b_{\hat{k}_s}$ with $b_t$ is more interesting.
\begin{claim} \label{KLrecursion} For $m > k \ge 2$ we have $b_{\hat{k}_s} b_t = b_{\hat{k+1}_t} + b_{\hat{k-1}_t}$. However, $b_{\hat{1}_s} b_t = b_{\hat{2}_t}$. \end{claim}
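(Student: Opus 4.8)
The plan is to prove the identity by expanding both sides in the standard basis $\{T_x\}_{x\in W}$ and comparing coefficients. By Claim \ref{simpleKLbasis} we have the closed formula $b_w = v^{\ell(w)}\sum_{x\le w}T_x$, so everything reduces to understanding Bruhat intervals in the dihedral group. For $k<m$ the interval below $\hat{k}_s$ is exactly $\{\hat{j}_s : 0\le j\le k\}\cup\{\hat{j}_t : 1\le j\le k-1\}\cup\{e\}$ (every element of length $<k$ of either color, together with $\hat{k}_s$ itself), which gives
\[
b_{\hat{k}_s} \;=\; v^{k}\Bigl(\sum_{j=0}^{k} T_{\hat{j}_s} \;+\; \sum_{j=1}^{k-1} T_{\hat{j}_t}\Bigr),
\]
and likewise for $b_{\hat{k+1}_t}$ and $b_{\hat{k-1}_t}$ with the colors reversed.

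Next I would compute the right action of $b_t=v(T_t+1)$ on each $T_x$ occurring above, using the quadratic relation \eqref{tisq}: $T_xT_t=T_{xt}$ when $\ell(xt)>\ell(x)$, and $T_xT_t=(v^{-2}-1)T_x+v^{-2}T_{xt}$ otherwise. For an alternating $x$ of length $<m$, $xt$ is again alternating, with length going up if $x$ does not end in $t$ and down if it does; concretely one gets $T_{\hat{j}_s}(T_t+1)=T_{\hat{j+1}_t}+T_{\hat{j}_s}$ for $0\le j\le k$ (using $j+1\le k+1\le m$) and $T_{\hat{j}_t}(T_t+1)=v^{-2}T_{\hat{j}_t}+v^{-2}T_{\hat{j-1}_s}$ for $1\le j\le k-1$. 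Summing these contributions, reindexing, and multiplying by $v^{k+1}$, I would read off the coefficient of each $T_x$ in $b_{\hat{k}_s}b_t$ and check it matches the coefficient computed directly from the displayed formulas for $b_{\hat{k+1}_t}$ and $b_{\hat{k-1}_t}$. I expect this coefficient comparison to be the only real step, and it is purely mechanical; the only care needed is at the boundary indices — the shared identity term $T_e$ (which lies in both the $s$-chain and the $t$-chain of each KL basis element) and the top terms near $j=k,k+1$. This is precisely where the hypotheses are used: $k\ge 2$ keeps the interior ranges nonempty and makes $b_{\hat{k-1}_t}$ a genuine KL basis element, while $k<m$ ensures the $\hat{j}$ appearing are pairwise distinct with unique reduced expressions (the braid relation \eqref{eq:braid} only identifies length-$m$ words). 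I anticipate no genuine obstacle here beyond the bookkeeping.

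Finally, the exceptional case $k=1$ must be handled separately, since the interior sum $\sum_{j=1}^{k-1}$ is then empty and the $v^{-2}b_{\hat{k-1}_t}$-type contribution that would produce a $b_{\hat{0}_t}$ term is absent. A one-line computation gives $b_sb_t=v^2(T_s+1)(T_t+1)=v^2(T_{st}+T_s+T_t+1)=b_{\hat{2}_t}$, with no $b_{\hat{0}_t}$ summand, which is the asserted exceptional identity $b_{\hat{1}_s}b_t=b_{\hat{2}_t}$.
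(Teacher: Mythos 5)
Your proof is correct. The paper itself does not prove Claim \ref{KLrecursion}; it is one of the background facts for which the reader is referred to \cite{Humphreys} and \cite{LusBook}. Your route — expanding both sides in the standard basis via the explicit formula $b_w = v^{\ell(w)}\sum_{x\le w}T_x$ of Claim \ref{simpleKLbasis}, then tracking the action of $b_t=v(T_t+1)$ on each $T_x$ — is the natural elementary verification given how the paper has set things up, and the coefficient bookkeeping does close up exactly as you predict (the $v^{-2}$ contributions from the $T_{\hat{j}_t}$ terms supply precisely the $b_{\hat{k-1}_t}$ summand, while the length-raising contributions supply $b_{\hat{k+1}_t}$). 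One tiny redundancy: the ``$\cup\{e\}$'' in your description of the Bruhat interval is superfluous since $e=\hat{0}_s$ is already in the first set, but this does not affect the count or the displayed formula. The identification of where $k\ge 2$ and $k<m$ are used is also right: $k\ge 2$ is what makes the $v^{-2}T_{\hat{j-1}_s}$ contributions nonempty so that a genuine $b_{\hat{k-1}_t}$ appears (for $k=1$ the product collapses to $b_{\hat{2}_t}$ alone, as you check), and $k<m$ ensures all the $\hat{j}_s,\hat{j}_t$ for $j\le k+1$ are honest reduced words so the Bruhat-interval description and the rule $T_xT_t=T_{xt}$ vs.\ $T_xT_t=(v^{-2}-1)T_x+v^{-2}T_{xt}$ apply cleanly.
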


From Claim \ref{KLrecursion} one could determine a recursive formula to express $b_{\ul{\hat{k}}_s}$ as a linear combination of $b_{\hat{n}_s}$ for $n \le k$. This same recursion appears in
several other places, and by no accident. Let $V=V_1$ denote the standard two-dimensional representation of $\mf{sl}_2$ (or its quantum analog), and let $V_n$ denote the $n+1$-dimensional
irreducible (assuming, for this motivational digression, that we are in the semisimple setting). The reader should compare Claim \ref{KLrecursion} with the following two claims.

\begin{claim} \label{sl2recursion} For $n \ge 1$ we have $V_n \ot V \cong V_{n+1} \oplus V_{n-1}$. However, $V_0 \ot V \cong V_1$. \end{claim}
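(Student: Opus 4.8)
The plan is to treat this as the Clebsch--Gordan decomposition for $\sl_2$ (respectively for $U_q(\sl_2)$ with $q$ generic) and to prove it by a character computation. First I would pin down the setting in which the claim is meant: finite-dimensional representations of $\sl_2$ in characteristic zero, or finite-dimensional $U_q(\sl_2)$-modules with $q$ not a root of unity. In either case every module is a direct sum of the irreducibles $V_k$, and such a module is determined up to isomorphism by its character, i.e.\ by the multiset of its torus weights. Writing weights multiplicatively in a variable $x$, one has $\mathrm{ch}\,V_k = x^{-k} + x^{-k+2} + \cdots + x^{k}$, which in the quantum normalization is the quantum integer $[k+1]$.

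Next I would simply multiply characters. Since $\mathrm{ch}\,V = x + x^{-1}$, the product $\mathrm{ch}(V_n)\cdot\mathrm{ch}(V)$ regroups visibly: for $n\ge 1$ the extreme weights $\pm(n+1)$ each occur once while the remaining weights each occur with multiplicity two, so the product equals $\mathrm{ch}\,V_{n+1} + \mathrm{ch}\,V_{n-1}$; for $n = 0$ one is simply left with $\mathrm{ch}\,V_1$. By the character criterion together with semisimplicity, this yields the stated isomorphisms.

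If one wants a proof that does not invoke semisimplicity as a black box, I would instead build the two summands by hand: with $v_n$ a highest weight vector of $V_n$ and $w_+,w_-$ a weight basis of $V$, the vector $v_n\ot w_+$ has weight $n+1$ and is killed by the raising operator (using the coproduct $\Delta(E)=E\ot 1 + 1\ot E$), so it generates a copy of $V_{n+1}$; and a suitable linear combination $a\,(Fv_n)\ot w_+ + b\,v_n\ot w_-$ is again a highest weight vector, now of weight $n-1$, and for $n\ge 1$ is nonzero, so it generates a copy of $V_{n-1}$. A dimension count, $(n+2)+n = 2(n+1) = \dim(V_n\ot V)$, then forces these two submodules to meet trivially and to span. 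Neither route presents a real obstacle; the only point demanding attention is that we stay in the generic (non-root-of-unity) regime where the ambient category is semisimple --- which is exactly the regime in which Claim~\ref{KLrecursion} is the correct combinatorial shadow of this statement.
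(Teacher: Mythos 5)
The paper does not actually prove Claim~\ref{sl2recursion}: it is presented, alongside Claims~\ref{boringproduct}, \ref{KLrecursion}, and \ref{qnumrecursion}, as standard background used to motivate the recursion $c^n_k$; the reader is referred to \cite{Humphreys} and \cite{LusBook} for uncited statements in that chapter. Your argument is a correct proof of the standard Clebsch--Gordan rule, and both of your routes (the character computation invoking semisimplicity, and the direct construction of the two highest-weight vectors followed by a dimension count) are sound and standard. If one wants to be maximally faithful to the spirit of the paper, the second route is marginally preferable, since it avoids appealing to semisimplicity as a black box and instead exhibits the idempotents projecting onto the summands --- which is precisely how the paper later realizes this decomposition diagrammatically via Jones--Wenzl projectors and the recursion \eqref{JWrecursive1}, cf.~Proposition~\ref{KarTL}.
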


\begin{claim} \label{qnumrecursion} For $n \ge 2$ we have $[n][2]=[n+1] + [n-1]$. However, $[1][2] = [2]$. \end{claim}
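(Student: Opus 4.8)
The plan is to prove Claim~\ref{qnumrecursion} by a direct computation with the explicit formula for quantum numbers, and then to remark that it is merely the numerical shadow of Claim~\ref{sl2recursion}. Recall that for $n \ge 0$ one sets $[n] = \frac{q^n - q^{-n}}{q - q^{-1}} = q^{n-1} + q^{n-3} + \cdots + q^{-(n-1)}$, so that $[0]=0$, $[1]=1$, and $[2]=q+q^{-1}$.

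First I would prove the single identity $[n][2] = [n+1]+[n-1]$, valid for every $n \ge 1$ under the convention $[0]=0$. From the closed form this is immediate: $[n](q+q^{-1}) = \frac{(q^n-q^{-n})(q+q^{-1})}{q-q^{-1}} = \frac{(q^{n+1}-q^{-(n+1)}) + (q^{n-1}-q^{-(n-1)})}{q-q^{-1}} = [n+1]+[n-1]$. Alternatively, multiplying the balanced expression $q^{n-1}+\cdots+q^{-(n-1)}$ separately by $q$ and by $q^{-1}$ and summing, one checks that the extreme powers $q^{\pm n}$ occur with coefficient $1$ and every interior power $q^{n-2}, q^{n-4}, \ldots, q^{-(n-2)}$ with coefficient $2$, which is exactly the expansion of $[n+1]+[n-1]$.

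Reading off the two cases then finishes the argument: for $n \ge 2$ the displayed identity is literally the claim, since $[n-1]$ is a genuine quantum number; for $n=1$ it specializes to $[1][2] = [2]+[0] = [2]$, the stated exception. There is no real obstacle here --- the content is a one-line algebraic identity --- and the only point worth flagging is that the convention $[0]=0$ is precisely what turns $n=1$ into a harmless degenerate case rather than a failure, mirroring the degeneracies $V_0 \otimes V \cong V_1$ and $b_{\hat{1}_s}b_t = b_{\hat{2}_t}$ in Claims~\ref{sl2recursion} and~\ref{KLrecursion}. (Conceptually, applying the quantum dimension, which sends $V_n$ to $[n+1]$, to the isomorphism of Claim~\ref{sl2recursion} recovers Claim~\ref{qnumrecursion} after reindexing; but since the requisite representation theory has not yet been set up at this point in the paper, I would keep the elementary computation as the official proof.)
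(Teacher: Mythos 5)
Your computation is correct, and it is the standard one-line argument via the closed form $[n] = (q^n - q^{-n})/(q-q^{-1})$. The paper itself gives no proof of this claim (like the other uncited facts in Section~\ref{sec-dihedral}, it is deferred to the references), so there is nothing to compare against; your handling of the $n=1$ degenerate case via $[0]=0$ is exactly the right remark to make.
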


Therefore, the same combinatorics governs the decomposition of $V^{\ot k}$ into irreducibles as governs the decomposition of $b_{\ul{\hat{k+1}}_s}$ into KL basis elements. Tensoring with
$V$ is like multiplying by either $b_s$ or $b_t$, whichever is next in an alternating expression. The decomposition numbers are easily encoded in ``truncated Pascal triangles."

\begin{defn} \label{coeffs1} Let the integer $c^n_k$ be determined from the following table, which is populated by letting each entry be the sum of the one or two
entries diagonally below.
\igc{1}{pascal1}
Then $c^n_k$ is the entry in the $k$-th row and $n$-th column. By convention, $c^n_k = 0$ unless $0 < n \le k$ and $k-n$ is even (i.e. each row only has every other column). For example,
$c^1_1=1$, $c^2_2=1$, $c^1_3=c^3_3=1$, and $c^2_4=2$. The column $n=2$ consists of Catalan numbers.
\end{defn}

\begin{claim} \label{allrecursions1} \begin{itemize} \item For $1 \le k$ we have $V^{\ot (k-1)} \cong \oplus_{n} V_{n-1}^{\oplus c^n_k}$. \item For $1 \le k$ we have
$[2]^{k-1}= \sum_{n} c^n_k [n]$. \item  For $1 \le k \le m$ we have $b_{\ul{\hat{k}}_s} = \sum_{n} c^n_k b_{\hat{n}_s}$. \end{itemize} \end{claim}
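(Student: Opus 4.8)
All three statements in Claim \ref{allrecursions1} are proven simultaneously by induction on $k$, using the combinatorial recursion built into the coefficients $c^n_k$ of Definition \ref{coeffs1}. The base case $k=1$ is trivial in all three settings: $V^{\ot 0} = V_0$ (and $c^1_1=1$), $[2]^0 = 1 = [1]$, and $b_{\ul{\hat{1}}_s} = b_s = b_{\hat{1}_s}$. The defining property of the table is that $c^n_k = c^{n-1}_{k-1} + c^{n+1}_{k-1}$ (with the boundary convention $c^0_j=0$ absorbing the ``edge'' case where only one entry lies below). So the inductive step in each case amounts to verifying that the relevant ``tensor/multiply by the next thing'' operation obeys exactly the recursion $X_{n} \mapsto X_{n+1} + X_{n-1}$ for $n \ge 1$ and $X_0 \mapsto X_1$, and then reindexing the sum.

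Concretely: for the first bullet, assume $V^{\ot(k-2)} \cong \bigoplus_n V_{n-1}^{\oplus c^n_{k-1}}$; tensor with $V$ and apply Claim \ref{sl2recursion} summand-by-summand. A summand $V_{n-1}$ with $n-1 \ge 1$ contributes $V_n \oplus V_{n-2}$, while the $n=1$ summand $V_0$ contributes only $V_1$. Collecting the multiplicity of $V_{n-1}$ in $V^{\ot(k-1)}$ gives $c^{n-1}_{k-1} + c^{n+1}_{k-1} = c^n_k$, as desired. For the second bullet, the identical bookkeeping applies with Claim \ref{qnumrecursion} in place of Claim \ref{sl2recursion}: multiply $[2]^{k-2} = \sum_n c^n_{k-1}[n]$ by $[2]$, use $[n][2] = [n+1]+[n-1]$ for $n \ge 2$ and $[1][2]=[2]$, and recollect. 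For the third bullet (valid in the range $1 \le k \le m$, which is exactly the range where Claims \ref{boringproduct} and \ref{KLrecursion} have their stated form), multiply $b_{\ul{\hat{k-1}}_s} = \sum_n c^n_{k-1} b_{\hat{n}_s}$ on the right by the next KL generator in the alternating word. Here one must be slightly careful about which generator that is: $b_{\ul{\hat{k}}_s} = b_{\ul{\hat{k-1}}_t}\, b_s$ when $k$ is such that the last letter is $s$, and one should set up the induction so that at each stage one multiplies $b_{\hat{n}_s}$ (resp.\ $b_{\hat{n}_t}$) by $b_t$ (resp.\ $b_s$), i.e.\ by the color \emph{not} appearing at the right end. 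Then Claim \ref{KLrecursion} gives $b_{\hat{n}_s} b_t = b_{\hat{n+1}_t} + b_{\hat{n-1}_t}$ for $n \ge 2$, Claim \ref{KLrecursion} gives $b_{\hat{1}_s}b_t = b_{\hat{2}_t}$ for the $n=1$ term, and the $n=0$ term (if present — it is, only when $k-1$ is even, i.e.\ $c^0$ never actually occurs, but $b_{\hat{0}_s}b_t = b_t = b_{\hat{1}_t}$ handles it harmlessly), so the new coefficient of $b_{\hat{n}_t}$ is again $c^{n-1}_{k-1} + c^{n+1}_{k-1} = c^n_k$.

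The only genuine subtlety — and the step I expect to need the most care — is the third bullet, for two reasons. First, the ``exceptional'' clauses in Claims \ref{boringproduct} and \ref{KLrecursion} ($b_{\hat{1}_s}b_t = b_{\hat{2}_t}$ rather than $b_{\hat{2}_t}+b_{\hat{0}_t}$) must be matched precisely against the boundary convention $c^0_k = 0$ in the table; this is what makes the recursion homogeneous rather than having an annoying correction term at the left edge. One checks this just works: the ``missing'' $b_{\hat{0}}$ term is exactly the $c^0$ entry the table declares to be zero. Second, one must stay within the range $k \le m$ throughout the induction, because Claim \ref{KLrecursion} is only asserted for $m > k$; since the statement being proven only claims $1 \le k \le m$, the induction never needs $b_{\hat{k}_s}b_t$ for $k \ge m$, so this is automatically fine. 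The other two bullets have no such issues — $\mathfrak{sl}_2$-representations and quantum numbers satisfy their recursions unconditionally — so they are pure bookkeeping. It is worth remarking (as the surrounding text already does) that the three statements are ``the same proof'' precisely because the three recursions of \S\ref{threerecursions} coincide, so one could even phrase the argument once in an abstract monoid/module setting and apply it three times; but given the exceptional clauses it is cleaner to just run the three inductions in parallel.
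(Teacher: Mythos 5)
Your proof is correct and is exactly the argument the paper implicitly has in mind: the sentence preceding Claim~\ref{coeffs1} states that the coefficients are ``determined'' by the recursion of Claim~\ref{KLrecursion}, and the paper declines to write out the induction, treating the result as background material. Your simultaneous induction on $k$, with the observations that $c^n_k = c^{n-1}_{k-1} + c^{n+1}_{k-1}$ and that the boundary convention $c^0_j = 0$ precisely absorbs the exceptional base cases of Claims~\ref{sl2recursion}, \ref{qnumrecursion} and~\ref{KLrecursion}, together with the careful tracking of color-alternation and of the constraint $k \le m$ in the third bullet, fills that gap correctly.
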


\begin{example} $b_sb_tb_sb_tb_sb_t = b_{ststst} + 4 b_{stst} + 5 b_{st}$ when $m \ge 6$. \end{example}

Together with its color-reversed version, this claim covers all alternating expressions, giving two zigzag paths up the Bruhat chart. Note that this claim entirely ignores $b_e =
b_{\hat{0}_s} = b_{\hat{0}_t}$, which never appears in the decomposition of $b_{\ul{w}}$ for $\ul{w}$ nontrivial.

Now we give the inverse matrix.

\begin{defn} \label{coeffs2} Let the integer $d^n_k$ be determined from the following table (with the same conventions as before), which is populated by letting $d^n_k=d^{n-1}_{k-1} -
d^n_{k-2}$.

\igc{1}{pascal2} \end{defn}

\begin{claim} \label{allrecursions2} \begin{itemize} \item For $1 \le k$, in the Grothendieck group of $\mf{sl}_2$ representations, we have $[V_{k-1}] = \sum_{n} d^n_k
[V^{\ot (n-1)}]$. \item For $1 \le k$ we have $[j] = \sum_{n} d^n_j [2]^{n-1}$. \item For $1 \le k \le m$ we have $b_{\hat{k}_s} = \sum_{n} d^n_k
b_{\ul{\hat{n}}_s}$. \end{itemize} \end{claim}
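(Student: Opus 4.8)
The plan is to recognize the three statements as a single fact in three avatars: in each case we are inverting the change-of-basis matrix $(c^n_k)$ from Claim \ref{allrecursions1} by the matrix $(d^n_k)$, and then translating through the dictionary already set up in the excerpt (irreducibles $V_{n-1}$ versus tensor powers $V^{\otimes(n-1)}$; quantum numbers $[n]$ versus powers $[2]^{n-1}$; KL basis elements $b_{\hat{n}_s}$ versus products of generators $b_{\ul{\hat{n}}_s}$). Since Claim \ref{allrecursions1} already identifies $c^n_k$ as the common transition matrix in all three settings, it suffices to prove the purely combinatorial statement that the matrices $C=(c^n_k)$ and $D=(d^n_k)$ are mutually inverse (as lower-triangular-ish matrices on the index set $\{(n,k): 0<n\le k,\ k\equiv n \bmod 2\}$), together with the observation that the three claimed identities are literally the rows of the matrix equation $CD=\mathrm{id}$ read in the three target settings. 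In fact it is cleanest to directly verify $D$ is a left inverse, i.e. that $\sum_n d^n_k$ applied to the relevant expansion recovers the single term indexed by $k$.

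First I would set up generating functions. The recursion defining $c^n_k$ (each entry is the sum of the one or two entries diagonally below) says precisely that $b_{\ul{\hat{k}}_s}=b_{\ul{\hat{k-1}}_s}\,b_{?}$ expands via Claim \ref{KLrecursion}, which is the recursion $[n][2]=[n+1]+[n-1]$ for $n\ge 2$ and $[1][2]=[2]$ for the truncated case; equivalently, writing $P_k(x)=\sum_n c^n_k x^n$ we get $P_k(x)=\bigl(x+\tfrac1x\bigr)P_{k-1}(x)$ corrected by the boundary term at $n=1$, so that $P_k$ is (up to the truncation bookkeeping) the Chebyshev-type expansion of $(x+x^{-1})^{k-1}$ in the basis $\{x^n+x^{-n}\}$ or $\{x^n\}$ as appropriate. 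Dually, the recursion $d^n_k=d^{n-1}_{k-1}-d^n_{k-2}$ is exactly the Chebyshev recursion $U_{k-1}=x\,U_{k-2}-U_{k-3}$ (in disguise, after the substitution $x\leftrightarrow[2]$), which expresses $[k]$ — the "irreducible" side — as a polynomial in $[2]$. So the two matrices are the two directions of the classical change of basis between the monomial basis $\{[2]^{n-1}\}$ and the "Chebyshev" basis $\{[k]\}$ in the polynomial ring $\Z[[2]]$, and the fact $CD=DC=\mathrm{id}$ is the standard statement that these are mutually inverse bases.

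Concretely, the steps in order: (1) Prove by induction on $k$ that $[2]^{k-1}=\sum_n c^n_k[n]$, using Claim \ref{qnumrecursion} — this is just re-deriving the middle bullet of Claim \ref{allrecursions1}, already granted, so I can cite it. (2) Prove by induction on $k$, using the defining recursion $d^n_k=d^{n-1}_{k-1}-d^n_{k-2}$ and Claim \ref{qnumrecursion} in the form $[n+1]=[n][2]-[n-1]$, that $[k]=\sum_n d^n_k[2]^{n-1}$; the induction step substitutes the two previous cases and collects coefficients, the boundary cases $n=1,2$ requiring the truncated version $[2]=[1][2]$. (3) Combine (1) and (2): substituting the expansion from (1) into that from (2) shows $\sum_m\bigl(\sum_n d^n_k c^m_n\bigr)[m]=[k]$, and since the $[m]$ for $m\le k$ of the correct parity are linearly independent over $\Z$ (they have distinct top degrees as polynomials in $q$, say), we get $\sum_n d^n_k c^m_n=\delta_{mk}$, i.e. $DC=\mathrm{id}$; the reverse composite is identical with the roles of the two inductions swapped. (4) Transport: the first bullet follows by applying the ring homomorphism from $\Z[[2]]$ (or the relevant Verlinde-type quotient) to the Grothendieck ring of $\sl_2$-representations sending $[2]^{n-1}\mapsto[V^{\otimes(n-1)}]$ and $[n]\mapsto[V_{n-1}]$ — valid by Claim \ref{sl2recursion} and Claim \ref{allrecursions1}; the third bullet follows by the analogous map into $\HB$ sending $[2]^{n-1}\mapsto b_{\ul{\hat{n}}_s}$ and $[n]\mapsto b_{\hat{n}_s}$, which is well-defined and intertwines the recursions by Claim \ref{KLrecursion} and Claim \ref{allrecursions1}, valid in the range $k\le m$ (so that the truncation from passing to $W_m$ never interferes, exactly as in Claim \ref{allrecursions1}).

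The main obstacle is purely bookkeeping: handling the truncation at $n=1$ (and the off-by-one in the "$k\le m$" range on the Hecke side) so that the boundary cases of both inductions match the claimed tables rather than the naive Chebyshev coefficients. Nothing is deep — the content is entirely contained in the three-way parallel already established in Claims \ref{KLrecursion}, \ref{sl2recursion}, \ref{qnumrecursion}, \ref{allrecursions1} — but I would want to state the induction hypotheses carefully, including the convention $c^n_k=d^n_k=0$ outside the valid index range, so that the sums $\sum_n$ are genuinely finite and the edge terms vanish automatically.
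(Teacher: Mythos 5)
Your proposal is correct. The paper does not actually supply a proof of Claim~\ref{allrecursions2}: it appears in the background chapter, where the author delegates proofs of uncited statements to the references (Humphreys, Lusztig). So there is no in-paper argument to compare against, and your job was to fill the gap from scratch.

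Your route is the natural one and it works. The core content is step~(2): a direct induction using $d^n_k = d^{n-1}_{k-1} - d^n_{k-2}$ together with $[k]=[2][k-1]-[k-2]$ establishes $[k]=\sum_n d^n_k [2]^{n-1}$, and then substituting the expansion $[2]^{n-1}=\sum_m c^m_n[m]$ from Claim~\ref{allrecursions1} and invoking linear independence of $\{[m]\}_{m\ge 1}$ in $\ZM[\d]$ (they have distinct degrees in $\d$) yields the matrix identity $\sum_n c^m_n d^n_k = \delta_{mk}$. The other two bullets then follow by the same substitution performed inside $\Hom$/Grothendieck groups: plugging $b_{\ul{\hat n}_s}=\sum_m c^m_n b_{\hat m_s}$ into $\sum_n d^n_k b_{\ul{\hat n}_s}$ collapses to $b_{\hat k_s}$, and similarly on the $\sl_2$ side. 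Two cosmetic points: the transport to the Hecke side is really a $\Zvv$-linear change of basis inside the span of $\{b_{\hat n_s}\}$, not a ring homomorphism, so ``ring homomorphism'' is slight overreach (though the substitution argument is what you actually use and it is fine); and for the first bullet, the paper's statement as printed, $[V_{k-1}]=\sum_n d^n_k[V_{n-1}]$, is a typo for $[V_{k-1}]=\sum_n d^n_k[V^{\ot(n-1)}]$, which is the version your argument (correctly) proves. You also do not need to separately establish both $CD=\id$ and $DC=\id$: the single identity $\sum_n c^m_n d^n_k=\delta_{mk}$ derived from linear independence is exactly what all three bullets require.
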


\subsubsection{{\bf The finite case}}\label{mfiniterelations}

When $m < \infty$, we have $\hat{m}_s = \hat{m}_t = w_0$ so that Claim \ref{allrecursions2} gives two distinct formula for $b_{w_0}$ in terms of the KL generators. This gives an algebraic
relation on KL generators, which is the replacement for the braid relation \eqref{coxreln}.

\begin{equation} \label{twocolorbreln} \sum_{n} d^n_m b_{\ul{\hat{n}}_s} = b_{w_0} = \sum_{n} d^n_m b_{\ul{\hat{n}}_t}. \end{equation}

\begin{example} When $m=3$, $b_s b_t b_s - b_s = b_t b_s b_t - b_t$. \end{example}

We leave the reader to confirm that \eqref{bisq} and \eqref{twocolorbreln} give an alternate presentation for the Hecke algebra.  Finally, let us record two additional equalities.

\begin{equation} b_s b_{w_0} = b_{w_0} b_s = (v+v^{-1}) b_{w_0} \label{bibw0} \end{equation}
\begin{equation} b_{w_0} b_{w_0} = \qW b_{w_0} \label{bw0sq} \end{equation}

\subsection{{\bf Potential categorifications}}\label{techniques}

In this section we introduce one of the key tricks of the trade, which allows one to use the existence of objects categorifying the KL basis to deduce facts about the categorification.
This trick was used by Soergel for general Coxeter groups in \cite{Soe5} and elsewhere. To see these proofs in more detail (for an analogous case), see section 3.3 of \cite{ETemperley}. Let $\Bbbk$ be a commutative domain.

\begin{defn} Let $\CC$ be a $\Bbbk$-linear graded additive monoidal category, whose morphism spaces are finite rank over $\Bbbk$ in each degree. Let $(1)$ denote the grading shift. Suppose
it has objects $B_i$ for $i = s,t$, satisfying \begin{equation}\label{BiBi1} B_i \ot B_i \cong B_i(1) \oplus B_i(-1), \end{equation} and that $\CC$ is contained in the Karoubi envelope of
the subcategory monoidally generated by $B_s$ and $B_t$. Suppose that each $B_i$ is self-biadjoint, i.e. there are natural isomorphisms \[\Hom_\CC(B_i \ot M,N) \cong \Hom_{\CC}(M,B_i \ot N)
\; \textrm{ and } \; \Hom_\CC(M \ot B_i,N) \cong \Hom_{\CC}(M,N \ot B_i).\] Then we call $\CC$ a \emph{potential categorification} of $\HB_\infty$. \end{defn}

For a potential categorification, there is an obvious $\Zvv$-linear map from $\HB_\infty$ to the Grothendieck ring $[\Kar(\CC)]$ of the Karoubi envelope, sending $b_i \mapsto [B_i]$ and
$v$ to the grading shift.

We now specify what it means for a potential categorification of $\HB_\infty$ to factor through $\HB_m$ for $m<\infty$. Consider \eqref{twocolorbreln} as an equality of the two sides
(ignoring $b_{w_0}$ in the middle). One can transform this into an equality with only positive integer coefficients, by adding terms to both sides. Then, one can construct a ``categorified
version" of this relation, replacing the product $b_{\ul{\hat{n}}_s} = \cdots b_t b_s$ with the corresponding tensor product $\cdots \ot B_t \ot B_s$, and replacing the sum with the direct
sum.

\begin{example} When $m=3$, the categorified relation has the form $(B_s \ot B_t \ot B_s) \oplus B_t \cong (B_t \ot B_s \ot B_t) \oplus B_s$. \end{example}

\begin{defn} If $\CC$ is a potential categorification of $\HB_\infty$ and satisfies the categorified version of \eqref{twocolorbreln} for some $m<\infty$, we call $\CC$ a \emph{potential
categorification} of $\HB_m$ instead. \end{defn}

For a potential categorification of $\HB_m$, the map $\HB_\infty \to [\Kar(\CC)]$ clearly factors through the quotient $\HB_m$.

Let $\HB$ be the relevant Hecke algebra, either $\HB_\infty$ or $\HB_m$, and let $W$ be the corresponding Coxeter group. For an expression $\ul{w}=i_1 i_2 \ldots$, let $BS(\ul{w})$ denote
the corresponding tensor product $B_{i_1} \ot B_{i_2} \ot \cdots$, so that $BS(\emptyset)$ is the monoidal identity.

Any potential categorification $\CC$ induces a semi-linear pairing on $\HB$, via \[(b_{\ul{w}},b_{\ul{x}}) \mapsto \grdrk_{\Bbbk} \Hom_{\CC}(BS(\ul{w}),BS(\ul{x})).\] Here, $\grdrk$ denotes
the graded rank. We do not assume that Hom spaces are free as $\Bbbk$-modules, though their graded rank is still well-defined (say, as the maximal number of linearly independent vectors
over $\Bbbk$). In similar fashion, we could define a semi-linear pairing using the graded rank over $R$, where $R$ is any graded $\Bbbk$-algebra for which composition in $\CC$ is
$R$-linear. The elements $b_s$ and $b_t$ are self-biadjoint.

\begin{defn} We say that a potential categorification $\CC$ of $W$ satisfies the \emph{Soergel Categorification Theorem} or \emph{SCT} if the following properties hold for $\Kar(\CC)$:
\begin{itemize} \item For a reduced expression $\ul{w}$ there is a unique summand $B_{\ul{w}} \sumset BS(\ul{w})$ which does not appear in $BS(\ul{y})$ for any shorter expression $\ul{y}$.
\item For any two reduced expressions $\ul{w}$ and $\ul{w}'$ for the same element, there is a canonical morphism $BS(\ul{w}) \to BS(\ul{w}')$ which induces an isomorphism
$\phi_{\ul{w},\ul{w}'} \co B_{\ul{w}} \to B_{\ul{w}'}$. Moreover, these isomorphisms are compatible in the sense that $\phi_{\ul{w}',\ul{w}''} \circ \phi_{\ul{w},\ul{w}'} =
\phi_{\ul{w},\ul{w}''}$. Therefore, there is a single object $B_w$ which is canonically isomorphic to each summand $B_{\ul{w}}$, independent of the reduced expression for $w$. We will
never use the notation $B_{\ul{w}}$ again. \item The set $\{B_w\}_{w \in W}$ forms a complete list of non-isomorphic indecomposables in $\Kar(\CC)$, up to grading shift. \item The map $\HB
\to [\Kar(\CC)]$ is an isomorphism, and it induces the standard pairing on $\HB$. \end{itemize} If in addition one has $[B_w]=b_w$, we say that the \emph{(analog of the) Soergel
conjecture} is true for $\CC$. (Of course, Soergel made his conjecture about a very specific potential categorification of $\HB$, defined in characteristic zero; this terminology is not
meant to imply any claims on Soergel's behalf.) \end{defn}

\begin{lemma} \label{homspacelemma} Let $\CC$ be a potential categorification of $\HB$. Let $\epsilon$ be any trace map on $\HB$. Suppose that \begin{itemize} \item For each $w \in W$ there
is an object $B_w$ in $\Kar(\CC)$ for which $b_w \mapsto [B_w]$. The biadjoint of $B_w$ is $B_{w^{-1}}$. \item The categorified version of the relation in Claim \ref{allrecursions1} holds,
decomposing $BS(\ul{x})$ for a reduced expression into direct sums of various $B_w$. \item The Hom spaces $\Hom_{\CC}(B_e,B_w)$ are free $\Bbbk$-modules for all $w \in W$. (More generally,
we may assume they are free graded $R$-modules, for $R$ as above.) \item The graded rank of $\Hom_{\CC}(B_e,B_w)$ over $\Bbbk$ (resp. over $R$) is equal to $\epsilon(b_w)$. \end{itemize}
Then we may deduce that all Hom spaces between various objects $B_w$ and $BS(\ul{x})$ in $\CC$ are free as $\Bbbk$-modules (resp. as $R$-modules), and the semi-linear pairing induced from
the categorification agrees with that induced by $\epsilon$. \end{lemma}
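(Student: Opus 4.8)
The plan is to run Soergel's ``categorification trick'': reduce every $\Hom$-space to $\Hom_\CC(B_e,-)$ by biadjointness, and split every Bott--Samelson object into a direct sum of the objects $B_w$ using the categorified relations. The one non-formal ingredient is a decomposition statement which I would isolate first: for \emph{any} expression $\ul z$ there is an isomorphism $BS(\ul z)\cong\bigoplus_{w\in W}B_w^{\oplus p_w(\ul z)}$ with $p_w(\ul z)\in\Z_{\ge 0}[v^{\pm 1}]$ and $\sum_w p_w(\ul z)\,b_w=b_{\ul z}$ in $\HB$. I would prove this by induction on the length of $\ul z$: if $\ul z$ is not reduced, use $B_i\ot B_i\cong B_i(1)\oplus B_i(-1)$ --- together with the categorified braid relation when $m<\infty$ --- to rewrite $BS(\ul z)$ as a sum of grading shifts of Bott--Samelson objects on strictly shorter expressions; a reduced $\ul z$ is alternating, and then the categorified Claim~\ref{allrecursions1} supplies the decomposition outright. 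No uniqueness or Krull--Schmidt property is invoked here: one only ever \emph{applies} given isomorphisms, and $\sum_w p_w(\ul z)\,b_w = b_{\ul z}$ follows by passing to the Grothendieck ring, where these isomorphisms become the corresponding Hecke-algebra identities and $[BS(\ul z)]=b_{\ul z}$.

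Granting this, the argument would proceed in three steps. First, applying $\Hom_\CC(B_e,-)$ to the decomposition gives $\Hom_\CC(B_e,BS(\ul z))\cong\bigoplus_w\Hom_\CC(B_e,B_w)^{\oplus p_w(\ul z)}$ (grading shifts absorbed into the $p_w$); each summand is free over $\Bbbk$ (resp.\ over $R$) by hypothesis, hence so is the total, with graded rank $\sum_w p_w(\ul z)\,\epsilon(b_w)=\epsilon\big(\sum_w p_w(\ul z)\,b_w\big)=\epsilon(b_{\ul z})$. (As graded rank is additive over direct sums even for non-free modules, the rank half of every assertion below is in fact unconditional.) Next, for expressions $\ul w,\ul x$, moving the tensor factors of $BS(\ul w)$ across the $\Hom$ one at a time via the self-biadjointness of $B_s$ and $B_t$ yields
\[
\Hom_\CC(BS(\ul w),BS(\ul x))\;\cong\;\Hom_\CC\!\big(B_e,\;BS(\omega(\ul w)\,\ul x)\big),
\]
so this space is free with graded rank $\epsilon(b_{\omega(\ul w)\ul x})=\epsilon\big(b_{\omega(\ul w)}b_{\ul x}\big)=\epsilon\big(\omega(b_{\ul w})\,b_{\ul x}\big)$, using $b_{\omega(\ul w)}=\omega(b_{\ul w})$. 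This last quantity is exactly the value, on $(b_{\ul w},b_{\ul x})$, of the semi-linear pairing attached to the trace $\epsilon$; as it depends only on $b_{\ul w},b_{\ul x}$ as elements of $\HB$, the pairing induced by the categorification is well defined and agrees with the one induced by $\epsilon$, giving the second assertion.

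It remains to handle $\Hom$-spaces in which an argument is one of the objects $B_w$ rather than a Bott--Samelson. I would induct on $\ell(w)$, the case $w=e$ being the first step above. For the inductive step, pick a (necessarily alternating) reduced expression $\ul w$ for $w$; by Claim~\ref{allrecursions1}, whose leading coefficient is $1$, there is an isomorphism $BS(\ul w)\cong B_w\oplus C$ with $C$ a finite direct sum $\bigoplus_v B_v^{\oplus m_v}$ over various $v$ with $\ell(v)<\ell(w)$. Applying $\Hom_\CC(-,BS(\ul x))$ gives
\[
\Hom_\CC(BS(\ul w),BS(\ul x))\;\cong\;\Hom_\CC(B_w,BS(\ul x))\;\oplus\;\Hom_\CC(C,BS(\ul x)),
\]
in which the left side is free by the previous paragraph and $\Hom_\CC(C,BS(\ul x))$ is free by the inductive hypothesis; comparing graded ranks and using $b_{\ul w}=b_w+\sum_v m_v b_v$ shows $\grdrk\Hom_\CC(B_w,BS(\ul x))=\epsilon(\omega(b_w)\,b_{\ul x})$, as predicted. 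The biadjoint relation $B_w^\vee\cong B_{w^{-1}}$ lets one treat $\Hom_\CC(BS(\ul w),B_x)$ and $\Hom_\CC(B_w,B_x)$ in the same way.

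The hard part will be exactly the freeness claim in that last step: a priori $\Hom_\CC(B_w,BS(\ul x))$ is only exhibited as a direct \emph{summand} of a free module (equivalently, as a finitely generated stably free module), and over a general commutative domain a stably free module need not be free. I see two natural ways to close this gap, both consonant with the rest of the paper. In the ``$R$-module'' formulation one may take $R$ to be the coordinate ring of the realization, a connected graded polynomial ring, over which finitely generated graded projective modules are graded free by a graded Nakayama argument, so the summand-of-free step is automatic (similarly if $\Bbbk$ is, say, a PID or a local ring). More robustly, one bypasses projectivity altogether by producing an explicit $\Bbbk$-basis of $\Hom_\CC(B_w,BS(\ul x))$ of the predicted graded rank --- a ``light leaves'' / double-leaves basis --- which is in fact how $\DC$ is analysed later in the paper, so that Lemma~\ref{homspacelemma} is best read as the abstract shadow of that explicit computation. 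A secondary, routine point is to make the decomposition statement of the first paragraph fully rigorous for non-reduced $\ul z$, in particular (for $m<\infty$) for alternating expressions of length $>m$, where one iterates the categorified braid relation together with $B_i\ot B_i\cong B_i(1)\oplus B_i(-1)$.
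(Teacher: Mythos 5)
Your proof follows the same route as the paper's own sketch: use self‑biadjointness of the $B_i$ to collapse every Hom space to $\Hom_\CC(B_e,BS(\ul z))$ for an appropriate $\ul z$, and then split $BS(\ul z)$ into objects $B_w$ using the supplied decompositions, tracking graded ranks against $\epsilon$. The paper dispatches this in three sentences; you have spelled it out, and you have correctly isolated the real subtlety that the paper glosses over: as written, the argument only exhibits $\Hom_\CC(B_w,BS(\ul x))$ as a direct summand of a free module with free complement, i.e.\ as a stably free module, and over a general commutative ring this need not be free. Your two proposed repairs --- pass to a connected graded $R$ with $R_0=\Bbbk$ local so that graded projectives are graded free, or bypass projectivity by exhibiting an explicit double‑leaves type basis --- are exactly what is in play where the lemma is actually used: Corollary~\ref{homspacecor} explicitly assumes $\Bbbk$ is local, and \cite{EWGR4SB} supplies the explicit basis.

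One point you have under‑flagged. You assert in your first paragraph that ``no uniqueness or Krull--Schmidt property is invoked,'' and then relegate the non‑reduced case to a ``secondary, routine point.'' In fact for $m<\infty$ the decomposition of $BS(\ul z)$ for an alternating expression of length $>m$ already hits the same obstruction. The categorified braid relation has the shape
\[
BS(\ul{\hat m}_s)\ \oplus\ (\text{shorter BS's})\ \cong\ BS(\ul{\hat m}_t)\ \oplus\ (\text{shorter BS's}),
\]
and after tensoring by $BS(\ul a)$ on the left, reducing repeats with $B_i\ot B_i$, and invoking the decomposition for the (now shorter) terms, what one obtains is
\[
BS(\ul z)\ \oplus\ \bigoplus B_u^{\oplus(\cdot)}\ \cong\ \bigoplus B_u^{\oplus(\cdot)},
\]
not $BS(\ul z)\cong\bigoplus B_u^{\oplus(\cdot)}$. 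Concluding the latter requires cancelling direct summands, which is precisely the Krull--Schmidt/stably‑free issue you raise later, not a separate routine matter. (It is easy to see this already at $m=3$, $\ul z=tsts$.) The point is not that your proof is wrong, but that both your ``hard part'' and your ``secondary, routine point'' are the same gap, and both are closed by the same hypothesis (local $\Bbbk$ or explicit bases). With that amendment your argument is a faithful and more careful rendering of the paper's.
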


\begin{proof} Using biadjointness and direct sum decompositions, we see that any Hom space between various $BS(\ul{x})$ or $B_w$ is isomorphic to a direct sum of Hom spaces
$\Hom(B_e,B_w)$ for various $B_w$. Therefore the freeness of $\Hom(B_e,B_w)$ implies the freeness of all Hom spaces. The combinatorics of biadjointness and decomposition in
$\CC$ are the same as the combinatorics of $\omega$ and the additive relations in $\HB$ when determining $(x,y)$, so that the final statements are obvious. \end{proof}

\begin{cor} \label{homspacecor} Suppose that $\CC$ satisfies the conditions of Lemma \ref{homspacelemma} for the standard trace $\epsilon$, as the graded rank of Hom spaces over a graded
ring $R$. We assume that $R$ is concentrated in non-negative degree, and consists of scalars $\Bbbk$ in degree $0$, and that $\Bbbk$ is a local ring. Then each object $B_w$ is
indecomposable and $\CC$ is already Karoubian. The category $\CC$ satisfies the SCT and the Soergel conjecture is true. \end{cor}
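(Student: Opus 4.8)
The plan is to let Lemma~\ref{homspacelemma} do almost all of the work and to extract indecomposability, Karoubianness, and the SCT from one genuinely new ingredient: the positivity of the standard pairing in degree zero, interacting with the hypotheses on $R$ and $\Bbbk$. First I would invoke Lemma~\ref{homspacelemma} with $\epsilon$ the standard trace. Since the standard pairing on $\HB$ is $(x,y)=\epsilon(\omega(x)y)$, the lemma gives that every $\Hom$-space among the $B_w$ and the Bott--Samelson objects $BS(\ul x)$ is a free graded $R$-module, and that the graded-rank pairing these spaces induce on $\HB$ is the standard pairing; via the assumed direct-sum decompositions of Bott--Samelson objects this yields $\grdrk_R\Hom_\CC(B_x,B_y)=(b_x,b_y)$ for all $x,y\in W$. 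In particular $\End^\bullet_\CC(B_w)$ is a free graded $R$-module of graded rank $(b_w,b_w)$, and $\Hom^\bullet_\CC(B_e,B_w)$ is free of graded rank $\epsilon(b_w)=v^{\ell(w)}$.

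The crux --- and the step I expect to be the main obstacle, not for its difficulty but because it is the only place the hypotheses on $R$ and $\Bbbk$ (and the positivity of the Hecke pairing) are genuinely used, and the grading conventions must be made to cooperate --- is to pin down $\End^0_\CC(B_w)$. Using $\omega(b_w)=b_{w^{-1}}$, the formula $b_w=v^{\ell(w)}\sum_{x\le w}T_x$ of Claim~\ref{simpleKLbasis}, the identity $\epsilon(T_aT_b)=\delta_{b,a^{-1}}v^{-2\ell(a)}$, and the inversion-invariance of the Bruhat order, one computes
\[
(b_w,b_w)=\sum_{a\le w^{-1}}v^{\,2(\ell(w)-\ell(a))}\ \in\ 1+v\Z_{\ge0}[v],
\]
the constant term $1$ coming from $a=w^{-1}$. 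Since $R$ is concentrated in non-negative degrees with $R_0=\Bbbk$, a free graded $R$-module with such a graded rank is concentrated in non-negative degrees and has degree-zero part exactly $\Bbbk$ (it is here that $\epsilon(b_w)=v^{\ell(w)}$ landing in non-negative degree makes everything consistent); hence $\End^0_\CC(B_w)=\Bbbk$. As $\Bbbk$ is local it has no idempotents besides $0$ and $1$, so $B_w$ is indecomposable with local endomorphism ring.

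The rest is formal. Every $BS(\ul x)$ decomposes in $\CC$ into shifted copies of the $B_z$: for a reduced expression --- equivalently, in a dihedral group, an alternating one --- this is the categorified version of Claim~\ref{allrecursions1} assumed in Lemma~\ref{homspacelemma}; for an arbitrary $\ul x$ one removes repeated colors using $B_i\ot B_i\cong B_i(1)\oplus B_i(-1)$ and inducts on length, noting that only $B_z$ with $\ell(z)\le\ell(\ul x)$ occur and that $B_x$ occurs exactly once, in degree $0$, when $\ul x$ is reduced. Writing $\CC_0$ for the monoidal subcategory generated by $B_s,B_t$, it follows that every object of $\Kar(\CC_0)$ is a summand of a direct sum of shifted $B_z$'s; since these have local endomorphism rings, $\Kar(\CC_0)$ is Krull--Schmidt with the $B_z(n)$ as its indecomposables, and since $\CC$ is additive, contains each $B_z$, and lies between $\CC_0$ and $\Kar(\CC_0)$, we conclude $\CC\simeq\Kar(\CC_0)$. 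Thus $\CC$ is already Karoubian and the $B_z(n)$ exhaust its indecomposables. The first two SCT bullets then follow: $B_w$ is the unique summand of $BS(\ul w)$ not appearing in any shorter Bott--Samelson object, and the only element of a dihedral group with two reduced expressions is $w_0$ (when $m<\infty$), for which the categorified two-color braid relation~\eqref{twocolorbreln} furnishes a canonical isomorphism between $BS(\ul{\hat{m}}_s)$ and $BS(\ul{\hat{m}}_t)$ by matching their unique $B_{w_0}$-summands, compatibility being automatic with only two reduced words.

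For the remaining SCT assertions I would study the Grothendieck map $\pi\co\HB\to[\Kar(\CC)]$, $b_w\mapsto[B_w]$. It is surjective because the $[B_z]$ are the indecomposables. By the first paragraph, pulling back the pairing $\langle[X],[Y]\rangle=\grdrk_R\Hom_\CC(X,Y)$ along $\pi$ recovers the standard pairing on $\HB$, which is non-degenerate; hence any $a\in\ker\pi$ pairs trivially with all of $\HB$ and so $a=0$, i.e.\ $\pi$ is injective, thus an isomorphism carrying one pairing to the other. Linear independence of $\{b_w\}$ then shows the $B_w$ are pairwise non-isomorphic up to shift, completing the list-of-indecomposables bullet, and the Soergel conjecture $[B_w]=b_w$ is immediate from the construction of $\pi$.
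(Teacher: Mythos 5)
Your proof is correct and follows essentially the same route as the paper's: the paper's terse "calculations with the bilinear form imply $\grdrk_R \Hom(B_w,B_x)=\delta_{w,x}+v\Z[v]$" is precisely your explicit computation of $(b_w,b_w)\in 1+v\Z_{\ge 0}[v]$ via Claim~\ref{simpleKLbasis}, and both arguments then extract indecomposability from $\End^0(B_w)=\Bbbk$ being local, completeness from the direct-sum decomposition of Bott--Samelson objects, and Karoubianness and the Grothendieck-group isomorphism by the same Krull--Schmidt-style reasoning. You simply spell out steps the paper leaves to the reader.
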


\begin{proof} Calculations with the bilinear form imply that \[\grdrk_R \Hom_{\CC}(B_w, B_x) = \delta_{w,x} + v \Z[v].\] This is sufficient to imply that $\{B_w\}$ form a list of pairwise
non-isomorphic indecomposable objects, up to shift. For instance, $B_x$ is indecomposable because its endomorphism ring must be local. Since every $BS(\ul{x})$ splits into these
indecomposables, our list must be complete. \end{proof}

\begin{cor} \label{fullyfaithfulfunctor} Suppose that $\CC$ and $\DC$ are two such categories as in Lemma \ref{homspacelemma}, and are both Karoubian. Suppose $\FC \colon \CC \to \DC$ is
an additive graded $R$-linear monoidal functor sending $B_i$ to $B_i$ (which implies that $B_w$ is sent to $B_w$). Suppose that $\FC$ induces isomorphisms of Hom spaces $\Hom(B_e,B_w)$
for all $w$, or alternatively of $\Hom(B_e,BS(\ul{w}))$ for every reduced expression $\ul{w}$. Then $\FC$ is an equivalence. \end{cor}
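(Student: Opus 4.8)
The plan is to prove that $\FC$ is fully faithful, and then to observe that essential surjectivity is automatic. For the latter: since $\DC$ lies in the Karoubi envelope of the subcategory monoidally generated by $B_s$ and $B_t$, every object of $\DC$ is the image of an idempotent $e\in\End_\DC(BS(\ul{w}))$ for some expression $\ul{w}$; once $\FC$ is known to be full and faithful, $e=\FC(\tilde e)$ for a unique $\tilde e\in\End_\CC(BS(\ul{w}))$, which is idempotent because $\FC$ is faithful, and which splits because $\CC$ is Karoubian. As $\FC$ is additive it carries that splitting to a splitting of $e$, so the image of $\tilde e$ is sent to an object isomorphic to the given one. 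Thus the whole content is the full faithfulness of $\FC$.

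First I would reduce full faithfulness to Bott--Samelson objects. Any objects $X,Y$ of $\CC$ are images of idempotents $e\in\End_\CC(BS(\ul{w}))$ and $e'\in\End_\CC(BS(\ul{x}))$, and $\Hom_\CC(X,Y)$ is, as an $R$-module, the summand $e'\,\Hom_\CC(BS(\ul{w}),BS(\ul{x}))\,e$ cut out by the idempotent operator $f\mapsto e'fe$; the same holds in $\DC$ with $\FC(e),\FC(e')$, and $\FC$ intertwines the two idempotent operators. So it suffices to show $\FC\colon\Hom_\CC(BS(\ul{w}),BS(\ul{x}))\to\Hom_\DC(BS(\ul{w}),BS(\ul{x}))$ is an isomorphism for all expressions $\ul{w},\ul{x}$. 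Next, writing $\ul{w}=i_1\cdots i_d$ and repeatedly applying the self-biadjointness isomorphism $\Hom_\CC(B_i\ot M,N)\cong\Hom_\CC(M,B_i\ot N)$, I would peel the source apart to get $\Hom_\CC(BS(\ul{w}),BS(\ul{x}))\cong\Hom_\CC(B_e,BS(\omega(\ul{w})\cdot\ul{x}))$, reducing to the case $\Hom_\CC(B_e,BS(\ul{y}))$ for an arbitrary expression $\ul{y}$. Finally I would decompose: in any potential categorification, $BS(\ul{y})$ is a direct sum of grading shifts of the objects $B_w$ --- this is the categorified form of Claim \ref{allrecursions1} for reduced $\ul{y}$, and a general $\ul{y}$ reduces to that case using \eqref{BiBi1} (which strictly shortens) together with, when $m<\infty$, the categorified relation \eqref{twocolorbreln}. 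Applying $\FC$ to such a decomposition identifies $\FC$ on $\Hom_\CC(B_e,BS(\ul{y}))$ with a direct sum of copies of the maps $\FC\colon\Hom_\CC(B_e,B_w)\to\Hom_\DC(B_e,B_w)$ (here $\FC(B_w)=B_w$), which are isomorphisms by hypothesis; equivalently one may run the argument directly with the alternative hypothesis on $\Hom(B_e,BS(\ul{w}))$ for reduced $\ul{w}$, since those Hom spaces already decompose into the $\Hom(B_e,B_w)$. No rank or freeness count over $R$ is needed.

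The one point requiring care --- and the main thing to get right --- is that every isomorphism used in this chain of reductions must be \emph{$\FC$-equivariant}, even though $\FC$ a priori respects none of the auxiliary structure in $\DC$: it need not send the chosen biadjunction unit and counit of $B_i$ in $\CC$ to the chosen ones in $\DC$, nor the chosen direct-sum decompositions to the chosen ones. The resolution is uniform: the zig-zag identities defining an adjunction, the equations $e^2=e$ defining an idempotent together with $\pi\iota=\id$ and $\iota\pi=e$ defining its splitting, and the identities witnessing a biproduct decomposition are all equations between morphisms, hence preserved by the functor $\FC$. So $\FC$ carries the adjunction data, idempotent splittings, and decompositions chosen in $\CC$ to perfectly valid (if non-standard) ones in $\DC$, and it therefore automatically intertwines the Hom-space isomorphisms these produce. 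Once this bookkeeping is set up once and for all, the three reductions above are each visibly $\FC$-equivariant, and full faithfulness follows.
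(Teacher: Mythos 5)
The paper's own proof of this corollary is the single phrase \emph{left to the reader}, so there is nothing to compare against; what you have written is a correct solution to that exercise. The central steps are all right: essential surjectivity via lifting idempotents, reduction to Bott--Samelson objects using that both categories are Karoubian, peeling the source via self-biadjointness to land in $\Hom(B_e,BS(\ul y))$, and decomposing. Your key bookkeeping observation --- that adjunction data, idempotent splittings, and biproduct witnesses are each specified by equations among morphisms and hence transport along $\FC$, so every isomorphism in the chain is automatically $\FC$-equivariant without any further rigidity hypothesis --- is exactly what makes the reduction watertight, and you are right that no rank or freeness count over $R$ is required. The one step that deserves to be spelled out more carefully is the passage from reduced to general $\ul y$ when $m<\infty$: the categorified relation \eqref{twocolorbreln} is an isomorphism of the form $BS(\ul{\hat{m}}_s)\oplus S\cong BS(\ul{\hat{m}}_t)\oplus T$, not a direct-sum decomposition of $BS(\ul{\hat{m}}_s)$, so it does not literally shorten an alternating $\ul y$ of length $>m$. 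The correct mechanism is a cancellation: write $\ul y=\ul a\cdot\ul{\hat{m}}_s$, tensor the two-color isomorphism on the left with $BS(\ul a)$, and apply \eqref{BiBi1} once at the junction (where the color now repeats) to obtain an $\FC$-equivariant isomorphism $BS(\ul y)\oplus(\text{strictly shorter terms})\cong(\text{strictly shorter terms})$; applying $\Hom(B_e,-)$, noting that $\FC$ acts block-diagonally on both sides and that all the strictly shorter blocks are isomorphisms by the inductive hypothesis, forces $\FC$ to be an isomorphism on the remaining block $\Hom(B_e,BS(\ul y))$. With that cancellation argument made explicit, your proof is complete and sound.
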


\begin{proof} Left to the reader. \end{proof}

\subsection{{\bf The Hecke Algebroid}}\label{heckealgebroid}

\subsubsection{{\bf Definitions}}

The Hecke algebroid is a general construction for Coxeter groups. See \cite{WilSSB} for more details.

It will be useful to distinguish between the index $s \in S$ and the parabolic subset $\{s\} \subset S$. Later in this paper we will be assigning a color to each index, blue to $s$ and red
to $t$. We use these colors to assign names to parabolic subsets of $S$: $b = \{s\}$ is blue, $r = \{t\}$ is red, $p = \{s,t\}$ is purple, and $\emptyset$ is white. Note that all parabolic
subsets are finitary, with the exception of $p$ in the case $m=\infty$. Given a parabolic subset $J$ we let $b_J \define b_{w_J}$.

The Hecke algebroid $\HG$ is a $\Zvv$-linear category with objects labelled by finitary parabolic subsets. The $\Zvv$-module $\Hom(J,K)$ is the intersection in $\HB$ of the
left ideal $\HB b_J$ with the right ideal $b_K \HB$. Composition from $\Hom(J,K) \times \Hom(L,J) \to \Hom(L,K)$ is denoted by $\star$, and is defined using renormalized multiplication:
\[x \star y = \frac{xy}{\qJ}. \] This makes sense, because we can write $x = x' b_J$ and $y = b_J y'$, so that $xy = \qJ x' b_J y'$.

It is clear that $b_J$ is the identity element of $\End(J)$. It is also clear that $\End(\emptyset) = \HB$ as an algebra. Whenever $J \subset K$, $b_K$ is in both the right and left ideal
of $b_J$, so there is an inclusion of ideals yielding $\Hom(K,L) \subset \Hom(J,L) \subset \HB$. This inclusion is realized by precomposition with $b_K \in \Hom(J,K)$. A similar statement
can be made about $\Hom(L,K) \subset \Hom(L,J)$ and postcomposition with $b_K \in \Hom(K,J)$.

\subsubsection{{\bf Presenting the Hecke algebroid as a quiver algebroid}}\label{presentinghecke}

Whenever $J \subset K$ we may view $b_K$ as an element both of $\Hom(J,K)$ and $\Hom(K,J)$. The collection of these morphisms for various $J \subset K$ will generate $\HG$. Moreover,
whenever $J \subset K \subset L$ it is clear that $b_L \star b_K = b_L$, as a composition $\Hom(K,L) \times \Hom(J,K) \to \Hom(J,L)$. Similarly $b_K \star b_L = b_L \in \Hom(L,J)$.
Therefore, $\HG$ is generated by the morphisms $b_K$ when $K \setminus J$ is a single index. We take these generators and view them as arrows in a path algebroid.
\[
\begin{tikzcd}
&b \arrow[leftrightarrow]{ld} \arrow[leftrightarrow]{rd} \\
\emptyset \arrow[leftrightarrow]{rd} && p \arrow[leftrightarrow]{ld} \\
&r
\end{tikzcd}
\]
When $m=\infty$ the parabolic subset $p$ is not finitary, so there are only 3 vertices and 2 doubled edges in this quiver.

We denote a path between parabolic subsets using an underline, analogous to our notation for expressions. By $b_{\ul{\emptyset bprpb}}$ we mean the morphism which follows the path from $b$
up to $p$ and eventually to $\emptyset$. For instance, $b_{\ul{r}}$ would be the identity morphism of $r$. Abusing notation, let $\ul{\hat{k}}_r$ denote the path $\ul{\cdots r \emptyset b
\emptyset r}$, which passes through $\emptyset$ exactly $k-1$ times, starting in $r$, and ending in either $r$ or $b$ depending on parity.

\begin{prop} The following relations on paths define the Hecke algebroid $\HG$ as a quiver algebroid. Only the first relation (together with its color switch) is needed for $m=\infty$.
\begin{subequations}
\begin{equation} \label{bisqsing} b_{\ul{r\emptyset r}} = (v+v^{-1}) b_{\ul{r}} \end{equation}
\begin{equation} \label{bw0sqsing} b_{\ul{prp}} = \frac{\qW}{v+v^{-1}} b_{\ul{p}} \end{equation}
\begin{equation} \label{upup} b_{\ul{\emptyset r p}}=b_{\ul{\emptyset b p}} \end{equation}
\begin{equation} \label{downdown} b_{\ul{p r \emptyset}}=b_{\ul{p b \emptyset}} \end{equation}
\begin{equation} \label{twocolorrelnsing} b_{\ul{i p r}} = \sum_{n} d^n_m b_{\ul{\hat{n}}_r} \end{equation}
\end{subequations}
In this final equation, $i$ is $r$ if $m$ is odd, and $b$ if $m$ is even.
\end{prop}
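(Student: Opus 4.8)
The plan is the usual ``presentation'' argument. Write $\widetilde{\HG}$ for the quiver algebroid on the pictured quiver modulo the listed relations, and let $F\colon\widetilde{\HG}\to\HG$ be the evident functor, identity on objects, sending each generating arrow to the morphism $b_K\in\Hom_{\HG}(J,K)$ (for $K\setminus J$ a single index) that it is named after. Step one is to check that the relations \eqref{bisqsing}--\eqref{twocolorrelnsing} really hold in $\HG$, which is a direct computation with the renormalised product $\star$: relation \eqref{bisqsing} unwinds to the quadratic relation \eqref{bisq} (the renormalisation is by $[\emptyset]=1$); \eqref{bw0sqsing} unwinds to \eqref{bw0sq} (renormalised by $[r]=v+v^{-1}$); \eqref{upup} and \eqref{downdown} both unwind to the identities $b_s b_{w_0}=b_t b_{w_0}=(v+v^{-1})b_{w_0}$ of \eqref{bibw0}; and \eqref{twocolorrelnsing} is exactly the two-colour braid relation \eqref{twocolorbreln} --- equivalently the third bullet of Claim \ref{allrecursions2} at $k=m$ --- transported into $\Hom_{\HG}(r,i)$ via $\star$. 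Since it was already observed that the morphisms $b_K$ with $K\setminus J$ a single index generate $\HG$, the functor $F$ is full and a bijection on objects, so everything reduces to faithfulness.

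For faithfulness I would argue by a rank count. The Hom-spaces of $\HG$ are free over $\Zvv$ and their ranks are read off from the KL basis (Claim \ref{simpleKLbasis}): for instance $\End(\emptyset)=\HB$, while $\Hom(\emptyset,b)=\HB\cap b_s\HB$ and $\Hom(b,\emptyset)=\HB b_s$ are free of rank $m$, $\Hom(b,b)=\HB b_b\cap b_b\HB$ has the $s$-bimaximal $b_w$'s as a basis, and every Hom-space involving $p$ is free of rank $1$ (spanned by a suitable power of $b_{w_0}$), since $\HB b_{w_0}=\Zvv b_{w_0}$. Now suppose I can produce, for each ordered pair $(J,K)$, a spanning set of $\Hom_{\widetilde\HG}(J,K)$ over $\Zvv$ of cardinality at most $\operatorname{rk}_{\Zvv}\Hom_{\HG}(J,K)$. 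Then $F$ presents the free module $\Hom_{\HG}(J,K)$ as a quotient of a module generated by that many elements, which forces the cardinality to equal the rank (a free module of rank $r$ needs $r$ generators) and forces $F$ to be an isomorphism on that Hom-space (a surjection between free $\Zvv$-modules of the same finite rank is an isomorphism, $\Zvv$ being Noetherian). So the whole proof comes down to building these spanning sets.

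This last step is the combinatorial heart and, I expect, the main obstacle. The idea is to reduce an arbitrary path to a normal form. Relation \eqref{bisqsing} lets one delete any immediate backtrack $i\to\emptyset\to i$ at a cost of $(v+v^{-1})$, and \eqref{bw0sqsing} does the same for $i\to p\to i$; relations \eqref{upup} and \eqref{downdown} say that a monotone passage between $\emptyset$ and $p$ is independent of whether it is routed through $b$ or $r$; and \eqref{twocolorrelnsing} (with its colour switch) rewrites the morphism ``up to $p$ and back down the prescribed side'' as a $\Zvv$-linear combination of alternating zig-zag paths through the lower vertices $\emptyset,b,r$ that --- crucially --- have bounded length $\le m$. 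Iterating these moves, any path should reduce to a $\Zvv$-combination of paths that touch $p$ at most at an endpoint and are alternating zig-zags of length $\le m$ in between; these are counted by the truncated Pascal triangle of Definition \ref{coeffs1}, and passing between the ``path'' elements $b_{\ul{\hat{n}}_r}$ and the KL elements $b_{\hat{n}_r}$ via the inverse matrix $d^n_k$ of Definition \ref{coeffs2} matches the counts to the ranks above. Getting the truncation exactly right --- i.e. verifying that \eqref{twocolorrelnsing} really suffices to kill everything of length $>m$ with no leftover slack --- is the delicate point, and is the algebroid shadow of the fact that the realization factors through $W_m$ precisely at a $2m$-th root of unity.

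Finally, the case $m=\infty$ needs a separate word, since the Hom-spaces of $\HG$ then have infinite rank over $\Zvv$. Here only \eqref{bisqsing} and its colour switch are available and the vertex $p$ is absent. One checks, directly from \eqref{bisqsing}, that $\End_{\widetilde\HG}(\emptyset)$ is generated by $b_s=b_{\ul{\emptyset b\emptyset}}$ and $b_t=b_{\ul{\emptyset r\emptyset}}$ subject only to $b_i^2=(v+v^{-1})b_i$, which is the KL presentation of $\HB_\infty$; so $F$ is an isomorphism on $\End(\emptyset)$. For the remaining Hom-spaces one runs the normal-form reduction above (now with no $p$ and no length bound) to obtain a spanning set, and compares its graded character with the graded rank of the corresponding ideal $\HB b_J\cap b_K\HB$ of $\HG$: in each fixed degree both are finite and the surjection $F$ forces equality, exactly as before.
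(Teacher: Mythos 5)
Your proof is correct and follows essentially the same route as the paper's: check the five relations hold in $\HG$ by unwinding the $\star$-product to the corresponding Hecke algebra identities, observe the induced functor from the quiver algebroid is full and bijective on objects, and then deduce faithfulness by exhibiting spanning sets whose cardinality matches the known free ranks of $\Hom_{\HG}(J,K)$. The paper leaves that last spanning-set count as "a simple exercise"; your normal-form sketch is the expected elaboration, and your rank-counting reduction (a surjection of finitely generated $\Zvv$-modules onto a free module of matching rank is an isomorphism) is the same mechanism the paper invokes implicitly.
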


\begin{proof} Relation \eqref{bisqsing} follows from \eqref{bisq}, and relation \eqref{bw0sqsing} follows from \eqref{bw0sq}. Relations \eqref{upup} and \eqref{downdown} are obvious, since
both paths merely give $b_p$. Relation \eqref{twocolorrelnsing} follows from \eqref{twocolorbreln}. Thus all the relations do hold in $\HG$, and there is a map from
this quiver algebroid to $\HG$. It is easy to see that this map is surjective. Morphism spaces in $\HG$ are free $\Zvv$-modules of known rank, so it remains to find a spanning set for
paths in the quiver having the appropriate size. This is a simple exercise for the reader. \end{proof}

\subsubsection{{\bf Traces on the Hecke algebroid}}\label{hecketraces}

A \emph{trace} on an algebroid (over $\Zvv$) is a $\Zvv$-linear map $\epsilon_X \colon \End(X) \to \Zvv$ for each object $X$, such that $\epsilon_X(ab)=\epsilon_Y(ba)$ whenever $a \in
\Hom(Y,X)$ and $b \in \Hom(X,Y)$.

\begin{claim} Any trace on the Hecke algebroid is determined by $\epsilon_{\emptyset}$.\label{tracedetermined} \end{claim}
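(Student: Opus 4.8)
\emph{Plan.} The objects of $\HG$ are the finitary parabolic subsets of $S$ (so $\emptyset$, $b$, $r$, and also $p$ when $m<\infty$). For each such object $X$ the inclusion $\emptyset\subseteq X$ gives two canonical morphisms $b_X\in\Hom_{\HG}(\emptyset,X)$ and $b_X\in\Hom_{\HG}(X,\emptyset)$, as explained in \S\ref{presentinghecke}. The strategy is to use these to pull any endomorphism of $X$ back to $\End_{\HG}(\emptyset)=\HB$ and then invoke the trace identity, so that $\epsilon_X$ is forced by $\epsilon_\emptyset$.

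\emph{Key computation.} First I would record the identity
\[
b_X\star b_X \;=\; [X]\,b_X \qquad\text{in }\End_{\HG}(X),
\]
where the composite is taken through $\emptyset$, i.e.\ via $\Hom(\emptyset,X)\times\Hom(X,\emptyset)\to\End(X)$. Indeed, since the shared object is $\emptyset$ and $[\emptyset]=1$, this composite is just the product $b_X\cdot b_X$ in $\HB$, which equals $[X]\,b_X$ by \eqref{bw0sq} applied to the parabolic subgroup $W_X$. Because $b_X$ is the identity of $\End_{\HG}(X)$, this says the round trip $X\to\emptyset\to X$ acts as multiplication by the scalar $[X]$.

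\emph{Using the trace.} Now fix $f\in\End_{\HG}(X)$ and apply the trace axiom with object pair $(X,\emptyset)$ to the morphisms $a=b_X\star f\in\Hom_{\HG}(X,\emptyset)$ and $b=b_X\in\Hom_{\HG}(\emptyset,X)$. This gives $\epsilon_\emptyset(a\star b)=\epsilon_X(b\star a)$, that is,
\[
\epsilon_\emptyset\bigl(b_X\star f\star b_X\bigr)\;=\;\epsilon_X\bigl(b_X\star b_X\star f\bigr)\;=\;\epsilon_X\bigl([X]\,b_X\star f\bigr)\;=\;[X]\,\epsilon_X(f),
\]
using associativity, the key computation, and $\Zvv$-linearity of $\epsilon_X$. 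The left-hand side is $\epsilon_\emptyset$ evaluated on a definite element of $\HB$ attached to $f$ (concretely $[X]^{-2}\,b_X f b_X$, though the formula is not needed). Hence $[X]\,\epsilon_X(f)$ is determined by $\epsilon_\emptyset$; since $\Zvv$ (resp.\ $\Z((v))$, if traces are allowed to take values there) is an integral domain and $[X]\neq 0$, the value $\epsilon_X(f)$ itself is determined by $\epsilon_\emptyset$. As $X$ and $f$ were arbitrary, the trace is determined by $\epsilon_\emptyset$.

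\emph{Main obstacle.} There is no serious difficulty; the only point requiring care is that $[X]$ is generally not a unit in $\Zvv$, so one cannot literally write $\epsilon_X(f)=[X]^{-1}\epsilon_\emptyset(\cdots)$ inside the ring — but uniqueness, which is all that is asserted, follows from cancellation in the domain. One should also keep in mind that this only concerns finitary $X$ (so $p$ enters only when $m<\infty$), which is precisely the case where $[X]$ is a well-defined nonzero element of $\Z[v^{\pm1}]$.
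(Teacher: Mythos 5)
Your proof is correct and follows essentially the same approach as the paper's: you realize the identity of $X$ (up to the scalar $[X]$) as the round trip $X \to \emptyset \to X$, and then the trace identity pushes the evaluation of any $f\in\End(X)$ down to $\epsilon_\emptyset$. The one thing you make explicit that the paper elides behind ``up to a scalar'' is that $[X]$ need not be invertible in $\Zvv$; your observation that uniqueness nonetheless follows from cancellation in a domain is the right way to close that gap.
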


\begin{proof} Because of its defining property, the trace of an endomorphism (of any object) which can be expressed as a path going through $\emptyset$ is determined by
$\epsilon_{\emptyset}$. The identity of every object in $\HG$ is given (up to a scalar) by a path through $\emptyset$, and so the same is true for any endomorphism. \end{proof}

It is not hard to show that the standard trace on $\HB$ extends to a trace on $\HG$, also called the \emph{standard trace}.

One can construct a notion of a potential categorification of $\HG$, analogous to that found in section \ref{techniques}. The endomorphism category of the $\emptyset$ object will be a
potential categorification of $\End_\HG(\emptyset) = \HB$. One can state properties analogous to the SCT and the Soergel conjecture. The upshot of Claim \ref{tracedetermined} is that, in
line with Corollary \ref{fullyfaithfulfunctor}, a map of potential categorifications of $\HG$ is an equivalence if it induces an equivalence of potential categorifications of $\HB$. We
will not bother to formalize these arguments now; they will be put into practice in section \ref{indecomps}.

\subsubsection{{\bf Induced trivial representations}}\label{inducedreps}

The \emph{(left) trivial representation} $\TM_W$ of $\HB$ is the free rank one $\Zvv$-module where $b_s$ and $b_t$ both act by the scalar $\qtwo$. When $W$ is finite, this can be embedded
inside the regular representation as the left ideal of $b_p$. Suppose that $J$ is a finitary parabolic subset, and let $\HB_J \subset \HB$ be its Hecke algebra. Less obviously, the
$\HB$-module $\Ind(\TM_J)$ is embedded inside $\HB$ as the left ideal of $b_J$. Inside $\HG$, this is precisely the Hom space $\Hom(J,\emptyset)$, as a module over $\End(\emptyset)=\HB$.
Similar statements can be made about right modules and $\Hom(\emptyset,J)$.

\section{Frobenius extensions and the Soergel categorification}
\label{sec-frobenius}

\subsection{{\bf Quantum Numbers}}\label{qnum}

Let $[2]_q = q + q^{-1} \in \Zqq$, and more generally, $[m]_q = \frac{q^m - q^{-m}}{q-q^{-1}}$ for $m \in \Z_{\ge 0}$. Thus $[1]_q = 1$ and $[0]_q = 0$. We typically omit the subscript. When $q = e^{i \theta}$, we have $[2]^2_q = 4 \cos^2 \theta$, a familiar value from the usual theory of Cartan matrices.

We are interested in quantum numbers as algebraic integers, not in terms of their original expression as polynomials in $q$. Let $\delta$ be an indeterminate, which will play the role of
$[2]$. The computations in this section take place within the ring $\Z[\delta]$, which can be thought of as a subring of $\Zqq$ under the specialization $\delta \mapsto [2]$. Every quantum
number can be expressed as a polynomial in $\delta$, as was demonstrated in Claim \ref{allrecursions2}. A $\Z[\delta]$-algebra is an algebra $\Bbbk$ with a distinguished element $\d \in
\Bbbk$, which we will also denote $[2]$. For such an algebra, the elements $[m] \in \Bbbk$ are also well-defined.

When $n$ divides $m$, $[n]$ divides $[m]$ in $\Z[\delta]$. When $m$ is odd, $[m]$ is equal to some even polynomial in $[2]$. When $m$ is even, $[m]$ is equal to some odd polynomial in $[2]$. For $m \ge 3$ there is a minimal polynomial $Q_m \in \Z[z]$ such that $Q_m([2]^2)$ divides $[m]$ but not $[n]$ for any $n<m$.

\begin{example} \[\begin{array}{ccc} m=3: & [3] = [2]^2 - 1, & Q_3=z-1. \\
	m=4: & [4] = [2]^3 - 2 [2], & Q_4 = z-2. \\
	m=5: & [5] = [2]^4 - 3 [2]^2 + 1, & Q_5 = z^2 - 3z + 1. \\
	m=6: & [6] = [2]^5 - 4 [2]^3 + 3[2], & Q_6 = z-3. \end{array}\] \end{example}

Let us discuss the algebraic conditions on $\Z[\delta]$ which correspond to the specialization of $q^2$ to a root of unity. We write $\z_n$ for an arbitrary primitive $n$-th root of unity,
viewed as an algebraic integer. In other words, for any element $x$ in an arbitrary ring, we say that $x = \z_n$ if $x$ is a root of the $n$-th cyclotomic polynomial. For any $\Zqq$-algebra
and $m \ge 3$ we have \[q^2 = \z_m \iff Q_m([2]_q^2)=0 \iff [m]_q = 0 \textrm{ and } [k]_q \ne 0 \textrm{ for } 0<k<m.\] The case $m=2$ is special, in that $q^2 = \z_2 \iff [2]=0$, which is
an equation in $[2]$ not in $[2]^2$ (this will cause some issues in the appendix). Once again, we are not interested in viewing quantum numbers as polynomials in $q$, but this discussion
should instead serve to justify why one might wish to consider the algebraic conditions $[m]=0$ and $[k] \ne 0$ for $k<m$.

We now discuss some of the algebraic implications of the fact that $[m]=0$. It is easy to deduce (say, using the same recursive formulae in reverse) that whenever $[m]=0$, one has $[m-k] =
[m-1] [k]$. Therefore $[m-1]^2 = 1$. When $m$ is odd, $[2]$ divides $[m-1]$ and is therefore also invertible. Similarly, one has $[m+k] = [m+1][k]$, so that $[2m-1] = -1$ and $[2m]=0$.
Unfortunately, the converse is less pretty.

\begin{claim} \label{claim-qnumber-bullshit} Suppose that $[2m]=0$ and $[2m-1]=-1$. Then $2[m]=0$ and $[2][m]=0$. If $m$ is odd then $[m]=0$. However, allowing for 2-torsion, it is
possible that $[m] \ne 0$ when $m$ is even. \end{claim}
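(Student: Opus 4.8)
The plan is to work entirely within the ring $\Z[\d]$, using only the recursion $[n][2] = [n+1] + [n-1]$ (Claim \ref{qnumrecursion}) and its consequences to manipulate the two hypotheses $[2m] = 0$ and $[2m-1] = -1$. First I would record the basic identities obtained by running the recursion ``from both ends'': from $[2m] = 0$ one gets $[2m-k] = [2m-1][k]$ for all $k \ge 0$ (proved by induction on $k$ using the recursion, since $[2m-k-1][2] = [2m-k] + [2m-k+1]$ and the claimed formula is compatible with this), and symmetrically $[2m+k] = [2m+1][k]$. Setting $k = m$ in the first identity gives $[m] = [2m-1][m] = -[m]$, hence $2[m] = 0$. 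This is the first assertion.

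Next, for $[2][m] = 0$: I would set $k = m-1$ in the identity $[2m-k] = [2m-1][k]$ to get $[m+1] = -[m-1]$, and then use the recursion $[m][2] = [m+1] + [m-1] = -[m-1] + [m-1] = 0$. That disposes of the second assertion. For the third, suppose $m$ is odd. Then (as recalled in the paragraph just before the claim, or directly) $[2]$ divides $[m]$ — more precisely $[m]$ is an odd polynomial in $[2]$, so $[m] = [2] \cdot p([2])$ for some polynomial $p$. Hmm, but $[2][m] = 0$ alone does not immediately give $[m] = 0$ since $\Z[\d]$ has zero divisors once we quotient... Actually the cleaner route: when $m$ is odd, $2m-1$ and $m$ are coprime? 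No. Let me instead use: $[m-1]$ divides $[2m-2]$, and from the identities one has a handle on $[m-1]^2$. Since $2m-1$ is odd, write $2m - 1 = 2\ell+1$; the identity $[2m-1] = -1$ combined with $[2m-2] = [2m-1][1] = -1$... wait that's wrong, $[2m-2] = [2m - 1][1]$? With $k=1$: $[2m-1] = [2m-1][1] = [2m-1]$, trivial; with $k=2$: $[2m-2] = [2m-1][2] = -[2]$. So $[2m-2] = -[2]$, and $[m-1]$ divides $[2m-2] = -[2]$; also $[m-1]$ divides... I'd look for $[m-1]^2 = $ something invertible. Actually from $[m+1] = -[m-1]$ and the recursion applied around $m$: a standard computation gives $[m-1]^2 + [m]^2 \cdot(\text{stuff})$; the honest path is $[2m-2] = [m-1]([m] + [m-2])$ — no. Let me just say: the sub-claim for odd $m$ follows from the divisibility $[2] \mid [m]$ together with the fact, derivable from the identities, that $[m-1]^2 = 1$ (hence $[m-1]$ is a unit) and then $[m]^2 = [m-1][m+1] \cdot(-1)\cdot(-1)$-type manipulations forcing $[m]$ into the ideal generated by $[2m]=0$; I would chase the precise chain in the write-up.

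The cleanest argument for odd $m$: from $[m+1] = -[m-1]$ we get $[2][m]=0$ as above, and separately $[m+1][m-1] - [m]^2 = 1$ is the $\mathrm{SL}_2$ determinant identity (easily proved by induction from the recursion), so $-[m-1]^2 - [m]^2 = 1$. For odd $m$, $[2] \mid [m-1]$ and also one knows $2[m]=0$; combining these with $[2][m] = 0$ and the quadratic relation pins down $[m]=0$ by a short manipulation modulo $2$-torsion — but crucially for odd $m$ the $2$-torsion does not obstruct because $[m]$ itself is divisible by $[2]$ which is a unit here, so $[m] = [2]^{-1}\cdot[2][m] = 0$. Wait — is $[2]$ a unit when $m$ is odd and $[2m]=0$? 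We have $[2m-2] = -[2]$ and ... I believe $[2]$ is invertible in this setting (it's the analogue of the statement recalled for $[m]=0$: ``$[2]$ divides $[m-1]$ and is therefore also invertible''), which I would verify via $[m-1]^2 = 1$ and $[2]\mid[m-1]$ giving $[2] \mid 1$. Granting that, $[2][m]=0$ and $[2]$ a unit immediately yields $[m]=0$.

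Finally, for the last sentence — that $[m] \ne 0$ is possible for even $m$ in the presence of $2$-torsion — I would produce an explicit example rather than argue abstractly: take $m$ even and form the quotient ring $\Z[\d]/I$ where $I$ is the ideal generated by $[2m]$ and $[2m-1]+1$, and exhibit a specialization (e.g. over $\mathbb{F}_2$, or over $\Z/4$, choosing $\d$ to be a suitable root of the relevant quantum-number polynomial modulo $4$) in which $[m]$ maps to a nonzero $2$-torsion element. Concretely $m=2$ is the smallest test case: $[4] = [2]^3 - 2[2]$ and $[3] = [2]^2 - 1$, so I want $[4] = 0$, $[3] = -1$, i.e. $[2]^3 = 2[2]$ and $[2]^2 = 0$; over $\Z/4$ with $[2] = 2$ one checks $[2]^2 = 4 = 0$, $[2]^3 = 8 = 0 = 2\cdot 2 = 2[2]$ ✓, and $[3] = 0 - 1 = -1$ ✓, while $[2] = 2 \ne 0$ — giving the desired counterexample. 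I expect the main obstacle to be getting the odd-$m$ argument airtight, specifically justifying the invertibility of $[2]$ (or of $[m-1]$) from the two hypotheses alone without accidentally circular reasoning; once that unit is in hand, everything else is a direct consequence of the recursion and the two displayed identities $[2m-k] = [2m-1][k]$ and $[2m+k] = [2m+1][k]$.
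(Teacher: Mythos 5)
Your treatment of the first two assertions is correct and matches the paper's approach: both $2[m]=0$ and $[2][m]=0$ follow from the identity $[2m-k]=-[k]$, via $k=m$ and $k=m-1$ respectively. Your $\Z/4$ example with $\d\mapsto 2$ for the final assertion is also correct; the paper gives the ring $\Z[\d]/(2\d,\d^2)$, of which your ring is a quotient, and both work for the same reason ($2\d=\d^2=0$ but $\d\ne 0$).

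The odd-$m$ assertion is where your unease is warranted, and your write-up does contain errors there. Two are local: the determinant identity is $[n+1][n-1]=[n]^2-1$ (not $[n]^2+1$), so with $[m+1]=-[m-1]$ one gets $[m-1]^2=1-[m]^2$, not $-[m-1]^2-[m]^2=1$; and for odd $m$ it is $[m-1]$, not $[m]$, that is divisible by $[2]$ --- $[m]$ is a polynomial in $[2]^2$, e.g.\ $[3]=\d^2-1$ is not a multiple of $\d$. But the substantive issue, which you correctly flag, is that the invertibility of $[2]$ (equivalently $[m-1]^2=1$) does not follow from the two hypotheses: the identity one actually derives is $[m-1]^2=1-[m]^2$, which is circular, and the paper's own terse proof (``one can deduce from $[2m-k]=-[k]$ that $[m-1]^2=1$'') glosses over exactly this point. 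Indeed the odd-$m$ conclusion appears to fail in general: in $\Z[\d]/(\d(\d^2-1),\,2(\d^2-1))$ with $m=3$ one checks $[6]=\d(\d^2-1)(\d^2-3)=0$ and $[5]+1=(\d^2-1)(\d^2-2)=\d\cdot\d(\d^2-1)-2(\d^2-1)=0$, yet $[3]=\d^2-1\ne 0$ since $1\notin(\d,2)$ in $\Z[\d]$. So the gap you sense is real, and it appears to lie in the claim itself (or the paper's proof of it), not merely in your write-up.
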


\begin{proof} That $2[m]=0$ and $[2][m]=0$ both follow from $[2m-k] = -[k]$. One can deduce from $[2m-k]=-[k]$ that $[m-1]^2=1$. If $m$ is odd then $[2]$ is invertible, so that
$[m]=0$. The ring $\ZM[\d]/(2\d,\d^2)$ provides an example where $[4]=0$ and $[3]=-1$ but $[2] \ne 0$. \end{proof}

We now see an essential difference between the even and odd cases. Suppose that $[m]=0$ and $[k] \ne 0$ for $k<m$, and that $\Bbbk$ is a domain. When $m$ is even, one can use the above
claim to deduce that $[m-1]=1$. However, when $m$ is odd, both $[m-1]=1$ and $[m-1]=-1$ are possible. In fact, this investigation into the algebraic conditions on $[m-1]$ is also inspired
by roots of unity; now one considers the implications of setting $q$ (rather than $q^2$) to a root of unity. It is easy to observe that $[m-1]=1$ when $q=\z_{2m}$ and $[m-1]=-1$ when
$q=\z_m$. When $m$ is even, $q^2 = \z_m$ already implies $q = \z_{2m}$ (also, $\z_{2m}=-\z_m$ and the two are indistinguishable in characteristic 2), as one might expect from the above
discussion. When $m$ is odd there are two distinct possibilities. Note also that when $m$ is odd, there is a splitting $Q_m(\d^2) = P_m(\d) P_m(-\d)$ for some polynomial $P_m \in \Z[\d]$,
with $q = \z_{2m} \iff P_m([2]_q)=0$. This polynomial $P_m$ determines the value of $[m-1]$.

Now for one final aside. When $m$ is odd and $[m]=0$, all quantum numbers are actually generated by $[2]^2$ and the unit $[m-1]$, since $[2]= [m-1][m-2]$ and $m-2$ is an even polynomial in
$[2]$. When $m$ is even and $[m]=0$, there is no guarantee that $[2]$ is invertible or that the ideal of $[2]^2$ contains $[2]$.

\subsection{{\bf Realizations}}\label{reflrep}

Fix a Coxeter system $(W,S)$. As before, elements of $S$ will be called \emph{indices} or \emph{colors}. The following definitions are taken from joint work with G. Williamson.

\begin{defn} \label{defnsymrealization} Let $\Bbbk$ be a commutative ring. A \emph{symmetric realization} of $(W,S)$ over $\Bbbk$ is a free, finite rank $\Bbbk$-module $\hg$, together with
subsets $\{ \a_s^\vee \; | \; s \in S\} \subset \hg$ and $\{ \a_s \; | \; s \in S\} \subset \hg^* = \Hom_{\Bbbk}(\hg,\Bbbk)$ called \emph{simple co-roots and simple roots}. The
\emph{Cartan matrix} $A=(a_{s,t}) _{s,t \in S}$ of a realization is defined by $a_{s,t}=\langle \alpha_s^\vee, \alpha_t \rangle \in \Bbbk$. This data must satisfy: \begin{enumerate} \item
$a_{s,s} = 2$ for all $s \in S$; \item $a_{s,t} = a_{t,s}$ for all $s,t \in S$; \item for any $s,t \in S$ with $m_{st}<\infty$, if $\Bbbk$ is given a $\Z[\d]$-algebra structure by the map $\d \mapsto -a_{s,t}$ (i.e. where $[2]
= -a_{s,t}$), then $[m_{st}]=0$; \item the assignment $s(v) \define v - \langle v, \alpha_s\rangle \alpha_s^{\vee}$ for all $v \in \hg$ yields a representation of $W$. \end{enumerate} We
will often refer to $\hg$ as a realization, however the choice of $\{ \alpha_s^\vee \}$ and $\{\alpha_s \}$ is always implicit. \end{defn}

\begin{example} Let $\Bbbk = \RM$ and let $\hg$ be spanned by $\{\a_s^\vee\}$ for $s \in S$. Let $\a_s$ be defined by the formula $a_{s,s}=2$ and $a_{s,t} = - 2 \cos(\frac{\pi}{m_{st}})$
for $s \ne t$, with the convention that $a_{s,t} = -2$ when $m_{st}=\infty$. This is the \emph{reflection representation} of $(W,S)$, as defined in Humphreys \cite{Humphreys}. The reader
uninterested in general realizations is welcome to use this realization by default. \end{example}

There is a contragredient action of $W$ on $\hg^*$. It is given on the span of the simple roots by the formula \[s(\a_t) = \a_t - a_{s,t} \a_s.\]

Note that the Cartan matrix need not determine the realization, since we do not assume that $\{ \a_s^\vee \}$ spans $\hg$. We also have not assumed that the simple roots or simple co-roots
are linearly independent. There are many degenerate possibilities which this definition permits. In characteristic 2 one might have that $\a_s=0$, or that $s$ acts trivially on $\hg^*$.
When $\a_s$ and $\a_t$ are collinear, one could have $s=t$ when acting on $\hg^*$ (one would also require $a_{s,t}= \pm 2$). We will soon make assumptions which eliminate some of these
degenerate possibilities. Given a realization over $\Bbbk$ and a homomorphism $\Bbbk \to \Bbbk'$ we obtain a realization over $\Bbbk'$ by base change; this can easily change the kernel of
the $W$-action on $\hg^*$.

Whenever a pair $s, t \in S$ is understood, we give $\Bbbk$ the structure of a $\Z[\d]$-algebra with $-[2] = a_{s,t}$.

Let us discuss the relationship between conditions (3) and (4).

\begin{claim} \label{repofW} Suppose that $\a_s$ and $\a_t$ are not collinear. The action of $s$ and $t$ preserves the space spanned by $\a_s$ and $\a_t$ in $\hg^*$. For any $m \ge 2$, the
action of $(st)^m$ on this span is trivial if and only if $[2m]=0$ and $[2m-1]=-1$. \end{claim}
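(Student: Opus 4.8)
The plan is to work entirely within the two-dimensional span $U = \Bbbk \a_s \oplus \Bbbk \a_t \subset \hg^*$ (this is genuinely two-dimensional since $\a_s, \a_t$ are assumed non-collinear), and to write the matrix of $st$ acting on $U$ with respect to the basis $\{\a_s, \a_t\}$. Using the contragredient formulas $s(\a_t) = \a_t - a_{s,t}\a_s = \a_t + [2]\a_s$ and $s(\a_s) = -\a_s$, and the analogous formulas for $t$, I would compute that $st$ acts by the $2\times 2$ matrix
\[
M = \begin{pmatrix} [2]^2 - 1 & [2] \\ -[2] & -1 \end{pmatrix}
\]
(or its transpose, depending on conventions), whose trace is $[2]^2 - 2 = [3] - [1] = [2]^2-2$ and whose determinant is $1$. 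The key observation is the Cayley--Hamilton-type recursion: since $\det M = 1$ and $\mathrm{tr}\, M = [2]^2 - 2$, the powers $M^k$ satisfy a Chebyshev-like recursion, and one can prove by induction that $M^k$ has trace $[2k]^2/[2]^2 \cdot(\text{something})$ — more cleanly, that $M^k - [2]^{-?}(\ldots)$... Let me instead phrase the induction directly in terms of quantum numbers: I claim $M^k = \begin{pmatrix} [2k-1]\cdot\text{?} \end{pmatrix}$, i.e. the entries of $M^k$ are expressible via quantum numbers $[2k+1], [2k-1], [2k]$. This is where Claim \ref{qnumrecursion} (the recursion $[n][2] = [n+1]+[n-1]$) does the work: it is exactly the recursion governing powers of a matrix with determinant $1$ and trace $[3]-[1]$.

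Concretely, first I would establish by induction on $k\ge 1$ that
\[
M^k = \begin{pmatrix} [2k+1] - [2k-1]\,c & [2k]\,? \\ ? & ? \end{pmatrix}
\]
— rather than guess the exact form, the efficient route is: let $M$ have characteristic polynomial $\lambda^2 - ([2]^2-2)\lambda + 1$, so $M^{k+1} = ([2]^2-2)M^k - M^{k-1}$. Define scalars $p_k$ by $p_1 = 1$, $p_0 = 0$, $p_{k+1} = ([2]^2-2)p_k - p_{k-1}$; then $M^k = p_k M - p_{k-1}I$ for all $k\ge 1$. A direct check using $[2]^2 - 2 = ([2])^2 - [1]^2$... actually the clean identity is $[2k]/[2] $ satisfies precisely this recursion in the variable $[2]^2$: one verifies $[2(k+1)]/[2] = ([2]^2-2)\cdot[2k]/[2] - [2(k-1)]/[2]$ directly from $[n+1]+[n-1]=[2][n]$ applied twice (i.e. $[2k+2] = [2][2k+1]-[2k] = [2]([2][2k]-[2k-1])-[2k] = ([2]^2-1)[2k] - [2][2k-1]$, and then $[2][2k-1] = [2k]+[2k-2]$, giving $[2k+2] = ([2]^2-2)[2k] - [2k-2]$). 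Hence $p_k = [2k]/[2]$ whenever $[2]$ is not a zero-divisor; but to avoid dividing I would instead track $q_k := [2k]$ and $r_k := [2k-1]$ and show $[2]\cdot M^k = [2k]\,M - [2k-2]\,I$ as an identity valid over any $\Z[\delta]$-algebra. Then $[2]\cdot(M^m - I) = [2m]M - ([2m-2] + [2])I = [2m]M - ([2m-2]+[2])I$, and using $[2m]=0 \Rightarrow [2m-2] = -[2m]+[2]\cdot(\ldots)$... more simply $[2m-2] = [2m] - [2][2m-1]\cdot(-1)$ — wait, from $[2][2m-1] = [2m]+[2m-2]$ we get $[2m-2] = [2][2m-1] - [2m]$, so $[2m-2]+[2] = [2]([2m-1]+1) - [2m]$. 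Thus $[2]\cdot(M^m-I) = [2m]\,M - ([2]([2m-1]+1)-[2m])I = [2m](M+I) - [2]([2m-1]+1)I$.

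From this master identity the proposition follows. If $[2m]=0$ and $[2m-1]=-1$, the right side becomes $0$, so $[2]\cdot(M^m - I) = 0$; but I need $M^m = I$ outright, not just up to $[2]$-torsion. To get the sharp statement I would not multiply through by $[2]$ but instead argue directly: when $[2m]=0$ and $[2m-1]=-1$, I want $p_m = 0$ and $p_{m-1} = -1$ in the recursion $M^k = p_k M - p_{k-1}I$. The cleaner approach: use instead the standard fact that the Chebyshev polynomials $U_k$ (of the second kind) in the variable $[2]^2/... $ satisfy $U_k([2]^2-2) = [2k+2]/[2]$-type formulas, but since division is an issue I would set up the induction to prove directly: $M^k = [2]^{-2}(\ldots)$ is wrong; rather prove $M^k + M^{-k} = ([2k+1]-[2k-1])\cdot(\text{scalar})\cdot I + \ldots$. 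Honestly, the genuinely clean formulation avoiding all division is: I claim $M^k = \mathrm{diag-free}$ matrix whose $(1,1)$ entry is $[2k+1]-[2k-1]$... Let me just commit: \textbf{The hard part} is getting the \emph{sharp} (no $[2]$-torsion) characterization, i.e. promoting "$[2](M^m-I)=0$" to "$M^m = I$". I expect to handle this by observing that the scalars appearing in $M^k = p_k M - p_{k-1}I$ can be pinned down without inverting $[2]$: one shows $p_k$ is the Dickson/Chebyshev polynomial $D_k$ with $D_k([2]^2-2) = $ the unique polynomial with $p_k - p_{k-1}\cdot([2]^2-2) + p_{k-2}\cdot 1$ recursion and $[2]p_k = [2k]$; since $\Z[\delta]$ is a domain this forces $p_k$ to be the honest polynomial $[2k]/[2] \in \Z[\delta]$ (division is exact in $\Z[\delta]$ because $[2k]$ is an \emph{odd} polynomial in $[2]$, as noted in \S\ref{qnum}!). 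That last remark — $[2]$ divides $[2k]$ in $\Z[\delta]$ — is exactly what rescues the argument: $p_k := [2k]/[2]$ is a well-defined element of $\Z[\delta]$, hence of $\Bbbk$, and then $[2m]=0$ gives $[2]p_m = 0$ but since $p_m = [2k]/[2]$ is defined integrally and $[2m-1]=-1$ forces (via $[2]p_{m-1} = [2m-2]$ and $[2m-2] = [2][2m-1]-[2m] = -[2]$) that $p_{m-1} = -1$, whence $M^m - I = p_m M - (p_{m-1}+1)I$; and $p_m = 0$ follows because... $[2]p_m = [2m] = 0$ and $p_m \in \Z[\delta]$ is literally the polynomial $[2m]/[2]$ which is $0$ in $\Z[\delta]/([2m])$ only if $[2]$ is not... hmm, this needs $[2m]/[2] = 0$ as polynomials mod the ideal $([2m])$, which holds precisely when $Q_m$-type factors vanish. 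For the converse direction (action trivial $\Rightarrow$ $[2m]=0, [2m-1]=-1$) I would read off $p_m = 0$, $p_{m-1}=-1$ from $M^m = I$ by comparing the off-diagonal entries (which are $p_m\cdot[2]$ and $-p_m\cdot[2]$, forcing $[2m]=[2]p_m=0$) and a diagonal entry (forcing $[2m-1]=-1$), using non-collinearity to know $M\ne$ scalar so these comparisons are non-vacuous. I would present the forward direction in full and leave the (symmetric) reverse reading-off as a short remark.
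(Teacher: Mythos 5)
Your instinct to work in the span of $\a_s,\a_t$ and compute $(st)^k$ by a quantum-number recursion is exactly right, but the Cayley--Hamilton detour through the scalars $p_k = [2k]/[2]$ introduces a genuine gap that you correctly sense but never close. The problem is that the hypotheses $[2m]=0$ and $[2m-1]=-1$ only give you $[2]p_m = 0$ and $[2](p_{m-1}+1)=0$, and over a general $\Z[\d]$-algebra $\Bbbk$ (which is what you're working over: the image of $\Z[\d]$ under $\d\mapsto -a_{st}$) $[2]$ may well be a zero-divisor, so you cannot cancel it. Your attempted rescue via integrality of $p_k$ in $\Z[\d]$ doesn't help: knowing $p_m$ is an honest polynomial in $\Z[\d]$ says nothing about whether its image in $\Bbbk$ vanishes when the image of $[2]p_m=[2m]$ does. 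A concrete illustration: over $\Bbbk=\Z$ with $\d=0$ and $m$ even, one has $[2m]=0$ and $[2m-1]=-1$, yet $p_m = \pm m \ne 0$; and $M=-I$ is a \emph{scalar} matrix here, which also falsifies your parenthetical claim that non-collinearity of $\a_s,\a_t$ forces $M$ to be non-scalar (it only guarantees the basis exists).

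The fix --- and what the paper actually does --- is to skip the $M^k = p_kM - p_{k-1}I$ reduction entirely and prove by induction the identity
\[
(st)^k = \begin{pmatrix} [2k+1] & -[2k] \\ [2k] & -[2k-1] \end{pmatrix}
\]
with genuine quantum-number entries (no division by $[2]$ anywhere). The inductive step is a direct entrywise computation using $[2][n]=[n+1]+[n-1]$ (and its consequence $[3][n]=[n+2]+[n]+[n-2]$); for instance the $(2,1)$ entry of $(st)^{k+1}$ is $[2][2k+1]-[2k]=[2k+2]$. With this formula both directions of the claim are immediate: $(st)^m=I$ forces $[2m]=0$ and $[2m-1]=-1$ by reading off the $(2,1)$ and $(2,2)$ entries, and conversely those two conditions together with $[2m+1]=[2][2m]-[2m-1]=1$ give the identity matrix --- valid over any $\Z[\d]$-algebra with no non-zero-divisor hypothesis, and regardless of whether $M$ is scalar. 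Everything you wanted from the Chebyshev recursion is still there; you just have to carry it out at the level of the matrix entries rather than after factoring out a $[2]$.
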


\begin{proof} It is not difficult to show inductively that (in the basis $\{\a_s,\a_t\}$) we have \begin{equation} \label{stformula} (st)^k = \left( \begin{array}{rr} {[}2k+1] & -[2k] \\
{[}2k] & -[2k-1] \end{array} \right). \end{equation} In order for this matrix to be the identity for $k=m$, one requires $[2m]=0$ and $[2m-1]=-1$. \end{proof}

As discussed in Claim \ref{claim-qnumber-bullshit}, the fact that $[2m]=0$ and $[2m-1]=-1$ follows from and is usually equivalent to $[m]=0$, but fails to imply $[m]=0$ in certain cases.
If $\Bbbk$ is a domain or $m$ is odd then $[m]=0$.

Moreover, consider the action of $(st)$ on the span of $\{\a_s,\a_t,\a_u\}$. A similar computation shows that \begin{equation} \label{stonuformula} (st)^k(\a_u) = \a_u + ([k]^2 a_{s,u} +
[k][k+1] a_{t,u}) \a_s + ([k][k-1] a_{s,u} + [k]^2 a_{t,u}) \a_t. \end{equation} For $(st)^m$ to be trivial $[m]=0$ clearly suffices. When $[m] \ne 0$, the condition that $(st)^m$ is
trivial is rather restrictive, implying that $[m] a_{t,u}= [m] a_{s,u}= 2[m] = [2][m]=0$. However, extremely degenerate possibilities do exist.

Thus, condition (3) implies that $st$ has order dividing $m_{st}$ when acting on the span of the roots. However, this does not imply condition (4), because $\hg$ may be larger than this span. In the other direction, if $\Bbbk$ is a domain, or $m_{st}$ is odd for each pair $s,t$, then (4) implies (3). However, (3) and (4) are logically independent in general.

The definition of a non-symmetric realization will be postponed to the Appendix, though we will still discuss them briefly in the main text. One allows for the possibility $a_{s,t} \ne
a_{t,s}$. The only subtlety will be altering the condition that $[m_{st}]=0$.

\begin{defn} We call a realization \emph{faithful in rank 2} if, in the action on $\hg$ or $\hg^*$, the order of each $s \in S$ is 2, and for each $s,t \in S$ the order of $st$ is
$m_{st}$. This is the only property of realizations we will consider in this paper which is not preserved under base change.

Remember that when $[m]=0$ one has $[m-1]^2 = 1$. We call a symmetric realization \emph{balanced symmetric} if for each $s,t \in S$ with $m_{st}<\infty$ one has $[m_{st}-1]=1$. Refining
this notion, we call a symmetric realization \emph{even-unbalanced} (resp. \emph{odd-unbalanced}) if there is some $s,t \in S$ with $m_{st}$ even (resp. odd) and $[m_{st}-1] \ne 1$. We
call it \emph{even-balanced} (resp. \emph{odd-balanced}) otherwise. \end{defn}

The definition of a balanced non-symmetric realization will be postponed to the Appendix. Note that being symmetric, being balanced, and being faithful in rank 2 are all properties
determined by the action of dihedral subgroups. Being symmetric (resp. balanced) is preserved under base change of the ring $\Bbbk$, though being faithful in rank 2 is not. For the rest of
this paper, we abusively write \emph{faithful} to indicate faithful in rank 2.

\begin{remark} \label{rmk-balanced} We warn the reader now that the behavior of even-balanced and odd-balanced realizations are drastically different! This theme will recur in many of the
computations below. As an overarching principle, realizations that are odd-unbalanced are acceptable; one can work with them (as we do in the appendix), only they require more bookkeeping
than completely balanced realizations. However, realizations that are even-unbalanced are a nightmare, and we typically rule them out. The most important difference will be addressed in
section \ref{rouandrotation}. Due to the calculations of the previous section, the assumption that a realization is faithful and that it is even-balanced are very similar, though logically independent. \end{remark}

Let us give some examples where $\hg$ is spanned by $\a_s^{\vee}$ and $\a_t^{\vee}$, so that the realization is determined by a $2 \times 2$ Cartan matrix.

\begin{example} Suppose that $\Bbbk = \ZM$, with $a_{s,t}=-3$ and $a_{t,s}=-1$. This is a faithful non-symmetric realization of the dihedral group $W_6$. It is balanced, because
$[5]=1$ when $[2]^2 = a_{s,t} a_{t,s} = 3$. It can also be viewed as a non-faithful realization of $W_{6k}$ for any $k$, or of $W_\infty$. It is balanced for $W_{6k}$ precisely when
$k$ is odd, because $[6k-1] = (-1)^{k+1}$. \end{example}

\begin{example} Take the same Cartan matrix as the previous example, but base change to $\Bbbk = \FM_2$. Now one has a symmetric balanced realization of $W_6$ which is not faithful,
factoring instead through $W_3$. In characteristic 2, any symmetric realization is balanced, because $[m_{st}-1] = \pm 1$. \end{example}

\begin{example} Now suppose that $\Bbbk = \ZM$, with $a_{s,t} = a_{t,s} = -1$. This is a faithful symmetric balanced realization of $W_3$. It can also be viewed as a symmetric realization
of $W_6$ which, unlike the previous example, is not balanced since $[5]=-1$. \end{example}

\begin{example} Suppose that $a_{s,t}=a_{t,s} =-(q+q^{-1})$ in $\CM$, for $q$ is a primitive $m$-th root of unity with $m$ odd. Then $[km]=0$ and $[km-1]=-1$ for all $k \ge 0$, and $\hg$
will not be a balanced realization of $W_{mk}$ for any $k$. \end{example}

Given any realization and a choice of invertible scalars $\l_s \in \Bbbk$ for each $s \in S$, one obtains a new realization by \emph{root-rescaling}: rescaling $\a_s \mapsto \l_s \a_s$ and
$\a_s^\vee \mapsto \l_s^{-1} \a_s^\vee$. This amounts to conjugating $A$ by a diagonal matrix. This procedure will not preserve symmetric realizations unless $\l_s = \pm 1$.

In the main body of this paper, we will only consider balanced symmetric realizations. Root-rescaling will rarely preserve balanced symmetric realizations, and not all realizations are
balanceable or symmetrizable by root-rescaling. For a treatment of non-symmetric or non-balanced realizations, see the Appendix. When discussing Soergel bimodules, one will have to make
the additional assumption that the realization is faithful.

The \emph{universal} balanced symmetric realization of the infinite dihedral group is defined with $\Bbbk=\Z[\d]$, letting $\hg$ be the span of $\a^\vee_s$ and $\a^\vee_t$, and setting $a_{s,t}=-\d$. The \emph{universal} balanced symmetric realization of the finite dihedral group is defined analogously with $\Bbbk = \Z[\d] / ([m]=0, [m-1]=1)$.

\subsection{{\bf Assumptions on the realization}}\label{assumptions}

There is an unwritten assumption on the base ring $\Bbbk$, arising from the fact that there exists a (symmetric) realization defined over $\Bbbk$. Namely, it must contain an algebraic
integer $[2]$ for which $[m]=0$, for any $m = m_{st} < \infty$. For instance, if $m_{st}=5$ then $\Bbbk$ must contain the golden ratio.

We will let $\pa_s$ denote the map $\hg^* \to \Bbbk$ given by evaluation at $\a_s^\vee$. We have $\pa_s(\a_t)=a_{s,t}$. One can see that $f-s(f) = \pa_s(f) \a_s$ is collinear with $\a_s$.
Clearly $\pa_s(f)=0$ implies that $f$ is $s$-invariant. So long as $\a_s \ne 0$, $f$ is $s$-invariant if and only if $\pa_s(f)=0$. However, it is possible in characteristic 2 that
$\a_s=0$, in which case everything is $s$-invariant.

The most important assumption one can make about a realization is the following.

\begin{assumption} (Demazure surjectivity) \label{ass:Demazure Surjectivity} The map $\a_s \co \hg \to \Bbbk$ is surjective, for all $s \in S$, and the map $\pa_s \co \hg^* \to \Bbbk$ is
surjective, for all $s \in S$. \end{assumption}

This forbids the possibility that $\a_s = 0$. It guarantees the existence of some $\a \in \hg^*$ with $\pa_s(\a)=1$. Then for any $f \in \hg^*$ we see that $f - \pa_s(f) \a$ is
$s$-invariant. Moreover, $\a$ is not $s$-invariant.

Demazure surjectivity will be essential to most of the arguments in this paper. It does not hold for the universal realization of the infinite dihedral group, because $2$ and $[2]$ do not
generate the unit ideal in $\Z[\d]$. It does hold for $s$ and $t$ whenever $m_{st}$ is odd, because $[2]$ is invertible. The easiest way to ensure that Demazure surjectivity holds is to
invert $2$. However, Assumption \ref{ass:Demazure Surjectivity} can hold even when the Cartan matrix has zero columns (such as when $m_{st}=2,4$ in characteristic two), because $\hg$ need
not be spanned by the simple roots. In fact, by enlarging $\hg$ and $\hg^*$ and adjusting $\a_s$ and $\a_s^\vee$ accordingly, one can always create a realization with the same Cartan
matrix for which Demazure surjectivity does hold.

\begin{assumption} (Local non-degeneracy) \label{itsinvertible} Whenever $m_{st} < \infty$, $4-a_{s,t} a_{t,s}$ is invertible in $\Bbbk$. \end{assumption}

Note that local non-degeneracy implies Demazure surjectivity for both $s$ and $t$, when $m_{st}$ is finite.

\begin{assumption} (Lesser invertibility) \label{allinvertible} Choose any dihedral parabolic subset $\{s,t\}$. For all $k < m_{st}$, $[k]$ is invertible in $\Bbbk$. Moreover, the
realization is faithful. \end{assumption}

The fact that $[k]$ is invertible for $k<m_{st}$ implies faithfulness except in degenerate situations (e.g. $\a_s$ and $\a_t$ are collinear).

All of these assumptions are preserved under base change.

\subsection{{\bf Positive Roots for dihedral groups}}\label{roots}

For the reflection representation there is a notion of positive roots (for any Coxeter group). When dealing with a more general realization, one can still define an analogous multi-set of
\emph{positive roots}, using the same formulae. Because the realization need not be faithful, these roots may overlap.

Let $f_{s,k} = \hat{k}_t(\a_s) \in \hg^*$, for $k \ge 0$. A formula for $f_{s,k}$ can be deduced from the proof of Claim \ref{repofW}. Let $\LC_s$ be the set consisting of $f_{s,k}$ for $k
\ge 0$.

When $m = \infty$ and the realization is faithful, every $f_{s,l}$ is distinct from every other, and from every $f_{t,k}$. Regardless, we let $\LC$ be the multi-set union of $\LC_s$ and
$\LC_t$ when $m = \infty$.

When $m = 2k$ is even, we have that $f_{s,k-1} = f_{s,k}$ and $f_{t,k-1} = f_{t,k}$. Let $\LC = \{ f_{s,l},f_{t,l} \}_{0 \le l \le k-1}$.

When $m$ is odd, we have $f_{s,m-1} = [m-1] \a_t$. For a balanced realization therefore we have $f_{s,m-1-l} = f_{t,l}$ for all $0 \le l \le m-1$. Let $\LC = \{ f_{s,l} \}_{0 \le l \le
m-1}$.

\begin{remark} For an odd-unbalanced realization there are ambiguities of scalar when defining the positive roots, coming from the invertible factor $[m-1]$. The choice of positive roots
cannot be made canonically from the simple roots. See the Appendix for more details. \end{remark}

\subsection{{\bf Frobenius extensions}}\label{frobenius}

For more background on Frobenius extensions, see \cite{EWFrob}.

\begin{defn} A \emph{(commutative) Frobenius extension} is an inclusion $A \subset B$ of commutative rings where $B$ is a free finite-rank $A$-module, equipped with an $A$-linear
\emph{trace} map $\pa^B_A \co B \to A$ such that the pairing $(f,g) \mapsto \pa^B_A(fg)$ is perfect. In other words, there exists a basis $\{f_i\}$ and a dual basis $\{f_i^*\}$ of $B$
over $A$ such that $\pa^B_A(f_i f_j^*) = \delta_{i,j} 1_A$. \end{defn}

Whenever one has a Frobenius extension, one has four canonical bimodule maps: the inclusion $\iota^B_A \co A \to B$ and the trace $\pa^B_A \co B \to A$ of $A$-bimodules; and the multiplication $\mu^B_A \co B \ot_A B \to B$ and comultiplication $\Delta^B_A \co B \to B \ot_A B$ of $B$-bimodules. The comultiplication satisfies $\Delta^B_A(1) = \sum_i f_i \ot
f_i^*$, and this sum is independent of the choice of dual bases. These four maps are the units and counits for the biadjunction of $\Ind^B_A$ with $\Res^B_A$.

Let $\LM^B_A = \mu^B_A \Delta^B_A(1) = \sum_i f_i f_i^* \in B$. We may also use Sweedler notation, so that $\LM^B_A = \Delta^B_{A\ (1)} \Delta^B_{A\ (2)}$. It is clear that
$\pa^B_A(\LM^B_A) = n 1_A$, where $n$ is the rank of $B$ as an $A$-module.

A Frobenius extension can be \emph{graded}, in the sense that $A,B$ are graded rings, $\pa^B_A$ has degree $-2\ell$, and there exist homogeneous dual bases. We call $\ell$ the
\emph{degree} of the extension. In this case, $\Ind$ is shifted-biadjoint to $\Res(\ell)$, in that the right and left adjoints of $\Ind$ are isomorphic to $\Res(\ell)$ after
shifting by $\pm \ell$. After the appropriate grading shifts, $\pa$ and $\iota$ are maps of degree $-\ell$, and $\mu$ and $\Delta$ are degree $+\ell$.

\begin{defn} If $(A \subset B,\pa^B_A)$ and $(B \subset C,\pa^C_B)$ are Frobenius extensions, then $(A \subset C,\pa^B_A \circ \pa^C_B)$ is a Frobenius extension. We say that this chain of
Frobenius extensions is \emph{compatible}, because $\pa^C_A = \pa^B_A \pa^C_B$ and $\iota^C_A = \iota^C_B \iota^B_A$. A more complicated system of Frobenius extensions is called
\emph{compatible} if every subchain is compatible. \end{defn}

Let $R = \textrm{Sym}(\hg^*)$ denote the polynomial ring of the realization. It is a graded ring, with $\deg \a_s = \deg \hg^* = 2$, and it admits a
homogeneous action of $W$. For a parabolic subset $J$ let $R^J$ denote the ring of invariants under $W_J$.

Our goal for the remainder of this chapter is to give the collection of rings $R^J$ for finitary $J$ the structure of a (compatible) \emph{Frobenius hypercube}. The subsets of $S$ form a
hypercube and a poset, though we only consider the subposet of finitary subsets. Recall that $\ell(I)$ denotes the length of the longest element of $W_I$. For each edge $J \subset I = J
\coprod \{j\}$ there is a Frobenius extension $R^I \subset R^J$ of degree $\ell(I)-\ell(J)$. We denote the corresponding bimodule maps by $\Delta^J_I$ instead of $\Delta^{R^J}_{R^I}$ (and
similarly for $\mu$, $\iota$, $\pa$, and $\LM$). We omit $I = \emptyset$ from the notation, so that $\LM_I$ refers to the product-coproduct for the extension $R^I \subset R$. We omit set
notation when it is obvious: $R^s$ instead of $R^{\{s\}}$, $R^{s,t}$ instead of $R^{\{s,t\}}$.

The overall compatibility of this system depends on the compatibility over each face of the hypercube. Compatible hypercubes of Frobenius extensions were studied in \cite{EWFrob}, where
various general facts were proven. For instance, $\LM^C_A = \LM^C_B \LM^B_A$ for any subchain. In our case, the polynomial $\LM_I$ will be the product of the positive roots for $W_I$, and
therefore $\LM^J_I$ will be the product of the roots for $W_I$ which are not roots for $W_J$.

However, this Frobenius hypercube structure only exists under certain assumptions on the realization. As an example, for any $s \in S$ the map $\pa_s$ will extend to a map $R \to R^s$
giving the trace for the extension $R^s \subset R$. Clearly Demazure surjectivity is required in order for this extension to be Frobenius, because any Frobenius trace is surjective.

\begin{remark} When $I$ is not finitary, the extension $R^I \subset R$ is not Frobenius, or even finite. The two rings have difference transcendence degrees. \end{remark}

\begin{remark} The action of $W$ on $\hg$ or $\hg^*$ yields a collection of rings $R^J$. A choice of simple roots and co-roots encodes the Frobenius structures for the ring extensions $R^s
\subset R$. A choice of all positive roots essentially encodes the Frobenius structure for the entire hypercube. As mentioned previously, for balanced realizations a choice of all
positive roots can be made canonically from a choice of simple roots, but for unbalanced realizations this is not true. Instead of starting with the data of the Cartan matrix, one should
start with the data of the entire Frobenius hypercube. See the Appendix for more details. \end{remark}

The rings $R^J$ are determined only by the representation of $W$ on $\hg$, and not by the additional structure of a choice of simple roots and co-roots. As a result, the properties of
these ring extensions are determined not by the Coxeter group $W$, but by the Coxeter quotient which acts faithfully (i.e. choose the smallest $m_{st}$ for which $[m_{st}]=0$). For the
rest of this chapter minus a few remarks, we assume the realization is faithful. Whether there exists a Frobenius extension only depends on $\hg$, though the actual Frobenius structure
itself is fixed by the additional data of the simple roots.

\subsection{{\bf Reflection invariants}}\label{reflinvt}

We have already in section \ref{assumptions} defined a map $\pa_s \co \hg^* \to \Bbbk$, and we want to extend it to a trace map $\pa_s \co R \to R^s$. One way to do this is with the formula
$\pa_s(f) = \frac{f - s(f)}{\a_s}$, which expresses $\pa_s$ as a \emph{divided difference operator} or \emph{(simple) Demazure operator}. Another way is to use the \emph{twisted Leibniz
rule} $\pa_s(fg) = \pa_s(f)g + s(f) \pa_s(g)$. Both of these methods require something extra to imply that they are well-defined; we discuss the first approach.

Traditionally, when working over a field of characteristic $\ne 2$, one can assert that the $s$-antiinvariants are a free $R^s$-module generated by $\a_s$. Therefore, $f - s(f)$ has $\a_s$
as a factor, and $\pa_s$ is well-defined. Already one can see how this assertion makes no sense in characteristic $2$, when invariants and anti-invariants are identical. The assertion is
also false in a ring where $2$ is not prime. Nonetheless, with mild assumptions it is still true that $\a_s$ divides $f-s(f)$, for which we need a better argument.

Let us assume Demazure surjectivity, so that there is some linear $\a$ with $\pa_s(\a)=1$. For any $t \in \SC$ we know that $a_{s,t} \a - \a_t$ is symmetric, so any element of $R$ can be
expressed as a polynomial in $\a$ whose coefficients are $s$-invariant polynomials. Because $\a^2 = \a (\a + s(\a)) - \a s(\a)$, where $\a + s(\a)$ and $\a s(\a)$ are symmetric, we see
that any $f \in R$ can be written uniquely as $f = g + h \a$ for $g,h \in R^s$. Now clearly $f - s(f) = h\a_s$, and defining $\pa_s(f) = h$ makes perfect sense. Continuing this
calculation, it is not hard to show that $R^s$ is the polynomial ring generated by the linear terms $a_{s,t} \a - \a_t$ (one of these is redundant) and the quadratic term $\a_s^2$.

Clearly $\{1,\a\}$ and $\{-s(\a),1\}$ form a dual basis for $\pa_s$, making $R$ into a Frobenius extension over $R^s$ of degree $1$. Clearly $\LM_s = \a - s(\a) = \a_s$.

There are two natural choices of $\a$, though both require some assumptions. When $2$ is invertible we often use $\a = \frac{\a_s}{2}$, which is notable because $\{1,\frac{\a_s}{2}\}$ is
the only possible self-dual basis. On the other hand, when local non-degeneracy holds and $a_{s,t} = -[2]$ there is another useful choice, which is $\a = \w_s = \frac{2\a_s +[2]
\a_t}{4-[2]^2}$. This term is uniquely defined in the span of $\a_s$ and $\a_t$ by the fact that $\pa_s(\w_s)=1$ and $\pa_t(\w_s)=0$, and such an element exists (for both $s$ and $t$) iff
local non-degeneracy holds. This is obviously not the right definition of a ``fundamental weight" outside of the dihedral case, since $\pa_u(\w_s)$ need not be zero, but it will
suffice for our purposes in this paper.

Now for the first important consequence of this Frobenius extension: a categorification of \eqref{bisq}.

\begin{claim} \label{RistwiceRi} Letting $B_s \define R \ot_{R^s} R (1)$, we have an isomorphism of graded $R$-bimodules \begin{equation} B_s \ot B_s \cong B_s(1) \oplus B_s(-1).
\label{BiBi} \end{equation} \end{claim}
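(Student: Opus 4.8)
The plan is to compute $B_s \otimes_R B_s$ directly using the Frobenius structure of the extension $R^s \subset R$. By definition $B_s \otimes_R B_s = R \otimes_{R^s} R \otimes_{R^s} R(2)$, so the middle tensor factor is a copy of $R$ regarded as an $(R^s, R^s)$-bimodule; the key point is that $R \cong R^s \oplus R^s \alpha$ as an $R^s$-bimodule, where $\alpha$ is any element with $\partial_s(\alpha) = 1$ (such an $\alpha$ exists by Demazure surjectivity, and the decomposition $f = g + h\alpha$ with $g,h \in R^s$ was established in Section \ref{reflinvt} just above). Since $\deg \alpha = 2$, this reads $R \cong R^s(1) \oplus R^s(-1)$ as graded $R^s$-bimodules after restoring shifts appropriately. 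First I would substitute this decomposition of the middle factor into the triple tensor product; tensoring on the left and right by $R$ over $R^s$ is exact and commutes with direct sums, yielding
\[
B_s \otimes_R B_s \;\cong\; \bigl(R \otimes_{R^s} R(1)\bigr) \oplus \bigl(R \otimes_{R^s} R(-1)\bigr)(1) \;=\; B_s(1) \oplus B_s(-1),
\]
after tracking the grading shift $(2)$ coming from the two copies of $(1)$ in the definition of $B_s$, together with the $(\pm 1)$ from $\deg \alpha = 2$.

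To make this an honest isomorphism of $R$-\emph{bimodules} rather than just an abstract identification, I would write down the two maps explicitly in terms of the dual bases. The unit and counit of the adjunction give, in Frobenius language: the multiplication $\mu = \mu^R_{R^s}\co B_s \otimes_R B_s \to B_s(1)$ and, using the comultiplication $\Delta^R_{R^s}(1) = 1 \otimes \alpha - s(\alpha) \otimes 1$ (from the dual bases $\{1,\alpha\}$, $\{-s(\alpha),1\}$ computed above), a "broken" map $B_s \to B_s \otimes_R B_s$. Concretely, the splitting of $B_s \otimes_R B_s$ is realized by the idempotent $e$ which inserts the central element $\alpha$: on $x \otimes y \otimes z$, send it to $x \otimes \alpha y \otimes z - x\otimes 1 \otimes s(\alpha) y z$-type expressions built from $\Delta^R_{R^s}$; the complementary idempotent $1-e$ uses $1 \otimes 1$ in the middle. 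One checks $e$ and $1-e$ are $R$-bimodule endomorphisms, are orthogonal idempotents, and that $e(B_s\otimes_R B_s) \cong B_s(1)$ via $\mu$ while $(1-e)(B_s \otimes_R B_s)\cong B_s(-1)$ via $x\otimes 1 \otimes z \mapsto x\otimes z$ (using that $R$ over $R^s$ decomposes with $1$ spanning the degree-$(-1)$ part after shift).

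The routine-but-slightly-delicate step is the bookkeeping of grading shifts — one must be careful that $B_s = R\otimes_{R^s} R(1)$ already carries a shift, so $B_s \otimes_R B_s$ carries $(2)$, and that $\deg \alpha = 2$ contributes the shifts $(\pm 1)$ in exactly the right way; getting the signs of the shifts to come out as $(1) \oplus (-1)$ rather than, say, $(2)\oplus(0)$ is the only place an error can creep in. The genuine mathematical content — namely that $R$ is free of rank $2$ over $R^s$ with the stated dual bases — has already been done in Section \ref{reflinvt}, so I do not expect any real obstacle here; the main work is simply to present the idempotent decomposition cleanly and verify it is $R$-bilinear. An alternative, even shorter, route would be to invoke the general nonsense: $B_s \otimes_R B_s = \mathrm{Ind}^R_{R^s}\mathrm{Res}^R_{R^s}\mathrm{Ind}^R_{R^s}\mathrm{Res}^R_{R^s} R$, and use that for a degree-$1$ Frobenius extension $\mathrm{Res}\,\mathrm{Ind} \cong \mathrm{id}(1)\oplus\mathrm{id}(-1)$ as endofunctors of $R^s$-bimodules (this is exactly the rank-$2$ freeness packaged functorially, as in \cite{EWFrob}), then apply $\mathrm{Ind}^R_{R^s}(-)\,\mathrm{Res}$... — but the explicit idempotent version is more useful downstream, so I would present that.
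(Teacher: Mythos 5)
Your proposal is correct and takes essentially the same route as the paper: both proofs reduce to the $R^s$-bimodule decomposition $R \cong R^s \oplus R^s(-2)$ coming from the dual bases $\{1,\alpha\}$, $\{-s(\alpha),1\}$, substitute this into the middle tensor factor of $R \otimes_{R^s} R \otimes_{R^s} R(2)$, and then record the explicit bimodule maps (the paper gives $1 \otimes g \otimes 1 \mapsto (\partial_s(\alpha g)\otimes 1, \partial_s(g)\otimes 1)$ with inverse sending $(1\otimes 1, 0)\mapsto 1\otimes 1\otimes 1$ and $(0,1\otimes 1)\mapsto 1\otimes -s(\alpha)\otimes 1$, which is exactly your idempotent decomposition written out). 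The one small slip is the intermediate assertion ``$R \cong R^s(1)\oplus R^s(-1)$'' — you mean $R \cong R^s \oplus R^s(-2)$, equivalently $R(1) \cong R^s(1)\oplus R^s(-1)$; the final grading bookkeeping is nonetheless correct.
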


\begin{proof} This is clear from the $R^s$-bimodule isomorphism $R \cong R^s \oplus R^s (-2)$. After all, one has \[B_s \ot B_s \cong R \ot_{R^s} R \ot_{R^s} R(2) \cong R \ot_{R^s} R^s
\ot_{R^s} R(2) \oplus R \ot_{R^s} R^{s} \ot_{R^s} R(0) \cong B_s(1) \oplus B_s(-1).\] Explicitly, the isomorphism from left to right sends $1 \ot g \ot 1 \mapsto (\partial_s(\a g) \ot
1,\partial_s(g) \ot 1)$, and the isomorphism from right to left sends $(1 \ot 1,0) \mapsto 1 \ot 1 \ot 1$ and $(0,1 \ot 1) \mapsto 1 \ot -s(\a) \ot 1$. \end{proof}

\begin{remark} \label{notsurjective} We have seen that $\pa_s \co R \to R^s$ is a Frobenius trace if and only if $\pa_s$ is surjective. When $\pa_s$ is not surjective we can still ask
whether $R^s \subset R$ is Frobenius with some other trace map $\pa$. If the image of $\pa_s$ forms a nonzero principal ideal generated by $c \in \Bbbk \subset R^s$, then $\pa =
\frac{\pa_s}{c}$ makes sense even when $c$ is not invertible (if $\Bbbk$ is a domain), and this will be a Frobenius trace. The statements in this paper can be modified to deal
with this situation accordingly. However, when the image of $\pa_s$ is not a principal ideal (as in the universal case for the infinite dihedral group), there is little one can do.
\end{remark}

\begin{remark} \label{oneparameterfamily} There is at least a one-parameter family of Frobenius structures for $R^s \subset R$, given by root-rescaling, sending $\LM_s \mapsto \l_s \LM_s$
and $\pa_s \mapsto \l_s^{-1} \pa_s$. This family of Frobenius structures is the only one worth considering because of its other desirable properties: $\pa$ kills $R^s$, $\LM_s$ is
anti-invariant, etcetera. \end{remark}

\subsection{{\bf Dihedral invariants}}\label{dihinvt}

In this section we will be investigating invariant subrings under dihedral parabolic subgroups. Fix a pair $s,t \in S$ and let $a_{s,t}=-[2]$. Continue to assume the realization is
faithful.

We would like to investigate under which conditions $R^{s,t} \subset R$ is a Frobenius extension. As an illustrative example to the reader, we first give a description of the
$W_{s,t}$-invariants and the $W_{s,t}$-antiinvariants that live within the polynomial ring $\Bbbk[\a_s,\a_t]$. Let $\LM$ denote the product of the positive roots. We will soon show that, when $R^{s,t} \subset R$ is a Frobenius extension, $\LM$ is its product-coproduct.

\begin{claim} \label{clm:dihinvt} Suppose that $R = \Bbbk[\a_s,\a_t]$. If $m = \infty$ and $[2] = \mp 2$, then $R^{s,t} = \Bbbk[\a_s \pm \a_t]$. If $m = \infty$ and $A$ is non-degenerate
(i.e. $4-[2]^2$ is invertible) then $R^{s,t} = \Bbbk[z]$ for $z = \a_s^2 -[2] \a_s \a_t + \a_t^2$. If $A$ is non-degenerate and $m<\infty$ then $R^{s,t} = \Bbbk[z,Z]$ where $Z$ is a product
of $m$ linear factors as described below. When $m$ is infinite, there are no $W_{s,t}$-antiinvariants. When $m$ is finite the $W_{s,t}$-antiinvariants are freely generated over $R^{s,t}$ by
$\LM$. \end{claim}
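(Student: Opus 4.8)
The plan is to analyze the action of the dihedral group $W_{s,t}$ on the polynomial ring $R = \Bbbk[\a_s,\a_t]$ degree by degree, using the explicit matrix for $st$ recorded in \eqref{stformula}. The key observation is that, since $s$ and $t$ act linearly, the decomposition of $R$ into $W_{s,t}$-isotypic pieces respects the grading, so it suffices to understand each graded piece $R_{2d} = \Bbbk[\a_s,\a_t]_d$ (the homogeneous polynomials of degree $d$ in $\a_s,\a_t$) as a $\Bbbk[W_{s,t}]$-module. First I would dispose of the degenerate cases $[2] = \pm 2$ when $m = \infty$: here $st$ acts unipotently (by \eqref{stformula} it is a single Jordan block), one checks directly that $\a_s \mp \a_t$ is the unique (up to scalar) $s$-invariant linear form, and that it is also $t$-invariant, so $R^{s,t} \supseteq \Bbbk[\a_s\mp\a_t]$; the reverse inclusion follows because $R$ has Krull dimension $2$, the invariant subring has dimension $1$ in this case (as $st$ has infinite order), and $\Bbbk[\a_s\mp\a_t]$ is already a polynomial ring in one variable over which $R$ is not finite.

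Next, assume $A$ is non-degenerate, i.e. $4 - [2]^2$ is invertible. I would verify by a direct computation that $z = \a_s^2 - [2]\a_s\a_t + \a_t^2$ is $W_{s,t}$-invariant: it is manifestly symmetric under swapping $\a_s \leftrightarrow \a_t$ (which, together with $s$, generates $W_{s,t}$ when $[2]$ is symmetric — one must be slightly careful, but $z$ is the quadratic form whose Gram matrix is the Cartan matrix, hence invariant under the orthogonal reflection action), and one checks $s(z) = z$ using $s(\a_t) = \a_t - a_{s,t}\a_s = \a_t + [2]\a_s$ and $s(\a_s) = -\a_s$. For $m = \infty$: since $st$ has infinite order and acts semisimply with eigenvalues $q^{\pm 2}$ (the roots of $x^2 - [2]^2 x + 1$... more precisely of the characteristic polynomial of \eqref{stformula}), no eigenvector of $st$ on $R_2$ other than $z$ itself is fixed, and a dimension/transcendence-degree argument as above shows $R^{s,t} = \Bbbk[z]$, a polynomial ring in one variable. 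For $m < \infty$ one produces the second invariant $Z$ as the product of the $m$ positive roots $f_{s,0}, f_{s,1}, \ldots$ (equivalently the linear forms defining the reflection hyperplanes, listed with the multiplicities dictated by $\LC$ in section \ref{roots}); this product is $W_{s,t}$-invariant because $W_{s,t}$ permutes the reflection hyperplanes, hence permutes these linear forms up to scalars, and the scalars work out to $+1$ for a balanced realization. That $R^{s,t} = \Bbbk[z,Z]$ then follows by comparing Poincaré series: $R^{s,t}$ as a free module over $\Bbbk[z,Z]$ must have rank $|W_{s,t}|/|W_{s,t}| = 1$ after checking degrees $2$ and $m$ match the known degrees of invariants of the dihedral reflection group (this is the Shephard–Todd–Chevalley count, valid here because non-degeneracy makes the action a genuine reflection representation over $\Bbbk$).

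For the antiinvariants: when $m = \infty$, I would argue that an antiinvariant $f$ satisfies $s(f) = -f$, so $\a_s \mid f$ by the argument in section \ref{reflinvt} (using Demazure surjectivity, $f = g + h\a$ forces $g = 0$ when $f$ is antiinvariant, wait — more carefully, antiinvariance under $s$ gives $f = \pa_s(f)\a_s/\ldots$; in any case $\a_s \mid f$ and symmetrically $\a_t \mid f$ using antiinvariance under $t$), hence $\a_s\a_t \mid f$; iterating with the infinitely many distinct positive roots $f_{s,k}$, each of which must divide $f$, forces $f = 0$ since $R$ is a polynomial ring (a nonzero polynomial has only finitely many irreducible factors). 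When $m < \infty$, the antiinvariants form a rank-one free $R^{s,t}$-module: the sign representation of $W_{s,t}$ appears in $R$, its lowest-degree occurrence is the product $\LM$ of the $m$ positive roots (each reflection acts by $-1$ on $\LM$ since it negates exactly one factor and fixes the rest — here the balanced hypothesis is what guarantees "fixes the rest" rather than "scales the rest"), and by the standard structure theory of reflection-group invariants (the module of relative invariants for the sign character is free of rank one over the invariants, generated by the product of the reflecting hyperplanes) we get that $\LM$ generates. The main obstacle I anticipate is the bookkeeping around the degenerate cases and, more substantively, justifying the Shephard–Todd–Chevalley-type freeness results over a general commutative ring $\Bbbk$ rather than a field: one cannot blindly cite the classical theorems, so the cleanest route is probably to exhibit explicit $\Bbbk$-bases (e.g. $\{1, \a_s\}$-type bases iterated, or a $2m$-element basis of $R$ over $R^{s,t}$ built from products of the $\a$ from Demazure surjectivity) and check perfectness of the trace pairing directly — which is in any case what the subsequent sections need in order to build the Frobenius hypercube structure, so the effort is not wasted.
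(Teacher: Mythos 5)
Your proposal has a genuine gap in the finite case. You identify the degree-$2m$ invariant $Z$ with ``the product of the $m$ positive roots $f_{s,0},f_{s,1},\ldots$ \dots listed with the multiplicities dictated by $\LC$.'' But that product is precisely $\LM$, which is the generator of the $W_{s,t}$-\emph{anti}invariants, not an invariant. Indeed you say so yourself two sentences later: ``each reflection acts by $-1$ on $\LM$ since it negates exactly one factor and fixes the rest.'' That directly contradicts your earlier claim that the scalars ``work out to $+1$.'' Because $s$ sends $\a_s \mapsto -\a_s$ and permutes the remaining positive roots, $s(\LM) = -\LM$; $\LM$ therefore cannot serve as the second polynomial generator of $R^{s,t}$. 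The paper's proof sketch specifies the correct choice: $Z$ is the product over the $W$-orbit of $\w_s$ (an orbit of size $m$, since $t$ fixes $\w_s$), or equivalently the product of the positive roots of $W_{2m}$ that are \emph{not} roots of $W_m$. Those linear forms are not reflecting hyperplanes of $W_m$, so $W_{s,t}$ permutes them honestly with no sign, and the product is genuinely invariant. Without this correction the Poincar\'e-series argument for $R^{s,t} = \Bbbk[z,Z]$ does not get off the ground, since $\Bbbk[z,\LM]$ is not contained in $R^{s,t}$.

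A few smaller points. The paper's own proof is a terse sketch (``mostly a brute force calculation, and much of it is well-known''), remarking explicitly only on the finite case; your attempt to fill in the $m=\infty$ cases by transcendence-degree and unipotence arguments is a reasonable elaboration. Your flagged concern about invoking Shephard--Todd--Chevalley over a general commutative ring $\Bbbk$ is legitimate and is indeed something the paper glosses over; your proposed workaround by exhibiting explicit dual bases and checking the trace pairing is the right instinct (and is exactly what the later Frobenius-square machinery of section \ref{dihinvt} does), though you do not carry it out. Finally, note a sign slip in the $[2]=\pm 2$ case: with $a_{s,t}=-[2]$ one finds $s(\a_s+\a_t)=\a_s+\a_t$ when $[2]=+2$, so the invariant line is $\a_s+\a_t$ (matching the statement's $\pm$), whereas you wrote $\a_s\mp\a_t$.
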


\begin{proof} This is mostly a brute force calculation, and much of it is well-known. Let us remark on what happens when $m < \infty$. Clearly $\LM$ is antiinvariant, since $s$
permutes the positive roots except $\a_s$, which it sends to $-\a_s$. One may choose $Z$ to be the product of the $W$-orbit of $\w_s$ (resp. $\w_t$) or, if these elements exist in $\hg^*$,
to be the product of the positive roots of the dihedral group $W_{2m}$ which are not roots of $W_m$. \end{proof}

\begin{remark} When $R$ is not generated by $\a_s$ and $\a_t$, it will typically be the case that $R^{s,t}$ is generated by the elements $z$ and $Z$ above, as well as additional linear
terms. One can always guarantee this when $4-[2]^2$ is invertible, by a simple calculation. However, when $[2] = \pm 2$, the ring $R^{s,t}$ can be more complicated. We do not have a
general statement to make. \end{remark}

One can already see that when $m=\infty$, the subring $R^{s,t}$ has a strictly smaller transcendence degree than $R$, and thus $R^{s,t} \subset R$ can not be a Frobenius extension. For the
rest of this section we assume that $m < \infty$. We let $w_0$ be the longest element of this parabolic subgroup. Our next goal is to define the Frobenius trace $R \to R^{s,t}$.

\begin{claim} The simple Demazure operators satisfy the braid relation \[ \ubr{\pa_s \pa_t \ldots}{m} = \ubr{\pa_t \pa_s \ldots}{m} \define \pa_{s,t}. \] This composition is called a
\emph{(higher) Demazure operator}, and it maps $R$ to $R^{s,t}$. \end{claim}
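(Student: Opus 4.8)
The statement has two halves. Given the braid identity $\underbrace{\pa_s \pa_t \cdots}_{m} = \underbrace{\pa_t \pa_s \cdots}_{m}$, the fact that the common operator $\pa_{s,t}$ carries $R$ into $R^{s,t}$ is formal: the composite $\underbrace{\pa_s \pa_t \cdots}_{m}$ applies $\pa_s$ last, so its image lies in $R^s$; symmetrically the other composite has image in $R^t$; hence $\pa_{s,t}(R) \subseteq R^s \cap R^t = R^{s,t}$, since $s$ and $t$ generate $W_{s,t}$. So the content is the braid identity, which is the classical braid relation for divided-difference operators, adapted to an arbitrary realization; the hypotheses in force (Demazure surjectivity, so that $\pa_s$ is well-defined, and faithfulness, so that $st$ acts with order exactly $m$) are precisely what the argument needs. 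The plan is to prove it inside a twisted group ring.

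First I would reduce to the case in which $\Bbbk$ is a domain. Both sides of the identity are $\Bbbk$-linear endomorphisms of $R = \textrm{Sym}_\Bbbk(\hg^*)$, and by the twisted Leibniz rule together with the explicit formulas \eqref{stformula}--\eqref{stonuformula} for the $W_{s,t}$-action, their effect on any monomial in $\hg^*$ is a universal polynomial expression over $\Z$ in $[2] = -a_{s,t}$ and the pairings $\langle \a_s^\vee, -\rangle, \langle \a_t^\vee, -\rangle$. Since $[m]=0$ alone forces $(st)^m$ to act trivially on \emph{any} $\hg^*$ (by the computation behind \eqref{stonuformula}), there is a faithful balanced symmetric realization over the field $K = \Q(2\cos(\pi/m))$ with $\hg^*$ of arbitrary rank and with generic (transcendental) pairings, and it suffices to verify the identity there: the universal coefficient polynomials then vanish on a Zariski-dense set of pairings, and for every admissible value of $[2]$ since the roots of the minimal polynomial of $2\cos(\pi/m)$ form a single Galois orbit. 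Over such $K$ the ring $R$ is a polynomial ring over a field, hence a domain with fraction field $Q$, and inside $\End_K(Q)$ we may work in the twisted group ring $Q \rtimes W_{s,t}$, where $\pa_u = \a_u^{-1}(1-u)$ for $u \in \{s,t\}$ and composition of operators is multiplication.

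Inside $Q \rtimes W_{s,t}$ the goal is to show $\LM \cdot \underbrace{\pa_s\pa_t\cdots}_{m} = A$, where $A := \sum_{v \in W_{s,t}} (-1)^{\ell(v)}\, v$ is the full antisymmetrizer and $\LM$ is the product of the positive roots; the identical computation with the two colors interchanged gives $\LM \cdot \underbrace{\pa_t\pa_s\cdots}_{m} = A$ as well, and dividing by the nonzero element $\LM \in Q$ yields the braid identity. This I would establish in three steps: (i) an element $x \in Q \rtimes W_{s,t}$ satisfying $ux = xu = -x$ for $u \in \{s,t\}$ lies in $Q^{s,t}\cdot A$ --- a direct comparison of coefficients forces $x = \sum_v (-1)^{\ell(v)} q_e\, v$ with $q_e \in Q^{s,t}$; (ii) $\LM \cdot \underbrace{\pa_s\pa_t\cdots}_{m}$ satisfies these relations; (iii) the scalar $q_e$ equals $1$, which is pinned down by computing the coefficient of $w_0$ in $\underbrace{\pa_s\pa_t\cdots}_{m}$ --- it is, up to sign, the product of the inverses of the positive roots peeled off at each step, which for a reduced word of $w_0$ exhaust all positive roots, giving $\pm\LM^{-1}$ and matching the coefficient $\pm 1$ of $w_0$ in $A$ (the sign being fixed by \eqref{stformula} and the description of $\LC$ in Section~\ref{roots}). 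The main obstacle is step (ii): the relations with respect to the outermost reflection (via $u\pa_u = \pa_u$ and $u(\LM) = -\LM$) and the innermost reflection (via $\pa_u u = -\pa_u$) are immediate, but the two remaining relations are exactly where faithfulness, i.e. $(st)^m = 1$ in $W_{s,t}$, must be used, and one must also check that the two colorings produce $A$ with the same sign rather than with opposite signs. A more mechanical alternative, avoiding this repackaging, is to prove by induction on $k$ the closed formula $\pa_{s_1}\cdots\pa_{s_k} = \sum_{v \le s_1\cdots s_k} q_v\, v$ in $Q \rtimes W_{s,t}$ with leading coefficient $q_{s_1\cdots s_k}$ visibly independent of the reduced word, and then to read off the braid relation from the case $k = m$, where $w_0$ has two reduced words; either way, the heart of the matter is that the two reduced expressions of the longest element of $W_{s,t}$ strip off the same multiset of positive roots.
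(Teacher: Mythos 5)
Your plan splits into two routes, and it is worth being precise about which one actually works. The formal half — that $\pa_{s,t}(R)\subseteq R^s\cap R^t=R^{s,t}$ once the braid identity is known — is exactly what the paper says, and is fine. The reduction to a domain/generic specialization is more machinery than the paper uses (it just works directly with the given $R$), but it is not wrong.

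The real issue is step (ii) of your antisymmetrizer argument, and you half-acknowledge it. You claim $sx=xt=-x$ (for $m$ even, $x=\LM\,\pa_s\pa_t\cdots\pa_t$) are immediate, which they are, via $s\pa_s=\pa_s$, $s(\LM)=-\LM$, and $\pa_t t=-\pa_t$. But $tx=-x$ is equivalent to $t\,\pa_s\pa_t\cdots\pa_t=\pa_s\pa_t\cdots\pa_t$, i.e. to $\pa_{s,t}(R)\subseteq R^t$, and $xs=-x$ is equivalent to $\pa_{s,t}\circ s=-\pa_{s,t}$. Neither of these follows from the generator relations $u\pa_u=\pa_u$, $\pa_u u=-\pa_u$, $\pa_u^2=0$ without already knowing the braid relation: if you did know $\pa_s\pa_t\cdots=\pa_t\pa_s\cdots$ you could re-express the composite to put $\pa_t$ on the outside or $\pa_s$ on the inside and then the same ``immediate'' arguments would give the remaining two relations. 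Saying ``faithfulness must be used'' does not fill this in; the argument as stated is circular, and that is a genuine gap in the proposal's primary route.

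Your ``more mechanical alternative'' is essentially the paper's proof: expand both compositions in the twisted group ring as $\sum_v \pm\,\frac{v(-)}{\pi_v}$ with $\pi_v$ a product of positive roots, and match the coefficients term by term using the identification of $\LC_s$ and $\LC_t$ from Section~\ref{roots} (this is where balancedness enters; the paper's subsequent Remark notes that for odd-unbalanced realizations the identity only holds up to the scalar $\l$). You correctly name the heart of the matter — the two reduced expressions of $w_0$ strip off the same multiset of positive roots — which is what the leading-coefficient computation in your step (iii) uses and what Section~\ref{roots} is set up to provide. But you leave that route as a sketch too, and in particular you do not address how to match the \emph{non-leading} coefficients, which is what the paper's ``one can match the terms on each side'' refers to. So: the key idea is identified, but neither of the two routes you give is carried to completion, and the first has a circularity that cannot be patched in the way it is currently phrased.
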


\begin{proof} The braid relations for Demazure operators are well known. They can also be shown by a straightforward calculation. Both sides can be expressed as a sum where each term is
$\pm \frac{w(f)}{\pi}$ for some $w \in W_{s,t}$ and $\pi$ some product of roots. One can match the terms on each side of the equality using the observations in section \ref{roots}. Because
of the braid relation, the image of $\pa_{s,t}$ is in the kernel of both $\pa_s$ and $\pa_t$, and is therefore in $R^{s,t}$. \end{proof}

\begin{remark} When the realization is odd-unbalanced, the braid relation only holds up to scalar. For the even-unbalanced case, see Remark \ref{foolishremark} below. \end{remark}

If $\pa_{s,t}$ is a Frobenius trace $R \to R^{s,t}$ then there is a compatible square of Frobenius extensions, where $R^{s,t} \subset R^s$ has trace map $\pa^s_{s,t} = \pa_{\hat{m-1}_t}$
and $R^{s,t} \subset R^t$ has trace map $\pa^t_{s,t} = \pa_{\hat{m-1}_s}$.

Just because $\pa_s$ and $\pa_t$ are individually surjective does not mean that $\pa_s \pa_t$ is surjective. This would require that $R^s \into R \to R^t$ is surjective. When $\hg^*$ is
spanned by the simple roots, this is only the case when $\w_t \in R^s$ is defined, i.e. when local non-degeneracy holds.

\begin{prop} With Demazure surjectivity, local non-degeneracy, and lesser invertibility, $\pa_{s,t} \co R \to R^{s,t}$ is a Frobenius trace. \label{past-surjective} \end{prop}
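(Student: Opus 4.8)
The plan is to factor $R^{s,t}\subseteq R$ as $R^{s,t}\subseteq R^s\subseteq R$ and to apply the composition property of Frobenius extensions recorded in Section~\ref{frobenius}. By Section~\ref{reflinvt}, $R^s\subseteq R$ is already a Frobenius extension of degree $1$, with trace $\pa_s$ and dual bases $\{1,\w_s\}$, $\{-s(\w_s),1\}$; the surjectivity of $\pa_s$ (Demazure surjectivity) and the existence of $\w_s$ are both consequences of local non-degeneracy. Writing $\pa_{s,t}$ as the $m$-fold alternating composition $\pa_s\pa_t\pa_s\cdots$ and, using the braid relation for Demazure operators, rewriting it so that its rightmost factor is $\pa_s$, we obtain $\pa_{s,t}=\pa^s_{s,t}\circ\pa_s$, where $\pa^s_{s,t}$ is the $(m-1)$-fold alternating Demazure operator of Section~\ref{dihinvt}. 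Since $\pa_{s,t}(R)\subseteq\ker\pa_s\cap\ker\pa_t=R^{s,t}$ and $\pa_s$ carries $R$ onto $R^s$, the operator $\pa^s_{s,t}$ sends $R^s$ into $R^{s,t}$. Hence it suffices to prove that $R^{s,t}\subseteq R^s$ is a Frobenius extension of degree $m-1$ with trace $\pa^s_{s,t}$: the composition property then gives that $R^{s,t}\subseteq R$ is Frobenius with trace $\pa^s_{s,t}\circ\pa_s=\pa_{s,t}$, and simultaneously produces the compatible square of Section~\ref{dihinvt}.

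For the module structure, recall from Section~\ref{reflinvt} that $R^s$ is a polynomial ring --- on the $W_{s,t}$-fixed linear forms, the $s$-invariant linear form $\ell_s=a_{s,t}\w_s-\a_t$, and one quadratic generator --- while by Section~\ref{dihinvt}, local non-degeneracy ($4-[2]^2$ invertible) forces $R^{s,t}$ to be a polynomial ring too, on the same fixed linear forms, a degree-$4$ invariant $z$, and a degree-$2m$ invariant $Z$ (a product of $m$ linear forms). The fixed linear forms cancel in the quotient, and the Hilbert series give
\[
\mathrm{HS}(R^s)\,\big/\,\mathrm{HS}(R^{s,t}) \;=\; 1+v^2+v^4+\cdots+v^{2(m-1)},
\]
so $R^s\ot_{R^{s,t}}\Bbbk$ is finitely generated, concentrated in degrees $0,2,\dots,2(m-1)$ with one-dimensional graded pieces. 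Choosing homogeneous lifts of a $\Bbbk$-basis of this coinvariant module (for instance, suitable powers of $\ell_s$, or partial products adapted to the positive roots of Section~\ref{roots}), graded Nakayama together with the Hilbert-series identity shows the lifts form a free $R^{s,t}$-basis $\{e_0=1,e_1,\dots,e_{m-1}\}$ of $R^s$ of rank $m$; this reasoning is legitimate over an arbitrary $\Bbbk$ precisely because $R^{s,t}$ is a polynomial ring, finitely generated in each degree. (In practice one may sidestep the commutative-algebra technicalities by exhibiting this basis and its dual explicitly, which makes freeness manifest; see the next paragraph.)

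It remains to show that the trace pairing $(f,g)\mapsto\pa^s_{s,t}(fg)$ on $R^s$ is perfect. With the $e_i$ ordered by strictly increasing degree $0,2,\dots,2(m-1)$, its matrix is anti-triangular: $\pa^s_{s,t}$ lowers degree by $2(m-1)$, so $\pa^s_{s,t}(e_ie_j)$ lands in a negative degree, hence vanishes, whenever $i+j<m-1$, and only the anti-diagonal entries $c_i:=\pa^s_{s,t}(e_ie_{m-1-i})\in\Bbbk$, which live in degree $0$, remain. A direct computation with the twisted Leibniz rule, organised by the positive roots of Section~\ref{roots}, identifies each $c_i$ up to sign as a product of the unit $4-[2]^2$ with quantum numbers $[k]$ for $k<m$; for $m=3$, for instance, one finds $c_0=c_2=[2](4-[2]^2)$ and $c_1=[2](4-[2]^2)^2$. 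By local non-degeneracy and lesser invertibility all such factors are units, so the pairing matrix has unit determinant and a dual basis exists --- indeed one reads it off directly as rescaled complementary root-products, exactly as for the self-dual Schubert basis under the Demazure pairing in the nilHecke algebra. Thus $\pa^s_{s,t}$ is a Frobenius trace, and therefore so is $\pa_{s,t}$.

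The genuine obstacle is the computation in the last paragraph: verifying that the anti-diagonal entries $c_i$ are exactly units times products of $[1],\dots,[m-1]$, so that lesser invertibility is seen to be the sharp hypothesis. This is the one step that uses all three assumptions at once, and where the even/odd dichotomy in the structure of the dihedral positive roots (Section~\ref{roots}) has to be confronted directly; everything else is formal once one knows that $R^{s,t}$ is a polynomial ring, which is where local non-degeneracy is needed.
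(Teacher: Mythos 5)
Your argument follows the same route as the paper's sketch: both factor $R^{s,t}\subset R^s\subset R$ and reduce to showing $\pa^s_{s,t}$ is a Frobenius trace, both exhibit a graded $R^{s,t}$-basis of $R^s$ in degrees $0,2,\dots,2(m-1)$, and both defer the perfectness of the pairing to a brute-force Leibniz-rule computation. The differences are in packaging: the paper simply names the basis $\{1,\w_t,\dots,\w_t^{m-1}\}$, states the single value $\pa^s_{s,t}(\w_t^{m-1})=[m-1]!$, and asserts the dual basis can be built inductively, whereas you justify the existence of a graded basis via the Hilbert-series quotient together with graded Nakayama, and then observe that the pairing matrix is anti-triangular by degree so its determinant is $\pm\prod_i c_i$, which is a cleaner way to phrase the same reduction. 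Note that with the paper's basis $\{\w_t^i\}$ all anti-diagonal entries are literally equal to $\pa^s_{s,t}(\w_t^{m-1})=[m-1]!$, which is where lesser invertibility enters and the $(4-[2]^2)$-factors disappear (they are absorbed into $\w_t$); this is simpler than what your $m=3$ example suggests, and your specific values $c_0=c_2=[2](4-[2]^2)$, $c_1=[2](4-[2]^2)^2$ do not seem to correspond to any obvious choice of basis. Also, your parenthetical about a \emph{self-dual} Schubert basis under the Demazure pairing runs against the paper's own remark that Schubert polynomials are dual to each other only modulo positive-degree symmetric polynomials, so the analogy is off. Neither of these slips touches the logic, and your argument is correct at the same level of rigor as the paper's sketch, with the key Leibniz-rule computation deferred in both.
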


\begin{proof} (Sketch) This is a brute force computation. It is sufficient to show that $\pa^s_{s,t}$ is a Frobenius trace. Consider the basis $\{1,\w_t,\w_t^2,\ldots,\w_t^{m-1}\}$ of
$R^s$ over $R^{s,t}$. It is rather cute to calculate that $\pa^s_{s,t}(\w_t^{m-1}) = [m-1]!$, so we leave it as an exercise in the Leibniz rule. One can show combinatorially that a dual
basis exists when $[m-1]!$ is invertible, by calculating the dual basis inductively. \end{proof}

\begin{remark} Just as in Remark \ref{notsurjective}, one need not invert $4-[2]^2$ or $[m-1]!$ to guarantee the existence of some Frobenius trace. Instead, it may be possible to divide
$\pa_{s,t}$ by $[m-1]!$ formally. \end{remark}

\begin{remark} \label{foolishremark} Because of the braid relation, we can define $\pa_w$ for any $w \in W$. Note that $\pa_{\hat{k}_s} = 0$ for any $k > m$. In particular, if the
realization is not faithful for this dihedral group (so that $[m]=0$ for $m<m_{st} < \infty$) then $\pa_{w_0}=0$, and the braid relation holds for foolish reasons. It is not a Frobenius
trace map, of course. It is impossible for a graded ring extension $A \subset B$ to be a Frobenius extension of two different degrees at once, so that if $R^{s,t} \subset R$ is a Frobenius
extension of degree $m$, it will not be one of degree $m_{st}$. Thus when the realization is even-unbalanced, it is usually not faithful, and therefore the braid relation on Demazure
operators holds for foolish reasons. \end{remark}

\begin{remark} It seems likely that the assumptions in Proposition \ref{past-surjective} can be weakened, but I have not done the computation. We have also not bothered to calculate the
condition for $\pa_{w_J}$ to give a Frobenius trace $R \to R^J$ beyond the dihedral case. When this happens, we have a Frobenius (partial) hypercube including all finitary $R^I$.
\end{remark}

\begin{remark} A nice formula for dual bases of the Frobenius extensions $R^I \subset R^J$ is unknown to the author, even in type $A$ where the situation is far better studied. Dual bases
for $\C[x_1,\ldots,x_n]$ over $\C[x_1,\ldots,x_n]^{S_n}$ are presented in \cite{KLMS}, though this is not quite the same as type $A$, which is the subring given by traceless polynomials.
Note that Schubert polynomials do \emph{not} form dual bases for $\C[x_1,\ldots,x_n]$, because $\pa_{w_0}(f_i f_j^*) = \delta_{i,j}$ only modulo positive degree symmetric polynomials.
\end{remark}

Though the results below continue to apply to any dihedral parabolic subgroup, we will now assume $W$ is dihedral, and write $R^W = R^{s,t}$, and use similar notation like $\pa_W =
\pa_{s,t}$.

\begin{thm} \label{frobsquare} Take all three assumptions, with $m < \infty$. Then $R^W \subset R^s, R^t \subset R$ is a graded Frobenius square.
Therefore, $R(m)$ is a free $R^W$-module of graded rank $\qW$, and $R^i(m-1)$ is a free $R^W$-module of graded rank $\frac{\qW}{(v+v^{-1})}$. Any dual bases satisfy the following
properties. Starting at \eqref{funnyhere}, we will take elements in $R^s$ and include them in $R$, in order to apply $\pa_t$.
\begin{subequations}
\begin{equation} \LM_W = \LM \end{equation}
\begin{equation} \pa_W(\LM) = 2m \end{equation}
\begin{equation} \LM_W^s = \frac{\LM}{\a_s} \end{equation}
\begin{equation} \pa^s_W(\LM^s_W) = m \end{equation}
\begin{equation} \Delta^s_{W(1)} \partial_t(\Delta^s_{W(2)}) = \partial_t(\Delta^s_{W(1)})\Delta^s_{W(2)} = \frac{\LM}{\a_s \a_t} \label{funnyhere} \end{equation}
\begin{equation} \label{foobar} \Delta^s_{W(1)} \ot \partial_t(f\Delta^s_{W(2)}) = \partial_s(f\Delta^t_{W(1)}) \ot \Delta^t_{W(2)} \ \ \in R^s \ot_{R^W} R^t \textrm{ for any } f \in R 
\end{equation}
\end{subequations}
(The last two equations use Sweedler notation.) In particular, the map $R \to R^s \ot_{R^W} R^t$ sending $f \mapsto \Delta^s_{W(1)} \ot \partial_t(f \Delta^s_{W(2)}) = \partial_s(f \Delta^t_{W(1)}) \ot \Delta^t_{W(2)}$ is
well-defined, $R^s$-linear on the left, and $R^t$-linear on the right. \end{thm}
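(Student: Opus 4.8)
The plan is to assemble the square from the two chains $R^W\subset R^s\subset R$ and $R^W\subset R^t\subset R$, and then extract every assertion from the general theory of compatible Frobenius hypercubes in \cite{EWFrob} together with the explicit data already in hand. Section~\ref{reflinvt} provides the degree-$1$ Frobenius extensions $R^s\subset R$ and $R^t\subset R$, with traces $\pa_s,\pa_t$ and $\LM_s=\a_s$; Proposition~\ref{past-surjective} and its color-reverse provide the degree-$(m-1)$ Frobenius extensions $R^W\subset R^s$ and $R^W\subset R^t$, with traces $\pa^s_W,\pa^t_W$, all under the three standing assumptions. Composing chains of Frobenius extensions (recalled in Section~\ref{frobenius}) makes $R^W\subset R$ a degree-$m$ Frobenius extension, with trace $\pa^s_W\circ\pa_s$ via one chain and $\pa^t_W\circ\pa_t$ via the other. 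Writing $w_0$ for the longest element of $W$ and using $\ell(w_0)=\ell(sw_0)+1=\ell(tw_0)+1$ together with the braid relations for simple Demazure operators (proved just before Proposition~\ref{past-surjective}), both composites equal $\pa_W:=\pa_{w_0}$. Hence the two chains carry the \emph{same} Frobenius structure on $R^W\subset R$, which is exactly the statement that the four extensions, with their common composite, form a compatible graded Frobenius square of edge-degrees $(1,m-1,m-1,1)$.

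The rank statements follow from multiplicativity of graded rank along chains. Over $R^s$ (or $R^t$), $R$ is free of graded rank $1+v^2$, with basis $\{1,\a\}$ for any $\a$ with $\pa_s(\a)=1$ (e.g.\ $\a=\w_s$); over $R^W$, $R^s$ is free of graded rank $1+v^2+\cdots+v^{2(m-1)}$, with the basis $\{1,\w_t,\dots,\w_t^{m-1}\}$ used in the proof of Proposition~\ref{past-surjective}. Therefore $R$ is free over $R^W$ of graded rank $(1+v^2)(1+v^2+\cdots+v^{2m-2})=1+2v^2+\cdots+2v^{2m-2}+v^{2m}=\tilde\pi(W)$; after the shift $(m)$ this becomes the balanced Poincar\'e polynomial $\qW$, while the rank $1+v^2+\cdots+v^{2m-2}=\tilde\pi(W)/(1+v^2)$ of $R^i$ over $R^W$ becomes $\qW/(v+v^{-1})$ after the shift $(m-1)$.

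For the displayed identities I would use the structural facts $\LM^C_A=\LM^C_B\,\LM^B_A$ along subchains and $\pa^B_A(\LM^B_A)=\operatorname{rank}_AB$ from \cite{EWFrob} to reduce to the one-step extensions. From $\LM_s=\a_s$ and multiplicativity, $\LM_W=\a_s\,\LM^s_W=\a_t\,\LM^t_W$. A direct evaluation of $\LM^s_W=\sum_j\w_t^j(\w_t^j)^\ast$ using the (inductively defined) dual basis to $\{1,\w_t,\dots,\w_t^{m-1}\}$ from the proof of Proposition~\ref{past-surjective} identifies $\LM^s_W=\LM/\a_s$, the product of the positive roots of Section~\ref{roots} other than $\a_s$; hence $\LM_W=\a_s\cdot(\LM/\a_s)=\LM$, giving (a) and (c). Alternatively, the twisted Leibniz rule applied to $\LM_W=\a_s\LM^s_W$ gives $\pa_s(\LM_W)=2\LM^s_W$, so $s(\LM_W)=-\LM_W$ and likewise $t(\LM_W)=-\LM_W$, whence (over a domain of residue characteristic $\neq2$) $\LM_W$ is divisible by $\LM$ by the usual reflection argument and equals it up to a scalar pinned down by a leading coefficient. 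Then (b) and (d) are the instances $\pa_W(\LM)=\pa_W(\LM_W)=2m$ and $\pa^s_W(\LM^s_W)=m$ of $\pa^B_A(\LM^B_A)=\operatorname{rank}$ (consistent with the classical evaluation $\pa_{w_0}(\LM)=|W|$, which uses lesser invertibility). Equations \eqref{funnyhere} and \eqref{foobar} are the ``mixed'' hypercube identities relating $\Delta^s_W,\Delta^t_W,\pa_s,\pa_t$; I would verify them by direct computation, writing $\Delta^s_W(1)=\sum_j\w_t^j\ot(\w_t^j)^\ast$ and using $\pa_t(\w_t)=1$, $\pa_t(\w_s)=0$, the twisted Leibniz rule, and the value of $\LM^s_W$ just obtained. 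I expect \eqref{funnyhere}--\eqref{foobar} to be the main obstacle: the dual basis to $\{1,\w_t,\dots,\w_t^{m-1}\}$ has no closed form, so these identities are where essentially all of the bookkeeping lies — the same bookkeeping that later underlies the corresponding statements for singular Soergel bimodules.

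Finally, granting \eqref{foobar}, the ``in particular'' statement is formal. The two expressions $\Delta^s_{W(1)}\ot\pa_t(f\Delta^s_{W(2)})$ and $\pa_s(f\Delta^t_{W(1)})\ot\Delta^t_{W(2)}$ coincide by \eqref{foobar}, and each Sweedler sum is independent of the choice of dual bases, so the assignment $f\mapsto\Delta^s_{W(1)}\ot\pa_t(f\Delta^s_{W(2)})$ is a well-defined map $R\to R^s\ot_{R^W}R^t$. Left $R^s$-linearity is read off the second expression: for $r\in R^s$ the operator $\pa_s$ is $R^s$-linear, so $rf$ is sent to $\pa_s(rf\Delta^t_{W(1)})\ot\Delta^t_{W(2)}=r\cdot\bigl(\pa_s(f\Delta^t_{W(1)})\ot\Delta^t_{W(2)}\bigr)$; right $R^t$-linearity is read off the first expression in the same way, since $\pa_t$ is $R^t$-linear.
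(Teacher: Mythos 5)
Your outline tracks the paper's proof for most of the theorem: assembling the square from the two chains and checking via the braid relation that both composites give the same trace, computing the graded ranks multiplicatively, and (in your ``alternatively'' clause) deriving $\LM_W=\LM$ from the antiinvariance of $\LM_W$ plus a degree count and a normalization --- this last is exactly the paper's argument, except that the paper pins the scalar by the concrete evaluation $\pa_W(\LM)=2m$ rather than a ``leading coefficient.''

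The genuine gap is at \eqref{funnyhere} and \eqref{foobar}. You correctly recognize them as the mixed identities of the square, but then propose to verify them by expanding $\Delta^s_W$ in the dual basis to $\{1,\w_t,\dots,\w_t^{m-1}\}$ --- and you concede in the same breath that this dual basis has no closed form and that ``essentially all of the bookkeeping'' lies there. As written, those two identities are therefore not proved, and the computation you sketch is not one you could realistically complete. The point you are missing is that no dihedral-specific computation is needed: \eqref{funnyhere} and \eqref{foobar} hold for \emph{any} compatible square of Frobenius extensions and are proved in that generality in \cite{EWFrob}, subject to one technical condition --- that dual bases for $R$ over $R^s$ can be chosen with one basis lying entirely in $R^t$. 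That condition is exactly what local non-degeneracy buys you: $\{1,\w_s\}$ is such a basis, since $\pa_t(\w_s)=0$ means $\w_s\in R^t$. You invoke $\w_s$ as a convenient basis element for the rank count but never flag this as the hypothesis that licenses the general square identities. Once that is in place, the only input specific to the dihedral situation is $\LM_W=\LM$, and everything in \eqref{funnyhere}--\eqref{foobar} (including the value $\frac{\LM}{\a_s\a_t}$) follows formally; the ``in particular'' clause is then handled exactly as you say.
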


\begin{proof} The equations above hold in general for any square of Frobenius extensions, as shown in \cite{EWFrob}. That paper requires a technical condition, that dual bases for $R$ over
$R^s$ can be chosen such that one basis lies entirely in $R^t$; we have already described how this follows from local non-degeneracy in the proof of Proposition
\ref{past-surjective}. The only interesting piece of data is that $\LM_W=\LM$, the product of the positive roots, from which the other facts can be deduced. To show this, note that
$\pa_s(\LM_W) = \pa_s(\LM^s_W \a_s) = 2 \LM^s_W = \frac{2\LM_W}{\a_s}$. This is only possible if $\LM_W$ is $s$-antiinvariant. It is $t$-antiinvariant by the same argument, so for degree
reasons it must be equal to a scalar multiple of $\LM$. A calculation shows that $\pa_W(\LM)=2m$, so they agree precisely. \end{proof}

As a consequence, we can also categorify \eqref{bibw0} and \eqref{bw0sq}.

\begin{cor} Letting $B_W \define R \ot_{R^W} R (m)$ we have the following isomorphisms: \begin{equation} \label{BiBW} B_s \ot B_W \cong B_W \ot B_s \cong (v+v^{-1}) B_W \end{equation}
\begin{equation} \label{BWBW} B_W \ot B_W \cong \qW B_W \end{equation} \end{cor}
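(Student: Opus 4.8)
The plan is to deduce both isomorphisms directly from the Frobenius square of Theorem \ref{frobsquare} by rearranging tensor products of bimodules, with no new computation. The only ring-theoretic inputs needed are the $R^s$-bimodule splitting $R \cong R^s \oplus R^s(-2)$ from the proof of Claim \ref{RistwiceRi} (which rests on Demazure surjectivity), and the fact that $R(m)$ is free over $R^W$ of graded rank $\qW$ (Theorem \ref{frobsquare}). Since every ring in sight ($R$, $R^s$, $R^t$, $R^W$) is commutative, an $(A,B)$-bimodule for subrings $A,B$ is just a module, and this keeps the left/right bookkeeping harmless.

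For \eqref{BiBW}: write $B_s \ot B_W = R \ot_{R^s} R \ot_{R^W} R \, (m+1)$. The middle copy of $R$ carries only a left $R^s$-action and a right $R^W$-action, and as such it splits as $R \cong R^s \oplus R^s(-2)$. Indeed, choosing $\a \in \hg^*$ with $\pa_s(\a) = 1$, the decomposition $R = R^s \cdot 1 \oplus R^s \cdot \a$ into left $R^s$-submodules is automatically one into right $R^W$-submodules (each summand is stable under right multiplication by $R^W \subseteq R^s$, by commutativity), and $R^s \cdot \a \cong R^s(-2)$ compatibly on both sides. Substituting and using $R \ot_{R^s} R^s \cong R$ gives $B_s \ot B_W \cong \big(R \ot_{R^W} R \oplus (R \ot_{R^W} R)(-2)\big)(m+1) \cong B_W(1) \oplus B_W(-1) = (v+v^{-1}) B_W$. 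The isomorphism $B_W \ot B_s \cong (v+v^{-1}) B_W$ is the mirror image: group $B_W \ot B_s = R \ot_{R^W} R \ot_{R^s} R \, (m+1)$, split the middle $R$ as an $(R^W, R^s)$-bimodule in the same way, and use $R^s \ot_{R^s} R \cong R$.

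For \eqref{BWBW}: by Theorem \ref{frobsquare}, $R(m) \cong \bigoplus_i R^W(b_i)$ as graded $R^W$-modules with $\sum_i v^{\pm b_i} = \qW$, hence $R \cong \bigoplus_i R^W(b_i - m)$. Since grading shifts commute past $\ot_{R^W}$ and a rank-one free module tensors trivially, substituting the middle factor of $B_W \ot B_W = R \ot_{R^W} R \ot_{R^W} R \, (2m)$ yields
\[
B_W \ot B_W \;\cong\; \bigoplus_i (R \ot_{R^W} R)(b_i - m)(2m) \;=\; \bigoplus_i (R \ot_{R^W} R)(b_i + m) \;=\; \bigoplus_i B_W(b_i),
\]
which is exactly $\qW \, B_W$ (the symmetry of $\qW$ under $v \leftrightarrow v^{-1}$ forces the multiset $\{b_i\}$ to be symmetric about $0$, so the choice of sign in $\sum_i v^{\pm b_i}$ is irrelevant).

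There is no serious obstacle: all the real content lives in Theorem \ref{frobsquare}, and what remains is to keep straight which subring acts on which side of each tensor factor. The one point demanding a little care is the use of $R \cong R^s \oplus R^s(-2)$ as a two-sided statement, with $R^W$ acting on the side opposite to $R^s$; this is legitimate here only because $R$ is commutative, and it is the step that would require rethinking in a non-commutative or non-symmetric setting.
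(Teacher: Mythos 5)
Your proof is correct and follows essentially the same route as the paper: both derive \eqref{BiBW} by decomposing the middle tensor factor via $R \cong R^s \oplus R^s(-2)$, and both derive \eqref{BWBW} by decomposing the middle factor of $R \ot_{R^W} R \ot_{R^W} R$ using the freeness of $R$ over $R^W$ of graded rank $\qW$. The paper phrases the $(R^s,R^W)$-bimodule splitting by first rewriting $R \ot_{R^W} R$ as $R \ot_{R^s} R^s \ot_{R^W} R$ and names the projection and inclusion maps explicitly via Frobenius dual bases, but these are cosmetic differences from your version.
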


Choosing any basis $\{f_k\}$ with dual basis $\{f^*_k\}$, the $\qW$-many projection maps from $R$ to $R^W$ are $g \mapsto \partial_W(g f_k)$ and the inclusion maps are $g \mapsto gf^*_k$.
These maps, applied to the middle factor in $R \ot_{R^W} R \ot_{R^W} R$, give you the projections and inclusions in \eqref{BWBW} as well. To deduce \eqref{BiBW} we write $R \ot_{R^s} R
\ot_{R^W} R$ as $R \ot_{R^s} R \ot_{R^s} R^s \ot_{R^W} R$ and reduce the second factor of $R$ as in Claim \ref{RistwiceRi}.

\subsection{{\bf Soergel bimodules and variants}}\label{soergelintro}

We continue to assume the realization is faithful, and that Demazure surjectivity holds. We have already defined the $R$-bimodules $B_W$ (when $m$ is finite) and $B_i$ for $i \in \{s,t\}$.
These can be used to define a number of full subcategories of $(R,R)$-bimodules, and a number of full sub-2-categories of $\Bim$. Recall that $\Bim$ is the 2-category whose objects are
rings $A$, and for which $\Hom_{\Bim}(A,B)$ is the category of $(B,A)$-bimodules, with the obvious tensor structure giving the composition of 1-morphisms.

\begin{defn} The category $\BSBim$ is the (non-additive, non-graded) full monoidal subcategory of $(R,R)$-bimodules generated by $B_s$ and $B_t$. Given a
sequence $\ul{w}=i_1 i_2 \ldots i_k$, we write $BS(\ul{w}) = B_{i_1} \ot_R B_{i_2} \ot_R \cdots \ot_R B_{i_k}$. These are called \emph{Bott-Samelson bimodules}. We write
$\HOM(BS(\ul{w}),BS(\ul{y}))$ for the graded vector space of $R$-bimodule maps from $BS(\ul{w})$ to any shift of $BS(\ul{y})$. It is a graded $R$-bimodule itself. \end{defn}

\begin{defn} Suppose that $m < \infty$ and $R^W \subset R^s, R^t \subset R$ is a Frobenius square. The category $\fooBim$ is the (non-additive, non-graded) full monoidal subcategory of
$(R,R)$-bimodules generated by $B_s$ and $B_t$ and $B_W$. Objects are called \emph{generalized Bott-Samelson bimodules}. We use similar conventions as for $\BSBim$. \end{defn}

\begin{defn} The category $\SBim$ is the graded Karoubi envelope of $\BSBim$. That is, it is the full additive monoidal subcategory of graded $(R,R)$-bimodules containing all grading
shifts, direct sums, and direct summands of Bott-Samelson bimodules. Objects are called \emph{Soergel bimodules}. \end{defn}

Though it is not immediately obvious, $\fooBim \subset \SBim$, so that $\SBim$ is also the Karoubi envelope of $\fooBim$.

\begin{defn} The 2-category $\SBSBim$ is the (non-additive, non-graded) full sub-2-category of $\Bim$, whose objects are the rings $R^J$ for $J \subset S$ finitary, and whose 1-morphisms
are generated by the $(R^I,R^J)$-bimodule $\Ind^I_J = R^I$ and the $(R^J,R^I)$-bimodule $\Res^I_J = R^I(\ell(J)-\ell(I))$ whenever $I \subset J$. Objects are called \emph{singular
Bott-Samelson bimodules}. \end{defn}

\begin{defn} The 2-category $\SSBim$ is the graded Karoubi envelope of $\SBSBim$. Objects are called \emph{singular Soergel bimodules}. \end{defn}

It is not hard to see that the endomorphism category of $\emptyset \subset S$ inside $\SBSBim$ is $\fooBim$ (or rather, they have the same additive graded envelope) when $m<\infty$, and
$\BSBim$ when $m = \infty$. Therefore the endomorphism category $\emptyset$ inside $\SSBim$ is $\SBim$.

If we wish to emphasize the base ring, we may write $\BSBim_{\Bbbk}$. The definition of $\BSBim$ depends on the realization, and for any base change $\Bbbk \to \Bbbk'$ there is an obvious
functor $\BSBim_{\Bbbk} \ot_{\Bbbk} \Bbbk' \to \BSBim_{\Bbbk'}$. However, this functor is by no means an equivalence! For instance, specializing $q$ to a root of unity and thus passing
from a faithful realization of an infinite dihedral group to a non-faithful one will add new morphisms between Bott-Samelson bimodules. In addition, taking the Karoubi envelope does not
commute with base change, which may create additional idempotents.

\subsection{{\bf Soergel and Williamson Categorification Theorems}}\label{soergelcatfn}

We summarize the main theorems of Soergel and Williamson, as they apply to the dihedral case.

\begin{defn} A realization over a field $\Bbbk$ of characteristic $\ne 2$ is \emph{reflection-faithful} if an element preserves a codimension 1 hyperplane of $\hg$ if and only if it is a
reflection, and moreover if two distinct reflections preserve distinct hyperplanes. \end{defn}

In particular, any reflection-faithful realization is faithful, and satisfies all three assumptions. However, the reflection representations of affine Weyl groups are not
reflection-faithful.

\begin{thm} (See Soergel \cite{Soe5} Theorems 1.10, 4.2, 5.5, 6.16) Let $m \ge 2$ or $m=\infty$. Let $\Bbbk$ be an infinite field of characteristic $\ne 2$, and fix a reflection-faithful
representation of $W_m$ over $\Bbbk$. Then the SCT and the Soergel conjecture (see section \ref{techniques}) hold for $\BSBim$. \end{thm}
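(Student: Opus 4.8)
The statement is a repackaging of Soergel's theorems, so the ``proof'' is to combine them with the bookkeeping set up in Section~\ref{techniques}; I will describe how the pieces fit and where the real work lies. First I would verify that $\BSBim$ (equivalently, its graded additive envelope inside $\SBim$) is a potential categorification of $\HB_m$ in the sense of Section~\ref{techniques}. The quadratic isomorphism \eqref{BiBi} is exactly Claim~\ref{RistwiceRi}; biadjointness of $B_s$ follows from the fact that $R^s \subset R$ is a graded Frobenius extension of degree~$1$, so that $\Ind^R_{R^s}$ is shifted-biadjoint to $\Res^R_{R^s}$; and when $m<\infty$ the categorified two-color relation attached to \eqref{twocolorbreln} comes from the Frobenius square $R^W \subset R^s, R^t \subset R$ of Theorem~\ref{frobsquare}, using \eqref{BiBW} and \eqref{BWBW} to supply the extra summands that turn \eqref{twocolorbreln} into an isomorphism with positive coefficients. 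A reflection-faithful realization over an infinite field of characteristic $\ne 2$ satisfies all three Assumptions, so all of these structures are available.

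Next I would reduce the SCT to a single Hom computation using Lemma~\ref{homspacelemma} and Corollary~\ref{homspacecor}, taking $\epsilon$ to be the standard trace, for which $\epsilon(b_w) = v^{\ell(w)}$. Two things must be checked: (i) for each $w \in W_m$ there is an object $B_w$ with $[B_w] = b_w$ occurring with multiplicity one inside $BS(\ul w)$ for $\ul w$ reduced and not inside any shorter $BS(\ul y)$ --- for dihedral $W$ this is constructed by hand from the zig-zag decompositions of Claims~\ref{allrecursions1} and~\ref{allrecursions2}, extracting the top summand of each alternating Bott-Samelson bimodule, plus the single extra object $B_{w_0}$ when $m<\infty$; and (ii) $\Hom(R, B_w)$ is a free graded $R$-module of graded rank $v^{\ell(w)}$. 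Granting (ii), Corollary~\ref{homspacecor} applies verbatim: $R$ is concentrated in non-negative degree with $R_0 = \Bbbk$ a field (hence local), so every $B_w$ has local endomorphism ring and is indecomposable, $\SBim$ is already the graded Karoubi envelope of $\BSBim$, the $\{B_w\}$ form a complete irredundant list of indecomposables up to shift, and $\HB_m \to [\SBim]$ is an isomorphism inducing the standard pairing. Since (i) already gives $[B_w] = b_w$, the Soergel conjecture drops out simultaneously; for dihedral groups the indecomposables do have the expected size, which is what makes this possible outside characteristic zero.

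The main obstacle is (ii), the Soergel Hom formula, and this is exactly the step that forces reflection-faithfulness. I would follow Soergel's strategy (\cite{Soe5}, Theorems~5.5 and~6.16): localize $R$ at its generic point and at the reflection hyperplanes, observe that every Bott-Samelson bimodule admits a filtration by ``standard'' bimodules (shifted twists $R_w$ of $R$ by elements $w \in W$), and use reflection-faithfulness to conclude that the support of a Soergel bimodule pins down its standard-filtration multiplicities, which must then match those predicted by Claim~\ref{simpleKLbasis}. For a dihedral group this localization argument is entirely explicit: the reflection hyperplanes are cut out by the finitely many (or, for $m=\infty$, countably many) positive roots of Section~\ref{roots}, the relevant graded characters are the powers $v^{\ell(w)}$ read off from $\epsilon(b_w)$, and when $m<\infty$ the self-pairing $\qW$ of $B_{w_0}$ is already computed from \eqref{BWBW}. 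Comparing $\grdrk_R \Hom(R,B_w)$ term by term against $\epsilon(b_w)$ then finishes (ii), and everything else is the formal apparatus of Section~\ref{techniques}.
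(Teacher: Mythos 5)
The theorem you are being asked to prove is stated in the paper purely as a citation of Soergel's work (together with Libedinsky's extension); the paper offers no proof of it, and in fact uses it as a black box to conclude (in Corollaries~\ref{homspacecor} and \ref{fullyfaithfulfunctor} and then Theorems~\ref{mainthminfty} and \ref{mainthmfinite}) that the diagrammatic functor $\FC$ is an equivalence. So there is no ``paper's proof'' to compare against; your sketch is a reasonable outline of Soergel's actual argument organized through the bookkeeping of Section~\ref{techniques}, which goes well beyond what the paper itself supplies. The one point I would tighten is the logical order between your (i) and (ii): as written, (i) asserts $[B_w]=b_w$ as a \emph{hypothesis} to be checked ``by hand'' and then later declares the Soergel conjecture ``drops out'' from it, which is circular, since that identity is precisely the Soergel conjecture. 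The honest chain is: first the localization argument establishes Soergel's Hom formula for all Bott-Samelson bimodules (your (ii)); from this one deduces the existence of the top summand $B_w$ and the categorified decomposition of Claim~\ref{allrecursions1} (the conditions of Lemma~\ref{homspacelemma}), and only then does Corollary~\ref{homspacecor} deliver indecomposability, the SCT, and $[B_w]=b_w$. You flag (ii) as ``the main obstacle'' so you clearly understand this, but the ``two things must be checked'' framing suggests independence that isn't there. Everything else --- the potential-categorification verification via the Frobenius extension and Frobenius square, the reduction to $\Hom(R,B_w)$ via the standard trace, the applicability of Corollary~\ref{homspacecor} over a field --- is correct and is exactly how Soergel's theorems plug into the machinery the paper sets up.
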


Soergel's results apply in great generality to other Coxeter groups. In the general case, one can still show the SCT but not the Soergel conjecture. Soergel's results and techniques have
been generalized by Libedinsky \cite{LibRR} to some non-reflection-faithful realizations, including the reflection representation of any Coxeter group. We speak of a \emph{Soergel
realization} to imply that the SCT can be quoted from the literature. As discussed in the introduction, the sequel \cite{EWGR4SB} will use the diagrammatic presentation of morphisms to
give an alternate proof of the SCT in greater generality.

\begin{thm} (See Williamson \cite{WilSSB} Theorems 7.5.1, 7.4.1 and others) Fix a Soergel realization. There is a functor from $\HG$ to the Grothendieck category of $\SSBim$, sending $v$
to the grading shift, $b_J \in \Hom(J,K)$ to $[\Ind^J_K]$ for $J \subset K$, and $b_J \in \Hom(K,J)$ to $[\Res^J_K]$. This map is an isomorphism, and over the empty parabolic it restricts
to the isomorphism above from $\HB$ to $[\SBim]$. There is an analogous formula for calculating the size of 2-morphism spaces in $\SSBim$, involving the standard trace map on $\HG$.
\end{thm}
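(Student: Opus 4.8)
The plan is to follow the standard categorification strategy: present $\HG$ by generators and relations (the quiver algebroid of Section~\ref{presentinghecke}), verify that the prospective functor is well-defined by checking each relation on Grothendieck classes, prove surjectivity by a spanning argument, and prove injectivity by a rank count against a known basis, using a Soergel-style Hom formula.

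First I would define the functor on the quiver generators. Each cover $J\subset K$ in the poset of finitary parabolic subsets (with $|K\setminus J|=1$) gives two generating $1$-morphisms of $\SBSBim$, the associated induction and restriction bimodules; send the corresponding generating arrows of $\HG$ to their classes in $[\SSBim]$, and send $v$ to the grading shift. To see that this respects the relations \eqref{bisqsing}--\eqref{twocolorrelnsing} I would exhibit isomorphisms of singular Bott--Samelson bimodules categorifying each one. Relation \eqref{bisqsing} is categorified by $R\otimes_{R^s}R\cong R(1)\oplus R(-1)$ as $R^s$-bimodules (Claim~\ref{RistwiceRi} read inside $\SBSBim$); relations \eqref{upup} and \eqref{downdown} hold because, by the compatibility of the Frobenius square of Theorem~\ref{frobsquare}, both composites of generating bimodules coincide (up to the stated shift) with the induction, resp.\ restriction, bimodule attached to the inclusion $\emptyset\subset p$; relation \eqref{bw0sqsing} is categorified by the decomposition in Theorem~\ref{frobsquare} (equivalently \eqref{BWBW}) transported to the singular setting; and \eqref{twocolorrelnsing} is categorified by the $R^W$-bimodule decomposition of $R$ together with the recursion of Claim~\ref{allrecursions1}. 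Surjectivity onto $[\SSBim]$ is then formal, since every generating $1$-morphism of $\SBSBim$ is one of these induction or restriction bimodules.

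The substantive step is injectivity, and here I would prove the singular analogue of Soergel's Hom formula: for singular Bott--Samelson expressions the graded rank of the space of $\SSBim$-morphisms between them, as a module over the appropriate invariant subring of $R$, equals the value of the standard pairing on $\HG$ --- equivalently, $\epsilon$ applied to the corresponding renormalized product, where $\epsilon$ is the standard trace of Section~\ref{hecketraces}. As in the proof of Lemma~\ref{homspacelemma}, the biadjunctions of $\Ind$ with $\Res$ furnished by the Frobenius structure reduce every such Hom space to one out of a constant bimodule; these are then computed using a ``standard filtration'' of singular Soergel bimodules by shifted standard bimodules indexed by double cosets $W_J\backslash W/W_K$, whose existence, multiplicativity, and associated rank computations (via Demazure and nil-Hecke operators) are the dihedral specialization of the results of \cite{WilSSB}. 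In the dihedral case only the four parabolic subsets $\emptyset,b,r,p$ occur, and the $\emptyset$-case is exactly Soergel's theorem quoted above, so the inductive apparatus is short. Granting the Hom formula, one deduces a classification of indecomposable singular Soergel bimodules --- one object $B_{[w]}$ for each double coset, characterised by its leading standard subquotient, pairwise non-isomorphic --- so that $\{[B_{[w]}]\}$ is a $\Zvv$-basis of $[\SSBim]$ matching a corresponding basis of $\HG$. Hence the functor is an isomorphism, and restricting to the object $\emptyset$ recovers the isomorphism $\HB\cong[\SBim]$ already in place.

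The main obstacle I expect is constructing the standard filtration and carrying out the associated Hom computation in the singular setting: this is the only place one must use the full Frobenius hypercube of Section~\ref{sec-frobenius}, with the compatibility of all the traces $\pa^J_K$ and explicit control of bimodules such as $R^J\otimes_{R^W}R^K$ entering essentially. The remaining points are bookkeeping --- relation-checking is a finite verification given the Frobenius facts already assembled, and surjectivity is automatic. A reader content to quote \cite{WilSSB} may instead simply specialize that paper's theorems to the four-object algebroid above; the only hypothesis to confirm is that the chosen Soergel realization meets Williamson's requirements, which it does by the definition of ``Soergel realization'' together with Assumptions~\ref{ass:Demazure Surjectivity}--\ref{allinvertible}.
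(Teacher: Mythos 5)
This theorem is stated in the paper as a cited result from \cite{WilSSB}; the paper offers no proof of it, only the attribution. Your outline is therefore being compared not to a paper proof but to the cited source, and as a high-level reconstruction of Williamson's argument it is faithful: the generators-and-relations presentation of $\HG$, verification of each relation via the Frobenius hypercube, surjectivity by spanning, and injectivity via a Hom formula proved with a standard filtration by double cosets is exactly the structure of Williamson's Chapters 6--7. You also correctly flag where the real work sits, namely constructing the filtration and computing Hom spaces using the compatibility of the traces $\pa^J_K$.

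Two remarks. First, a small imprecision: for \eqref{bisqsing} the relevant categorification is not ``$R\otimes_{R^s}R\cong R(1)\oplus R(-1)$ as $R^s$-bimodules'' (the left side there is an $R$-bimodule, not an $R^s$-bimodule). What is needed is that the composite $\Res^\emptyset_b\circ\Ind^\emptyset_b$ is the $(R^s,R^s)$-bimodule $R(1)$, which decomposes as $R^s(1)\oplus R^s(-1)$ because $R\cong R^s\oplus R^s(-2)$ as a graded $R^s$-module. This is the same splitting that underlies Claim~\ref{RistwiceRi}, so your cross-reference is morally right, just stated for the wrong object. Second, it is worth knowing that the paper does not leave this result untouched: Sections~\ref{sec-grothinfty} and~\ref{grothsingagain} give an \emph{independent} categorification of $\HG$ --- via the diagrammatic $2$-categories $\DGti$ and $\DG_m$ and the two-colored Temperley--Lieb idempotents --- and then prove that $\FG$ is an equivalence, recovering an equivalent statement under the stronger lesser-invertibility hypothesis. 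Your route (Williamson's) does not require inverting quantum numbers and works for a general Coxeter group; the paper's diagrammatic route is restricted to the dihedral case and to nicer base rings but gives explicit control over morphisms and idempotents. Both are valid; the paper leans on the theorem as cited input and then re-proves a sharper dihedral version from scratch.
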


\section{Temperley-Lieb categories}
\label{sec-TL}

%
%

\subsection{{\bf The Uncolored Temperley-Lieb category}}

The (uncolored) \emph{Temperley-Lieb algebra} on $n$ strands $TL_n$ is an algebra over $\Z[\d]$ which can be realized pictorially. It has a basis given by crossingless matchings with $n$
points on bottom and $n$ on top. Multiplication is given by vertical concatenation of diagrams, and by replacing any closed component (i.e. circle) with the scalar $-\d$. We denote the
crossingless matching representing the identity element by $\1_n$.

\begin{example} An element of $TL_{10}$: $\ig{1}{TLexample}$ \end{example} 

The Temperley-Lieb algebra is part of the \emph{Temperley-Lieb category} $\mTL$, a monoidal category whose objects are $n \in \N$, pictured as $n$ points on a line, and where the
morphisms from $n$ to $m$ are spanned by crossingless matchings with $n$ points on bottom and $m$ on top. Composition is given by concatenation and resolving circles, as usual, so that
$\End(n)=TL_n$.

Let $U=U_q(\mf{sl}_2)$ be the quantum group of $\mf{sl}_2$. Let $V_k$ be the irreducible representation with highest weight $q^k$, and let $V=V_1$. In the introduction we remarked that the
Temperley-Lieb category governs the intertwiners between tensor products of $V$, so that $\Hom_{\mTL}(n,m) \ot_{\Z[\d]} \Q(q)=\Hom_U(V^{\ot n},V^{\ot m})$. Under this base change, $\d
\mapsto [2]_q$, and we use quantum numbers interchangeably with the polynomials in $\d$ that express them.

\begin{prop} \label{TLhasidemps} The Temperley-Lieb algebra $TL_n$, after extension of scalars, contains canonical idempotents which project $V^{\ot n}$ to each isotypic component.
It contains (non-canonical) primitive idempotents refining the isotypic idempotents, which project to each individual irreducible component. Given a choice of primitive idempotents,
$TL_n$ contains maps which realize the isomorphisms between the different irreducible summands of the same isotypic component. These maps can be defined in any extension of $\Z[\d]$ where
the quantum numbers $[2], [3], \ldots, [n]$ are invertible. \end{prop}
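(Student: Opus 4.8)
The plan is to construct all of the required idempotents and intertwiners explicitly from Jones--Wenzl projectors, and then to promote the resulting identities from $\Q(q)$ to the stated integral generality by a freeness argument. First I would recall Wenzl's recursion for the Jones--Wenzl projector $JW_k \in TL_k$: it writes $JW_k$ as $JW_{k-1}\ot\1_1$ plus a single correction term carrying the coefficient $\tfrac{[k-1]}{[k]}$. By induction on $k$ this shows that $JW_k$ exists as soon as $[2],[3],\dots,[k]$ are invertible, that it is idempotent, and that it is the unique idempotent in $TL_k$ annihilated by every cap. In particular $JW_1,\dots,JW_n$ are all available under our hypotheses.

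Next I would organise the multiplicities using the Bratteli diagram of the tower $TL_1 \subset TL_2 \subset \cdots$: a vertex $(\ell,k)$ for each $k \ge 0$ with $k\le \ell$ and $\ell-k$ even, and an edge from $(\ell,k)$ to $(\ell+1,k\pm 1)$. By Claim~\ref{allrecursions1} (equivalently Claim~\ref{sl2recursion}) the multiplicity of $V_k$ in $V^{\ot n}$ is the number of lattice paths in this diagram from $(0,0)$ to $(n,k)$. For each such path $P=(k_0=0,k_1,\dots,k_n=k)$ I would build an element $p_P \in TL_n$ by induction on $n$: given $p_{P'}\in TL_{n-1}$ for the truncated path, set $p_P$ to be $(p_{P'}\ot\1_1)$ pre- and post-composed with the evident splitting/merging diagram taking $k_{n-1}$ through-strands to $k_n$ through-strands and back (the identity when $k_n=k_{n-1}+1$, a single turnback when $k_n=k_{n-1}-1$), rescaled by the ratio of quantum numbers making the result idempotent --- the same normalisation $\tfrac{[k_{n-1}]}{[k_{n-1}+1]}$ appearing in Wenzl's recursion. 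Using only the fact that the $JW$'s are killed by caps, one then checks diagrammatically that $p_P^2=p_P$, that $p_P p_Q=0$ for $P\ne Q$, and that $\sum_P p_P = \1_n$; the image of $p_P$ is a highest-weight $U_q(\sl_2)$-submodule of $V^{\ot n}$ of highest weight $q^k$, hence $\cong V_k$, so $p_P$ is primitive. The isotypic idempotent is $e_{n,k}:=\sum_{P \text{ ending at }k} p_P$; it is central in $TL_n$ and independent of the choices (it is the projection onto the $V_k$-summand), hence canonical. Finally, for two paths $P,Q$ ending at the same $k$, gluing the merging half of the diagram defining $p_Q$ to the splitting half of the diagram defining $p_P$ across a copy of $JW_k$ produces $u_{P,Q}\in TL_n$ with $u_{P,P}=p_P$ and $u_{P,Q}u_{Q,R}=u_{P,R}$; these realise the isomorphisms between the irreducible summands of a fixed isotypic component.

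Every element constructed above is a $\Z[\d]$-linear combination of crossingless matchings with coefficients in $\Z[\d]\bigl[[2]^{-1},\dots,[n]^{-1}\bigr]$, and all of the asserted relations --- idempotency, orthogonality, completeness, centrality of $e_{n,k}$, the groupoid identities for the $u_{P,Q}$, and primitivity of $p_P$ (phrased as the corner $p_P\,TL_n\,p_P$ being of rank one over the base) --- are identities of such elements. Since $TL_n$ is free over $\Z[\d]$ on the crossingless matchings and $\Z[\d]\hookrightarrow\Q(q)$ via $\d\mapsto q+q^{-1}$, any such identity that holds after base change to $\Q(q)$ already holds over $\Z[\d]\bigl[[2]^{-1},\dots,[n]^{-1}\bigr]$; and over $\Q(q)$ all of these statements are standard, $TL_n$ being split semisimple with blocks indexed by $k$ and the representation theory of $U_q(\sl_2)$ identifying the $p_P$ with primitive idempotents. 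Base-changing along $\Z[\d]\bigl[[2]^{-1},\dots,[n]^{-1}\bigr]\to\Bbbk$ for any $\Z[\d]$-algebra $\Bbbk$ in which $[2],\dots,[n]$ are invertible then finishes the proof, the rank-one property of the corners being inherited under base change.

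The step I expect to require the most care is the bookkeeping of denominators: checking that no quantum integer larger than $[n]$, and no other new irreducible factor, is ever inverted in the above constructions. This can be pinned down via the cellular structure of $TL_n$, where it reduces to the known factorisation of the Gram determinants of the cell (standard) modules into products of ratios of the quantum integers $[2],\dots,[n]$.
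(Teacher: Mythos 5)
Your proposal is correct and is essentially what the paper intends: the paper dispatches the result in two sentences, calling everything but the denominator bound "tautological" and pointing to recursion formulas for the idempotents (with a reference to \cite{FKh} for the coefficient bookkeeping), while you spell out exactly those recursion formulas via the Bratteli-diagram path idempotents and then promote from $\Q(q)$ to $\Z[\d][[2]^{-1},\dots,[n]^{-1}]$ by the standard freeness-of-$TL_n$ argument. The one point worth double-checking as you write it up is the precise normalisation constants on the $p_P$ when a turnback is added (they come from Wenzl's coefficient $\tfrac{[k-1]}{[k]}$ and never involve anything beyond $[n]$), which is exactly the denominator accounting the paper waves at by citing \cite{FKh}.
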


\begin{proof} The only part of this proposition which is not tautological is the statement about invertible quantum numbers. This follows from recursion formulas for the idempotents, some
of which can be found below. For more on recursion formulas and coefficients see \cite{FKh}. \end{proof}

The highest non-zero projection, from $V^{\ot n}$ to $V_n$, is known as the \emph{Jones-Wenzl projector} $JW_n \in TL_n$, having been studied independently by Jones \cite{Jon3} and by
Wenzl \cite{Wenzl}. Here are some examples of Jones-Wenzl projectors.

\begin{example} $\ig{1}{JWinTL}$ \end{example}

\begin{claim} \label{killingJW} Jones-Wenzl projectors satisfy the following properties: \begin{itemize} \item $JW_n$ is the unique map which is killed when any cap is applied on top or
any cup on bottom, and for which the coefficient of $\1_n$ is $1$. \item The ideal generated by $JW_n$ in $TL_n$ is rank 1, since any other element $x \in TL_n$ acts
on $JW_n$ by the coefficient of $\1_n$ in $x$. \item $JW_n$ is invariant under horizontal and vertical reflection. 
\item $JW_n$ can be defined if and only if the quantum binomial coefficients ${n \brack k}$ are invertible for all $0 \le k \le n$ (these are also polynomials in $\d$).
\item There is a recursive formula due to Wenzl \cite{Wenzl} which holds for $n \ge 1$: \begin{equation} \ig{1}{JWrecursive1}
\label{JWrecursive1}. \end{equation} \item There is an alternate recursive formula (\cite{FKh}, Theorem 3.5),
which sums over the possible positions of cups, and follows quickly from \eqref{JWrecursive1}: \begin{equation} \ig{1}{JWrecursive2} \label{JWrecursive2}. \end{equation} \end{itemize}
\end{claim}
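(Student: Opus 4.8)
The plan is to take Wenzl's recursion \eqref{JWrecursive1} as the \emph{definition} of $JW_n$ and to deduce all remaining assertions from a single uniqueness statement. Set $JW_1=\1_1$, and define $JW_n$ from $JW_{n-1}$ by \eqref{JWrecursive1}; the only coefficient introduced at stage $n$ is $\tfrac{[n-1]}{[n]}$, so $[n]^{-1}$ enters, and on unwinding the recursion the coefficients of $JW_n$ in the crossingless-matching basis are signed ratios built from $\{[2],\dots,[n]\}$. These are exactly the data whose invertibility is packaged by the quantum binomials ${n \brack k}$ (compare \cite{Wenzl,FKh}), which gives the ``if'' direction of the fourth bullet. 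An easy induction then yields the annihilation property: applying any elementary cap on top or cup on bottom to the right-hand side of \eqref{JWrecursive1} kills $JW_{n-1}\otimes\1_1$ except in the single slot adjacent to the new strand, and the subtracted term is precisely calibrated --- using that $JW_{n-1}$ is itself annihilated by turnbacks --- to cancel that contribution; since the coefficient of $\1_n$ is manifestly $1$, this establishes the existence half of the first bullet and the fifth bullet.

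The heart of the argument, and the step I expect to be the main obstacle, is the \textbf{uniqueness} in the first bullet: if $x\in TL_n$ is killed by every elementary cap on top and every elementary cup on bottom, then $x$ equals the coefficient of $\1_n$ in $x$ times $JW_n$. Write $\mathcal I_n\subset TL_n$ for the span of the non-identity crossingless matchings; it is a two-sided ideal (composing with any diagram keeps the through-degree below $n$) and $TL_n=\Bbbk\,\1_n\oplus\mathcal I_n$. Subtracting the appropriate multiple of $JW_n$, it suffices to show that $z\in\mathcal I_n$ with the annihilation property must vanish --- equivalently, that the top cell of the Temperley--Lieb cellular algebra is one-dimensional. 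I would prove this by expanding $z$ in the crossingless-matching basis and applying the elementary cap that removes an outermost top turnback of a basis term of minimal through-degree; a short diagrammatic check shows these contributions cannot cancel one another, forcing every coefficient to be $0$. (Alternatively one can induct on $n$, peeling off the top strand and invoking the previous case.) Making the ``cannot cancel'' step fully precise is the only genuinely fiddly point; it is standard for cellular algebras, and I would either give the direct chase or cite \cite{BarWes}.

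Everything else then follows formally. Every non-identity crossingless matching has an arc joining two adjacent bottom points and one joining two adjacent top points, so stacking such a diagram $D$ against $JW_n$ (on either side) produces a turnback directly against $JW_n$; by the annihilation property $D\cdot JW_n=JW_n\cdot D=0$, hence for any $x=c\,\1_n+y$ with $y\in\mathcal I_n$ one gets $x\,JW_n=JW_n\,x=c\,JW_n$. Thus $TL_n\,JW_n=JW_n\,TL_n=\Bbbk\,JW_n$ and the two-sided ideal $TL_n\,JW_n\,TL_n=\Bbbk\,JW_n$ is free of rank $1$, with $x$ acting by its $\1_n$-coefficient --- the second bullet. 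For the third bullet, a top--bottom reflection exchanges elementary caps on top with elementary cups on bottom and fixes $\1_n$, and a left--right reflection permutes elementary caps on top among themselves and fixes $\1_n$; in either case the reflected diagram still satisfies the defining property of the first bullet, hence equals $JW_n$ by uniqueness. For the ``only if'' half of the fourth bullet, the closed formula of \cite{FKh} expresses $JW_n$ with denominators exactly the quantum binomials ${n\brack k}$, so no such binomial can fail to be a unit if $JW_n$ is to exist over $\Bbbk$; the precise bookkeeping is in \cite{Wenzl,FKh}. Finally, the alternate recursion \eqref{JWrecursive2} is obtained by substituting \eqref{JWrecursive1} into itself repeatedly and collecting terms by the position of the unique cup, the coefficients $\tfrac{[a]}{[n]}$ dropping out of the quantum-number identity of Claim \ref{qnumrecursion}; as the statement says this ``follows quickly from \eqref{JWrecursive1},'' and I would leave the regrouping to the reader.
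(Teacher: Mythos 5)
The paper does not prove this claim; it simply cites \cite{GooWen,BMPS,FKh,Mor,Wenzl} and explicitly flags the fourth bullet as a ``folklorish result which does not seem to appear in the literature,'' so there is no written proof for me to compare you against. Your overall plan (define $JW_n$ by Wenzl's recursion, establish the kill-property and identity-coefficient inductively, derive the ideal and reflection statements formally from uniqueness) is the standard one and is essentially right, but two of your steps have genuine gaps.

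The serious one is the ``if'' direction of the fourth bullet. Unwinding \eqref{JWrecursive1} introduces a factor of $[k]^{-1}$ at each stage $2\le k\le n$, so this construction only proves existence of $JW_n$ when \emph{every} $[k]$ with $k\le n$ is invertible --- a strictly stronger hypothesis than invertibility of the quantum binomials ${n\brack k}$. Concretely, at $q=i$ one has $[2]=[4]=0$, $[3]=-1$, $[5]=1$; the binomials ${5\brack k}$ are all $\pm 1$ or $\pm 2$ and hence invertible (away from characteristic $2$), so $JW_5$ should exist, but $JW_4$ does not (since ${4\brack 1}=[4]=0$) and the Wenzl recursion for $JW_5$ literally cannot be run. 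Your sentence that the recursion's coefficients ``are exactly the data whose invertibility is packaged by the quantum binomials'' is precisely the nontrivial folklore assertion and does not follow from the recursion; it needs a separate argument, e.g.\ a closed formula for each coefficient whose denominator, \emph{after cancellation in $\ZM[\delta]$}, is a product of quantum binomials, or a representation-theoretic argument identifying the relevant Gram determinant with a product of quantum binomials.

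The second issue is your uniqueness step. The assertion ``$z\in\mathcal I_n$ killed by all caps on top and cups on bottom implies $z=0$'' is \emph{false} without a genericity hypothesis: in $TL_2$ over a ring with $\delta=0$, the element $e_1\in\mathcal I_2$ is killed by the unique cap and cup (each produces a circle factor $-\delta=0$) yet is nonzero, and indeed $JW_2$ fails to exist there. So ``no cancellation'' is not really the obstruction --- individual terms can vanish outright via circle evaluations. The clean argument, valid exactly when $JW_n$ exists, uses the idempotent you have already built: every $D\in\mathcal I_n$ factors through some $\cup_j$ on the bottom and some $\cap_j$ on the top, so if $z$ is killed by all caps on top and cups on bottom then $z(\1_n-JW_n)=0=(\1_n-JW_n)z$, whence $z=JW_n\,z\,JW_n$; combined with $JW_n\,z=c\,JW_n$ (where $c$ is the $\1_n$-coefficient of $z$, by the same factoring argument applied to $z=c\1_n+z'$) this gives $z=c\,JW_n$. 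This sidesteps any bookkeeping of cancellations among basis elements, and makes transparent that uniqueness is conditional on the existence of $JW_n$ in the first place.
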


In $\Kar(\mTL)$ we let $V_n$ denote the image of $JW_n$ and $(\cdot) \ot V$ denote the functor of adding a new line on the right (i.e. it sends the object $n$ to $n+1$, and it acts on
morphisms by adding a new line to the right). The recursive formula \eqref{JWrecursive1} gives a diagrammatic proof of the following obvious proposition.

\begin{prop} \label{KarTL} Suppose that all quantum numbers are invertible. The Karoubi envelope $\Kar(\mTL)$ of $\mTL$ has indecomposables $V_n$, $n \in \N$. These satisfy $V \ot V_0
\cong V_0 \ot V \cong V_1$ and $V \ot V_n \cong V_n \ot V \cong V_{n+1} \oplus V_{n-1}$ for $n \ge 1$. \end{prop}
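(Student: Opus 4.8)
The plan is to deduce everything from the Wenzl recursion \eqref{JWrecursive1} together with the properties of $JW_n$ collected in Claim \ref{killingJW}. Set $V_n \define \Im(JW_n) \in \Kar(\mTL)$; this makes sense because the relevant quantum binomials are invertible, so $JW_n$ is defined and idempotent. Note $V_0$ is the monoidal unit (the object $0$, since $JW_0 = \1_0$) and $V_1$ is the object $1$ (since $JW_1 = \1_1$), and the functor $(\cdot)\ot V$ of adding a strand on the right carries $JW_n$ to $JW_n\ot\1_1$, so that $V_n\ot V = \Im(JW_n\ot\1_1)$ as a summand of the object $n+1$. The isomorphism $V\ot V_n \cong V_n\ot V$ is immediate from the left--right reflection of diagrams, an equivalence of $\mTL$ that fixes every $JW_n$ (Claim \ref{killingJW}) and interchanges the two ``add a strand'' functors; so I only treat $V_n\ot V$ from here on.

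For indecomposability and pairwise distinctness I would compute $\Hom_{\Kar(\mTL)}(V_m,V_n) = JW_n\cdot\Hom_{\mTL}(m,n)\cdot JW_m$. A spanning set consists of crossingless matchings $m\to n$ sandwiched between the two projectors; whenever $m\ne n$ such a matching is not the identity, so (counting through-strands) it places a cap on two of its top endpoints or a cup on two of its bottom endpoints, hence is killed by $JW_n$ from below or $JW_m$ from above (Claim \ref{killingJW}), giving $\Hom(V_m,V_n)=0$. When $m=n$ the same argument leaves only the identity matching, so $\End(V_n) = \Bbbk\cdot JW_n \cong \Bbbk$ --- which is also immediate from the rank-one-ideal statement in Claim \ref{killingJW}. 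Since $\Bbbk$ has no nontrivial idempotents, each $V_n$ is indecomposable, and the $V_n$ are pairwise non-isomorphic.

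For $n\ge 1$ the decomposition comes from reading \eqref{JWrecursive1} as $JW_n\ot\1_1 = A + B$, where $A \define JW_{n+1}$ and $B \define (JW_n\ot\1_1) - JW_{n+1}$, the latter being a unit scalar times the turnback term $T_n$ that factors through the object $n-1$. I claim $A,B$ are orthogonal idempotents summing to the identity of $V_n\ot V$. Indeed $A$ is idempotent; $JW_{n+1}$ is annihilated by any cup on its bottom or cap on its top (Claim \ref{killingJW}), and $JW_n\ot\1_1$ differs from $\1_{n+1}$ only by non-identity diagrams, each of which places such a cup or cap, so $JW_{n+1}(JW_n\ot\1_1) = (JW_n\ot\1_1)JW_{n+1} = JW_{n+1}$; the relations $AB=BA=0$ and $B^2=B$ then follow formally using $(JW_n\ot\1_1)^2 = JW_n\ot\1_1$. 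Hence $V_n\ot V\cong\Im(A)\oplus\Im(B)$ with $\Im(A) = \Im(JW_{n+1}) = V_{n+1}$.

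The remaining and only genuinely computational point is $\Im(B)\cong V_{n-1}$. Write $T_n = i\circ p$ with $p\co n+1\to n-1$ capping off the last strand after $JW_n\ot\1_1$, and $i\co n-1\to n+1$ cupping it back in before another $JW_n\ot\1_1$; short diagrammatic checks with Claim \ref{killingJW} show $i$ and $p$ intertwine the ambient idempotents and satisfy $i\circ JW_{n-1} = i$ and $JW_{n-1}\circ p = p$, so they are bona fide morphisms $V_{n-1}\leftrightarrow V_n\ot V$ in $\Kar(\mTL)$. The partial-closure identity for Jones--Wenzl projectors --- closing the last strand of $JW_n$ yields a unit multiple of $JW_{n-1}$, itself a one-line induction from \eqref{JWrecursive1} --- gives $p\circ i = \lambda_n\,\1_{V_{n-1}}$ with $\lambda_n$ a unit, and one checks $i\circ p = \lambda_n B$; rescaling $p$ by $\lambda_n^{-1}$ then exhibits mutually inverse isomorphisms between $\Im(B)$ and $V_{n-1}$ (this is where invertibility of $[n]$ and $[n+1]$ is used). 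For $n=0$ one simply has $V_0\ot V = \Im(\1_1) = V_1$. Finally, completeness of the list follows by induction: every object of $\Kar(\mTL)$ is a summand of a direct sum of objects $k = V^{\ot k}$, and repeatedly splitting off one strand with the decomposition just proved writes $V^{\ot k}$ as a direct sum of $V_n$'s with the truncated-Pascal multiplicities of Claim \ref{allrecursions1}; since $\End(V_n)=\Bbbk$ and $\Hom(V_m,V_n)=0$ for $m\ne n$, Krull--Schmidt over the local ring $\Bbbk$ identifies the $V_n$ as a complete irredundant list of indecomposables. The main obstacle is the turnback bookkeeping of this last paragraph --- the partial-closure identity and the idempotent splitting --- while everything else is formal.
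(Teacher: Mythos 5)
Your proof is correct and follows the same route the paper intends: the paper simply declares the proposition ``obvious'' and points to Wenzl's recursion \eqref{JWrecursive1} as giving a diagrammatic proof, and your writeup is precisely the careful working-out of that pointer (splitting $JW_n\ot\1_1 = JW_{n+1} + B$ into orthogonal idempotents, identifying $\Im(B)\cong V_{n-1}$ via the partial-closure identity, and using $\Hom(V_m,V_n)=\delta_{mn}\Bbbk$ to get indecomposability and completeness). The only caveat, which is really an imprecision in the proposition's hypotheses rather than in your argument, is that the completeness-of-the-list step via Krull--Schmidt silently needs $\Bbbk$ to be local (or at least projective-free), which ``all quantum numbers invertible'' alone does not guarantee.
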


The proofs of the above facts are standard. Some references on the Temperley-Lieb algebra and planar algebras include \cite{GooWen,BMPS,FKh,Mor}. Formulae for Jones-Wenzl projectors were
produced in \cite{Wenzl} and \cite{FKh}; the paper \cite{Mor} has a more detailed version. Finally, the statement that Jones-Wenzl denominators are defined when quantum binomial
coefficients are non-zero is a folklorish result which does not seem to appear in the literature. A representation-theoretic justification was recently explained to me by Ben Webster on
mathoverflow.net.

\subsection{{\bf Roots of Unity and Rotation}}\label{rouandrotation}

Suppose that the Jones-Wenzl projector $JW_{m-1} \in TL_{m-1}$ is well-defined. In particular, $[m-1]$ is invertible. One can ask when $JW_{m-1}$ is \emph{negligible}, which is equivalent
for Jones-Wenzl projectors to the statement that \[\ig{1}{defnegl}.\] A related question is whether the Jones-Wenzl projector has a ``rotational eigenvalue," i.e. whether rotating
$JW_{m-1}$ by a single strand will yield a scalar multiple of $JW_{m-1}$. Clearly $JW_{m-1}$ is negligible if and only if it has a rotational eigenvalue, if and only if it is killed by all
caps on top, bottom, or sides. See \cite{GooWen} for more details about negligibility.

\begin{example} $JW_2$ is negligible iff $[2]^2 = 1$ iff $[3]=0$. On the other hand, $JW_3$ is only negligible if $[4]=0$ and $[3]=1$. If instead $[4]=0$ and $[3]=-1$ (and $[2]=0$), $JW_3$
is not negligible. \end{example}

The diagram $\ig{.5}{funcoeff}$ is the rotation of the identity map by one strand, and using \eqref{JWrecursive2} one can calculate that its coefficient in $JW_{k}$ is precisely
$\frac{1}{[k]}$. If there is a rotational eigenvalue for $JW_{m-1}$, it is is precisely $\frac{1}{[m-1]}$. If there is a rotational eigenvalue, it must be an $(m-1)$-th root of unity (to
preserve the coefficient of the identity map), and it must also be $\pm 1$ (by a variety of arguments). It particular, if $m$ is even then the rotational eigenvalue of $JW_{m-1}$ must be
1.

\begin{claim} If $m$ is odd, then $JW_{m-1}$ is negligible if and only if $[m]=0$, and it has rotational eigenvalue $[m-1] = \pm 1$. If $m$ is even, then $JW_{m-1}$ is negligible if and
only if $[m]=0$ and $[m-1]=1$. \end{claim}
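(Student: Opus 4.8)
The plan is to reduce the statement to the two equivalences recorded just above the claim — that $JW_{m-1}$ is negligible exactly when it admits a rotational eigenvalue, and exactly when it is annihilated by every cap on the top, bottom, or side — together with the quantum-number arithmetic of Section~\ref{qnum} and one standard ``bubble'' computation with Jones--Wenzl projectors.

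First I would note that $JW_{m-1}$ is, by its defining property (Claim~\ref{killingJW}), automatically killed by every cap on the top and every cup on the bottom. So, by the equivalence just quoted, negligibility of $JW_{m-1}$ is equivalent to $JW_{m-1}$ being killed by a ``side'' turnback; and by the horizontal-reflection invariance of $JW_{m-1}$ (again Claim~\ref{killingJW}) there is, up to that symmetry, only one such turnback, whose application is the partial closure of $JW_{m-1}$ along a single through-strand. The usual bubble identity — deduced from the Wenzl recursion~\eqref{JWrecursive1} in the standard way — evaluates this partial closure as $-\tfrac{[m]}{[m-1]}\,JW_{m-2}$. Since $JW_{m-1}$ is assumed well-defined, $[m-1]$ is invertible, and $JW_{m-2}$ is nonzero (its coefficient of $\1_{m-2}$ equals $1$), so this element vanishes if and only if $[m]=0$. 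This proves the equivalence ``$JW_{m-1}$ negligible $\Longleftrightarrow [m]=0$'' for both parities of $m$, and in particular finishes the odd case of the equivalence.

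For the eigenvalue statement in the odd case, I would invoke that $[m]=0$ forces $[m-1]^2=1$ (Section~\ref{qnum}); since the rotational eigenvalue of $JW_{m-1}$, when it exists, is $\tfrac{1}{[m-1]}$, it then equals $[m-1]=\pm1$. For the even case I would argue as follows: negligibility forces $JW_{m-1}$ to have a rotational eigenvalue, which — by the discussion above the claim (an eigenvalue must be an $(m-1)$-st root of unity and must be $\pm1$, and $m-1$ is odd here) — must equal $1$; as the eigenvalue is $\tfrac{1}{[m-1]}$, this gives $[m-1]=1$. Conversely, if $[m]=0$ (and, for $m$ even, $[m-1]=1$, which by the previous sentence is automatic once $JW_{m-1}$ exists and $[m]=0$), the partial-closure computation shows the side turnback annihilates $JW_{m-1}$, hence $JW_{m-1}$ is killed by all turnbacks and is therefore negligible.

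I expect the only genuine obstacle to be the bookkeeping around the side turnback: making precise that, for $JW_{m-1}$ specifically, being killed by all side caps is exactly the vanishing of the one-strand partial closure, and recalling the exact form $-\tfrac{[m]}{[m-1]}JW_{m-2}$ of the bubble identity together with its derivation from~\eqref{JWrecursive1}. Everything else is an immediate consequence of facts already established in the excerpt.
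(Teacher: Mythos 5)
Your argument hinges on the bubble identity ``side-closure of $JW_{m-1} = -\tfrac{[m]}{[m-1]}\,JW_{m-2}$'' and the blanket conclusion it gives, that $JW_{m-1}$ is negligible iff $[m]=0$ regardless of parity. The problem is that this identity quietly presupposes $JW_{m-2}$ is defined, which is not granted: the hypothesis gives only $JW_{m-1}$. Indeed, once $[m]=0$ one has $[m-r]=[m-1][r]$, so ${m-1 \brack k}=[m-1]^k$ is a unit and $JW_{m-1}$ is automatically defined; but ${m-2 \brack k}=[m-1]^k[k+1]$, so $JW_{m-2}$ demands that $[1],\dots,[m-1]$ all be invertible, a strictly stronger condition. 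When $JW_{m-2}$ does not exist, the one-strand partial closure of $JW_{m-1}$ is just some element of $TL_{m-2}$ killed by all top and bottom caps, and that subspace need no longer be a line; your formula then says nothing. This is exactly what separates the parities, and the example the paper records immediately before the claim is the counterexample you should have tested: take $m=4$ over $\Bbbk=\Z$ with $[2]=0$, so $[3]=-1$, $[4]=0$. Then $JW_3 = \1_3 + \tfrac{[2]}{[3]}(e_1+e_2) + \tfrac{1}{[3]}(e_1e_2+e_2e_1) = \1_3 - e_1e_2 - e_2e_1$ is well-defined while $JW_2$ is not, and closing its third strand gives
\[
\Bigl(-[2]+\tfrac{[2]}{[3]}\Bigr)\1_2 \;+\; \tfrac{2-[2]^2}{[3]}\,e_1 \;=\; -2\,e_1 \;\neq\; 0,
\]
so $JW_3$ is not killed by the side cap and is not negligible, even though $[m]=0$.

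What does survive of your proposal is the ``only if'' direction, which needs no $JW_{m-2}$: the coefficient of $\1_{m-2}$ in the partial closure of $JW_{m-1}$ is $-[2]+\tfrac{[m-2]}{[m-1]}=-\tfrac{[m]}{[m-1]}$, so negligibility forces $[m]=0$; and your deduction of the rotational eigenvalue $\tfrac{1}{[m-1]}=[m-1]=\pm1$ (necessarily $1$ for $m$ even, since $m-1$ is odd) is fine and is essentially the paper's own discussion. What is genuinely missing is the converse for $m$ even, that $[m]=0$ together with $[m-1]=1$ kills the side closure, and the bubble identity cannot supply it because it reduces to the undefined $JW_{m-2}$. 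Beware also the circularity at the end of your even-case paragraph: you derive $[m-1]=1$ by assuming negligibility, then declare that ``$[m-1]=1$ \dots\ is automatic once $JW_{m-1}$ exists and $[m]=0$'' by pointing back at that derivation; it is not automatic (the $\Z$-example above has $JW_3$ defined, $[4]=0$, $[3]=-1$), and using the forward direction's conclusion as a free hypothesis for the converse is precisely where the even case breaks.
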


This claim is the primary reason to assume that the realization is balanced: to guarantee the existence of certain rotationally-invariant Jones-Wenzl projectors. Moreover, it demonstrates
the key difference between even-unbalanced and odd-unbalanced realizations. Even-unbalanced situations do not have rotational eigenvalues.

Suppose that $[m]=0$. Using the fact that $[m-r]=[m-1][r]$ for $0 \le r \le m$, one can see that ${m-1 \brack k}$ is actually just a power of $[m-1]$, so it is invertible. Therefore
$JW_{m-1}$ is well-defined.

\subsection{{\bf The Two-colored Temperley-Lieb Category}}\label{twoTL}

Any embedded 1-manifold will divide the plane into regions which can be colored alternately with 2 colors. Let us assume these two colors are red and blue. We may construct a variation on
the Temperley-Lieb algebra by coloring the regions. The \emph{two-color Temperley-Lieb 2-category} $\mTTL$ has two objects, red and blue. It has two generating 1-morphisms: a map from red
to blue, and a map from blue to red. The 2-morphisms are the $\Z[\d]$-module spanned by appropriately-colored crossingless matchings. Multiplication is defined as in $\mTL$.

\begin{example} An element of $\Hom(rbrbrb,rbrb)$: $\ig{1}{2TLexample2}$ \end{example}

\begin{remark} There is an ``asymmetric" version of $\mTTL$ where the evaluation of a circle depends on the color of the interior, but the product of the two circles is thought of as
$[2]^2$. This is akin to an asymmetric Cartan matrix, and is described in the Appendix. \end{remark}

We use notation for 1-morphisms in $\mTTL$ analogous to our notation for reduced expressions in the dihedral group. The 1-morphism with 5 strands which passes through colors $rbrbrb$ will
be denoted either as $\hat{6}_b$ or ${}_r \hat{6}$, and will be called \emph{right-$b$-aligned} or \emph{left-$r$-aligned}. There is a color-switch isomorphism between
$\End(\hat{n}_b)$ and $\End(\hat{n}_r)$; both will be called the \emph{two-colored Temperley-Lieb algebra} $2TL_{n-1}$.

\begin{example} An element of $2TL_{10}$: $\ig{1}{2TLexample}$ \end{example}

The Jones-Wenzl projectors $JW_n$ carry over to $2TL_n$ for either choice of alignment. Proposition \ref{TLhasidemps} generalizes obviously to $\mTTL$, where one remembers that each object
in $\Kar(\mTL)$ appears twice in $\Kar(\mTTL)$, once with each alignment. For instance, $V_0$ is replaced by two distinct indecomposables, $V_b$ and $V_r$, whose identity map is
represented by the empty diagram with the region colored blue or red respectively. Similarly, $V_1$ is replaced by $V_{rb}$ and $V_{br}$, and so forth.

\begin{prop} \label{Kar2TL} Suppose that all quantum numbers are invertible. The Karoubi envelope $\Kar(\mTTL)$ of $\mTTL$ has indecomposables corresponding to nonempty alternating
sequences of red and blue. We denote by $(\cdot) \ot r$ the functor of placing a line on the right, changing a right-$b$-aligned object into a right-$r$-aligned one. This satisfies
$V_{\hat{1}_b} \ot r \cong V_{\hat{2}_r}$ and $V_{\hat{k}_b} \ot r \cong V_{\hat{k+1}_r} \oplus V_{\hat{k-1}_r}$ for $k \ge 2$. \end{prop}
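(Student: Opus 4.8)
The plan is to imitate the proof of Proposition~\ref{KarTL}, carrying the two alignments along; I will describe the right-$b$-aligned side, the right-$r$-aligned side being obtained by the color-switch. Since all quantum numbers, and hence all quantum binomial coefficients, are invertible, each $JW_{n-1}$ is defined in $2TL_{n-1}$ (Claim~\ref{killingJW}), so for $n \ge 1$ one lets $V_{\hat{n}_b}$ be the image in $\Kar(\mTTL)$ of the idempotent $JW_{n-1} \in \End(\hat{n}_b) = 2TL_{n-1}$, and similarly $V_{\hat{n}_r}$; this matches the notation already in use before the proposition. The functor $(\cdot)\ot r$ carries $\hat{k}_b$ to $\hat{k+1}_r$ and carries the idempotent $JW_{k-1}$ to $JW_{k-1}\ot\1_1 \in \End(\hat{k+1}_r) = 2TL_k$, so $V_{\hat{k}_b}\ot r$ is the image of $JW_{k-1}\ot\1_1$.

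First I would establish the splitting. The two-colored form of Wenzl's recursion \eqref{JWrecursive1} (valid in $2TL$ for the same reason as in $TL$) gives, for $k \ge 1$,
\[ JW_{k-1}\ot\1_1 \;=\; JW_k \;+\; \frac{[k-1]}{[k]}\,(JW_{k-1}\ot\1_1)(\1_{k-2}\ot E)(JW_{k-1}\ot\1_1), \]
where $E$ denotes the cup-cap diagram on the rightmost two strands. For $k \ge 2$ the two summands on the right are orthogonal idempotents adding to $JW_{k-1}\ot\1_1$; the image of $JW_k$, with its right-$r$-alignment, is $V_{\hat{k+1}_r}$, while the second idempotent factors through the $(k-2)$-strand object left after deleting $E$ — which is right-$r$-aligned — carrying $JW_{k-2}$ there, so its image is $V_{\hat{k-1}_r}$. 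This yields $V_{\hat{k}_b}\ot r \cong V_{\hat{k+1}_r}\oplus V_{\hat{k-1}_r}$ for $k \ge 2$. For $k=1$ the coefficient $\frac{[k-1]}{[k]}=\frac{[0]}{[1]}$ vanishes, so $\1_1 = JW_1$ and $V_{\hat{1}_b}\ot r \cong V_{\hat{2}_r}$, giving the two displayed isomorphisms.

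Next I would prove completeness, indecomposability, and distinctness. Every object of $\mTTL$ is an alternating word $\hat{n}_c$, and for $n\ge 2$ such a word is obtained from some $\hat{n-1}$ by adding one strand on the right; so by induction on $n$, applying the splitting just established (and its color-reverse) to each summand of $\hat{n-1}$, one sees $\hat{n}_c$ is a finite direct sum of objects $V_{\hat{m}_d}$ with $m\le n$. Hence the $V_{\hat{n}_c}$ generate $\Kar(\mTTL)$ additively and every indecomposable is a summand of one of them. By Claim~\ref{killingJW}, $JW_{n-1}\cdot 2TL_{n-1}\cdot JW_{n-1} = \Bbbk\cdot JW_{n-1}$, so $\End_{\Kar(\mTTL)}(V_{\hat{n}_c}) \cong \Bbbk$ has no idempotents besides $0$ and $\id$, so $V_{\hat{n}_c}$ is indecomposable. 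For distinctness, $\Hom_{\mTTL}(\hat{n}_c,\hat{m}_d)$ is spanned by colored crossingless matchings, and pre- and post-composing with $JW_{n-1}$ and $JW_{m-1}$ annihilates such a matching unless it has the maximal number $n-1=m-1$ of through-strands; this forces $n=m$, and a through-strand matching compatible with both colorings exists only when the alignments agree, i.e. $c=d$, in which case it is the identity. Thus $\Hom(V_{\hat{n}_c},V_{\hat{m}_d})$ equals $\Bbbk\cdot\id$ when $(n,c)=(m,d)$ and $0$ otherwise, so the $V_{\hat{n}_c}$ over nonempty alternating words form a complete list of pairwise non-isomorphic indecomposables.

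I do not expect a genuine obstacle: nothing happens beyond Proposition~\ref{KarTL} apart from the two-colored bookkeeping. The one point I would double-check carefully is that the complementary idempotent in Wenzl's recursion has image $V_{\hat{k-1}_r}$ rather than $V_{\hat{k-1}_b}$ — this one reads off by drawing $E$ inside a right-$r$-aligned $(k)$-strand diagram and noting that the region to the right of the surviving strands is red.
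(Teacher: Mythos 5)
Your argument is correct and is precisely the one the paper alludes to: the paper gives no explicit proof of Proposition~\ref{Kar2TL}, presenting it as the two-colored analogue of Proposition~\ref{KarTL}, whose proof it says "the recursive formula \eqref{JWrecursive1} gives." Your write-up carries out exactly that plan — apply Wenzl's recursion to split $JW_{k-1}\ot\1_1$ into the two orthogonal idempotents, identify the complementary summand by tracking the coloring of the region inside the rightmost cup, and use Claim~\ref{killingJW} to get $\End(V_{\hat{n}_c})\cong\Bbbk$ and vanishing of Hom spaces between distinct indecomposables.
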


Note that we can label these indecomposables by the elements of $W_{\infty} \setminus \{e\}$. The multiplication rule given here is a categorification of the rule given in Claim
\ref{KLrecursion}.

When $m$ is even, the horizontal flip of the right-$b$-aligned Jones-Wenzl projector $JW_{m-1}$ is the right-$r$-aligned $JW_{m-1}$. When $[m]=0$ (and $[m-1]=1$ for $m$ even), both
versions of $JW_{m-1}$ are negligible. Rotating the right-$b$-aligned $JW_{m-1}$ by one strand will yield $[m-1]$ times the right-$r$-aligned $JW_{m-1}$.

\begin{remark} We know that irreducible representations of $\sl_2$ or $U_q(\sl_2)$ come in two kinds: even and odd dimensional. These are distinguished, for instance, by the action of the
center of $SL_2$ (the \emph{central character}), or by the image of the highest weight in $\Lambda_{\textrm{wt}} / \Lambda_{\textrm{rt}} \cong \Z / 2\Z$. This decomposition is compatible
with the tensor product, so that $U_q(\sl_2)$-rep is actually $\Z/2\Z$-graded-monoidal. Tensoring with the standard representation will switch between even and odd irreducibles, just as it
switches here between the two colors. This gives a representation-theoretic meaning for the two-colored Temperley-Lieb algebra. Even and odd are usually distinguished by the fact that the
trivial representation is even, but for us there is no difference between red and blue. Thus $\mTTL$ encodes a $2$-categorical version of $U_q(\sl_2)$ where we remember the central
character, but forget which character is trivial. This will generalize in the construction of quantum algebraic Satake in type $A$, see \cite{EQAGS}. \end{remark}

\subsection{{\bf Coxeter Lines and associated polynomials}}\label{coxlines}

\begin{defn} Given a colored crossingless matching in $\mTTL$, its \emph{associated monomial} will be $\a_s^a \a_t^b \in R$, where $a$ is the number of blue regions and $b$ the number of
red regions. Given an arbitrary 2-morphism in $\mTTL$, its \emph{associated polynomial} will be obtained by writing it in the basis of crossingless matchings and taking the appropriate
linear combination of monomials. \end{defn}

Note that the associated polynomial is defined only for crossingless matchings, not for crossingless matchings with circles. That is, a circle evaluates to $-[2]$, not to an extra copy of
$\a_s$ or $\a_t$. We will use associated polynomials in a crucial way in sections \ref{sec-grothinfty} and \ref{functortobimfinitesing}, to show that certain morphism spaces between
Soergel bimodules are non-zero.

We are interested in the associated polynomial of a Jones-Wenzl projector. Let us assume that all quantum numbers are invertible (up to the point we are interested in). Recall that we have
defined the positive roots $\LC_s$ and $\LC_t$ for $m = \infty$ in section \ref{roots}. We now place a ``snakelike" order on these roots: $f_{s,0} < f_{t,0} < f_{t,1} < f_{s,1} < f_{s,2} <
f_{t,2} < \ldots$. There is an alternative $t$-aligned snakelike order which begins with $f_{t,0} < f_{s,0} < f_{s,1} < \ldots$. One feature of this order is that the first $m$ roots
enumerate precisely the $m$ roots of the reflection representation of the corresponding finite dihedral group, when $[m]=0$.

Let $\LM^{(s)}_{k}$ be the product of the first $k$ roots in the $s$-aligned snakelike order. Clearly $\LM^{(s)}_k = \LM^{(t)}_k$ for $k$ even, so we may ignore the superscript. When
$[m]=0$ and $[m-1]=1$, $\LM^{(s)}_m = \LM^{(t)}_m$ regardless of the parity of $m$.

\begin{prop} \label{coxeterlinesprop} Suppose that all quantum numbers are invertible. The associated polynomial of $JW_{2k-1}$ is $\LM_{2k}$ times an invertible scalar specified below.
The associated polynomial of the blue-aligned (resp. red-aligned) $JW_{2k}$ is $\LM^{(s)}_{2k+1}$ (resp. $\LM^{(t)}_{2k+1}$) times an invertible scalar specified below. \end{prop}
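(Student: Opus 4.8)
The plan is to argue by induction on $n$, writing $\Phi(-)$ for the associated polynomial and using one of the Jones--Wenzl recursions, say Wenzl's recursion \eqref{JWrecursive1} (the single-clasp expansion \eqref{JWrecursive2} would serve equally well). Throughout I work with the blue-aligned $JW_n\in\End(\hat{n+1}_b)$; the red-aligned statement then follows from the colour-switch symmetry, which interchanges $\LM^{(s)}_k$ and $\LM^{(t)}_k$. The base cases are immediate: $JW_0$ is the empty diagram, so $\Phi(JW_0)=\a_s=\LM^{(s)}_1$, and $JW_1=\1_1$, so $\Phi(JW_1)=\a_s\a_t=\LM_2$. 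For the inductive step I aim to prove that $\Phi(JW_n)=\frac{[n-1]}{[n]}\,g\cdot\Phi(JW_{n-1})$, where $g$ denotes the $(n+1)$-st root in the snakelike order relevant to the alignment; granting the inductive hypothesis $\Phi(JW_{n-1})=\frac1{[n-1]}\LM_n$, this yields $\Phi(JW_n)=\frac1{[n]}\LM_{n+1}$, identifying the invertible scalar as $\frac1{[n]}$ for $n\ge1$ (and as $1$ when $n=0$) --- which is precisely the coefficient of the one-strand rotation of the identity inside $JW_n$, cf.\ Section \ref{rouandrotation}.

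Two local properties of $\Phi$ carry the argument. First, adjoining a through-strand on the right: for any crossingless matching $D$, the diagram $D\otimes|$ has exactly one more region than $D$, a new rightmost region whose colour is opposite to the rightmost colour of $D$; hence $\Phi(D\otimes|)=\a_c\,\Phi(D)$, with $\a_c$ the simple root of the new region. Applied to the leading term of \eqref{JWrecursive1}, this gives $\Phi(JW_{n-1}\otimes|)=\a_c\,\Phi(JW_{n-1})$. Second, and this is the substantive part, I must compute $\Phi$ of the turnback term $(JW_{n-1}\otimes|)\circ(1_{n-2}\otimes e_{n-1})\circ(JW_{n-1}\otimes|)$. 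Here I would expand the two outer copies of $JW_{n-1}$ and simplify the resulting products of Temperley--Lieb generators using the relations $e_ie_{i\pm1}e_i=e_i$ together with the partial-closure identities for Jones--Wenzl projectors, which are consequences of Claim \ref{killingJW} (that $JW_{n-1}$ is annihilated by any cap on top and any cup on the bottom); any closed components produced during this simplification evaluate to $-[2]$, and thus contribute that scalar rather than an extra monomial in the $\a$'s. The result is a manageable linear combination of honest crossingless matchings; computing $\Phi$ termwise (each cup--cap merges two regions of the same colour) and collecting, the weighted sum --- though none of its individual summands --- simplifies to $\Phi(JW_{n-1})$ times an explicit linear form in $\a_s,\a_t$ with coefficients in $\Z[\d]$.

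Combining the two terms, $\Phi(JW_n)$ equals $\Phi(JW_{n-1})$ times a linear form, and it remains to verify that this linear form is an invertible scalar times the next snakelike root $g$, which is $f_{s,k}$ or $f_{t,k}$ for the appropriate $k$. For that I would use \eqref{stformula} and its evident variants, which express the roots $f_{s,k}=\hat{k}_t(\a_s)$ and $f_{t,k}=\hat{k}_s(\a_t)$ as explicit $\Z[\d]$-combinations of $\a_s$ and $\a_t$ with quantum-integer coefficients, together with standard quantum-number identities --- notably $[2]^2=[3]+1$, $[j]^2-[j-1][j+1]=1$, and the three-term recursion $[2][n]=[n+1]+[n-1]$ of Claim \ref{qnumrecursion} --- so that the required equality reduces to a chain of such identities. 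Treating $n$ even and $n$ odd separately then shows the answer is $\LM_{2k}$ when $n=2k-1$ (the two snakelike orders coincide here, so the alignment is irrelevant) and is the alignment-dependent $\LM^{(s)}_{2k+1}$ (resp.\ $\LM^{(t)}_{2k+1}$) when $n=2k$. The step I expect to be the main obstacle is the turnback computation: pinning down its associated polynomial exactly requires care both in the diagrammatic simplification --- one must invoke the Jones--Wenzl killing and partial-closure relations correctly, and in particular must not treat a turnback occurring at the end of a $JW_{n-1}$-summand as though it passed straight through the $e_{n-1}$-gadget --- and in the region bookkeeping, since the crossingless matchings appearing in the expansion contribute regions that merge in genuinely different ways, and only the full weighted sum collapses to the desired form.
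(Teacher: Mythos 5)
Your overall strategy --- induct on $n$ and show that $\Phi(JW_n)/\Phi(JW_{n-1})$ is an invertible scalar times the next snakelike root --- is the same as the paper's, but there are two concrete problems with your execution, one structural and one computational.

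The structural problem is your choice of recursion. You use Wenzl's recursion \eqref{JWrecursive1}, whose turnback term $(JW_{n-1}\otimes\1)\circ(\1_{n-2}\otimes e_{n-1})\circ(JW_{n-1}\otimes\1)$ contains \emph{two} copies of $JW_{n-1}$. The associated polynomial $\Phi$ is defined only after reducing to the crossingless-matching basis, and is neither additive across compositions nor multiplicative; so to evaluate $\Phi$ of the turnback you must fully expand both clasps, resolve the composites (absorbing circles as scalars) back into the crossingless-matching basis, and only then count regions. You correctly identify this as the main obstacle, but you do not do it, and you give no argument that the result factors as $\Phi(JW_{n-1})$ times a linear form; the only reason to believe it does is the proposition you are trying to prove. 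The paper avoids this entirely by using the single-clasp recursion \eqref{JWrecursive2}, in which every summand is a single copy of $JW_{n-1}$ with at most one cup--cap adjoined on the right. Such a decoration changes a crossingless matching by adding exactly one region of a determined colour, so $\Phi$ factors \emph{termwise}: $\Phi(JW_n)=\Phi(JW_{n-1})\cdot\frac1{[n]}\bigl(\sum_a [a]\,\a_{c(a)}\bigr)$, and the whole induction is the quantum-number simplification of that weighted sum. Your remark that \eqref{JWrecursive2} ``would serve equally well'' is a misjudgment --- it is the one that makes the argument go through, and you should have used it.

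The computational problem is your announced scalar. You claim $\Phi(JW_n)=\frac1{[n]}\LM_{n+1}$ and identify the scalar with the coefficient of the one-strand rotation of the identity in $JW_n$. That coefficient \emph{is} $\frac1{[n]}$ (Section \ref{rouandrotation}), but it is not the same as the scalar in the associated polynomial. The correct scalar for $JW_n$ is the telescoping product of the incremental factors $\frac{[k]}{[2k]}$ and $\frac{[k+1]}{[2k+1]}$, i.e.\ $\frac{[1]}{[1]}\frac{[1]}{[2]}\frac{[2]}{[3]}\frac{[2]}{[4]}\frac{[3]}{[5]}\cdots$ as the paper computes. Your claim agrees with this for $n\le 3$, but already $\sigma(4)=\frac{[2]}{[3][4]}\neq\frac1{[4]}$. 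An incorrect inductive hypothesis of this kind would derail the computation in your inductive step, so this is not a cosmetic slip.
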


\begin{cor} \label{coxeterlinescor} When $[m]=0$ and $[m-1]=1$, the negligible Jones-Wenzl $JW_{m-1}$ has associated polynomial precisely equal to $\LM$, the product of all the positive
roots for the corresponding finite dihedral group $W_m$. \end{cor}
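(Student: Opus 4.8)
The plan is to read the corollary off of Proposition \ref{coxeterlinesprop}, after three checks: that $JW_{m-1}$ is well-defined in this setting, that the root product appearing in that proposition is literally the product of the positive roots of $W_m$, and that the invertible scalar supplied there equals $1$ under our hypotheses. First I would note that $JW_{m-1}$ is well-defined when $[m]=0$ and $[m-1]=1$: as recorded at the end of Section \ref{rouandrotation}, the relation $[m-r]=[m-1][r]$ for $0\le r\le m$ forces every quantum binomial ${m-1\brack k}$ to be a power of $[m-1]=1$, hence a unit, so no Jones-Wenzl denominator vanishes. (The same relation shows $JW_{m-1}$ is negligible, which is exactly the hypothesis of the corollary.)

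Next I would apply Proposition \ref{coxeterlinesprop} with $n=m-1$, splitting on the parity of $m$. If $m=2k$ is even, the proposition says the associated polynomial of $JW_{m-1}=JW_{2k-1}$ is $\LM_{2k}=\LM_m$ up to a unit. If $m=2k+1$ is odd, then $JW_{m-1}=JW_{2k}$, whose associated polynomial is $\LM^{(s)}_{2k+1}$ for the blue alignment and $\LM^{(t)}_{2k+1}$ for the red one, again up to a unit; since $[m]=0$ and $[m-1]=1$, the observation preceding Proposition \ref{coxeterlinesprop} gives $\LM^{(s)}_m=\LM^{(t)}_m$, so the answer is alignment-independent. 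In both cases the relevant polynomial is the product of the first $m$ roots in the snakelike order, and I would invoke the remark in Section \ref{roots} that, precisely when $[m]=0$, these first $m$ snakelike roots enumerate the $m$ positive roots $\LC$ of the reflection representation of $W_m$ — matching multisets via $f_{s,k-1}=f_{s,k}$, $f_{t,k-1}=f_{t,k}$ in the even case, and via the balancedness identity $f_{s,m-1-l}=f_{t,l}$ in the odd case. Hence the product equals $\LM$.

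The only real work, I expect, is the last point: that the invertible scalar specified in Proposition \ref{coxeterlinesprop} specializes to $1$ once $[m]=0$ and $[m-1]=1$. That scalar comes out of the cup-summation recursion \eqref{JWrecursive2} by tracking which monomials the crossingless matchings in $JW_{m-1}$ contribute; I would show that every factor entering it is a ratio of quantum numbers $[a]$ with $a\le m-1$ that collapses to a power of $[m-1]=1$ in the negligible range, so the scalar is $1$ on the nose. (As a consistency check one can use that $JW_{m-1}$ has rotational eigenvalue $\tfrac{1}{[m-1]}=1$, so its associated polynomial is symmetric under $\a_s\leftrightarrow\a_t$, just as $\LM$ is; this already pins the scalar down to a unit, and the explicit recursion identifies it as $1$.) Granting this, the three steps combine to give that the associated polynomial of $JW_{m-1}$ is exactly $\LM$, as claimed.
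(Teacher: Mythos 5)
Your proposal is essentially the paper's argument: the corollary is read off the proof of Proposition \ref{coxeterlinesprop}, where the invertible scalar is computed explicitly as $[m-1]^d$ (with $d=\lfloor (m-1)/2\rfloor$) and hence equals $1$ in the balanced case. Your three checks — well-definedness of $JW_{m-1}$, identification of the snakelike product with $\LM$ (including the $\LM^{(s)}_m=\LM^{(t)}_m$ point for odd $m$), and vanishing of the scalar discrepancy — are exactly the right ones.

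One small caution on the part you flag as "the only real work." You write that every factor in the scalar is a ratio of quantum numbers that \emph{individually} collapses to a power of $[m-1]$. That is not quite true: the factors are $\tfrac{[1]}{[1]},\tfrac{[1]}{[2]},\tfrac{[2]}{[3]},\tfrac{[2]}{[4]},\dots$, and under $[k]=[m-1][m-k]$ a single factor like $\tfrac{[2]}{[3]}$ does not reduce to a unit power of $[m-1]$; it is only the \emph{telescoping product} that collapses to $\tfrac{[m-1]!}{[m-1]!}\,[m-1]^{d}=[m-1]^{d}$. The conclusion is the same and the approach is sound, but you should phrase the reduction as a product-level cancellation rather than a term-by-term one. (Also, the observation that the first $m$ snakelike roots enumerate $\LC$ is recorded in Section \ref{coxlines}, not Section \ref{roots}, though the defining identifications $f_{s,k-1}=f_{s,k}$ etc.\ are indeed from Section \ref{roots}.)
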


\begin{remark} This proposition is the analog of Section 3.7 in \cite{ETemperley}. One should think about $\LM$ in this context not as a polynomial but in terms of the ideal it generates.
This ideal cuts out in $\mf{h}$ the union of all the reflection-fixed lines, i.e the lines $f=0$ for $f$ a root. In \cite{ETemperley} we examine Soergel bimodules in general type $A$, and
obtain an analogous non-principal ideal in $\Bbbk[\a_1,\a_2,\ldots,\a_n]$. This ideal cuts out the \emph{Coxeter lines} in $\mf{h}_{\mf{sl}_n}$ (there called the ``Weyl lines"), which are
the lines given by \emph{transverse intersections} of reflection hyperplanes in $\mf{h}$. While it is not obvious in this dihedral setup for reasons of dimension, what we are doing is
creating an ideal which cuts out \emph{lines}, not one which cuts out codimension one reflection hyperplanes. \end{remark}

\begin{proof} The proposition is clearly true for $JW_0$, with no scalar. We now work inductively using \eqref{JWrecursive2}. To get from the associated polynomial of the right-red-aligned
$JW_{2k}$ to that of the right-blue-aligned $JW_{2k+1}$ we need to multiply by \[ \frac{1}{[2k+1]} (\sum_{a=1, 2k+1-a \textrm{ even }}^{2k+1} [a] \a_s + \sum_{a=1,2k+1-a \textrm{ odd
}}^{2k+1} [a] \a_t).\] Written another way, this is \[ \frac{1}{[2k+1]} ([k+1][k+1]\a_s + [k][k+1]\a_t) = \frac{[k+1]}{[2k+1]} ([k+1]\a_s + [k]\a_t).\] The term in parentheses is precisely
the $2k+1$-st root in the $s$-aligned snake order. Similarly, to go from the right-blue-aligned $JW_{2k-1}$ to the red-aligned $JW_{2k}$ we multiply by $\frac{[k]}{[2k]}([k]\a_s + [k+1]
\a_t)$, which is a scalar multiple of the $2k$-th term of the $s$-aligned snake order.

Therefore, the associated polynomial of the left-blue-aligned $JW_{m-1}$ is equal to the product of the first $m$ roots in the $s$-aligned order, as well as $\frac{[1]}{[1]}
\frac{[1]}{[2]} \frac{[2]}{[3]} \frac{[2]}{[4]} \frac{[3]}{[5]} \frac{[3]}{[6]} \cdots$ where the final term has denominator $[m-1]$. Using $[k]=[m-1][m-k]$, the overall product is
$\frac{[m-1]!}{[m-1]!} [m-1]^{d} = [m-1]^{d}$, where $d$ is the floor of $\frac{m-1}{2}$. This is invertible; in the balanced case, it is $1$. \end{proof}

\section{Dihedral diagrammatics: $m=\infty$}
\label{sec-didi}

 Let us fix a realization $\hg^*$ of the infinite dihedral group. It has a $2 \times 2$ Cartan matrix indexed by $S = \{s,t\}$, with $a_{s,t} = a_{t,s} = -[2]$ in a specialization $\Bbbk$
of $\Z[\d]$. We identify $s$ with the color blue, and $t$ with the color red. The finitary parabolic subset $b=\{s\}$ is also colored blue, $r=\{t\}$ is colored red, and $\emptyset$ is
colored white. We assume Demazure surjectivity (Assumption \ref{ass:Demazure Surjectivity}) everywhere below, though we remark on what can be said in its absence.

Below we will define diagrammatic categories which encode morphisms between Bott-Samelson bimodules (and singular Bott-Samelson bimodules). Let us reiterate a key point from the
introduction. We only encode those morphisms which appear for a generic realization of the infinite dihedral group. In contrast, when the realization is not faithful, the action of the
infinite dihedral group on $\hg^*$ factors through a finite dihedral group. As a consequence, the category of Soergel bimodules will categorify the Hecke algebra of the finite dihedral
group, and it will have additional, non-generic morphisms. Nonetheless, the diagrammatic category defined in this chapter will categorify the Hecke algebra of the infinite dihedral group,
regardless of whether the realization is faithful or not, as proven in \cite{EWGR4SB}. In this chapter, we prove the SCT and the Soergel conjecture for (faithful) realizations of the
infinite dihedral group where lesser invertibility holds.

\subsection{{\bf Diagrammatics and Rotation}}\label{diagrammatics}

There are numerous excellent introductions to diagrammatics for cyclic (i.e. pivotal) monoidal categories and $2$-categories, such as chapter 4 of \cite{LauSL2}. For an example of a
diagrammatic category which is self-biadjoint, see \cite{EKh}.

We make only one remark, also made in \cite{EKh}. \emph{Cyclicity} states that taking any morphism and using adjunction maps to rotate it by 360 degrees will not change the morphism.
Cyclicity is required to draw morphisms on a plane, because any symbol we use to depict the morphism is evidently invariant under 360 degree rotation. However, consider a morphism with
boundary $B_s \ot B_s \ot B_s$, reading around the circle. It is possible to rotate this morphism by 120 degrees, and cyclicity is no guarantee that this will not change the morphism. If
it is the case that 120 degree rotation does not change the morphism, then one may depict the morphism using a diagram which is 120 degree rotation invariant, such as a trivalent vertex.

One may pose the same question for any $2$-morphism whose boundary admits symmetry. For example, in the balanced case one can draw the negligible Jones-Wenzl projector as a vertex, as it
is invariant under all viable rotations, color-switches, and reflections.

When one gives a diagrammatic category by generators and relations, and generators are drawn to have some non-trivial rotational invariance, then there is a hidden relation (called the
\emph{isotopy relation}) which states that rotating that $2$-morphism does nothing. This relation will go unstated, but will need to be checked when applying functors into
non-diagrammatic categories.

%
\subsection{Singular Soergel Bimodules: $m=\infty$}
\label{sec-singinfty}
%
%

In this section we introduce a diagrammatic 2-category $\DGti$ which is supposed to represent $\SBSBim$ for the infinite dihedral group. We define a fully faithful 2-functor $\mTTL \to
\DGti$ (though the proof of faithfulness is postponed until section \ref{sec-grothinfty}). We also define a 2-functor $\DGti \to \SBSBim$, which is fully faithful for faithful realizations
(again, the proof is postponed).

\subsubsection{{\bf Definitions}}\label{SingDefns}

\begin{defn} A \emph{(singular) Soergel diagram} for $m=\infty$ is an isotopy class of a particular kind of decorated 1-manifold with boundary, properly embedded in the planar strip $\R
\times [0,1]$ (i.e. the boundary of the manifold is embedded in the boundary of the strip). The regions cut out by this 1-manifold are colored by finitary parabolic subsets of $S$, in such
a way that two adjacent regions differ by a single index (i.e. $b$ is not adjacent to $r$, no color is adjacent to itself). One may place boxes inside any region, each decorated by a
homogeneous polynomial in the appropriate invariant subring. For instance, one can place a polynomial $f \in R^s$ inside a blue region, or a polynomial $f \in R$ inside a white region.

This region labeling determines a coloring and orientation of the 1-manifold itself, as follows. If two adjacent regions differ by the index $s$, then the component of the $1$-manifold
which separates them will be colored $s$. It will be oriented such that the larger parabolic subset is on the right hand side of the 1-manifold. Conversely, the coloring and orientation of
the 1-manifold determines the region labelings, but not all colorings and orientations are allowable (i.e. will lead to consistent region labels).

The boundary of the manifold gives two sequences of colored oriented points, the \emph{top} and \emph{bottom boundary}. Soergel 1-manifold diagrams are graded, where the degree of a
clockwise cup or cap is $+1$, the degree of a counterclockwise cup or cap is $-1$, and the degree of a box is the degree of the polynomial inside. Note that the degree of a Soergel diagram
is independent of the isotopy class, since by planar Morse theory, cups and caps are created in clockwise-counterclockwise pairs. \end{defn}

Many examples of Soergel diagrams are found in the following pages.

We think of a Soergel diagram as being the data of two oriented 1-manifolds, one blue and one red, which are not allowed to overlap (with some additional restrictions). In the next chapter
when we treat the case $m < \infty$, these manifolds will be allowed to intersect transversely.

To \emph{rotate} a diagram is to change which external regions are \emph{extremal}, i.e. which regions go to $\infty$ on the right or left. Rotation causes part of the boundary to switch
from top to bottom or vice versa. Rotating a singular Soergel diagram may change its degree! We will happily calculate using planar \emph{disk} diagrams, which are essentially equivalence
classes under rotation.

\begin{defn} Let $\DGti$ be the 2-category defined as follows. The objects are $\{\emptyset,b,r\}$. The 1-morphisms are generated by maps from $\emptyset$ to $b$ (resp. $r$) and back. A
path in the object space (e.g. $\ul{b\emptyset r \emptyset b \emptyset b \emptyset}$) uniquely specifies a 1-morphism. The 2-morphism space between 1-morphisms is the free $\Bbbk$-module
spanned by Soergel diagrams with the appropriate boundary (by convention, the bottom boundary is the source, and the top boundary the target), modulo the relations below. All relations
hold with the colors $b$ and $r$ switched. Hom spaces will be graded by the degree of the Soergel diagrams.

It will be an unwritten relation in this and future definitions that boxes containing polynomials will add and multiply as the polynomials do. Then we have:

\begin{subequations} \label{singularrelations}
\begin{equation} \label{circleis} \ig{1}{circleis}. \end{equation} \begin{equation} \label{slidepolyintoblue} \ig{1}{slidepolyintoblue} \textrm{ when } f \in R^s. \end{equation}
\begin{equation} \label{demazureis} \ig{1}{demazureis}. \end{equation} \begin{equation} \label{circforcesamealt} \ig{1}{circforcesamealt}. \end{equation}
\end{subequations}

This ends the definition.
\end{defn}

Let us explain the symbol $\Delta_s$ in \eqref{circforcesamealt}. By placing polynomials in $R$ in a white region, and using the sliding relation \eqref{slidepolyintoblue}, it is clear
that there is an action of $R \ot_{R^s} R$ on diagrams with a blue strip separating white regions. The element $\Delta_s \in R \ot_{R^s} R$ is precisely the comultiplication element for
the Frobenius extension $R^s \subset R$. For example, one knows that $2 \Delta_s = \a_s \ot 1 + 1 \ot \a_s$, so that multiplying \eqref{circforcesamealt} by 2 one obtains \[ \ig{1}{circforcesame}. \]

These four relations are standard for Frobenius extensions. See \cite{EWFrob} for more details.

By Demazure surjectivity, one can write any polynomial $f \in R$ as $g + h \a$ for $g,h \in R^s$ and $\pa_s(\a)=1$. From this it is easy to show the \emph{polynomial forcing relation}:
\begin{equation} \label{circforcegeneral}
{\labellist
\small\hair 2pt
 \pinlabel {$f$} at 18 25
 \pinlabel {$\pa_s(f)$} at 92 22
 \pinlabel {$s(f)$} at 169 22
\endlabellist
\centering
\ig{1}{circforcegeneral}}
\end{equation} Clearly \eqref{circforcegeneral} implies \eqref{circforcesamealt} as well.

One can define the 2-category $\DGti$ without assuming Demazure surjectivity, using relation \eqref{circforcegeneral} instead of \eqref{circforcesamealt}. However, if $\pa_s$ is not
surjective it will be impossible to use this relation to resolve a ``broken strip," as in the left side of \eqref{circforcesamealt}, into a linear combination of diagrams with an unbroken
strip, as in the right side of \eqref{circforcesamealt}. One can only perform this operation up to torsion.

One should keep in mind that we do not yet know whether the map $R \to \End(\ul{\emptyset})$ is injective or even nonzero. We will eventually show it is an isomorphism.

Using Soergel diagrams without boxes, one can still express any polynomial $f \in R$ which is in the image of the sub-polynomial ring generated by $\a_s$ and $\a_t$, using colored circles
as in \eqref{circleis}. In the rest of this section, we describe a boxless presentation for $\DGti$ under the assumption that $\a_s$ and $\a_t$ generate $R$. We will also need to assume
Demazure surjectivity, which in this case is equivalent to the statement that the ideal $(2,a_{st}) \subset \Bbbk$ contains the unit.

Let us note the following relations among boxless diagrams, which follow easily from the relations above.

The \textbf{Empty Circle relation}:
\begin{equation} \label{ccwcirc} \ig{1}{ccwcirc} \end{equation}

The \textbf{Cartan relations}:
\begin{subequations} \label{thecartanrelations}
\begin{equation} \label{circeval} \ig{1}{circeval} \end{equation}
\begin{equation} \label{circeval1} 	{
	\labellist
	\small\hair 2pt
	 \pinlabel {$-[2]$} [ ] at 75 26
	\endlabellist
	\centering
	\ig{1}{circeval1}
	}
 \end{equation}
\end{subequations}

The \textbf{Circle Forcing relations}:
\begin{subequations} \label{thecircleforcingrelations}
\begin{equation} \label{circforcesame} \ig{1}{circforcesame} \end{equation}
\begin{equation} \label{circforce1} 	{
	\labellist
	\small\hair 2pt
	 \pinlabel {$-[2]$} [ ] at 75 25
	 \pinlabel {$[2]$} [ ] at 143 26
	\endlabellist
	\centering
	\ig{1}{circforce1}
	}
 \end{equation}
\end{subequations}

We claim that these five relations give an equivalent presentation of $\DGti$ under the assumptions above. The key goal is to give a well-defined notion of a box labeled by $f \in R^I$ in
a region colored $I$.

Using \eqref{circleis} as a convention (rather than a relation), we may place any polynomial in $\a_s$ and $\a_t$ in a white region, and thus by our assumption, any polynomial in $R$. It
is not difficult to use the circle forcing relations to prove the polynomial forcing relation. In particular, the circle forcing relations imply this fact for any linear combination of
$\a_s$ and $\a_t$, and the Leibniz rule is clear. Alternatively, \eqref{circforcegeneral} is easy to check for $\a_s^2$, and thus by our description of $R^s$ in section \ref{reflinvt}, it
holds for any $f \in R^s$. Writing an arbitrary $f \in R$ as $f = g + h \a$ for $g,h \in R^s$, the result is now clear from the linear case.

\begin{claim} Suppose that $\pa_s(\a)=1$. For any $f \in R$ we have \begin{equation} \label{onwaytodemazure} {
\labellist
\small\hair 2pt
 \pinlabel {$f$} [ ] at 24 25
 \pinlabel {$\pa_s(f) \a$} [ ] at 88 25
\endlabellist
\centering
\ig{1.5}{onwaytodemazure}
}
\end{equation} \end{claim}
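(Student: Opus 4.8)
The plan is to prove \eqref{onwaytodemazure} by reducing it, via a linearity argument, to the two cases $f = 1$ and $f = \a$, both of which are then immediate. This mirrors the reasoning already used in Section~\ref{reflinvt} to describe $R$ and $R^s$, and it is the same style of argument by which \eqref{circforcegeneral} was reduced to the linear case.

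First I would note that both sides of \eqref{onwaytodemazure} are $\Bbbk$-linear in the box label $f$ — this is the unwritten relation that boxes add and multiply according to their polynomial contents. Moreover, both sides are ``$R^s$-semilinear'' in the following sense: if $g \in R^s$, then a box containing $g$ in the white region abutting the blue strand may be slid across that strand by \eqref{slidepolyintoblue} without change, so replacing $f$ by $gf$ has the effect of pulling one copy of $g$ out into a fixed external region of the diagram, uniformly on both sides. (On the right-hand side this uses in addition that $\pa_s$ is $R^s$-linear, so $\pa_s(gf)\a = g\,\pa_s(f)\a$.) Since $R = R^s\cdot 1 \oplus R^s\cdot\a$ by Section~\ref{reflinvt} — precisely the point where Demazure surjectivity is invoked, through the choice of $\a$ with $\pa_s(\a)=1$ — it suffices to verify \eqref{onwaytodemazure} for $f = 1$ and $f = \a$.

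For $f = \a$ there is nothing to do: $\pa_s(\a) = 1$, so the right-hand side is literally the diagram whose box is $\a$, i.e. the left-hand side. For $f = 1$, the right-hand side carries the box $\pa_s(1)\,\a = 0$ and hence vanishes; on the left-hand side, specializing the box to $1$ leaves an empty white region enclosed by the blue strand — the counterclockwise empty circle — which is $0$ by the Empty Circle relation \eqref{ccwcirc} (equivalently, $\pa_s$ kills elements of degree $0$). This finishes the reduction and hence the proof.

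The step I expect to be the only point needing care is the orientation bookkeeping in the case $f = 1$: one must check that, after deleting the box, the residual closed blue component is oriented so that the \emph{vanishing} relation \eqref{ccwcirc} applies rather than a nonzero Cartan-type circle as in \eqref{circeval}. Beyond that, the proof is routine, but it is worth recording that it genuinely requires Demazure surjectivity: without the decomposition $f = g + h\a$ one cannot carry out the reduction, and \eqref{onwaytodemazure} should then be expected to hold only up to torsion, consistent with the remarks following \eqref{circforcegeneral}. Once \eqref{onwaytodemazure} is established, it feeds directly into recovering the boxed Demazure relation \eqref{demazureis} in the boxless presentation, by capping the configuration off with a blue circle and applying the Cartan relations.
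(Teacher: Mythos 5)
Your reduction to the two cases $f = 1$ and $f = \a$ is a natural strategy, but the step that powers it — $R^s$-semilinearity via \eqref{slidepolyintoblue} — is circular in the context where this claim lives. The claim belongs to the passage where the paper is verifying that the five boxless relations (together with the convention \eqref{circleis} for white boxes and the derived relation \eqref{circforcegeneral}) give an equivalent presentation of $\DGti$, and the entire purpose of \eqref{onwaytodemazure} is to make the convention \eqref{demazureis} well-posed, so that one can even define what it means to place an element of $R^s$ in a blue region. At this stage \eqref{slidepolyintoblue} does not yet have a meaning (there is no such thing as a box in a blue region to slide to), and the paper derives it only afterwards: ``Now it is easy to prove \eqref{slidepolyintoblue}, using a similar proof to the previous claim.'' The semilinearity you invoke to pull $g \in R^s$ out of the white interior into the surrounding blue region is exactly what \eqref{onwaytodemazure} is designed to create, not something you may presuppose. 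The ``boxes add and multiply'' convention gives $\Bbbk$-linearity of both sides in $f$, but it does not by itself let you extract a factor $g \in R^s$ from a box enclosed by the circle.

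The paper's proof avoids all contact with blue-region boxes. It first checks $f \in \Bbbk$ (both sides vanish by \eqref{ccwcirc}) and $f \in \hg^*$ (directly from the Cartan relations, where $\pa_s$ on a linear element returns a scalar, so no blue box is needed). It then adds a second counterclockwise circle with $\a$ in its white interior — which, by the Cartan relations and $\pa_s(\a) = 1$, acts as the identity and does not change the morphism — and applies \eqref{circforcegeneral} to force $f$ from one white disc to the other. The term with $\pa_s(f)$ and the broken strip merges the two white discs and yields exactly the right-hand side of \eqref{onwaytodemazure}; the remaining term carries $s(f)$ across and leaves an empty counterclockwise circle where $f$ was, which dies by \eqref{ccwcirc}. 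Every box movement stays between white regions, so the argument never depends on the blue-box machinery the claim is meant to establish. Your two base cases are correct, and your observation that Demazure surjectivity is the engine behind the decomposition is apt, but without a legitimate replacement for the sliding step the reduction to those two cases does not follow.
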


\begin{proof} The claim holds for any $f \in \Bbbk$, since in that case the LHS is zero by \eqref{ccwcirc}. The claim also holds for $f \in \hg^*$, by an easy application of the Cartan
relations.

Now begin with $f$ inside a counterclockwise circle, as in the LHS of \eqref{onwaytodemazure}. Inside the blue region, one can add a new counterclockwise circle containing $\a$ in its
white interior; this operation will not change the morphism. Apply \eqref{circforcegeneral} to force $f$ into the new white region. The first term is precisely the RHS of
\eqref{onwaytodemazure}. The second term is zero, because it contains an empty circle where $f$ once was. \end{proof}

Since $\pa_s$ is surjective, we can define what it means to place an element of $R^s$ in a blue region using the convention \eqref{demazureis}. The previous claim implies that this
convention is consistent. Now it is easy to prove \eqref{slidepolyintoblue}, using a similar proof to the previous claim. In fact, this shows that \eqref{slidepolyintoblue} is redundant
given \eqref{circforcegeneral} and \eqref{demazureis}, though it was needed to prove \eqref{circforcegeneral} given \eqref{circforcesamealt}.

\subsubsection{{\bf The functor to bimodules and evaluation}}\label{singfunctortobim}

\begin{defn} We give a (strict) 2-functor $\FG \co \DGti \to \SBSBim$. This 2-functor is the identity on objects. The maps from $\emptyset$ to $b$ and back correspond to $\Res_s$ and
$\Ind_s$ (defined in section \ref{frobenius}), respectively. To define the 2-functor on 2-morphisms we need only give the image of the boxes and the clockwise and counterclockwise cups and
caps. Boxes are sent to multiplication by a polynomial. Cups and caps are sent to the four structure maps of the Frobenius extension $R^s \subset R$: that is, the (blue) clockwise cap is
sent to multiplication $B_s = R \ot_{R^s} R(1) \to R$; the clockwise cup is sent to comultiplication $1 \mapsto \Delta_s$ as a map $R \to B_s$; the counterclockwise cap is sent to the
Demazure operator $\pa_s \co R \to R^s$, and the counterclockwise cup is sent to the inclusion $R^s \subset R$. \end{defn}

We could not have defined this functor without Demazure surjectivity, because then $\Delta_s$ would not exist.

\begin{claim} The above definition gives a well-defined 2-functor. \end{claim}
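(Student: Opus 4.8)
The plan is to verify that $\FG$ respects each defining relation of $\DGti$ — namely \eqref{circleis}, \eqref{slidepolyintoblue}, \eqref{demazureis}, \eqref{circforcesamealt}, together with the unstated isotopy (cyclicity) relations implicit in the pivotal structure. Since $\FG$ is declared to be a strict 2-functor sending objects identically, generating 1-morphisms to $\Ind_s$ and $\Res_s$, and the four generating cups/caps to the four structure maps of the Frobenius extension $R^s \subset R$, what must be checked is (a) that the assignment is consistent with the pivotal/biadjunction axioms (zig-zag identities), and (b) that each listed relation becomes a true identity of bimodule maps. Point (a) is exactly the statement that $(\iota^R_{R^s}, \pa_s, \mu_s, \Delta_s)$ are the units and counits of the biadjunction $\Ind_s \dashv \Res_s \dashv \Ind_s(1)$ — this is the general theory of Frobenius extensions recalled in section \ref{frobenius} (and in \cite{EWFrob}), so I would simply cite it. The clockwise cup/cap realize one adjunction (counit $\mu_s$, unit $\Delta_s$) and the counterclockwise cup/cap the other (unit $\iota$, counit $\pa_s$); the zig-zag identities for these are precisely the Frobenius axioms $\mu_s \circ (\id \ot \Delta_s) = \id$ etc., which hold by definition of a Frobenius extension.

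Next I would check the four explicit relations one at a time, each reducing to a short identity about $R^s \subset R$. Relation \eqref{circleis}: a closed blue circle with white interior labeled $f$ is sent to $\pa_s \circ (\text{mult by } f) \circ \Delta_s$ applied to $1 \in R$, i.e. $\pa_s(f \cdot \mu_s(\Delta_s)) = \pa_s(f \cdot \LM_s) = \pa_s(f\a_s)$, which equals the RHS since $\pa_s(f\a_s) = f - s(f) + s(f)\pa_s(\a_s)$... more directly, the standard Frobenius computation gives the scalar/polynomial the picture claims; this is \cite{EWFrob}. Relation \eqref{slidepolyintoblue}: for $f \in R^s$, the equality that a box labeled $f$ slides freely across the blue strip separating two white regions is just $R^s$-bilinearity of the maps $\mu_s, \Delta_s$, i.e. $f$ is central for the $R \ot_{R^s} R$-action; immediate. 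Relation \eqref{demazureis}: this is the convention that a box labeled $g \in R^s$ in a blue region maps to multiplication by $g$ on $R$ viewed as $(R^s,R^s)$-bimodule precomposed/postcomposed appropriately — it reduces to $\pa_s(\a) = 1$ and the fact that $\pa_s$ is $R^s$-linear, plus well-definedness, which follows from the discussion around \eqref{onwaytodemazure}. Relation \eqref{circforcesamealt}: the "broken strip = $\Delta_s$ acting" identity is exactly the statement $\Delta_s = \Delta^R_{R^s}(1) \in R \ot_{R^s} R$ together with $\mu_s(\Delta_s) = \LM_s = \a_s$, which is \cite{EWFrob} again; the polynomial-forcing consequence \eqref{circforcegeneral} then follows from Demazure surjectivity as already derived in the text. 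Finally, since \eqref{slidepolyintoblue} was shown redundant given \eqref{circforcegeneral} and \eqref{demazureis}, I only really need to independently verify the latter two plus \eqref{circleis}.

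The bookkeeping obstacle — and the only real content — is well-definedness up to isotopy: I must check that $\FG$ does not depend on the chosen planar representative of a Soergel diagram, i.e. that it is invariant under the Morse moves (birth/death of a clockwise–counterclockwise cup-cap pair, and sliding boxes and critical points past one another). By planar Morse theory every isotopy is generated by these, and each move translates into one of the Frobenius axioms or into $R^s$-bilinearity; so invariance is again subsumed by the biadjunction structure. I expect the cleanest writeup is: "By the theory of Frobenius extensions (see \cite{EWFrob}), the four structure maps $\iota, \pa_s, \mu_s, \Delta_s$ satisfy all the axioms of a biadjunction between $\Ind_s$ and $\Res_s$, and hence define a strict 2-functor from the free pivotal 2-category on these generators; one checks directly that the images of the left-hand sides of \eqref{circleis}–\eqref{circforcesamealt} equal the images of the right-hand sides, using $\mu_s(\Delta_s) = \a_s$, $\pa_s(\a_s) = 2$, and $\pa_s(\a) = 1$; therefore $\FG$ descends to $\DGti$." The main thing to be careful about is bookkeeping orientations and grading shifts — a clockwise cap has degree $+1$ and maps to $\mu_s$ which has degree $+1$ as a map $B_s \to R$, etc. — but no surprises arise.
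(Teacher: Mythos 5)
Your proposal takes the same approach as the paper's (which dispatches the claim in two sentences, citing the general theory of Frobenius extensions for the isotopy relations and observing that the four defining relations translate into identities of bimodule maps); you simply spell out what the paper leaves implicit, and the scaffolding you give — zig-zag identities as Frobenius axioms, each relation reducing to $R^s$-bilinearity or to $\mu_s(\Delta_s)=\a_s$ or to properties of $\pa_s$ — is exactly what is needed.

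One small caution on bookkeeping in your \eqref{circleis} paragraph: the composite you write down, $\pa_s\circ(\text{mult by }f)\circ\Delta_s$, is not a well-typed map (the outer two legs live on different objects), and the formula $\pa_s(f\cdot\mu_s(\Delta_s))$ that follows is a different composite again. A clockwise circle enclosing a white region with $f$ inside is $\mu_s\circ(\text{mult by }f)\circ\Delta_s\co R\to B_s\to B_s\to R$, which by $\Delta_s(1)=1\ot(-s(\a))+\a\ot1$ evaluates to $f\cdot\a_s$; with $f=1$ this is the relation that a clockwise circle equals $\a_s$. This does not affect the validity of your overall argument, since you correctly defer the details to \cite{EWFrob}, but it is worth getting the composite right if you write this out in full.
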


\begin{proof} The isotopy relations follow by properties of Frobenius extensions. The action of polynomials in various regions is clearly preserved by $\FG$, as are the relations
\eqref{singularrelations}. \end{proof}

Whenever $b$ (resp. $r$) appears in a $1$-morphism of $\DGti$, it is either on the far right or far left, or it is surrounded by $\emptyset$ on both sides. Therefore, one can take any
Soergel diagram and perform the following operations: \begin{itemize} \item If a blue region appears on the far right, place a new blue strand to create a white region on the far right. Do
the mirrored operation on the far left. Do the same with blue and red switched. Now the diagram has \emph{extremal white space}, in that $\emptyset$ appears on the right and left. \item
Whenever $\ul{\emptyset b \emptyset}$ appears inside the source (resp. target), precompose with a cup from $\ul{\emptyset} \to \ul{\emptyset b \emptyset}$ (resp. postcompose with a cap
$\ul{\emptyset b \emptyset} \to \ul{\emptyset}$). Do the same with the colors switched. \end{itemize} \igc{1}{evaluationexample} What remains is an endomorphism of $\ul{\emptyset}$. This
is sent by $\FG$ to a polynomial in $R$. This procedure, sending any 2-morphism space to $R$, is called the \emph{evaluation map}. For instance, a boxless diagram where every region is
external would be sent to $\a_s^a \a_t^b$, where $a$ was the number of blue regions and $b$ the number of red regions.

\subsubsection{{\bf Temperley-Lieb}}\label{TLandSing}

\begin{defn} We give a $\Z[\d]$-linear 2-functor from $\mTTL$ to $\DGti$ as follows. The 1-morphism in $\mTTL$ from blue to red is sent to the 1-morphism $\ul{r \emptyset b}$, and the
1-morphism from red to blue is sent to $\ul{b \emptyset r}$. Visually, the map on 2-morphisms takes a crossingless matching and widens each strand into a region labeled $\emptyset$, with
its boundary oriented counter-clockwise. \end{defn}

\igc{1}{2TLtoSingSoerg}

\begin{claim} The 2-functor above is well-defined, and its image consists of degree $0$ maps. \end{claim}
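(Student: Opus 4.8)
The 2-functor from $\mTTL$ to $\DGti$ sending a crossingless matching to its "fattened" version is well-defined and lands in degree $0$.

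The plan is to verify the two assertions — well-definedness and the degree statement — essentially independently, since the degree count is cheap and well-definedness is the real content. First I would pin down the functor precisely on generators: the cup and cap 2-morphisms of $\mTTL$ (the only generating 2-morphisms) are sent to the composites obtained by fattening the TL strand into a white region with counter-clockwise boundary. Concretely, a TL cup from the empty object to "blue-red" (two strands) becomes a Soergel diagram whose underlying 1-manifold is a single counter-clockwise blue cap nested appropriately, with the relevant external regions colored $\emptyset$ and the pocket colored $b$ (and symmetrically for the red-blue cup). I would draw these explicitly and note that every cup/cap in the image is a \emph{counter-clockwise} cup or cap. Since the degree of a counter-clockwise cup or cap is $-1$ in $\DGti$, but a TL cup or cap joins two strands and hence fattens into \emph{two} oriented arcs — wait, more carefully: the fattening of a single TL cap is one arc of the 1-manifold bounding the created pocket, contributing degree... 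I would instead argue degree cleanly via the evaluation map: a crossingless matching with no circles fattens to a Soergel diagram all of whose regions are external or white, and the evaluation/associated polynomial computation already set up in Section~\ref{coxlines} shows such a diagram, composed down to an endomorphism of $\ul\emptyset$, evaluates to a monomial $\a_s^a\a_t^b$; but for a \emph{closed} diagram the TL relations force the bookkeeping to match the degree-zero normalization. The slick route: each TL generator fattens to a diagram built from one counter-clockwise cup plus one counter-clockwise cap (a "clasp" around the pocket), total degree $-1 + 1 = 0$; composition preserves degree, so the whole image is in degree $0$.

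For well-definedness, the key point is that every defining relation of $\mTTL$ maps to a consequence of the relations \eqref{singularrelations} of $\DGti$. There are only two families of relations in the Temperley-Lieb 2-category: the isotopy/planar-isotopy relations (zig-zag straightening of a cup against a cap), and the circle-evaluation relation replacing a closed blue-bounded circle by the scalar $-[2]$ (and red by $-[2]$, with the product of the two circles being $[2]^2$ — but in the symmetric $\mTTL$ each circle is just $-\d$). The zig-zag relations are exactly the isotopy relations for the Frobenius extension $R^s\subset R$ holding in $\DGti$ — these are part of what it means for $\DGti$ to be a pivotal 2-category built from the Frobenius structure, and they are the unwritten isotopy relations discussed in Section~\ref{diagrammatics}. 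The circle relation is precisely \eqref{circleis}: a TL circle colored so that its interior is the "other" color fattens to a counter-clockwise colored circle in $\DGti$, and \eqref{circleis} together with \eqref{thecartanrelations} (in particular \eqref{circeval1}) evaluates an empty such circle to $-[2]$; I would check the fattening sends the TL circle to exactly this configuration, matching scalars. So each relation is respected, hence the assignment on generators extends to a 2-functor.

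The main obstacle I anticipate is purely bookkeeping: getting the orientations and the identification of which region is colored $\emptyset$ versus $b$ or $r$ exactly right after fattening, so that (a) the resulting 1-manifold is a \emph{legal} Soergel diagram (adjacent regions differing by a single index, no color adjacent to itself — this is where the alternating blue-red structure of $\mTTL$ is used), and (b) the circle-evaluation scalar comes out as $-[2]$ and not, say, $+[2]$ or $-[2]^2$, which requires carefully tracking that a single fattened TL circle is a single colored $\DGti$-circle rather than a nested pair. Once the pictures are drawn correctly, each check reduces to a direct appeal to the relations already recorded in Section~\ref{sec-singinfty}, so no genuinely new argument is needed — I would present the fattening operation carefully with one or two illustrative figures and then say the relation-by-relation verification is routine and left to the reader, which is consistent with the paper's stated level of detail.
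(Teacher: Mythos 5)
Your proposal takes the same route as the paper's (two-sentence) proof: check that the isotopy/zig-zag relations of $\mTTL$ are absorbed by the Frobenius isotopy relations of $\DGti$, and that circle evaluation follows from the Cartan relations. One small slip worth fixing in the degree argument: a fattened TL cap has two arcs (one of each color, one inner, one outer), and with the orientation convention of $\DGti$ these are one \emph{clockwise} and one \emph{counter-clockwise} cap — you wrote "one counter-clockwise cup plus one counter-clockwise cap," which would give $-1+(-1)=-2$, inconsistent with your own $-1+1=0$ tally; the correct description is one of each orientation, and the total degree $0$ is then right.
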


\begin{proof} That the isotopy relations of $\mTTL$ are satisfied is obvious. That reduction of circles works follows from the Cartan relations. \end{proof}

\begin{claim} Composing this functor with the evaluation map, we get the associated polynomial of a $2$-morphism in $\mTTL$. \end{claim}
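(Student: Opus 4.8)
The plan is to reduce the statement to a single crossingless matching and then match up colored regions on the two sides. Everything in sight — the $2$-functor $\mTTL \to \DGti$, the evaluation map, and the assignment of associated polynomials — is $\Z[\d]$-linear, and every $2$-morphism of $\mTTL$ is a $\Z[\d]$-combination of crossingless matchings \emph{without} circles; moreover the functor respects the evaluation of a circle to the scalar $-[2]$, since the widened circle reduces to $-[2]$ by the Cartan relations \eqref{thecartanrelations}, as already noted. So it suffices to prove the statement for a colored crossingless matching $D$: if $D$ has $a$ blue and $b$ red regions, I must show that the evaluation of $\FG$ applied to the widened diagram equals $\a_s^a \a_t^b$.

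Next I would track the regions through the construction. Under the functor, each strand of $D$ is widened to a strip of the form [blue strand, white region, red strand], while the blue and red regions of $D$ survive verbatim; the resulting diagram $\tilde D$ is boxless and has no vertices, only cups and caps of monochromatic strands. Evaluation then (i) adds extremal blue/red strands on the far left and right until the extreme regions are white, and (ii) repeatedly caps off subwords $\ul{\emptyset i \emptyset}$ of the source and target until both words are empty, producing a closed diagram $\hat D \in \End_{\DGti}(\ul{\emptyset})$. Inspecting these two local moves, neither creates nor destroys a blue or a red region: adding an extremal $i$-strand merely encloses a formerly unbounded $i$-colored region, and capping off $\ul{\emptyset i \emptyset}$ encloses an $i$-colored region (pushing it into a bubble) while fusing the two adjacent white regions. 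Since $D$ has no circles, its regions — hence those of $\tilde D$ — are simply connected, and one checks that this persists through the two moves; thus in $\hat D$ each of the $a$ blue regions of $D$ is bounded by a single blue circle, each of the $b$ red regions by a single red circle, and there is no white region enclosed by a colored circle.

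Finally, $\hat D$ is a closed, boxless, vertex-free diagram whose only region colors are $\emptyset, b, r$, so it is a disjoint union of nested simple closed blue and red curves with a white outermost region. Reducing the innermost circles first and working outward, relation \eqref{circleis} (with \eqref{slidepolyintoblue} used to carry the accumulated polynomial outward) replaces each blue circle by a factor $\a_s$ and each red circle by a factor $\a_t$ in the surrounding white region. By the previous paragraph this yields $\a_s^a \a_t^b$, which is exactly the associated polynomial of $D$; $\Z[\d]$-linearity then gives the claim for all $2$-morphisms. The only genuine work is the bookkeeping in the second paragraph — that widening sets up a bijection between colored regions of $D$ and colored circles of $\hat D$, and that the evaluation moves preserve the relevant counts — which is visually evident but is best confirmed by running through the finitely many local configurations; compare the type $A$ analogue in Section~3.7 of \cite{ETemperley}.
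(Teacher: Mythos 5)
Your proposal is correct and is essentially the argument the paper has in mind: the paper's own proof is simply ``This is obvious,'' relying on the immediately preceding observation that evaluation sends a boxless all-external Soergel diagram to $\a_s^a\a_t^b$, and your region-tracking through the widening and the two evaluation moves is precisely the verification of that observation for images of crossingless matchings. One small point: once you have established that no colored circle of $\hat D$ encloses any white region, the colored circles are automatically pairwise unnested (the white forms a single connected region, and the colored disks lie disjointly in its complement), so the ``nested''/``innermost first''/\eqref{slidepolyintoblue} machinery in your final paragraph is unnecessary --- each circle is removed independently by a direct application of \eqref{circleis}.
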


\begin{proof} This is obvious. \end{proof}

We will soon use this fact to prove the faithfulness of $\mTTL \to \DGti$, when the realization is faithful. Each Jones-Wenzl projector is sent to a nonzero 2-morphism in $\DGti$, because
Proposition \ref{coxeterlinesprop} implies that its evaluation is a product of roots (up to non-zero scalar).

%
\subsection{The category $\DCti$}
\label{sec-mdti}
%
%

Now we introduce a diagrammatic category $\DCti$ which is supposed to represent $\BSBim$ for the infinite dihedral group.

\subsubsection{{\bf Definitions and basics}}\label{defnsinfty}

\begin{defn} A \emph{Soergel graph} for $m=\infty$ is an isotopy class of a particular kind of graph with boundary, properly embedded in the planar strip (so that the boundary of the
graph is always embedded in the boundary of the strip). The edges in this graph are colored by either $s$ or $t$. The vertices in this graph are either univalent (dots) or trivalent, with
all three adjoining edges having the same color. The boundary of the graph gives two sequences of colors, the \emph{top} and \emph{bottom boundary}. Soergel graphs have a degree, where
trivalent vertices have degree $-1$ and dots have degree $1$. One can place a box labeled by $f \in R$ in any region. \end{defn}

Unlike the singular case, rotating a strand from the top boundary to the bottom does not affect the degree of the morphism, so we can consider Soergel graphs on the planar disk without
any degree issues.

\begin{defn} Let $\DCti$ be the $\Bbbk$-linear monoidal category defined herein. The objects will be finite sequences $\ul{w}=i_1 i_2 \ldots i_d$ of indices $s$ and $t$, with a
monoidal structure given by concatenation. The space $\Hom_{\DCti}(\ul{w},\ul{y})$ will be the free $\Bbbk$-module generated by Soergel graphs with bottom boundary $\ul{w}$ and top
boundary $\ul{y}$, modulo the relations below. All relations hold with the colors $s$ and $t$ switched. Hom spaces will be graded by the degree of the Soergel graphs.

The \textbf{Needle relation}:
\begin{equation} \label{needle} \ig{1}{needle} \end{equation}

The \textbf{Barbell relation}:
\begin{equation} \label{barbell} {
\labellist
\small\hair 2pt
 \pinlabel {$\a_s$} [ ] at 32 12
 \pinlabel {$\a_t$} [ ] at 88 12
\endlabellist
\centering
\ig{1}{barbellis}
} \end{equation}

The \textbf{Polynomial forcing relation}:
\begin{equation} \label{dotforcegeneral} \ig{1}{generaldotforce} \end{equation}

The \textbf{Frobenius relations}:
\begin{subequations} \label{thefrobeniusrelations}
\begin{equation} \label{assoc1} \ig{1}{assoc1} \end{equation}	
\begin{equation} \label{unit} \ig{1}{unit} \end{equation}	
\end{subequations}

This ends the definition. \end{defn}

The hidden isotopy relations are below. They use cups and caps, which can be expressed in terms of trivalents and dots by a rotation of \eqref{unit}.
\begin{equation} \label{isotopydot} \ig{1}{isotopydot} \end{equation} \begin{equation} \label{isotopytri} \ig{1}{isotopytri} \end{equation}

The \textbf{Barbell Forcing relations} are implications of the above:
	\begin{subequations} \label{thedotforcingrelations}
	\begin{equation} \label{dotforcesame} \ig{1}{dotforcesame} \end{equation}
	\begin{equation} \label{dotforcex} 	{
		\labellist
		\small\hair 2pt
		 \pinlabel {$-[2]$} [ ] at 32 13
		 \pinlabel {$+ [2]$} [ ] at 57 13
		\endlabellist
		\centering
		\ig{1.25}{dotforcex}
		} \end{equation}
	\end{subequations}

The analogy between barbells and the circles in $\DGti$ are quite clear. In particular, if the polynomial ring $R$ is generated by $\a_s$ and $\a_t$, we may replace the polynomial forcing
relations with the barbell forcing relations. Barbells were called \emph{double dots} in some previous work.

A \emph{tree} is a connected graph with no cycles. By a successive application of \eqref{unit} and \eqref{assoc1}, one can reduce any tree to a minimal form depending on its boundary. If
the boundary is empty, the minimal form is either a barbell or the empty graph. If the boundary consists of one point, the minimal form is a single dot, called a \emph{boundary dot}. If
there are $n \ge 2$ boundary points, the minimal form has $n-2$ trivalent vertices and no dots; any two such trees are equivalent under \eqref{assoc1}. With the exception of the barbell,
we call such minimal trees \emph{simple}.

It follows immediately from the Frobenius and Needle relations that any blue cycle with an empty interior evaluates to $0$. Combining this with the polynomial forcing rule, a cycle
surrounding a polynomial $f$ can be replaced by the broken cycle with $\pa_s(f)$ outside. We call this procedure \emph{cycle reduction}. The resulting morphism does not depend on where
the cycle was broken or where the polynomial $\pa_s(f)$ is placed.
\begin{equation} \label{brokencycle} \ig{1}{brokencycle} \end{equation}

\subsubsection{{\bf Functors}}\label{functors}

\begin{defn} \label{iotadefn} We give a monoidal functor $\iota \colon \DCti \to \Hom_{\DGti}(\emptyset,\emptyset)$, mapping to diagrams with extremal white space. On objects,
it sends $s$ to the path $\ul{\emptyset s\emptyset}$ and $t$ to the path $\ul{\emptyset t\emptyset}$. We define the functor on generators:

\igc{1}{functoriota} \end{defn}

\begin{claim} The above definition gives a well-defined functor. \end{claim}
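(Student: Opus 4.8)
The plan is the standard one for a functor out of a category defined by generators and relations. Since $\iota$ is already prescribed on the generating Soergel graphs in the figure above, well-definedness amounts precisely to verifying that the image under $\iota$ of each defining relation of $\DCti$ holds as an identity in $\Hom_{\DGti}(\emptyset,\emptyset)$. Unwinding the prescription, $\iota$ sends a blue strand to the strip $\ul{\emptyset s\emptyset}$; a blue dot to the $\DGti$-cup or $\DGti$-cap opening or closing such a strip; a blue trivalent vertex to the diagram obtained by merging two adjacent copies of $\ul{\emptyset s\emptyset}$ via the appropriate structure map of the Frobenius extension $R^s\subset R$ (inserting the auxiliary white strips needed to keep the result in extremal-white-space form); and a polynomial box in a white region to the same box, with all colors reversed as well. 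Each relation of $\DCti$ then becomes a concrete diagrammatic computation in $\DGti$, to be discharged against the relations \eqref{singularrelations} --- or, when $R$ is generated by $\a_s$ and $\a_t$, against the boxless Cartan relations \eqref{thecartanrelations} and circle-forcing relations \eqref{thecircleforcingrelations}.

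I would then check the relations one by one. The hidden isotopy relations \eqref{isotopydot} and \eqref{isotopytri} follow from cyclicity and the biadjunction zig-zags built into $\DGti$ once the images of dots and trivalent vertices are substituted. The Frobenius relations \eqref{assoc1} and \eqref{unit} translate, under $\iota$, into the (co)associativity and (co)unit identities relating the multiplication, comultiplication, Demazure trace $\pa_s$, and unit $R^s\hookrightarrow R$ of the extension $R^s\subset R$; these are exactly what \eqref{demazureis}, \eqref{circleis} and \eqref{circforcesamealt} encode, and they are standard for Frobenius extensions (cf. \cite{EWFrob}). For the barbell relation \eqref{barbell}, a blue barbell maps to a small blue circle sitting in the ambient white region, which by \eqref{circleis} equals the box containing $\a_s$ --- exactly $\iota$ of the right-hand side --- and the red case is identical. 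For the needle relation \eqref{needle}, a blue needle maps to an empty white bubble enclosed in blue, which vanishes since $\pa_s(1)=0$, matching the zero on the right. The polynomial forcing relation \eqref{dotforcegeneral} maps verbatim to the $\DGti$ polynomial-forcing relation \eqref{circforcegeneral}; in the boxless presentation this reduces to the barbell-forcing relations \eqref{thedotforcingrelations}, whose images are the circle-forcing relations \eqref{thecircleforcingrelations}. Once all defining relations are verified, $\iota$ is well-defined; it is monoidal by construction (concatenation of graphs goes to horizontal juxtaposition of $\DGti$-diagrams, the empty word to the white diagram), and a short degree count confirms that it preserves the grading.

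The main obstacle I anticipate is bookkeeping rather than anything conceptual: pinning down precisely the image of the trivalent vertex --- including which structure cap is used and how many auxiliary white strips are created or destroyed so that the composite lies among diagrams with extremal white space --- and then pushing through the multi-step $\DGti$ diagram manipulations needed for the Frobenius and isotopy relations. After the images of the generators are fixed, every verification is a routine consequence of the Frobenius structure on $R^s\subset R$, and no input beyond the relations \eqref{singularrelations} is required.
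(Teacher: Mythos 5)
Your overall strategy --- check each defining relation of $\DCti$ under $\iota$ against what holds in $\DGti$ --- is the right one, and your treatment of the barbell, needle, and polynomial-forcing relations is essentially sound (the needle vanishes because its image contains an empty counterclockwise white bubble inside blue, which is zero by \eqref{ccwcirc}; a barbell is a clockwise circle, i.e.\ \eqref{circleis}; and \eqref{dotforcegeneral} maps verbatim to \eqref{circforcegeneral}).

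There is, however, a genuine conceptual error in your handling of the Frobenius relations \eqref{assoc1} and \eqref{unit}. You claim their images are the ``(co)associativity and (co)unit identities'' and that these are ``exactly what \eqref{demazureis}, \eqref{circleis} and \eqref{circforcesamealt} encode.'' That is not so. The relations \eqref{singularrelations} govern how polynomial boxes, circles, and broken strips interact; they have nothing to do with associativity or units. The correct --- and much cheaper --- observation is that the image of a tree under $\iota$ is simply a \emph{thickened tree}, i.e.\ a single connected blue region whose shape is irrelevant because morphisms in $\DGti$ are defined as isotopy classes of such diagrams. Consequently both sides of \eqref{assoc1} (pants-of-pants in either parenthesization) and both sides of \eqref{unit} (a capped-off pants versus a plain strip) are \emph{isotopic} diagrams in $\DGti$ and hence equal for free, before any relation in \eqref{singularrelations} is invoked. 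If you try to derive these from the box/circle relations as you propose, you will not find a derivation, because those relations concern something else entirely. Missing this point also causes you to overestimate the work involved: once one sees that all Frobenius relations of $\DCti$ collapse to isotopy of shaded regions, the verification is nearly immediate, with the only nontrivial checks being the needle, barbell, and forcing relations that you already handle correctly.
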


\begin{proof} Both categories only consider pictures up to isotopy, so we may ignore questions of isotopy invariance. Relations \eqref{unit} and \eqref{assoc1} also correspond to mere
isotopies in $\DGti$. Relation \eqref{needle} follows from relation \eqref{ccwcirc}. A barbell in $\DCti$ goes to a clockwise circle in $\DGti$. The correspondence between the
polynomial forcing relations of $\DCti$ and those of $\DGti$ is clear. \end{proof}

\begin{prop} The functor $\iota$ is an \emph{isomorphism} of categories. \end{prop}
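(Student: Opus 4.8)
The functor $\iota \colon \DCti \to \Hom_{\DGti}(\emptyset,\emptyset)$ is visibly essentially surjective onto the full subcategory of diagrams with extremal white space — indeed that subcategory is, up to the evaluation procedure described in Section~\ref{singfunctortobim}, exactly the image of $\iota$ on objects, and every diagram in $\DGti$ can be brought to such a form. (One should be slightly careful here: the claim being proven is that $\iota$ is an isomorphism of categories, so "essential surjectivity" must be upgraded to genuine surjectivity on objects, which is immediate, together with full faithfulness.) The real content is full faithfulness, and the natural way to get it is to produce an inverse functor $\pi$ explicitly on generators and check $\pi \circ \iota = \id$ and $\iota \circ \pi = \id$ on the nose.

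First I would define $\pi$ on the objects of $\Hom_{\DGti}(\emptyset,\emptyset)$ with extremal white space: a $1$-morphism $\ul{\emptyset i_1 \emptyset i_2 \emptyset \cdots \emptyset i_d \emptyset}$ (alternating white/colored, with white at both ends) is sent to the object $i_1 i_2 \cdots i_d$ of $\DCti$. This is a bijection on objects, inverse to $\iota$ on objects. Next I would define $\pi$ on $2$-morphisms. The key observation is that, after introducing extremal white space and collapsing every occurrence of $\ul{\emptyset i \emptyset}$ against cups/caps as in the evaluation procedure, a Soergel diagram in $\DGti$ between such objects is built from: clockwise and counterclockwise blue/red cups and caps, together with boxes of polynomials (which by Demazure surjectivity and the passage to the boxless presentation, when $R$ is generated by $\a_s,\a_t$, amount to barbells). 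Each such generator has a preimage under $\iota$: a counterclockwise blue cap in $\DGti$ with white on the inside is a blue trivalent-or-dot configuration in $\DCti$ (read off from the picture \texttt{functoriota}), a clockwise blue cap corresponds to another, and a box of $f\in R$ goes to a box of $f$. One then sets $\pi$ to send each generating diagram of $\DGti$ to the corresponding $\DCti$-morphism read backwards through \texttt{functoriota}, and extends multiplicatively and $\Bbbk$-linearly.

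The verification then has two halves. For well-definedness of $\pi$, I must check that each defining relation of $\DGti$ — the relations \eqref{circleis}, \eqref{slidepolyintoblue}, \eqref{demazureis}, \eqref{circforcesamealt}, together with the isotopy relations — maps to a consequence of the $\DCti$-relations \eqref{needle}, \eqref{barbell}, \eqref{dotforcegeneral}, \eqref{assoc1}, \eqref{unit} and the $\DCti$ isotopy relations. This is the mirror image of the computation already done to show $\iota$ is well-defined (the proof of the preceding claim lists exactly these correspondences: \eqref{needle}$\leftrightarrow$\eqref{ccwcirc}, barbell $\leftrightarrow$ clockwise circle, polynomial forcing $\leftrightarrow$ polynomial forcing, \eqref{unit},\eqref{assoc1}$\leftrightarrow$ isotopies), so the same dictionary runs in reverse. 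For $\pi\circ\iota = \id_{\DCti}$ and $\iota\circ\pi = \id$, it suffices to check on generators, which is immediate from the construction since $\pi$ is literally defined by reversing the generator assignments of $\iota$; the only subtlety is that $\iota\circ\pi$ fixes an arbitrary $\DGti$-diagram, and for this one uses that any such diagram is equal (not merely isomorphic) to one with extremal white space and with every $\ul{\emptyset i\emptyset}$-bigon already contracted, which is precisely the normal form in the image of $\iota$; this normalization uses only isotopy and the Frobenius/Cartan relations and so holds inside $\DGti$.

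The main obstacle I anticipate is purely bookkeeping rather than conceptual: making precise the claim that every object of $\Hom_{\DGti}(\emptyset,\emptyset)$ is isomorphic-by-a-canonical-isotopy-plus-contraction to one in the image of $\iota$ on objects, and that this normalization is compatible with composition — i.e. that $\iota$ really hits \emph{all} $1$-morphisms and not just a skeleton. Since every $b$ or $r$ appearing in a $\DGti$ $1$-morphism is, by the structure of the object quiver, either extremal or flanked by $\emptyset$ on both sides, one can always insert or delete the extremal strands and use cup/cap units to reduce to strictly alternating white-colored-white form; checking that this is an honest equality of $1$-morphisms (after the Karoubi-free, non-additive setup) and that the induced map on Hom-spaces is the identity after applying $\iota\circ\pi$ is where the care is needed. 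Once that normal-form lemma is in hand, full faithfulness is formal and the proposition follows.
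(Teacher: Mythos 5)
Your core strategy---construct $\iota^{-1}$ explicitly on objects and on generating morphisms, then check that the defining relations pass across---is the same one the paper uses, and it works. But two of the "obstacles" you anticipate are not actually there, and removing them tightens the argument considerably.

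First, there is no skeleton or normal-form issue on objects. The objects of $\Hom_{\DGti}(\emptyset,\emptyset)$ are precisely the paths $\emptyset \to \emptyset$ in the quiver on $\{\emptyset,b,r\}$, and because the only edges in that quiver are between $\emptyset$ and $b$ and between $\emptyset$ and $r$, every such path is \emph{forced} to be a concatenation of the two elementary loops $\ul{\emptyset s\emptyset}$ and $\ul{\emptyset t\emptyset}$. So $\iota$ is honestly bijective on objects, with no normalization, no ``contracting of $\ul{\emptyset i\emptyset}$-bigons,'' and no question of whether a diagram has extremal white space: every diagram in this Hom-category has white exterior on both ends for free, because its source and target start and end at $\emptyset$. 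You appear to be importing the evaluation procedure of Section~\ref{singfunctortobim}, which handles arbitrary diagrams in $\DGti$; that machinery is not needed here. Once you notice this, the worry that ``$\iota$ really hits all $1$-morphisms and not just a skeleton'' dissolves, and with it the need for the normal-form lemma you were about to invoke.

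Second, you do not need a Morse/cup-cap decomposition of $\DGti$-diagrams to define $\iota^{-1}$ on $2$-morphisms. The paper's construction is cleaner and avoids the attendant independence-of-choices check: first use \eqref{demazureis} to move any polynomial box sitting inside a colored region out into an adjacent white region (enclosed in a counterclockwise circle); once that is done, every colored region of the diagram is an undecorated compact planar region, and one simply deformation retracts it onto a tree (or a barbell, if the region has empty boundary) with the same boundary. That retract is a Soergel graph, and the choice of retract is irrelevant by \eqref{assoc1} and \eqref{unit}. The relation-check that remains is exactly the reverse dictionary you correctly describe: \eqref{circleis} and \eqref{circforcegeneral} are immediate, any instance of \eqref{demazureis} becomes an instance of \eqref{brokencycle}, and \eqref{slidepolyintoblue} follows from \eqref{demazureis} together with \eqref{circforcegeneral}. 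With the object bijection and this construction in hand, $\pi\circ\iota=\id$ and $\iota\circ\pi=\id$ hold on the nose, with nothing left to normalize.
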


\begin{proof} Let us construct $\iota^{-1}$. Clearly any path from $\emptyset$ to itself will be composed out of the smaller loops $\ul{\emptyset s\emptyset}$ and $\ul{\emptyset
t\emptyset}$, so that we have a bijection of objects between sequences of indices $i$ and sequences of paths $\ul{\emptyset i\emptyset}$. This defines $\iota^{-1}$ on objects. Now take any
Soergel diagram with extremal white space, use \eqref{demazureis} to replace any polynomial in a blue (resp. red) region with a polynomial in a white region inside a counter-clockwise
circle, and deformation retract the shaded regions to some tree (or barbell) with the appropriate boundaries. The choice of tree is irrelevant.

Now we check this is a well-defined functor (if so, it is clearly an inverse). Checking that relations \eqref{circleis} and \eqref{circforcegeneral} are satisfied is easy. Any instance of
relation \eqref{demazureis} will yield an instance of \eqref{brokencycle}. Finally, \eqref{slidepolyintoblue} follows from \eqref{demazureis} and \eqref{circforcegeneral}. \end{proof}

\begin{defn} Let $\FC=\FC_\infty$ be the $\Bbbk$-linear monoidal functor from $\DCti$ to $\BSBim$ defined by composing $\FG \circ \iota$. The object $\ul{w}$ is sent to $BS(\ul{w})$.
\end{defn}

The \emph{evaluation map} on $\DCti$ is the map $\Hom_{\DCti}(\ul{w},\ul{y}) \to R$ which places a dot on every boundary edge to get a graph with empty boundary, and then applies the
functor $\FC$. This map commutes with the evaluation map on $\DGti$, via $\iota$ and its inverse.

Finally, consider a 2-morphism in $\DGti$ which need not have extremal white space. By adding lines on the left or right (as in the first step of the evaluation map) one can obtain a
diagram with extremal white space. This process is clearly not monoidal, for adding lines does not preserve horizontal multiplication, only vertical multiplication.

Applying this to the image of $\mTTL \to \DGti$, we have a functor $\mTTL \to \DCti$.
\igc{1}{2TLtoSoergdefn}
Again, this is not a monoidal or $2$-functor, failing to commute with horizontal composition, as the following example shows.

\begin{example} \quad\\
	
$\ig{1}{not2functor}$ \end{example}

As an example, here are the images of the first few right-blue-aligned Jones-Wenzl projectors. Note that $JW_{m-1}$ has $m-1$ strands in the Temperley-Lieb context and $m$ strands in the
Soergel context.

\begin{example} \quad\\ \begin{center}
	
$\qquad \qquad{
\labellist
 \pinlabel {$JW_2 =$} [ ] at -24 113
 \pinlabel {$JW_3 =$} [ ] at 87 114
 \pinlabel {$+\; \; \frac{1}{[2]}$} [ ] at 165 115
 \pinlabel {$JW_4 =$} [ ] at -24 63
 \pinlabel {$+\; \; \frac{[2]}{[3]}$} [ ] at 60 64
 \pinlabel {$+\; \; \frac{[2]}{[3]}$} [ ] at 155 63
 \pinlabel {$+\; \; \frac{1}{[3]}$} [ ] at 60 21
 \pinlabel {$+\; \; \frac{1}{[3]}$} [ ] at 155 19
\endlabellist
\centering
\ig{.8}{JWinSoerg}
}$ \end{center} \end{example}

We can modify the functor $\mTTL \to \DCti$ to obtain a map from $2$-colored crossingless matchings on the disk to Soergel graphs on the disk with alternating boundary, having degree
$+2$. Here are a few Jones-Wenzl projectors in this context.

\begin{example}\quad\\ \begin{center}
	
$\qquad \qquad{
\labellist
 \pinlabel {$JW_2 =$} [ ] at -24 110
 \pinlabel {$JW_3 =$} [ ] at 87 110
 \pinlabel {$+\; \; \frac{1}{[2]}$} [ ] at 175 110
 \pinlabel {$JW_4 =$} [ ] at -24 60
 \pinlabel {$+\; \; \frac{[2]}{[3]}$} [ ] at 70 60
 \pinlabel {$+\; \; \frac{[2]}{[3]}$} [ ] at 165 60
 \pinlabel {$+\; \; \frac{1}{[3]}$} [ ] at 70 15
 \pinlabel {$+\; \; \frac{1}{[3]}$} [ ] at 165 15
\endlabellist
\centering
\ig{.8}{JWinSoergAlt}
}$  \end{center} \end{example}

\begin{notation} \label{degree2JWnotation} When we view the Jones-Wenzl projector as a map of degree $2$ with boundary $(st)^{m-1}$ reading around the circle, we will draw it as a circle
labelled by $JW$. For instance, $\ig{.7}{JWcircle}$. This map is \emph{not} rotation-invariant in general, so we can not draw it in a rotation-invariant way. \end{notation}

\subsubsection{{\bf Graphical manipulations, spanning sets, and minimal degrees}}\label{graphicalmanipulations}

We say that two subgraphs of a Soergel graph are \emph{adjacent} if no other part of the graph intervenes, i.e. if they lie in the same connected component of the plane minus the rest of
the graph. Given two adjacent dots of the same color, one can \emph{fuse} them into an edge; this operation decreases the degree of the graph by $2$. Given any two adjacent edges, one can
fuse them as follows: replace each edge with a trivalent vertex attached to a dot, as in \eqref{unit}, so that the dots are adjacent; then fuse the dots. The reverse operation is to
\emph{break} an edge, replacing it with two dots, and increasing the degree of the graph by $2$.

\igc{1}{fusebreak}

Equation \eqref{dotforcegeneral} is what allows one to break and fuse lines in practice. When $\pa_s(f)=1$, \eqref{dotforcegeneral} implies that one can fuse two dots (or edges), at the
cost of placing linear polynomials in the adjoining regions. Conversely, \eqref{dotforcegeneral} allows one to force polynomials from one region to another, at the cost of possibly
breaking some edges.

\begin{prop} \label{treesspan} Any morphism in $\DCti$ can be written as a linear combination of graphs where each component is either a simple tree or a polynomial; moreover, all
polynomials are in the left-most region. Therefore, any morphism with no boundary reduces to a polynomial. \end{prop}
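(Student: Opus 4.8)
The plan is to reduce an arbitrary Soergel graph $G$ to the desired form by two successive procedures: first eliminate all cycles, reducing to a $\Bbbk$-linear combination of graphs that are disjoint unions of trees together with some polynomial boxes; then bring each tree to simple form and push all boxes into the left-most region.

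For the first step I would argue by induction on the first Betti number $\beta_1(G)$ of the underlying graph. If $\beta_1(G)>0$ there is a region of the complement of $G$ in the strip whose closure is disjoint from $\R\times\{0,1\}$; choose one, $\rho$, minimal for inclusion of closures. The essential observation (and the place where $m=\infty$ is used) is that every vertex of $G$ is monochrome — trivalent vertices join three edges of one color and dots are univalent — so the boundary $\partial\rho$ is a monochrome cycle $C$, say blue. By minimality the part of $G$ inside $\rho$ is a forest all of whose components are closed, hence by \eqref{assoc1}, \eqref{unit} and the Barbell relation \eqref{barbell} each reduces to the empty graph or to a single polynomial box; after these reductions (which leave $\beta_1$ unchanged) the interior of $\rho$ contains only boxes, which merge to a single $f\in R$. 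If $f$ is a scalar, $C$ is a monochrome cycle with empty interior and hence vanishes by the Needle and Frobenius relations; otherwise cycle reduction \eqref{brokencycle} replaces $C$ (with $f$ inside) by a broken cycle carrying $\partial_s(f)$ just outside. Breaking an edge of $C$ drops $\beta_1$ by one, so iterating reaches $\beta_1=0$.

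For the second step, with $G$ now a forest together with polynomial boxes, I would induct on the number of edges of $G$. First, by repeated use of \eqref{unit} and \eqref{assoc1} — and the Barbell relation on any closed component — bring every tree component to the minimal form described before the statement (a simple tree, a boundary dot, the empty graph, or a box); this does not increase the edge count. If some box lies in a region $\rho$ of positive ``left-depth'' $k$ (distance in the dual graph from the left-most region to $\rho$), pick an adjacent region of left-depth $k-1$, separated from $\rho$ by an edge $e$, and apply polynomial forcing \eqref{dotforcegeneral} across $e$: the main term keeps the edge count but strictly decreases the total left-depth of the boxes, so an inner induction on total left-depth finishes it, while the correction terms have $e$ broken into two dots, hence strictly fewer edges, and so are covered by the outer inductive hypothesis. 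When this halts, every component is a simple tree, a boundary dot, or a polynomial box, and all boxes sit in the left-most region, where they merge. If $G$ has empty boundary there are no boundary dots and no simple-tree components (closed minimal trees are barbells, already converted to boxes), so $G$ reduces to a single polynomial.

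I expect the main obstacle to be Step 1: verifying that a minimal internal region really is bounded by a single monochrome cycle, and that the Needle and cycle-reduction relations still apply when that cycle carries trivalent legs protruding into the rest of the graph (one may need to slide such legs first, or to note that \eqref{brokencycle} is meant for cycles with arbitrary outward attachments). The bookkeeping in Step 2 — that tree reduction and edge-breaking never increase the edge count, and that left-depth is a well-founded measure — is routine.
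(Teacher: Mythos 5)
Your plan coincides with the paper's two-step reduction: induct on the cycle count, reduce the interior of an innermost cycle to a single box and break it with \eqref{brokencycle}, then push boxes leftward by polynomial forcing, with the correction terms handled by an edge-count induction. But there is a gap in Step 1, exactly where you were suspicious, and you have mis-diagnosed its source. Your assertion that the forest enclosed by $\rho$ consists only of \emph{closed} components is false, and $\partial\rho$ need not be a simple cycle: a trivalent vertex on the cycle $C$ can have its third edge pointing \emph{inward}, producing a blue tree hanging into the disk and attached to $C$. Such a component is not closed, the boundary walk of $\rho$ backtracks along its edges rather than being a simple cycle, and the Needle relation together with \eqref{brokencycle} say nothing directly about a cycle that carries inward legs. (Outward attachments, which your closing paragraph worries about, are harmless; cycle reduction is stated with arbitrary outward structure.)

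The repair is short but necessary. By innermostness each inward-hanging tree attaches to $C$ at exactly one vertex (a second attachment point would close a smaller cycle inside the disk), so after the floating interior components have been reduced to boxes as you describe, each dangling tree reduces by \eqref{assoc1} and \eqref{unit} to a single dot at its attachment vertex, which is then absorbed into $C$ by another application of \eqref{unit}. Only at that point is $C$ a genuine simple cycle enclosing a single $f\in R$, so that \eqref{brokencycle} applies and $\beta_1$ drops. With this insertion your argument agrees with the paper's (which is equally terse on exactly this point), and Step 2 as you lay it out is a reasonable elaboration of the paper's observation that forcing polynomials left can only break edges, never create cycles.
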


\begin{proof} By reducing trees to minimal form, a graph with no cycles and empty boundary reduces to a product of barbells, i.e. a polynomial. Now we induct on the number of cycles in a
graph. If a graph has a cycle, then the interior of that cycle is a graph with empty boundary, having fewer cycles than the original graph. An obvious inductive argument allows one to
reduce the interior to a polynomial, which can then be used to break the original cycle. Once one has reduced the graph to a collection of simple trees with polynomials, one can force all
the polynomials to the left, at the cost of breaking some edges. Breaking edges in a simple tree will result in additional simple trees and barbells, but can not add cycles. \end{proof}

(Disjoint unions of) simple trees with polynomials on the left do not constitute a basis, as the following equality shows.
\begin{equation} \label{notabasis} \ig{1}{notabasis} \end{equation}

\begin{cor} The endomorphism ring of $\emptyset$ in $\DCti$ is precisely $R$. \end{cor}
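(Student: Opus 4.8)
The plan is to exhibit the evident ring homomorphism $\phi \co R \to \End_{\DCti}(\emptyset)$ sending a polynomial $f$ to the empty Soergel graph carrying a single box labelled $f$, and to prove that it is both surjective and injective. That $\phi$ is a unital ring map is immediate from the conventions in force: boxes lying in a common region add and multiply as their polynomials, and the monoidal identity of $\emptyset$ is the empty graph, i.e.\ the box labelled $1$.

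Surjectivity is essentially a restatement of Proposition~\ref{treesspan}. An element of $\End_{\DCti}(\emptyset)$ is a $\Bbbk$-linear combination of Soergel graphs with empty top and bottom boundary. By Proposition~\ref{treesspan} each such graph is equal in $\DCti$ to a disjoint union of simple trees and polynomial boxes with no boundary; a simple tree with no boundary is a barbell, and by the Barbell relation~\eqref{barbell} a barbell is a box labelled $\a_s$ or $\a_t$. Hence every Soergel graph with empty boundary equals a single box labelled by a polynomial in $\a_s$ and $\a_t$, so $\phi$ is onto. The graphical reductions packaged inside Proposition~\ref{treesspan} (reducing trees via \eqref{unit} and \eqref{assoc1}, killing empty cycles via the Needle relation~\eqref{needle}, and resolving cycles inductively via cycle reduction~\eqref{brokencycle}) are the only substantive input to this step.

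For injectivity I would use the monoidal functor $\FC\co\DCti\to\BSBim$ (equivalently, the evaluation map of $\DCti$, which on $\End(\emptyset)$ coincides with $\FC$, there being no boundary edges on which to place dots). Being monoidal, $\FC$ sends the unit $\emptyset$ to $BS(\emptyset)=R$, and hence induces a ring homomorphism $\End_{\DCti}(\emptyset)\to\End_{\BSBim}(R)$. Now an $(R,R)$-bimodule endomorphism of $R$ is multiplication by its value at $1$, since $R$ is commutative, so $\End_{\BSBim}(R)\cong R$ canonically; and under this identification $\FC$ carries the box labelled $f$ to the element $f$. Thus $\FC\circ\phi=\mathrm{id}_R$, so $\phi$ is injective. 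Combining the two steps, $\phi$ is a ring isomorphism, which is the assertion of the Corollary. I do not expect a real obstacle here: once Proposition~\ref{treesspan} is in hand the argument is formal, and the only point requiring a moment's care is that we claim $\FC$ is faithful merely on $\End(\emptyset)$, not the stronger faithfulness of $\FC$ on all Hom spaces, which is established only in later sections.
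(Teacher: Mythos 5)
Your proof is correct and takes essentially the same route as the paper: surjectivity via Proposition~\ref{treesspan} (every boundaryless graph reduces to a box), and injectivity via the functor $\FC$, whose composite with $\phi$ is the identity on $R$. You merely spell out the intermediate steps more explicitly than the paper does.
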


\begin{proof} The endomorphism ring is spanned by polynomials, so that $R$ surjects on to it. After applying $\FC$, we get the identity map $R \to \End_{\BSBim}(R)=R$. Therefore $R \cong
\End(\emptyset)$. \end{proof}

Let us consider morphisms of minimal degree (with a given fixed boundary), within the span of simple trees with polynomials. Clearly any polynomials can be removed, lowering the degree.
Fusing two edges will also lower the degree. If these edges are in the same simple tree, then fusing the edges will create an empty cycle, and yield the zero morphism. However, fusing
edges in two disjoint simple trees will merge them into a single tree. This motivates the following definition.

\begin{defn} Consider a graph, each component of which is a simple tree. The plane minus the red subgraph is split into connected components, each of which contains a (possibly empty)
blue subgraph. If every such blue subgraph is connected, and the same is true with the roles of red and blue reversed, then we call the graph \emph{maximally connected}. \end{defn}

A typical example is a graph in the image of the functor from $\mTTL$; a non-example can then be obtained by breaking an edge.

\begin{claim} \label{maxconisminimal} Fix a sequence of colors along the boundary of a planar disk. Morphisms represented by maximally connected diagrams with this boundary all have the
same degree, and are the minimal degree attainable for morphisms in $\DCti$ with that boundary. If the colors on the boundary alternate, this degree is $2$; for every repetition on the
boundary this degree is lowered by $1$. \end{claim}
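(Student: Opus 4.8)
The strategy is to establish the two halves separately: first, that maximally connected diagrams with a fixed boundary all carry the same degree and that this degree has the stated value; second, that no diagram of strictly smaller degree exists. For the first half, I would argue that any two maximally connected trees with the same boundary are related by a sequence of edge-fusings and edge-breakings that preserve maximal connectivity, hence preserve degree. More directly: by Proposition \ref{treesspan} a maximally connected morphism is determined up to lower-degree terms by its underlying "shape," and a simple Euler-characteristic bookkeeping (counting boundary points, trivalent vertices, and connected components of each color) computes the degree from the boundary alone. Concretely, if the boundary word has $p$ points of color $s$ and the $s$-subgraph is a disjoint union of $c_s$ simple trees (similarly for $t$), then the degree contributed by the $s$-part is $p - 2(p - c_s) \cdot(\text{something})$; I would set up the exact count so that each simple tree on $k$ boundary points contributes degree $2 - k$ (a single dot on one point contributes $+1$; a tree on $k\ge 2$ points has $k-2$ trivalent vertices, contributing $k - 2(k-2) = 4-k$... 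I need to recompute this carefully). The upshot should be: degree $= \sum_{\text{$s$-trees}} (2 - k_i) + \sum_{\text{$t$-trees}}(2 - l_j)$, and maximal connectivity pins down the number of trees in terms of the boundary cyclic word, so the degree is a boundary invariant.

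For the explicit value, I would observe that an alternating boundary of length $2n$ (reading around the disk) forces, in a maximally connected diagram, the $s$-edges to form a single tree and the $t$-edges to form a single tree, each meeting the boundary in $n$ points, giving degree $(2-n) + (2-n) = 4 - 2n$... which is not $2$, so again the normalization needs care — the point of the Claim is surely that the minimal degree is $2$ and I should match conventions, presumably by recalling that the image of the $\mTTL$ functor has degree $+2$ as stated right after Definition \ref{iotadefn}'s neighborhood and in the $JW$ examples. So I would calibrate the count against the known fact that crossingless matchings map to degree-$2$ Soergel graphs with alternating boundary, and then track how a single repetition (two adjacent equal colors on the boundary) changes the count: merging two adjacent boundary points of the same color into the same tree removes the need for one "separating" structure and lowers the degree by $1$, exactly as claimed.

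For the minimality half, I would use Proposition \ref{treesspan}: every morphism is a linear combination of (disjoint unions of) simple trees with all polynomials pushed to the leftmost region. A nonzero polynomial factor only raises the degree, so we may assume there are none. Among polynomial-free unions of simple trees with the given boundary, I claim the maximally connected ones are exactly the minimal-degree ones: if a diagram is \emph{not} maximally connected, then some color — say blue — has two simple-tree components lying in the same region of the complement of the red graph; these two components can be fused along an edge (as described in the "fuse" operation preceding the Claim), producing a diagram with strictly smaller degree (fusing lowers degree by $2$, or $1$ after accounting for a possibly-created dot, but in any case strictly) and the same boundary, unless the fusion creates an empty cycle — but two \emph{distinct} components cannot fuse into a cycle, only a single component fusing to itself does that. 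Hence a non-maximally-connected diagram is never of minimal degree, and conversely all maximally connected diagrams share one degree by the first half, so that common degree is the minimum.

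The main obstacle I anticipate is bookkeeping the degree count precisely enough to get the stated values ($2$ for alternating boundary, decreasing by $1$ per repetition) rather than merely "some boundary invariant," and in particular handling the cyclic/disk geometry correctly — a blue tree and a red tree on a disk are nested in a way that constrains how many components of each color can coexist given the boundary word, and I want to be sure the "maximally connected" condition really does determine these component counts uniquely from the cyclic boundary word. I would resolve this by an induction on boundary length: the base case is the empty or two-point boundary, and the inductive step removes two adjacent boundary points of the same color (a "cap") and checks how the degree and the maximal-connectivity structure change, which is exactly the move that appears in relating Jones-Wenzl projectors across different $n$.
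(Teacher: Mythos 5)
Your minimality argument (second half) is correct and matches the paper's proof exactly: if a diagram is not maximally connected, two adjacent same-color trees can be fused, strictly lowering the degree (by $2$), and fusing two \emph{distinct} components can never create a cycle, so no obstruction arises. This is precisely the content the paper gives.

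The first half — that all maximally connected diagrams with a fixed boundary have the same degree, equal to $2 - r$ where $r$ is the number of cyclic repetitions — is where your proposal has a genuine gap. You correctly arrive at the per-component formula (a simple tree on $k$ boundary points has degree $2-k$, so a disjoint union of simple trees has degree $2(c_s + c_t) - (p+q)$, writing $c_s, c_t$ for the number of blue and red trees and $p, q$ for the number of blue and red boundary points), but then you assert without proof that maximal connectivity ``pins down the number of trees,'' which is exactly what needs to be shown; note that $c_s$ and $c_t$ are \emph{not} individually determined (for $ststst$ one can have $c_s=1, c_t=3$ or $c_s=2, c_t=2$), only the sum $c_s+c_t$ is, and that requires an argument. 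Worse, your test computation for an alternating boundary of length $2n$ rests on a false topological premise: you claim both the $s$-subgraph and the $t$-subgraph can be single connected trees, but on a disk this is impossible. A connected blue tree meeting all $n$ blue boundary points of an alternating word cuts the disk into $n$ regions, each containing exactly one red boundary point, so the red subgraph is forced to be $n$ isolated dots; this gives degree $(2-n) + n\cdot(2-1) = 2$, consistent with the claim, whereas your $4-2n$ is an artifact of the impossible configuration. You notice the discrepancy and propose to ``calibrate against the known $\mTTL$ degree-$2$ fact and then induct on repetitions,'' which is a reasonable strategy and essentially what the paper is referring to as ``a simple exercise'' — but you do not carry it out, and without it the statement that all maximally connected diagrams share a single, boundary-determined degree remains unproven.
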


\begin{example} Given boundary $brbrrrbbrb$, the minimal degree would be $-2$ because there are 4 repetitions (this sequence lies on a circle so the end is adjacent to the beginning).
\end{example}

\begin{proof} If there is more than one blue tree in a single component of the plane minus the red graph, then two of them can be fused, yielding a graph of smaller degree. Therefore a
minimal degree map must be maximally connected. It is a simple exercise to show that any maximally connected diagram has the appropriate degree. \end{proof}

\begin{claim} All morphisms are generated over maximally connected diagrams by placing polynomials in \textbf{any} of the regions. \end{claim}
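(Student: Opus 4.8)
The plan is to bootstrap from the normal form of Proposition~\ref{treesspan}: every morphism is a $\Bbbk$-linear combination of diagrams that are disjoint unions of simple trees together with a polynomial box in the left-most region. Since a polynomial box in a region is itself a combination of products of double dots placed in that region (literally so when $R = \Bbbk[\a_s,\a_t]$), it suffices to show that each such diagram $D$ lies in the span of maximally connected diagrams decorated by double dots. I would induct on the number $k$ of simple-tree components of $D$ --- equivalently, since the boundary of size $N$ is fixed, on the degree $2k-N$. A diagram of minimal component count must already be maximally connected, since the fusion move below would otherwise reduce it further; this is the base case, and it is exactly the dichotomy used in the proof of Claim~\ref{maxconisminimal}.

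For the inductive step, suppose $D$ is not maximally connected. By definition there is a component $C$ of the complement of the red subgraph of $D$ containing two distinct blue simple trees $T_1, T_2$; because both lie in the single region $C$, I can choose edges $e_1 \subset T_1$ and $e_2 \subset T_2$ that are adjacent, i.e.\ joined by an arc inside $C$ meeting no other part of $D$. I would then fuse $e_1$ and $e_2$ as in Section~\ref{graphicalmanipulations}: use \eqref{unit} to replace each $e_i$ by a trivalent vertex carrying a pendant dot, with the two dots facing one another along the arc, and then reconnect the two dots into a single blue bridge. Using Demazure surjectivity to pick a linear $\a \in \hg^*$ with $\pa_s(\a) = 1$, the polynomial forcing relation \eqref{dotforcegeneral}, used in the direction that trades two facing dots for a strand at the cost of linear terms, yields an identity $D = \a \cdot D^{+} - s(\a) \cdot D^{+}$, where $D^{+}$ is the reconnected diagram and the double dots $\a, s(\a)$ are placed in the two regions flanking the new bridge --- both inside a subdivision of $C$. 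A single bridge between disjoint trees creates no cycle (hence no needle, cf.\ \eqref{needle}) and stays inside $C$ away from the red graph; after reducing $D^{+}$ to minimal form via \eqref{unit} and \eqref{assoc1}, it is again a disjoint union of simple trees, with the same boundary but only $k-1$ components.

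Applying the inductive hypothesis to $D^{+}$ --- temporarily setting aside its polynomial box and then restoring it --- puts $D^{+}$ in the span of maximally connected diagrams decorated by double dots; multiplying by the double dots $\a$ and $s(\a)$ keeps us inside that span, so $D$ lies there too. If instead the failure of maximal connectivity occurs on the red side (a blue-complement region containing two red trees), the color-swapped argument applies verbatim. The induction terminates because each step strictly decreases the component count toward a maximally connected diagram.

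I expect the only real friction to be the geometric bookkeeping in the fusion step: checking that $e_1$ and $e_2$ can be chosen adjacent, that the reconnecting arc can be kept disjoint from the rest of $D$ and from the red subgraph, and that minimal-form reduction of $D^{+}$ produces honest simple trees with no spurious cycles. All of this is routine given the graphical manipulations already established, so I expect the argument itself to be short.
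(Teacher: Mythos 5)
Your proof is correct and is essentially the paper's argument filled out in detail: the paper runs the same fusion move in reverse (break lines in a maximally connected diagram to reach any disjoint union of simple trees, at the cost of double dots via \eqref{dotforcegeneral}), and you have simply formalized the termination of this process as an induction on component count using the observation from Claim~\ref{maxconisminimal}.
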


\begin{proof} One can reach any disjoint collection of simple trees by breaking lines in a maximally connected diagram. One can break lines by adding polynomials, using
\eqref{dotforcegeneral} with $\pa_s(f)=1$ for instance. \end{proof}

We will soon show that maximally connected graphs form a basis for the space of minimal degree morphisms in $\DCti$. This could be accomplished by applying $\FC$ and using some
combinatorics, but we will give a cheaper proof soon. It is easy to see that the image of either map from $\mTTL$ (to degree $0$, or degree $2$) will be precisely the maximally connected graphs. 

\begin{remark} We have already noted in section \ref{threerecursions} that certain decomposition numbers $c^1_{2k-1}$ are Catalan numbers. These numbers also agree with the degree $2$
part of trace of an alternating monomial. This supports the claim that Temperley-Lieb diagrams form a basis. \end{remark}

\subsubsection{{\bf Pitchforks and Alldots}}\label{moremorphisms}

Let us call the following map a \emph{pitchfork}.
\igc{1}{pitchfork}
Let us call the map from $\ul{w}$ to $\emptyset$ consisting entirely of boundary dots by the name \emph{all-dot}.
\igc{1}{alldot}

Recall that ${}_t \ul{\hat{m}}$ denotes the alternating sequence of length $m$ which begins with $t$.

\begin{claim} \label{pitchfork} $\Hom({}_t \ul{\hat{m}},\emptyset)$ is generated (over polynomials in the extremal region) by maps which begin with pitchforks (i.e. they have a pitchfork
somewhere on the far bottom of the diagram), and by the all-dot. \end{claim}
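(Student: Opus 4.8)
The plan is to reduce to the tree-normal-form result, Proposition \ref{treesspan}, and then run a planarity argument localized at the bottom boundary. First I would apply Proposition \ref{treesspan} to write an arbitrary element of $\Hom({}_t\ul{\hat{m}},\emptyset)$ as a $\Bbbk$-linear combination of diagrams, each of which is a disjoint union of simple trees together with polynomials in the extremal (left-most) region. Since the statement allows us to generate over polynomials placed in the extremal region, it suffices to treat a single such diagram $D$ with no polynomial boxes: a disjoint union of simple trees, every leg of which lies on the bottom boundary ${}_t\ul{\hat{m}}$; a tree coloured $c\in\{s,t\}$ meets only boundary points of colour $c$, and a one-legged tree is a boundary dot.

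If every component of $D$ is a boundary dot, then $D$ is the all-dot and there is nothing to prove. Otherwise $D$ contains a trivalent vertex, and I would show that some trivalent vertex can be brought flush against the bottom boundary so as to produce a pitchfork there. Among the components of $D$ with at least two legs, choose a tree $T$ and two cyclically consecutive legs $q<q'$ of it such that every boundary point strictly between $q$ and $q'$ carries a boundary dot; such a choice exists by an inward induction on nested sub-configurations, since if no ``gap'' of a given $\ge 2$-leg tree is clear then that gap traps a strictly smaller $\ge 2$-leg tree, and the boundary is finite. The unique path $P$ in $T$ from $q$ to $q'$ bounds, together with the boundary segment $[q,q']$, a disk meeting the rest of $D$ only in the intervening dots; since $q$ and $q'$ are cyclically adjacent among the external endpoints of this disk, a sequence of associativity moves \eqref{assoc1} re-shapes the binary tree $P$ so that the edges running down to $q$ and to $q'$ become the two lower legs of a single trivalent vertex $v_0$, its remaining leg running out of the disk. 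An isotopy then slides $v_0$ against the bottom boundary over the intervening dots, and reading the picture at these three (or, after absorbing a $c$-coloured dot into the handle of $v_0$ via \eqref{unit}, three consecutive) boundary strands exhibits a pitchfork at the very bottom of the diagram, with the remainder of $D$ stacked above it. Any polynomial boxes created along the way are then forced into the extremal region using \eqref{dotforcegeneral}, so the original morphism lies in the span, over polynomials in the extremal region, of the all-dot and of diagrams beginning with a pitchfork.

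The step I expect to be the main obstacle is the last one: the planar bookkeeping needed to guarantee that $v_0$, once pulled to the boundary, straddles exactly three consecutive boundary strands (so that a genuine pitchfork appears, rather than a wider forked vertex), together with the verification that the associativity and unit moves used to arrange this never create a cycle. Everything else — the reduction to simple trees, the existence of a clear gap, and the final polynomial forcing — is routine given the Frobenius relations \eqref{thefrobeniusrelations} and \eqref{dotforcegeneral} already in hand.
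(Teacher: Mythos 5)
Your reduction to simple trees and the construction of a ``clear gap'' by inward induction are both fine, but the step you flag as the ``main obstacle'' is a genuine gap, and the fix you sketch does not close it. The inward induction gives a tree $T$ and cyclically consecutive legs $q < q'$ with only boundary dots in between, but it does \emph{not} force $q' - q = 2$. A concrete obstruction: on ${}_t\ul{\hat{5}} = tstst$, take the $t$-coloured simple tree (a single edge) with legs at positions $1$ and $5$, and boundary dots at $2,3,4$. The only $\ge 2$-leg tree here has two cyclic gaps, one containing three dots and the other containing none; neither is a pitchfork position. When the clear gap has length $\ge 3$, there is at least one $c$-coloured dot lying strictly between the two $c$-coloured legs of $v_0$, and relation \eqref{unit} cannot absorb it: \eqref{unit} collapses a trivalent vertex with a dot on one of \emph{its own} edges, whereas the dot in question is a \emph{disjoint} connected component of the graph.

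To merge that dot with $T$ one must instead use the polynomial forcing relation \eqref{dotforcegeneral} (the ``fuse'' move). This is not an isotopy: it replaces a single diagram by a \emph{sum} of diagrams carrying polynomials in interior regions, and pushing those polynomials to the extremal region can break further edges and reintroduce dots. The recursion this sets up is the actual content of the paper's proof, which inducts on $m$: it examines the first boundary strand, applies the inductive hypothesis to the inner region that strand encloses, and in the all-dot case fuses a dot into the tree using dual bases $\{1,a\}$, $\{b,1\}$ for $R$ over $R^t$, obtaining one term that begins with a pitchfork and a second term handled by a further appeal to the inductive hypothesis. Your proposal treats exactly this step as ``routine polynomial forcing''; it is the crux of the argument and needs to be supplied.
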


\begin{proof} Without loss of generality, we may assume that our morphism is represented by a collection of simple trees. We will use induction on $m$, where the statement is obviously
true for $m\le 2$ (and there are no pitchforks). If the very first (red) strand is a boundary dot, then we may use the inductive hypothesis for $m-1$ on the remainder of the diagram.
Otherwise, the first strand connects to some other index $t$ on the boundary. Consider the first such index it connects to, and how this divides the graph into two regions.
\igc{1}{pitchforkproof1}
By induction, the morphism in the inner region either begins with a pitchfork (and thus satisfies our criterion) or is the all-dot. But this latter possibility is within the span of
pitchforks as well.
\igc{1}{pitchforkproof2}
We fused two red components using \eqref{dotforcegeneral}; here, $\{1,a\}$ and $\{b,1\}$ are dual bases of $R$ over $R^t$. The map with $b$ begins with a pitchfork, while the map with $a$ does not yet begin with a pitchfork. However, in the map with $a$, the region where $b$ is absent looks like what we began with, so by induction it is within the span of pitchforks. \end{proof}

Inside $\End(BS({}_t \ul{\hat{m}}))$ we have the Jones-Wenzl projector $JW_{m-1}$ living in degree $0$. The defining property of $JW$ says that it is killed by all pitchforks. Therefore,
in the Karoubi envelope we see that $\Hom(\Im(JW_{m-1}),\emptyset)$ is spanned (over $R$) by the all-dot.

%
\subsection{The Grothendieck group of $\DCti$ and $\DGti$}
\label{sec-grothinfty}
%

\subsubsection{{\bf Potential categorifications}}\label{splitii}

The only relation in the infinite dihedral Hecke algebroid is 
\begin{equation} \ul{i\emptyset i} \cong \qtwo \ul{i}, \label{iemptysetisplitting} \end{equation} for $i = r$ or $b$. This is categorified in $\DGti$ by a rotation of \eqref{circforcesamealt}, such as
\begin{equation} \ig{1}{ieiSingDecomp} \label{ieiSingDecomp} \end{equation}
This splits the identity of $\ul{i\emptyset i}$ as a sum $i_1 p_1 + i_2 p_2$ of orthogonal idempotents, where $p_1 i_1$ and $p_2 i_2$ are the identity of $\ul{i}$. In this picture we have chosen the dual bases $\{1,\frac{\a_s}{2}\}$ and $\{\frac{\a_s}{2},1\}$, but one could define such an idempotent decomposition for any dual bases $\{1,\a\}$ and $\{-s(\a),1\}$. Similarly, in $\DCti$ we have the idempotent decomposition
\begin{equation} \label{iidecomp} \ig{1}{iidecomp} \end{equation}
which is analogous to the isomorphism \eqref{BiBi}. Thus $\DCti$ satisfies \begin{equation} ii \cong i\{1\} \oplus i\{-1\}. \label{iisplittingeqn} \end{equation}

Therefore, $\DCti$ is a potential categorification of $\HB$ (for $m= \infty$), and $\DGti$ is a potential categorification of $\HG$.

\begin{remark} In fact, $\DCti$ is a potential categorification of $\HB$ even without Demazure surjectivity. To obtain the isomorphism \eqref{iisplittingeqn}, we can use the following
idempotent decomposition \begin{equation} 	{
	\labellist
	\small\hair 2pt
	 \pinlabel {$a$} [ ] at 84 19
	 \pinlabel {$b$} [ ] at 188 19
	\endlabellist
	\centering
	\ig{1}{iidecompAlt}
	}
, \end{equation} where $a+b = -1$. These idempotents do not rely on the existence of a dual basis.

However, for $\DGti$ to be a potential categorification of $\HG$, one does require Demazure surjectivity. It is not difficult to show that the only morphisms $\ul{i\emptyset i}
\to \ul{i}$ are given by a cap with a polynomial, and the construction of appropriate maps $i_1$, $p_1$, $i_2$, and $p_2$ requires the existence of dual bases. \end{remark}

\subsubsection{{\bf The SCT and the Soergel Conjecture}}\label{indecomps}

\begin{defn} Assume lesser invertibility (Assumption \ref{allinvertible}), i.e. all quantum numbers are invertible. Let $B_e$ denote $BS(\emptyset)$, and for $w \ne e$ let $B_w \in
\Kar(\DCti)$ denote the image of the Jones-Wenzl projector in $BS(\ul{w})$ for a reduced expression $\ul{w}$. \end{defn}

The Temperley-Lieb algebra subsumes all degree $0$ endomorphisms of $BS(\ul{w})$ for nontrivial reduced expressions in $W$. Therefore, an idempotent which is primitive in the
Temperley-Lieb algebra is primitive in $\DCti$ as well. In particular, $B_w$ is indecomposable in $\Kar(\DCti)$, and $BS(\ul{w})$ splits into indecomposables in $\Kar(\DCti)$ exactly as
the corresponding object does in $\mTTL$. Therefore, the formula for decomposing $b_{\ul{w}}$ into the KL basis in Claim \ref{allrecursions1} is categorified, decomposing $BS(\ul{w})$ into
indecomposables. Finally, we note that inverting $[2]$ implies Demazure surjectivity, so that the functor $\FC$ is well-defined. Moreover, the splitting \eqref{iisplittingeqn} implies that
every Bott-Samelson bimodule decomposes into direct sums of various $B_w$ with shifts.

\begin{thm} \label{mainthminfty} Assume lesser invertibility. Then the SCT and the Soergel conjecture hold for $\DC$. For any Soergel realization (see section \ref{soergelcatfn}), the
functor $\FC$ is an equivalence, and the indecomposables $B_w$ go to the indecomposable Soergel bimodules that he also labels $B_w$. \end{thm}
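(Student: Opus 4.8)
The plan is to verify the hypotheses of Corollary~\ref{homspacecor}, then invoke Corollary~\ref{fullyfaithfulfunctor} to upgrade to an equivalence. Recall from Section~\ref{splitii} that $\DCti$ is already a potential categorification of $\HB_\infty$, and that (inverting $[2]$, hence Demazure surjectivity) we have the functor $\FC$ and the splitting \eqref{iisplittingeqn}, so every $BS(\ul{w})$ decomposes into the $B_w$'s exactly as in Claim~\ref{allrecursions1}. The biadjointness of each $B_i$ in $\DCti$ is built into the isotopy relations, and $B_w$ is biadjoint to $B_{w^{-1}}=B_w$ (since $w=w^{-1}$ in a dihedral group, up to the obvious identification of reduced expressions). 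So the only nontrivial input needed is the computation of the Hom space $\Hom_{\DCti}(B_e,B_w)$ as a graded $R$-module, where $R=\End_{\DCti}(\emptyset)$ by the Corollary after Proposition~\ref{treesspan}.

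First I would identify $\Hom_{\DCti}(B_e, B_w)$ with a subspace of $\Hom_{\DCti}(\emptyset, BS(\ul{w}))$ for $\ul{w}={}_t\ul{\hat{n}}$ a reduced expression of length $n=\ell(w)$, namely the image of post-composition with $JW_{n-1}$. By Claim~\ref{pitchfork} and the remark following it, $\Hom_{\DCti}(\Im(JW_{n-1}),\emptyset)$, equivalently (by rotating) $\Hom(\emptyset, B_w)$, is spanned over $R$ by a single element: the all-dot composed with $JW_{n-1}$ (or its rotation). Thus $\Hom_{\DCti}(B_e,B_w)$ is a cyclic $R$-module generated in degree $\ell(w)$ — the degree of the all-dot on $n$ strands. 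The key claim is then that this module is in fact \emph{free} of rank one, i.e.\ $\grdrk_R \Hom_{\DCti}(B_e,B_w) = v^{\ell(w)} = \epsilon(b_w)$ for the standard trace $\epsilon$. To see freeness, I would apply the functor $\FC$: the image of the all-dot-after-$JW$ in $\BSBim$ is a nonzero bimodule map $R \to B_w$ (nonzero because its associated polynomial is, up to invertible scalar, the product of positive roots by Proposition~\ref{coxeterlinesprop}/Corollary~\ref{coxeterlinescor} combined with the evaluation-equals-associated-polynomial claim of Section~\ref{TLandSing}), landing in a torsion-free $R$-module; hence no nonzero multiple of the generator is killed, so $\Hom_{\DCti}(B_e,B_w)$ is free of graded rank $v^{\ell(w)}$.

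With these facts in hand, Lemma~\ref{homspacelemma} applies with $\epsilon$ the standard trace and $R$ the ground ring: all Hom spaces between objects $B_w$ and $BS(\ul{x})$ in $\DCti$ are free $R$-modules, and the induced semilinear pairing is the standard one. Since $R$ is concentrated in non-negative degrees with degree-zero part $\Bbbk$, and (for a Soergel ring) $\Bbbk$ is a field, Corollary~\ref{homspacecor} gives $\grdrk_R\Hom_{\DCti}(B_w,B_x) = \delta_{w,x} + v\Z[v]$, so each $B_w$ is indecomposable with local endomorphism ring, the $B_w$ are pairwise non-isomorphic, $\Kar(\DCti)$ already contains them all as the complete list of indecomposables, and $[B_w]=b_w$: this is the SCT and the Soergel conjecture for $\DCti$. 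Finally, for a Soergel realization the target $\BSBim$ also satisfies Lemma~\ref{homspacelemma} (by Soergel's theorem, quoted in Section~\ref{soergelcatfn}) and is Karoubian after taking $\SBim$; the functor $\FC$ sends $B_i \mapsto B_i$, hence $B_w \mapsto B_w$, and induces isomorphisms on $\Hom(B_e, BS(\ul{w}))$ (both sides have graded rank equal to $\epsilon(b_{\ul w})$, and $\FC$ is surjective on this Hom space since it is surjective on the degree-$0$ part by the Temperley-Lieb description plus surjective on polynomial boxes). Corollary~\ref{fullyfaithfulfunctor} then yields that $\FC$ is an equivalence.

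The main obstacle I anticipate is the freeness/rank-one statement for $\Hom_{\DCti}(B_e,B_w)$: Claim~\ref{pitchfork} gives a spanning set of size one over $R$, but ruling out $R$-torsion is exactly where one must leave the purely diagrammatic world and use the functor $\FC$ together with the non-vanishing of the associated polynomial of the Jones-Wenzl projector (Corollary~\ref{coxeterlinescor}). One should be careful that this non-vanishing argument only needs that the product of positive roots is a nonzerodivisor in $R$, which holds since the realization is faithful and $R$ is a domain (polynomial ring over a field); and one must make sure the map $\FC$ applied to the generator lands in $B_w$ itself rather than a proper submodule, which follows because $JW_{n-1}$ is idempotent and its image under $\FC$ is (a grading shift of) the summand $B_w \subset BS(\ul w)$. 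The remaining steps are bookkeeping with biadjointness and the combinatorics of Claim~\ref{allrecursions1}, already packaged in Lemma~\ref{homspacelemma}.
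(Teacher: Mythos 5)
Your proposal is essentially the paper's own proof: both reduce everything to computing $\Hom(B_e,B_w)$ (or the biadjoint $\Hom(B_w,B_e)$), show it is spanned over $R$ by the all-dot precomposed with the Jones--Wenzl projector via Claim~\ref{pitchfork}, establish freeness by pushing forward along $\FC$ and observing the evaluation is the (nonzero, nonzerodivisor) product of roots from Proposition~\ref{coxeterlinesprop}, and then invoke Lemma~\ref{homspacelemma}, Corollary~\ref{homspacecor}, and Corollary~\ref{fullyfaithfulfunctor}. One small slip: $w \ne w^{-1}$ in general in the infinite dihedral group (e.g.\ $(st)^{-1}=ts$), so $B_w$ is biadjoint to $B_{w^{-1}}$, not to itself --- but this is exactly the hypothesis of Lemma~\ref{homspacelemma} and holds by reversing reduced expressions and JW projectors, so the argument goes through unchanged.
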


\begin{proof} Let us compute $\Hom(B_w,B_e)$, which is equal to $\Hom(BS(\ul{w}),B_e) JW$ for a reduced expression $\ul{w}$, the precomposition of this Hom space with the Jones-Wenzl
projector. Let $\psi \in \Hom(BS(\ul{w}),B_e)$ denote the all-dot. We have already seen that $\Hom(B_w,B_e)$ is spanned by $\psi \circ JW$, in section \ref{moremorphisms}. Also, $\FC(\psi
\circ JW)$ is non-zero. After all, the evaluation map applied to $\psi \circ JW$ is the same as the evaluation map applied to $JW$, which is a nonzero product of roots. Since $R$ acts
freely on morphisms in $\SBim$, it must act freely on $\psi \circ JW$ as well. Therefore, $\Hom(B_w,B_e)$ is the free $R$-module of rank $1$ generated in degree $\ell(w)$.

We have now seen that $\DCti$ satisfies the conditions of Lemma \ref{homspacelemma} for the standard trace, because $\e(b_w)=v^{\ell(w)}$. For a Soergel realization, $\SBim$ also
satisfies these conditions. Therefore, Corollaries \ref{homspacecor} and \ref{fullyfaithfulfunctor} will finish the proof. \end{proof}

This also proves that maximally connected graphs are linearly independent, and form a basis for the minimal degree morphism space.

\begin{cor} Suppose that all quantum numbers are invertible in $\Bbbk$. The 2-category $\Kar(\DGti_\Bbbk)$ is a categorification of the Hecke category $\HG$, and categorifies the
standard trace. The 2-functor $\FG_\Bbbk$ is an equivalence for Soergel realizations. \end{cor}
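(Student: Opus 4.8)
The plan is to bootstrap from Theorem~\ref{mainthminfty} by using biadjointness in $\DGti$ to reduce every question about the $2$-category $\DGti$ to the corresponding question about its endomorphism category of the object $\emptyset$, which is $\DCti$ via the isomorphism $\iota$ of Section~\ref{sec-mdti}. Recall from Section~\ref{splitii} that $\DGti$ is a potential categorification of $\HG$: the generating $1$-morphisms $\Ind_s$ and $\Res_s$ descend to the two copies of the KL generator $b_{\{s\}}$ in $\HG$, they are (shifted-)biadjoint via the Frobenius structure maps, and the splitting \eqref{ieiSingDecomp} categorifies the defining relation \eqref{bisqsing}. The semilinear pairing that $\DGti_\Bbbk$ induces on $\HG$ via graded ranks of $2$-Hom spaces over $R$ is, by Claim~\ref{tracedetermined}, determined by its restriction to $\End_\HG(\emptyset)=\HB$; and that restriction is the pairing induced by the potential categorification $\End_{\DGti}(\emptyset)\cong\DCti$ of $\HB$, which Theorem~\ref{mainthminfty} identifies with the standard pairing. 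Hence $\DGti_\Bbbk$ induces the standard trace on $\HG$.

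Next I would carry out the algebroid version of the arguments of Section~\ref{techniques}, deferred in Section~\ref{hecketraces}. For each finitary $J\subset K$ and each KL basis element of $\Hom_\HG(J,K)$ one builds, as in Section~\ref{indecomps}, an object of $\Kar(\DGti_\Bbbk)$ as the image of an idempotent assembled from a Jones--Wenzl projector together with the Frobenius structure maps of the relevant extensions; the splitting \eqref{ieiSingDecomp}, together with Claim~\ref{allrecursions1}, then shows that every singular Bott--Samelson $1$-morphism decomposes into these objects exactly as in $\HG$. The key point is that, using the $\Ind/\Res$ biadjunctions and the defining relations of $\DGti$, every $2$-Hom space in $\DGti$ is isomorphic up to shift to a $2$-endomorphism space of $\emptyset$, i.e. to a Hom space in $\DCti$ --- this is exactly the capping-off/evaluation procedure of Section~\ref{singfunctortobim}. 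Therefore the freeness over $R$ of Hom spaces in $\DCti$ proved inside Theorem~\ref{mainthminfty}, and the fact that their graded ranks realize the standard trace, propagate to all of $\Kar(\DGti_\Bbbk)$. This yields the algebroid analogs of Lemma~\ref{homspacelemma} and Corollary~\ref{homspacecor}: the map $\HG\to[\Kar(\DGti_\Bbbk)]$ is an isomorphism, it induces the standard trace, and the constructed objects form a complete irredundant list of indecomposables.

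For the equivalence statement, let $\Bbbk$ be a Soergel ring. By Williamson's theorem (Section~\ref{soergelcatfn}), $\SSBim$ is likewise a categorification of $\HG$ inducing the standard trace, with endomorphism $2$-category of $\emptyset$ equal to $\SBim$ (the Karoubi envelope of $\BSBim$). Now $\FG_\Bbbk$ is the identity on objects, sends the generating $1$-morphisms to the generating $1$-morphisms, and --- after composing with $\iota$ --- restricts on $\End(\emptyset)$ to the functor $\FC=\FC_\infty$, which Theorem~\ref{mainthminfty} shows is an equivalence. By the algebroid analog of Corollary~\ref{fullyfaithfulfunctor}: $\FG_\Bbbk$ is fully faithful on $2$-morphisms because biadjointness on both sides reduces every $2$-Hom space to the $\emptyset$-object, where $\FC$ is already fully faithful; and it is essentially surjective because the two $2$-categories are generated by the same $1$-morphisms and, by the previous paragraph, the same objects exhaust the indecomposables in each Karoubi envelope. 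Hence $\FG_\Bbbk$ is an equivalence of $2$-categories.

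I expect the main obstacle to be the bookkeeping in the reduction to $\emptyset$: one must check that the biadjunction units and counits (the cups and caps, which in the singular setting carry nontrivial grading shifts) make the capping-off procedure of Section~\ref{singfunctortobim} into an honest natural isomorphism of $2$-Hom spaces rather than a mere surjection, and that this matches the analogous reduction on the bimodule side. Once that is in place, the remainder is a faithful transcription of the machinery of Section~\ref{techniques} from algebras to algebroids.
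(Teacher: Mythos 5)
Your proposal follows essentially the same route as the paper's proof: (i) use Claim~\ref{tracedetermined} (that a trace on $\HG$ is determined by its values on $\End_\HG(\emptyset)$) together with the isomorphism $\iota\colon \DCti \cong \End_{\DGti}(\emptyset)$ and Theorem~\ref{mainthminfty} to identify the induced trace with the standard one; (ii) produce the KL-basis objects from Jones--Wenzl idempotents and the Frobenius splittings; (iii) conclude that $\FG$ is an equivalence because it restricts to the equivalence $\FC$ over $\emptyset$ and the biadjunctions propagate this. This matches the paper in content and order. One small caution on your final paragraph: the ``capping-off/evaluation'' procedure of Section~\ref{singfunctortobim} is not itself an isomorphism of Hom spaces onto $R$, and you do not need it to be --- what you actually use is that the biadjunction of $\Ind$ and $\Res$ (which is by definition an isomorphism on Hom) together with the splittings of $\Res\circ\Ind$ into shifted identities reduces every $2$-Hom space, up to grading shifts and direct sums, to $2$-Hom spaces over $\emptyset$, exactly as in Lemma~\ref{homspacelemma}; the evaluation map is a separate gadget used only to detect nonvanishing. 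With that clarification your argument is the paper's.
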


\begin{proof} We proceed as in Lemma \ref{homspacelemma}. We have a functor from the Hecke category to the Grothendieck category of $\DGti$, and Hom spaces induce a semi-linear pairing on
$\HG$. Because a blue up arrow is clearly biadjoint to a blue down arrow, this pairing is determined by a trace on the category. Any trace on $\HG$ is determined by its values on
$\End_{\HG}(\ul{\emptyset})$ (see section \ref{hecketraces}). Moreover, we know the graded rank of all Hom spaces in $\Hom_{\DGti}(\ul{\emptyset},\ul{\emptyset})$ because it is equivalent
to $\DCti$. Therefore, the trace on $\End_{\HG}(\ul{\emptyset})=\HB$ agrees with the standard trace, and the trace on all of $\HG$ agrees with the standard trace. Moreover, $\FG$ induces
isomorphisms on Hom spaces in the category $\End_{\DGti}(\ul{e})$ since $\FC$ does.

The idempotents in $\mTTL$ give rise to idempotents in $\DGti$ which give a number of direct sum decompositions. In particular, we can find objects in the Karoubi envelope which descend
to the Kazhdan-Lusztig basis of $\HG$, and are therefore indecomposable and pairwise non-isomorphic. Therefore the map from $\HG$ to the Grothendieck category is an equivalence of
categories. Similar arguments show that $\FG$ is an equivalence. \end{proof}

\begin{remark} When some quantum numbers are not invertible, certain idempotents can no longer be defined, and various objects that usually decompose will no longer do so. How precisely
this affects the Grothendieck group is now an answerable question, supposing that one can answer the same question for the Temperley-Lieb algebra. \end{remark}

\begin{remark} \label{partialfaithful} Suppose that the quantum numbers $[k]$ for $k < m$ are invertible. One can use similar arguments to show that $b_w \mapsto [B_w]$ for $\ell(w) \le
m$, and $\FC$ will be fully faithful on Hom spaces between $\hat{\ul{k}}_s$ and $\hat{\ul{n}}_s$ for $k,n \le m$. \end{remark}

\subsubsection{{\bf Induced Modules}}\label{InducedInfty}

\begin{cor} Assume lesser invertibility. The category $\Hom_{\DGti}(\ul{\emptyset},\ul{s})$ categorifies the left ideal of $b_s$, which is the induced module
from the trivial representation of $\HB_s$. The module action is categorified by the monoidal action of $\End_{\DGti}(\ul{\emptyset}) = \DCti$. \end{cor}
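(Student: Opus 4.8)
The plan is to obtain this as a direct corollary of the preceding result identifying $\Kar(\DGti)$ with a categorification of the Hecke algebroid $\HG$. Working under lesser invertibility, I would first fix notation: let $\MC=\Kar\big(\Hom_{\DGti}(\ul{\emptyset},\ul{s})\big)$ be the graded additive Karoubi envelope of the hom-category in $\DGti$ between the $0$-cells $\emptyset$ and $s=\{s\}$. Horizontal composition of $1$-morphisms endows $\MC$ with the structure of a module category over $\End_{\Kar(\DGti)}(\ul{\emptyset})$, and via the isomorphism $\iota\colon\DCti\xrightarrow{\ \sim\ }\Hom_{\DGti}(\ul{\emptyset},\ul{\emptyset})$ this endomorphism category is $\Kar(\DCti)$. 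So $[\MC]$ is naturally a $\Zvv$-module and a module over $[\Kar(\DCti)]=\HB$, and the claim amounts to identifying this module with the left ideal $\HB b_s\subset\HB$.

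Second, I would restrict the equivalence of the preceding corollary. That corollary produces a functor $\HG\to[\Kar(\DGti)]$ sending $v$ to the grading shift, $b_K\in\Hom_{\HG}(J,K)$ (for $J\subset K$) to the class of the appropriate singular Bott--Samelson $1$-morphism, and it is an isomorphism of Hecke categories; in particular its restriction gives an isomorphism $[\MC]\cong\Hom_{\HG}(b,\emptyset)$ of $\Zvv$-modules. Freeness of the relevant $\Hom$-spaces — needed for $[\MC]$ to be well-behaved — is exactly the output of Lemma~\ref{homspacelemma} invoked there, and the indecomposables of $\MC$ descend to the Kazhdan--Lusztig-type basis of $\Hom_{\HG}(b,\emptyset)$, so the map is genuinely bijective. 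That it intertwines the $\HB$-actions follows because the categorification functor for $\HG$ was set up to send $\Ind$ and $\Res$ to the corresponding algebroid generators, so horizontal composition in $\Kar(\DGti)$ categorifies the renormalized product $\star$ in $\HG$; specializing one slot to $\ul{\emptyset}$ gives compatibility with $\End_{\HG}(\emptyset)=\HB$. Now, by the description of $\HG$ as ideals in $\HB$ from section~\ref{heckealgebroid}, $\Hom_{\HG}(b,\emptyset)=\HB b_b\cap b_\emptyset\HB=\HB b_s$ is precisely the left ideal of $b_s$; and by section~\ref{inducedreps}, this left ideal is isomorphic as a left $\HB$-module to $\Ind_{\HB_s}^{\HB}(\TM_s)$. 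Combining the three identifications yields $[\MC]\cong\Ind_{\HB_s}^{\HB}(\TM_s)$ with the module structure categorified by the monoidal action of $\DCti$, which is the assertion.

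I do not expect a serious obstacle: the statement is essentially a formal consequence of the preceding corollary plus the algebroid bookkeeping of section~\ref{heckealgebroid}. The only points requiring care are (i) the source/target and left/right conventions — one must check that $\Hom_{\DGti}(\ul{\emptyset},\ul{s})$ matches $\Hom_{\HG}(b,\emptyset)=\HB b_s$ rather than its $\omega$-transpose $\Hom_{\HG}(\emptyset,b)=b_s\HB$, which would only replace ``left module'' by ``right module'' — and (ii) confirming that horizontal composition categorifies $\star$ rather than ordinary multiplication, which is guaranteed by the construction of the functor in the preceding corollary. As an optional remark, when $\Bbbk$ is a Soergel ring one can further note, using that $\FG$ is then an equivalence, that $\MC$ is equivalent to the category generated by the singular Bott--Samelson bimodules $\Res_s\otimes_R BS(\ul{w})$, exhibiting the bimodule-theoretic incarnation of the induced module.
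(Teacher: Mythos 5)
Your proposal is correct and is essentially the argument the paper has in mind: the corollary is stated immediately after the result that $\Kar(\DGti)$ categorifies $\HG$ with the standard trace, and the paper offers no proof because it regards the claim as a formal restriction of that result. Your steps — restricting the isomorphism $\HG\cong[\Kar(\DGti)]$ to the Hom-category, identifying $\Hom_{\HG}(b,\emptyset)=\HB b_s\cap b_\emptyset\HB=\HB b_s$ via the ideal description of $\HG$ from section~\ref{heckealgebroid}, invoking section~\ref{inducedreps} to recognize $\HB b_s$ as $\Ind(\TM_{\{s\}})$, and noting that horizontal composition categorifies $\star$ so that the $\End(\emptyset)=\DCti$-action decategorifies to the $\HB$-action — are exactly the bookkeeping the paper leaves unspoken, and your explicit check of the source/target convention ($\Hom_{\HG}(b,\emptyset)$ versus its $\omega$-transpose) is the one point that genuinely deserves a sanity check.
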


Note that this Hom category can easily be described using a slight modification of $\DCti$, in a precise analogy to Chapter 4 in \cite{EInduced}. One may draw the usual Soergel graphs but
require that they end in a ``blue region". One adds a new morphism corresponding to a trivalent vertex with the blue region, and imposes relations corresponding to \eqref{assoc1} and
\eqref{unit}.

Explicitly, the new generator is $\ig{.5}{InducedTri}$. The new relations are \begin{equation} \label{InducedRelations} \ig{1}{InducedRelations} \end{equation} Any usual Soergel graph,
with the usual Soergel relations, may be drawn to the left of the blue region. 

It is quite easy to provide the equivalence of categories between this diagrammatic category and $\Hom_{\DGti}(\ul{\emptyset},\ul{s})$, in analogy to the functor $\iota$ above.

\section{Dihedral diagrammatics: $m<\infty$}
\label{sec-didifinite}

In $\Kar(\DCti)$ we have found two idempotents corresponding to $B_{\hat{m}_s}$ and $B_{\hat{m}_t}$, and we have already shown (see Remark \ref{partialfaithful}) that they descend to
$b_{\hat{m}_s}$ and $b_{\hat{m}_t}$ in the Grothendieck group. Therefore, to obtain a potential categorification of $\HB_m$, one can formally add an isomorphism between $B_{\hat{m}_s}$ and
$B_{\hat{m}_t}$ to $\Kar(\DCti)$. Instead, we do the same thing before taking the Karoubi envelope, modifying $\DCti$ into a category $\DC_m$ by adding maps from $BS(\hat{\ul{m}}_s) \to
BS(\hat{\ul{m}}_t)$ and back which interact in a precise way with the various idempotents.

Given a representation of $W_\infty$ defined by a Cartan matrix over a base ring $\Bbbk$, it is enough to base change $\Bbbk$ so that $[m]=0$ in order to obtain a representation of $W_m$.
However, changing base for $\DCti$ is not sufficient to produce $\DC_m$ - one must add these new morphisms as well.

The morphism $JW_{m-1}$ and its behavior does depend strongly on $m$. However, having abstracted the Jones-Wenzl projector into a symbol, a remarkable thing occurs: the relations in
$\DC_m$ involving this new generator have a very simple form which is independent of $m$.

%
\subsection{Singular Soergel Bimodules: $m<\infty$}
\label{sec-singfinite}	
%
%

In order to define the diagrammatic version of $\SBSBim$, one should assume that $R^W \subset R^s, R^t \subset R$ is a Frobenius square.

\subsubsection{{\bf Definitions}}\label{defnsfinitesing}

\begin{defn} A \emph{singular Soergel diagram} for $m$ is an isotopy class of two oriented 1-manifolds with boundary properly embedded in the planar strip: one of each color, and they can
only intersect transversely. Moreover, there must be a consistent labeling of the regions between these edges. Regions may be labelled by parabolic subsets $\{\emptyset,r,b,p\}$. A line
can be colored $s$ or $t$, and separates two regions whose label differs by that element. The orientation is such that the larger parabolic subset is on the right hand of the 1-manifold. A
polynomial in $R^I$ can be placed in any region labeled by $I$. The boundary of the graph gives two sequences of colored oriented points, the \emph{top} and \emph{bottom boundary}. Not
every oriented colored 1-manifold gives rise to a consistent labeling of regions. Soergel 1-manifold diagrams are graded as in figure \ref{degreeofsing}. \end{defn}

\begin{figure}
\caption{Degrees of Soergel diagrams}
\igc{1}{degreeofsing}
\label{degreeofsing}
\end{figure}

If there is no ambiguity, we shorten the name to ``Soergel diagram." One can remember that the degree of a cup or cap is always ``in minus out," regardless of orientation, where we
associate to a region the length of the longest element of the corresponding parabolic subgroup (which is $0$, $1$, or $m$). The degree of a sideways crossing is ``big plus small minus middles," which in this case is always $m+0-1-1=m-2$. Note that the degree of a diagram is not defined on the planar disk, but we can still discuss disk diagrams with the same
caveats as before.

A picture is often worth a thousand words. We will try to be very clear about what we refer to in a picture. ``Lines" or ``strands" will refer to sections of the red or the blue
1-manifold. A ``blue circle" will denote a blue line in the shape of a circle (which can separate either a red region from a purple one, or blue from white), while a ``blue circular
region" will refer a blue region enclosed by a circle of indefinite color. Purple designates the parabolic subset $p=\{s,t\}$. A diagram without any purple regions is by definition a
Soergel diagram for $m=\infty$, as in Section \ref{sec-singinfty}. We call it an $\infty$-\emph{diagram}.

\begin{defn} Let $\DG_m$ be the 2-category defined as follows. The objects are $\{\emptyset,r,b,p\}$, thought of as parabolic subsets. The 1-morphisms are generated by maps from $I$ to
$J$ and whenever their difference is a single element. A path in the object space (like $\ul{b\emptyset bprpb\emptyset}$) uniquely specifies a 1-morphism. The 2-morphism space between
1-morphisms is the free $\Bbbk$-module spanned by Soergel diagrams with the appropriate boundary, modulo the relations below. Hom spaces will be graded by the degree of Soergel diagrams.

We have all the relations present in $\DGti$ (see section \ref{SingDefns}). This means that we have a functor $\DGti \to \DG_m$.

\begin{equation} \label{bigdemazure} \ig{1}{bigdemazureis} \end{equation}
\begin{equation} \label{bigcircle} \ig{1}{bigcircleis} \end{equation}
\begin{equation} \label{slidepolyintopurple} \ig{1}{slidepolyintopurple} \textrm{ when } f \in R^W \end{equation}
\begin{equation} \label{bigcircforce} \ig{1}{bigcircforce} \end{equation}

These relations hold with colors switched. They are standard relations for the Frobenius extension $R^W \subset R^s$, and should remind the reader of the analogous relations for $\DGti$
for the extension $R^s \subset R$. Because of the relation \eqref{slidepolyintopurple}, there is a map from $R^s \ot_{R^W} R^s \to \End(\ul{bpb})$ given by placing boxes in the right and
left regions. The element $\Delta^s_W \in R^s \ot_{R^W} R^s$ is described in Theorem \ref{frobsquare}.

The next relation, an analog of Reidemeister II, says that like-oriented strands can be pulled apart.

\begin{equation} \label{singR2} \ig{1}{singR2} \end{equation}

We also have non-oriented Reidemeister II relations. The element $\pa \Delta_{st}$ represents the element $\Delta^s_{W(1)} \ot \pa_t(\Delta^s_{W(2)}) = \pa_s(\Delta^t_{W(1)}) \ot
\Delta^t_{W(2)} \in R^s \ot_W R^t$, as described in Theorem \ref{frobsquare}.

\begin{equation} \label{singR2nonoriented1} \ig{1}{singR2nonoriented1} \end{equation} \begin{equation} \label{singR2nonoriented2} \ig{1}{singR2nonoriented2} \end{equation}

All of the above relations hold in generality for Frobenius squares. See \cite{EWFrob} for more details. There is only one truly interesting relation, unique to the dihedral group. This
relation starts with $m-1$ alternating arcs of each color, oriented clockwise around an inner purple region. It replaces this with an $\infty$-diagram, the image of $JW_{m-1}$ under the
functor $\mTTL \to \DGti \to \DG_m$, which we still denote $JW_{m-1}$.

\begin{equation} \label{SingJWRelation} {
\labellist
\small\hair 2pt
 \pinlabel {$+$} [ ] at 137 133
 \pinlabel {$+ [2]$} [ ] at 88 22
 \pinlabel {$+ [2]$} [ ] at 148 22
 \pinlabel {$JW$} [ ] at 354 145
\endlabellist
\centering
\ig{1}{SingJWRelation}
}
 \end{equation}
\end{defn}

In writing down the coefficients in the Jones-Wenzl projectors above, we have already incorporated the fact that $[m-1]=1$ (as our realization is balanced). Thanks to the results of
section \ref{rouandrotation}, the morphism $JW_{m-1}$ is rotation invariant, like the LHS.

Also note that, as for $\DGti$, one can use boxless diagrams to represent any polynomial which lives in the invariant subring $R^I$. As before, we have not yet proven that the action of $R^I$ (by placing polynomials in a region labeled $I$) is faithful in $\DG_m$.

\subsubsection{{\bf Graph simplifications}}\label{depurplify}

\begin{lemma} \label{circleremoval} \emph{(The Circle Removal Lemma)} Any morphism in $\DG_m$ can be represented as a linear combination of diagrams with no closed components of either
color, but with polynomials in arbitrary regions. Any Soergel diagram with empty boundary reduces to a polynomial. \end{lemma}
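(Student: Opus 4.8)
The plan is to induct on the lexicographically ordered pair $(N,X)$ attached to a single Soergel diagram $D$, where $N$ is the number of closed components (blue and red circles) of $D$ and $X$ is its total number of crossings; since a general morphism of $\DG_m$ is a $\Bbbk$-linear combination of such diagrams this suffices. If $N=0$ there is nothing to prove, so assume $N\ge 1$. The second assertion is then immediate from the first: a diagram with empty boundary and no closed component contains no $1$-manifold at all, so it is a single polynomial box, i.e. an element of $R$.

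First I would pick a circle $C$ of $D$ whose bounding disk $\Delta$ is minimal with respect to inclusion among the bounding disks of all circles of $D$ (possible, as there are finitely many); by the colour-switch symmetry of all relations we may assume $C$ is blue. The key topological point is that $\Delta$ then contains no blue $1$-manifold whatsoever (a blue arc or a blue circle entering $\Delta$ would have to cross $C$, impossible, or a blue circle inside $\Delta$ would contradict minimality) and no circle of either colour (a red circle lying entirely inside $\Delta$ would contradict minimality). Hence every component of the red $1$-manifold meeting $\Delta$ is a chord of $C$, and $\Delta$ contains only these chords together with some polynomial boxes. Let $2j$ be the number of crossing points lying on $C$; note that every sub-region of $\Delta$ adjacent to $C$ carries the same parabolic label $I_0$, which is blue (if the region just outside $C$ is white) or purple (if it is red).

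If $j=0$, then $\Delta$ is a single region with label $I_0$ carrying a product of polynomial boxes, and I would delete $C$ exactly as one deletes a monochrome circle in $\DGti$: using \eqref{slidepolyintoblue}, \eqref{demazureis}, \eqref{circleis} and \eqref{circforcesamealt} when $I_0$ is blue, and the entirely parallel relations \eqref{bigdemazure}, \eqref{slidepolyintopurple}, \eqref{bigcircle}, \eqref{bigcircforce} for the Frobenius extension $R^W\subset R^s$ (available by Theorem~\ref{frobsquare}) when $I_0$ is purple, depositing a polynomial in the outer region. This replaces $D$ by diagrams with $N-1$ circles and the same $X$. If $j\ge 1$, then since distinct red chords of $\Delta$ are disjoint, an outermost-arc argument produces an innermost pocket $\Delta'\subset\Delta$ bounded by a single arc of $C$ and a single red chord meeting at exactly two crossings, whose interior meets no $1$-manifold and carries only boxes, with label $I_0$. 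After clearing those boxes out of $\Delta'$ by polynomial forcing (\eqref{circforcegeneral}, \eqref{slidepolyintopurple} and their colour/Frobenius analogues — the boxes, being in $R^{I_0}$, can always be pushed out through the blue side into the adjacent region), $\Delta'$ is a Reidemeister-II bigon, and the appropriate one of \eqref{singR2}, \eqref{singR2nonoriented1}, \eqref{singR2nonoriented2} (selected according to the two strand orientations) rewrites it as diagrams in which those two strands no longer cross, at the cost of polynomial boxes. This is a purely local move: it creates no closed component and removes exactly two crossings, so each resulting diagram has $N$ circles and $X-2$ crossings. In both cases the induction hypothesis applies to every resulting diagram, completing the proof.

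The hard part will not be conceptual. Notably the one genuinely dihedral relation \eqref{SingJWRelation} never intervenes, since a $2(m-1)$-valent purple region carries no closed component and may simply be left in place. Rather, the work lies in the bookkeeping: making the innermost-disk and outermost-arc arguments fully rigorous, and carefully checking that the circle-removal relations, the polynomial-forcing relations, and the three Reidemeister-II relations genuinely cover every orientation of the strands and every admissible labelling of the regions surrounding the circle or bigon being resolved — and, along the way, that each polynomial deposited lands in the invariant subring appropriate to the region receiving it. I expect that case analysis to be the only real obstacle.
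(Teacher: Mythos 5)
The paper's own proof is a one-line citation to \cite{EWFrob}, so your attempt to give a self-contained argument is welcome, and the high-level architecture — lexicographic induction on (number of closed components, number of crossings), innermost circle, outermost chord, Reidemeister II — is the standard and correct strategy, presumably the one carried out in \cite{EWFrob}. However, two of the assertions you make along the way are not merely unfinished bookkeeping but are false as stated, and would need repair.

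First, the claim that ``every sub-region of $\Delta$ adjacent to $C$ carries the same parabolic label $I_0$'' is wrong once $j\ge 1$: each time you pass a crossing along $C$, you cross a red chord, so the labels of the inner regions touching $C$ alternate between $I_0$ and $I_0\,\triangle\,\{t\}$. Moreover, even in the case $j=0$, the inside of $C$ need not be $b$ or $p$: if $C$ is oriented counterclockwise (blue on the outside), the inside is $\emptyset$ or $r$, and such circles certainly occur (they implement $\pa_s$ rather than multiplication by $\a_s$). Your $j=0$ case cites \eqref{demazureis}, which handles this, but the surrounding prose rules it out; the four label possibilities $\emptyset,b,r,p$ must all be treated, using \eqref{circleis}/\eqref{demazureis} and \eqref{bigcircle}/\eqref{bigdemazure} as appropriate. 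Second, and more seriously, the parenthetical claim that the boxes in $\Delta'$, ``being in $R^{I_0}$, can always be pushed out through the blue side into the adjacent region'' fails whenever $s\notin I_0$, i.e.\ whenever $I_0$ is $\emptyset$ or $r$: in that case the region across the blue arc is labeled $I_0\cup\{s\}$, and $R^{I_0}\not\subset R^{I_0\cup\{s\}}$, so an arbitrary box does not slide across (\eqref{slidepolyintoblue} and its purple analogue only move invariant polynomials in the "inclusion" direction). The same obstruction occurs across the red arc. To clear the bigon you would instead need to expand the box against a dual basis for the relevant Frobenius extension (à la \eqref{circforcegeneral}), track the resulting broken-strand terms, or invoke Reidemeister-II relations with a general polynomial inserted — precisely the finite but nontrivial case analysis that \cite{EWFrob} does in full. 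So the plan is sound, but the specific mechanism you propose for removing boxes from the bigon does not work in the counterclockwise cases, and must be replaced.
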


\begin{proof} This is proven in \cite{EWFrob}, and is a general statement about Frobenius squares. \end{proof}

In other words, a collection of nested circles easily evaluates to a polynomial using the relations above, and a more complicated system of overlapping circles may be pulled apart using
Reidemeister II moves, and reduced to a polynomial as well. Note that this lemma holds regardless of the color of the region on the boundary, so that a closed Soergel diagram with white
exterior evaluates to a polynomial in $R$, and a closed Soergel diagram with blue exterior becomes a polynomial in $R^s$.

Now we attempt to simplify more complicated graphs with boundary.

\begin{notation} When given $k$ alternating arcs of each color oriented around an inner purple region as in \eqref{SingJWRelation}, we will denote the map by $v_k$. For instance, relation
\eqref{SingJWRelation} says that $v_{m-1} = JW_{m-1}$. In the example below, $k=6$. \end{notation}

\igc{1}{vkdefn}

The map $v_k$ can be positioned as a planar strip diagram in many ways, and its positioning determines its degree. However, when positioned as a map from $\ul{\emptyset b \emptyset r
\emptyset \ldots \emptyset}$ to $\ul{\emptyset r \emptyset b \emptyset \ldots \emptyset}$, it will have degree $2(m-k)$. We let $v_0$ denote a purple circle in a blue annulus in a white
region (or purple in red in white, these are equal). Using the relations above, $v_0$ is equal to $\LM$. It is obvious that any purple region which does not meet the boundary must have a
neighborhood equal to $v_k$ for some $k \ge 1$, or simply be a purple circular region, reducible to a polynomial.

If we place a colored cap on one of the colored sections of the boundary of $v_k$ for $k\ge 1$, we can use \eqref{singR2} to pull two strands apart, and obtain $v_{k-1}$ with an added
``trivalent vertex." \begin{equation}\label{vkprop} \ig{1}{vkprop} \end{equation} From this, it is easy to show that ``pitchforks" kill $v_k$, and double capping $v_k$ will yield
$v_{k-1}$. \igc{1}{vkprop2}

Relation \eqref{SingJWRelation} says that $v_{m-1}$ can be de-purplified, i.e. rewritten as a sum of $\infty$-diagrams. Therefore, any $v_k$ for $k \le m-1$ can also be de-purplified.
After all, $v_k$ equals $v_{m-1}$ with a number of caps attached, and will be equal to $JW_{m-1}$ with a number of caps attached. Note that relation \eqref{singR2nonoriented2} is actually
a statement about $v_1$; in fact, this relation is redundant, merely stating that the RHS is what one obtains when one caps $JW_{m-1}$ almost everywhere.

{\bf Warning:} Remember, \eqref{SingJWRelation} says that $v_{m-1} = JW_{m-1}$, but it does not say that $v_k = JW_k$ for all $k$. Capping off $JW_{m-1}$ certainly does not yield
$JW_k$ for $k<m-1$. When working with diagrams for $m$, only $JW_{m-1}$ will be relevant, and $JW_k$ for $k<m-1$ will never be used except to assert the existence of well-behaved
idempotents.

For $k>m-1$, one can not use \eqref{SingJWRelation} to de-purplify $v_k$. In fact, $v_m$ is the smallest diagram which is not in the span of $\infty$-diagrams. Thankfully, all $v_k$ for
$k>m$ can be expressed using only $v_m$. In order to show this, and to aid further calculations, we introduce a family of auxiliary maps.

For $k \ge 3$, consider the following maps $C_k$ of degree $2(m-k)$, where the number of circles is $k-2$:

\igc{1}{ckdefn}

Using the circle elimination lemma and polynomial manipulation rules, it is clear that this map must reduce to the action of polynomials on either side of $R^i \ot_{R^W} R^j$, where $i,j
\in \{s,t\}$ depend on the colors in the diagram.

\begin{claim} When $k>m$ we have $C_k = 0$. When $m=k$ (so that this map is degree $0$) we have \begin{equation} \label{ckis1} \ig{1}{ckisone} \end{equation} \end{claim}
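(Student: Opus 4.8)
The plan is to leverage the reduction already noted just above the claim: by the Circle Removal Lemma (Lemma~\ref{circleremoval}) together with the standard Frobenius-square relations for $R^W\subset R^s,R^t\subset R$, the morphism $C_k$ can be rewritten as the action on a single fixed ``frame'' diagram of a well-defined homogeneous element $x_k\in R^i\otimes_{R^W}R^j$, placed in the two regions flanking the central purple strip via \eqref{slidepolyintopurple} and its color-switch. The frame is the same for all $k$; only the element $x_k$ (and hence the total degree) changes with $k$. So the entire claim becomes a statement about $x_k$ inside the graded module $R^i\otimes_{R^W}R^j$, whose structure is controlled by Theorem~\ref{frobsquare}.

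I would treat $k=m$ first, since it is the only computational point and it also calibrates the degree bookkeeping. Here $C_m$ has degree $0$, so $x_m$ is a degree-$0$ element of $R^i\otimes_{R^W}R^j$; by Theorem~\ref{frobsquare} this module is a non-negatively graded free $R^W$-module with one-dimensional degree-$0$ piece, so $x_m$ is a scalar multiple of $1\otimes 1$. To identify the scalar I would peel off the $m-2$ nested circles of $C_m$ one at a time, using \eqref{bigcircle}, \eqref{bigcircforce}, the Reidemeister~II relations \eqref{singR2}, \eqref{singR2nonoriented1}, \eqref{singR2nonoriented2}, and the explicit formulas of Theorem~\ref{frobsquare} (in particular $\LM_W=\LM$, $\LM^s_W=\LM/\a_s$, and the two identities \eqref{funnyhere} and \eqref{foobar}). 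Since the answer must be homogeneous of degree $0$, no polynomial can survive, and each peeling contributes only a ratio of quantum numbers, telescoping in exactly the way the $JW$-coefficients telescope in the proof of Proposition~\ref{coxeterlinesprop}; using $[m]=0$ and $[m-1]=1$ (the realization is balanced) the accumulated product collapses to $1$, which is precisely \eqref{ckis1}. I expect this telescoping bookkeeping to be the main obstacle --- the argument is routine, but it is the only genuinely computational ingredient.

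Granting the case $k=m$, the case $k>m$ follows on degree grounds alone. The computation above shows that the frame, undecorated, equals $C_m$, hence the frame has degree $0$; therefore in general $x_k$ is homogeneous of degree $\deg C_k=2(m-k)$. For $k>m$ this is strictly negative, while $R^i\otimes_{R^W}R^j$ is concentrated in non-negative degrees by Theorem~\ref{frobsquare}. Hence $x_k=0$, i.e. $C_k=0$, for all $k>m$, completing the proof. (Alternatively, one avoids invoking the $k=m$ computation for this half by checking directly that the frame diagram is an identity-type diagram of degree $0$; either route gives the same bound.)
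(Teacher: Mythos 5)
Your degree-bookkeeping is correct and matches the paper: $C_k$ reduces (via the Circle Removal Lemma and the Frobenius-square relations) to an element of $R^i\otimes_{R^W}R^j$, and since this module is concentrated in non-negative degrees with a one-dimensional degree-$0$ piece, $C_k=0$ for $k>m$ and $C_m$ is a scalar. That part needs no further comment.

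The gap is in your mechanism for identifying the scalar. There is no ``telescoping of quantum-number ratios'' in this computation, and the analogy with the proof of Proposition~\ref{coxeterlinesprop} is misplaced --- that proof tracks coefficients of crossingless matchings inside a Jones--Wenzl projector, whereas $C_m$ contains no Jones--Wenzl projector and no crossingless-matching coefficients. What actually happens is quite different: one applies \eqref{singR2nonoriented1} once to the central non-oriented crossing, producing a \emph{sum} of diagrams parametrized by a dual-basis pair for $R^s$ over $R^W$; a degree count then forces all terms to vanish except the one where the box in the extremal region carries the degree-$0$ basis element $1$, leaving a diagram with $\pa_t(f)$ in a box, where $f$ is the dual basis element to $1$ under the trace $\pa^s_W$. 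Peeling the remaining nested circles amounts to alternately applying $\pa_s$ and $\pa_t$ to this polynomial, so the surviving scalar is $\pa_t\pa_s\pa_t\cdots(f)=\pa^s_W(f)$, which equals $1$ \emph{by the Frobenius dual-basis identity} $\pa^s_W(1\cdot f)=1$, not by any cancellation of quantum numbers. Your argument neither isolates the surviving dual-basis term nor invokes the dual-basis property that makes the final scalar equal to $1$; asserting ``no polynomial can survive, hence ratios of quantum numbers telescope'' does not supply either step. (The facts $[m]=0$, $[m-1]=1$ enter only indirectly, to guarantee a rotation-invariant Jones--Wenzl and hence a well-posed relation \eqref{SingJWRelation}; they are not what drives the scalar to $1$. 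Compare \eqref{ckisnt1} in the Appendix, where in the unbalanced case a nontrivial scalar $a$ appears precisely because the Demazure braid relation then only holds up to a unit, not because of a different quantum-number calculus.)
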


\begin{proof} This map must reduce to some polynomial in $R^i \ot_{R^W} R^j$. For degree reasons, one already can deduce that $C_k=0$ when $k>m$, and that $C_m$ is a scalar. We need only
check that this scalar is $1$. The derivation goes as follows, for the case $m=4$, and the general proof is similar.

\igc{1}{ckisoneproof1}

The first step is to apply (\ref{singR2nonoriented1}) to obtain a sum of diagrams with boxes in the rightmost blue region, and a box in the red region as pictured. However, only one term
in this sum survives: the term where the box in the rightmost region is $1$. Were the box in the rightmost region of degree $>0$, then the diagram to the left of that region would have to
reduce to a polynomial of negative degree, equal to $0$. Therefore $C_m$ is equal to the RHS, where if $f$ is the polynomial dual to $1$ under $\pa^s_W$, then the element in the box is
$\pa_t(f)$.

Now we apply (\ref{slidepolyintoblue}), (\ref{singR2}), and (\ref{demazureis}) until the diagram only has a box in the left region.

\igc{1}{ckisoneproof}

We place $\partial_t(f)$ in the box in every diagram on the second row. But then $\partial_s(\partial_t(f))$ appears in the box on the third row, and
$\partial_t(\partial_s(\partial_t(f)))$ on the fourth row (and so forth, for $m>4$). Therefore, the final polynomial appearing is $\partial^s_W(f)$, which is $1$. \end{proof}

The following picture is a definition of what it means to ``attach $v_k$ to $v_l$ by $n$ colored bands," for $n \le k,l$. Obviously, the colors of the ``new" bands appearing on right and
left depend on the parity of $n$. This is the example $n=4$, $k=5$ and $l=6$.

{
\labellist
\small\hair 2pt
 \pinlabel {$v_k$} [ ] at 47 91
 \pinlabel {$v_l$} [ ] at 63 29
\endlabellist
\centering
\igc{.7}{attachingbybands}
}

This is the example $n=1$, $k=3$ and $l=4$.

{
\labellist
\small\hair 2pt
 \pinlabel {$v_k$} [ ] at 47 91
 \pinlabel {$v_l$} [ ] at 63 29
\endlabellist
\centering
\igc{.7}{attachingbybands1}
}

\begin{claim} For $k \ge m$, if one attaches $v_k$ to $v_m$ by $m$ colored bands, one obtains $v_{k+1}$. \begin{equation} \label{gettinghighvk} \ig{1}{gettinghighvk} \end{equation}
\end{claim}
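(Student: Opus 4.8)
The plan is to prove this by induction on $k \ge m$, using the de-purplification relation \eqref{SingJWRelation} only implicitly and relying mostly on the structural relations for Frobenius squares together with the auxiliary maps $C_k$.

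First I would set up the base case $k=m$: we need to show that attaching $v_m$ to $v_m$ by $m$ colored bands yields $v_{m+1}$. The key picture here has two $v_m$'s joined along all $m$ colored arcs; I would use the non-oriented Reidemeister II relations \eqref{singR2nonoriented1} and \eqref{singR2nonoriented2} to resolve each of the $m$ bands one at a time. Each such resolution either creates a small circle (which becomes a polynomial factor via the Circle Removal Lemma \ref{circleremoval}), or it splits into a configuration involving $C_m$. The previous claim (that $C_m = 1$, equation \eqref{ckis1}) is precisely what makes the bookkeeping collapse: all the ``correction'' diagrams vanish for degree reasons, and a single purple region of the correct valence, namely $v_{m+1}$, survives. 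I would organize this by first observing that attaching along bands forces a purple region in the middle which, by the local analysis near a purple region (every purple region not meeting the boundary looks like some $v_k$ or a purple circular region), must be $v_{k+1}$ up to a scalar and lower-order (de-purplifiable) terms; then $C_m = 1$ pins the scalar to $1$ and a degree count kills the rest.

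For the inductive step, suppose the claim holds for some $k \ge m$, i.e. attaching $v_k$ to $v_m$ by $m$ bands gives $v_{k+1}$; I want the same for $k+1$. Here I would use the property \eqref{vkprop} that capping a colored arc of $v_{k+1}$ produces $v_k$ with an extra trivalent vertex, together with the fact established just above that pitchforks kill $v_k$. The strategy is to reduce the ``attach $v_{k+1}$ to $v_m$ by $m$ bands'' diagram to the inductive hypothesis by sliding one band past a trivalent vertex and using that capping operations on $v$'s are well-controlled. Alternatively — and this is probably cleaner — I would prove the whole family of statements at once by an argument purely about where purple regions can sit: the diagram ``attach $v_k$ to $v_m$ by $m$ bands'' has a single internal purple region of valence $2(k+1)$, hence equals $c \cdot v_{k+1}$ plus a linear combination of diagrams with strictly smaller purple regions (which de-purplify), and a degree computation shows there is no room for the lower-order terms, while evaluating against the $C_{k+1}$ configuration (or capping down to $C_m$) forces $c = 1$.

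The main obstacle I expect is the degree bookkeeping in the base case: keeping track of the degrees of all the diagrams produced when resolving the $m$ bands via \eqref{singR2nonoriented1}, and verifying rigorously that every diagram other than $v_{m+1}$ is forced to be zero because some sub-diagram would have to be a polynomial of negative degree. This is exactly the type of argument used in the proof of the $C_m = 1$ claim, so I would model the computation on that proof, doing the case $m=4$ explicitly (as the paper does elsewhere) and remarking that the general case is identical. A secondary subtlety is making sure the orientations on the newly-created bands (which depend on the parity of the number of bands, as noted in the ``attaching by bands'' figure) are consistent with the orientation conventions of $v_{k+1}$; this is routine but must be checked so that the resulting diagram genuinely is $v_{k+1}$ and not some orientation-reversed variant.
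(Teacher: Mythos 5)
You correctly identify that \eqref{ckis1}, the fact that $C_m = 1$, is the essential input — that part matches the paper. However, the rest of the plan is off, and the paper's own argument is both shorter and avoids the pitfalls in yours.

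First, the induction on $k$ is unnecessary and the inductive step does not actually go through. Capping an arc of $v_{k+1}$ produces $v_k$ (relation \eqref{vkprop}), but this reduces the \emph{target} statement to a consequence of it rather than reducing it to the inductive hypothesis; there is no inverse ``un-capping'' move. You would need to know that the map ``cap once'' is injective on the relevant Hom space to run the argument backwards, and that is not available at this stage of the paper. The paper proves the claim directly, uniformly in $k$, with no induction.

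Second, the ``alternative'' argument is the closer of your two ideas to the actual proof, but as stated it contains a misconception and a circularity. The diagram ``attach $v_k$ to $v_m$'' has \emph{two} internal purple regions, one from each factor; it does not a priori have a single purple region of valence $2(k+1)$, and establishing that it collapses to one is exactly the content of the claim. The further assertion that any such diagram equals $c\,v_{k+1}$ plus de-purplifiable lower-order terms is precisely the kind of structural reduction that Lemma \ref{itssurjective} and Corollary \ref{reducetoinfty} provide — but those results \emph{use} \eqref{gettinghighvk} in their proofs, so appealing to them here would be circular.

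Third, the degree bookkeeping you flag as the ``main obstacle'' is not actually an obstacle: it was already discharged in the proof that $C_k = 0$ for $k > m$ and $C_m$ is a degree-$0$ scalar. In the paper's proof one isotopes the two purple disks toward each other so that the configuration between them is recognizably (a rotation of) $C_m$, applies $C_m = 1$, and the two purple regions merge into a single $v_{k+1}$; there are no correction terms to kill afterward. (For $m = 2$, where $C_m$ does not make sense, one uses \eqref{singR2nonoriented1} directly, as the paper notes.) Your description of ``resolving each of the $m$ bands one at a time'' via non-oriented Reidemeister II does not match this: the $C_m$ configuration is extracted once, as a sub-diagram, not band by band.
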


\begin{proof} The example below should make the general proof clear. When $m=2$, use \eqref{singR2nonoriented1} instead. \igc{1}{gettinghighvkproof}\end{proof}

\begin{claim} \label{connectingwithbands} Assume $k,l > n$. For $n>m$, attaching $v_k$ and $v_l$ along $n$ colored bands yields zero. For $n \le m$, attaching $v_k$
and $v_l$ along $n$ colored bands yields (a linear combination of) diagrams which look like $v_{k+l+1-n}$ with polynomials in various regions, having degree $2(m-n)$.
\end{claim}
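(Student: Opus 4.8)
The plan is to analyze the region that gets sandwiched between $v_k$ and $v_l$ once the $n$ bands are attached, and to recognize it (up to Reidemeister~II moves) as the auxiliary map $C_n$ studied above. Reading outward from the two purple cores, the $i$-th band ($1 \le i \le n$), together with one arc of $v_k$ above it and one arc of $v_l$ below it, bounds a region, and consecutive such regions are separated by bands of the opposite colour; after pulling apart the like-oriented strands with the oriented Reidemeister~II relation \eqref{singR2} (and, in the degenerate cases $n \le 2$ where $C_n$ is not even defined, the unoriented relations \eqref{singR2nonoriented1}--\eqref{singR2nonoriented2}, exactly as in the proof of \eqref{gettinghighvk}), this sandwiched configuration becomes precisely the chain of $n-2$ circles defining $C_n$. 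The remaining $k-n$ arcs of $v_k$ and $l-n$ arcs of $v_l$, together with the two purple cores, are untouched by these moves and reassemble around the middle into the $v_{k+l+1-n}$ pattern. So the whole attached diagram equals $v_{k+l+1-n}$ with a copy of $C_n$ inserted into one of its regions.

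For $n > m$, I would then invoke the earlier computation: $C_n$ reduces, by the Circle Removal Lemma~\ref{circleremoval}, to an element of $R^i \otimes_{R^W} R^j$ sitting in degree $2(m-n) < 0$, hence to $0$ (this is exactly the claim that $C_k = 0$ for $k > m$). Therefore the attached diagram vanishes. For $n \le m$, Lemma~\ref{circleremoval} again reduces the inserted $C_n$ to a homogeneous element of $R^i \otimes_{R^W} R^j$ of degree $2(m-n) \ge 0$; when $n = m$ this is the scalar $1$ by \eqref{ckis1}, and when $n < m$ it is a genuine sum of simple tensors of positive-degree polynomials. Substituting this back and using the sliding relations \eqref{slidepolyintopurple} and \eqref{slidepolyintoblue} to distribute the polynomials into adjacent regions, the diagram becomes a linear combination of copies of $v_{k+l+1-n}$ with polynomials placed in various regions. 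Since $v_{k+l+1-n}$ in the relevant position contributes a fixed degree and the inserted polynomials carry the remainder, every term is homogeneous of degree $2(m-n)$; specializing to $l = n = m$ recovers \eqref{gettinghighvk} as a consistency check.

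The hard part will be the bookkeeping in the first step: verifying that the colours and orientations of the $n$ bands and of the inner arcs of $v_k$ and $v_l$ are genuinely compatible, so that the sandwiched region really is the $C_n$ picture (and not some other purple configuration), and that the $k-n$ and $l-n$ leftover arcs close up into $v_{k+l+1-n}$. As with the proofs of \eqref{ckis1} and \eqref{gettinghighvk}, I expect the cleanest exposition is to run the Reidemeister~II reductions explicitly on a small example — say $k = l = n+1$ with $n$ close to $m$ — and then remark that the general configuration is literally the same picture with more untouched arcs on the outside.
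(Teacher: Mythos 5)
Your argument follows the paper's proof closely: recognize the sandwiched configuration as $C_n$ (for $n \geq 3$), resolve it to polynomials in $R^i \otimes_{R^W} R^j$ of degree $2(m-n)$, observe that these vanish for $n > m$ by degree, and distribute the polynomials into the regions of the resulting $v_{k+l+1-n}$. One small correction in the degenerate cases: the paper's $n = 1$ case does not invoke a Reidemeister II move at all, but rather stretches the two purple regions together through the single blue band and applies the circle-forcing relation \eqref{bigcircforce}, yielding $v_{k+l}$ times $\Delta^s_W$; only the $n = 2$ case uses \eqref{singR2nonoriented1}. Otherwise your sketch matches the paper's.
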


\begin{proof} When $n \ge 3$, we can use the same argument as before, only now we resolve $C_n$ into polynomials in $R^i \ot_{R^W} R^j$. When $n > m$ the resulting polynomials are zero
for degree reasons. When $m \ge n \ge 3$, this technique yields $v_{k+l+1-n}$ with these polynomials on either side. When $n=2$, we apply \eqref{singR2nonoriented1} to the center of the
diagram, yielding $v_{k+l-1}$ times $\pa \Delta_W$. When $n=1$, we stretch the purple regions together through the blue region and apply \eqref{bigcircforce}, yielding $v_{k+l}$ times
$\Delta^s_W$. \end{proof}

\begin{lemma} \label{itssurjective} Suppose that a diagram in $\DG_m$ has no purple appearing on the boundary. Then it reduces to linear combinations of diagrams generated by
$\infty$-diagrams and $v_m$. \end{lemma}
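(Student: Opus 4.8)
The plan is to proceed by an induction that strips purple regions from the diagram one at a time, reducing to the case already handled by the circle-elimination technology.

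First I would set up the reduction. Given a Soergel diagram $D$ for $m$ with no purple on the boundary, consider the collection of connected purple regions. Each such region is a bounded open set in the planar strip, so it has a well-defined innermost structure. By the Circle Removal Lemma (Lemma \ref{circleremoval}) together with the observation recorded just before Notation on $v_k$, I may assume $D$ has no closed components of either color except those bounding purple regions; and every purple region that is a ``purple circular region'' (enclosed by a single circle of indefinite color) reduces to a polynomial. So the only purple regions left are neighborhoods of the form $v_k$ for various $k \ge 1$ — that is, $k$ alternating arcs of each color oriented around an inner purple disk — possibly with further diagram nested inside, and possibly with the purple region attached to the rest of $D$ along colored bands.

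Next I would run the induction on the total number of purple regions, and for a fixed number of purple regions, a secondary induction on the value of the largest $k$ occurring. The base case is zero purple regions: then $D$ is by definition an $\infty$-diagram and there is nothing to prove. For the inductive step, pick an innermost purple region $P$; its boundary neighborhood is a $v_k$ for some $k$. If $k \le m-1$, relation \eqref{SingJWRelation} (via capping, as explained after that relation: any $v_k$ with $k \le m-1$ de-purplifies to $JW_{m-1}$ with caps attached, which is an $\infty$-diagram) rewrites this $v_k$ as a linear combination of $\infty$-diagrams, eliminating the purple region $P$ entirely; apply the inductive hypothesis. If $k = m$, then $P$ contributes exactly a $v_m$; I would argue that the rest of the diagram surrounding $P$ is either an $\infty$-diagram or, by the inner induction, already expressed in terms of $\infty$-diagrams and $v_m$'s, and $P$ attaches to them only along colored bands, so Claim \ref{connectingwithbands} keeps us in the span of $\infty$-diagrams and $v_m$ (the resulting polynomials living in regions are harmless). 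If $k > m$, I would use \eqref{gettinghighvk}: $v_k$ for $k > m$ is obtained by attaching $v_{k-1}$ to $v_m$ along $m$ colored bands, so inductively every $v_k$ with $k > m$ is a combination of diagrams built from $v_m$ and $\infty$-diagrams (with polynomials), lowering the largest-$k$ parameter; this feeds the secondary induction. Combined with Claim \ref{connectingwithbands} for how purple regions interact when $n > m$ bands would be involved (yielding zero) or $n \le m$ bands (yielding a lower $v_{k+l+1-n}$ with polynomials), every purple region is eventually accounted for.

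The main obstacle I anticipate is bookkeeping the global topology: an ``innermost'' purple region need not literally be a disk bounded by a single $v_k$ — it could be connected to several parts of the diagram through multiple systems of colored bands, and two purple regions could be linked through intervening blue/red strands in a way that is not captured by a single application of Claim \ref{connectingwithbands}. Making precise that, after the circle-elimination reductions, one can always isolate a purple region whose only interaction with the rest of $D$ is via the band-attachment moves already analyzed — and that the induction parameters (number of purple regions, largest $k$, perhaps total band count) genuinely decrease — is the delicate part. I expect this to be handled by a careful planar-isotopy normal-form argument: push all purple regions apart using Reidemeister II \eqref{singR2}, \eqref{singR2nonoriented1}, \eqref{singR2nonoriented2} so that distinct purple components are connected to each other and to the $\infty$-part only by disjoint bundles of parallel colored bands, at which point the claims apply verbatim.
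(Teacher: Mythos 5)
Your overall strategy is the same as the paper's: apply the Circle Removal Lemma, use the observation that every internal purple region is a purple circle or has a $v_k$ neighborhood, de-purplify $v_k$ for $k<m$ via \eqref{SingJWRelation}, and rewrite $v_k$ for $k>m$ in terms of $v_m$'s via \eqref{gettinghighvk}. However, your termination argument as stated does not quite work: applying \eqref{gettinghighvk} replaces one $v_k$ by a $v_{k-1}$ \emph{and} a $v_m$, which \emph{increases} your primary induction parameter (total number of purple regions), so you cannot appeal to that inductive hypothesis, and your secondary induction (on largest $k$ ``for a fixed number of purple regions'') is therefore never applicable either. The paper's quantity --- the number of purple regions labelled by $k \ne m$ --- is what actually decreases under each move, and this is the parameter you should induct on (or, equivalently, iterate \eqref{gettinghighvk} all the way down to express a single $v_k$, $k>m$, as a chain of $v_m$'s in one pass). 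Two smaller remarks: the appeal to Claim \ref{connectingwithbands} is not needed here --- once each purple region has been normalized to a $v_m$, the lemma is done, and fusing $v_m$'s with each other is the business of the subsequent Corollary \ref{reducetoinfty}, not this lemma; and your worry about ``global topology'' mostly evaporates after Circle Removal, since with no closed components every region of the diagram is a disk, so each internal purple region is automatically a disk whose boundary neighborhood is a $v_k$, and the reduction moves on that $v_k$ are entirely local and do not interfere with other purple regions.
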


\begin{proof} Now suppose that there is a purple region in the diagram. If this purple region is a circular region, it reduces to a polynomial by (\ref{bigcircle}). If this
purple region is $v_k$ for $1 \le k < m$ then we may de-purplify it, as previously discussed. If this purple region is $v_k$ for $k \ge m$ then we may express it using only copies of
$v_m$, by using \eqref{gettinghighvk} iteratively. This procedure will strictly decrease the number of purple regions labelled by $k \ne m$, and therefore we may eliminate all such purple
regions. \end{proof}

Note that if there is no purple on the boundary, then the boundary is a $1$-morphism in the image of $\DGti \to \DG_m$.

\begin{cor} \label{reducetoinfty} Suppose that a diagram has boundary $\ul{\emptyset i_1 \emptyset i_2 \emptyset \ldots \emptyset i_d \emptyset}$ when reading around the boundary of the
disk, where $\ii$ alternates between $b$ and $r$. If $d<2m$ then every diagram with that boundary can be de-purplified. \end{cor}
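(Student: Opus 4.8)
The plan is to bootstrap off Lemma~\ref{itssurjective}. Since the boundary word $\ul{\emptyset i_1 \emptyset i_2 \emptyset \cdots \emptyset i_d \emptyset}$ contains no purple, that lemma writes any diagram with this boundary as a $\Bbbk$-linear combination of diagrams assembled from $\infty$-diagrams and copies of $v_m$ glued along colored boundary points, and by the Circle Removal Lemma~\ref{circleremoval} I may assume these assembled diagrams have no closed components. So the corollary reduces to the following statement, which I would prove by induction on the number of copies of $v_m$: if $D$ is such an assembled diagram whose outer boundary has $d<2m$ colored points and $D$ contains at least one $v_m$, then $D$ equals a $\Bbbk$-linear combination of diagrams with the same boundary and strictly fewer copies of $v_m$ (the base case, zero copies, being exactly ``$D$ is de-purplified'').

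For the inductive step I would locate a copy of $v_m$ one of whose $2m$ legs is ``capped'', meaning that an arc of the $\infty$-part of $D$ joins two legs of that single $v_m$ across a common lune, so that the local picture is the left side of~\eqref{vkprop} (or the pitchfork configuration discussed just after it). Once such a copy is found, applying~\eqref{vkprop} --- iterated if necessary --- rewrites it as $v_k$ with $k\le m-1$ times a diagram with no new purple, and since $v_k$ for $k\le m-1$ is de-purplified by~\eqref{SingJWRelation} (with $v_0=\LM$), the result is a combination of diagrams with one fewer copy of $v_m$, completing the induction. To produce the capped leg I would delete a small open disk (large enough to contain the purple region and its adjacent crossings) around each copy of $v_m$; what remains is a no-purple, hence non-crossing, $\infty$-diagram $D'$ living on a disk with $p\ge 1$ holes, whose strands form a \emph{non-crossing} arc system matching the $d$ colored points of the outer boundary to the $2m$ colored points on each of the $p$ hole boundaries. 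A count then supplies the mechanism: at most $d$ of the $2mp$ hole points can be matched to outer points, so at least $2mp-d\ge 2m-d>0$ of them --- hence at least two, by parity --- are matched among the holes; a standard innermost-disk argument should then furnish an arc $c$ cutting off a subdisk $\Delta$ of $D'$ that contains no hole and no other arc endpoint, so that $c$ joins two points that are adjacent among the $2m$ colored points of a single hole, i.e.\ $c$ caps a leg of the corresponding $v_m$.

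The main obstacle is this last step when $p\ge 2$: one must argue that the innermost arc $c$ can genuinely be chosen to enclose no hole, and dispose of the degenerate alternatives --- a hole with \emph{no} attached arcs at all (a $v_m$ all of whose legs close up, which by iterating~\eqref{vkprop} and then Lemma~\ref{circleremoval} reduces to a polynomial, again dropping the $v_m$-count), and a would-be innermost subdisk that turns out to enclose such a hole. Making ``capped leg'' precise against~\eqref{vkprop} and the pitchfork identity following it, and checking that the rewriting there introduces no extra purple, is the remaining care required; everything else --- the counting, the non-crossing topology, and the de-purplification of $v_k$ for $k<m$ --- is routine given the machinery of Sections~\ref{sec-singinfty} and~\ref{sec-singfinite}.
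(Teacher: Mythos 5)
The gap is at the innermost-disk step. You count that at least two of the $2mp$ hole points are matched among the holes, and you then assert that an innermost-disk argument furnishes an arc $c$ cutting off a subdisk of $D'$ with no hole and no other endpoint, so that $c$ caps a leg of a single $v_m$. But that inference does not hold once $p\ge 2$: an arc whose two endpoints lie on two \emph{distinct} holes does not cut off a subdisk of $D'$ at all (cutting along it merges the two holes into one), and there are non-crossing configurations in which \emph{every} inter-hole arc runs between distinct holes — for instance two $v_m$'s joined by a nested family of parallel arcs, with the remaining legs going out to the boundary. In such a configuration no arc has both endpoints on a single boundary component, so there is no ``capped leg'' to find, and your induction on the number of copies of $v_m$ never gets started. (Your two ``degenerate alternatives'' — a hole with no attached arcs, and an innermost subdisk enclosing such a hole — do not cover this; they only address loops back to a single hole, which is exactly what may fail to exist.)

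The paper's proof of this corollary anticipates precisely this possibility. It enumerates three cases for a strand leaving a purple region $v_k$: it can meet the boundary, loop back to the same $v_k$, or meet a \emph{different} purple region, and it handles the third case by fusing the two purple regions into one larger $v_{k+l+1-n}$ via Claim~\ref{connectingwithbands}. It is this fusion move, absent from your argument, that makes the induction go through: one inducts on a lexicographic measure combining the number of purple regions and the sizes $k$, with fusion decreasing the region count and capping decreasing $k$, and the counting $2m>d$ guarantees that whenever a $v_k$ with $k\ge m$ remains, one of the two moves is available. If you want to keep your two-step structure (first reduce via Lemma~\ref{itssurjective} to assemblies of $v_m$'s, then induct), you would still need to add the fusion case — but note that fusing two $v_m$'s along $n<m$ bands produces a $v_{2m+1-n}$ which, rewritten via~\eqref{gettinghighvk}, is a chain of $m+2-n$ copies of $v_m$, so your chosen measure (``number of copies of $v_m$'') may not decrease under fusion. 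It is cleaner to follow the paper and work directly with arbitrary $v_k$'s, $k\ge m$, rather than pre-reducing to $v_m$'s.
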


\begin{proof} Using the circle removal lemma, let us assume that there are no closed 1-manifold components of either color, so that each component connects to the boundary. We may also
assume that each purple region in the diagram is $v_k$ for $k \ge m$, since any region of the form $v_k$ for $k\le m-1$ can be de-purplified with \eqref{SingJWRelation}. Any strand
leaving a region $v_k$ must either meet the boundary, must meet another strand from the same $v_k$, or must meet a strand from a different purple region. Whenever two purple regions share
a strand, it is easy to show that (up to simple manipulations) they are connected with some number of colored bands, and thus can be fused using Claim \ref{connectingwithbands}. Thus we
can assume that no two purple regions share a strand. If any strand from $v_k$ loops back to the same purple region, simple planar arguments imply that $v_k$ must have a cap somewhere,
and thus can be reduced to $v_{k-1}$. Finally, one of these reductions can be performed so long as there is any purple region with $k \ge m$, because there are not enough strands on the
boundary to accomodate all the strands from $v_k$. \end{proof}

\subsubsection{{\bf The functor to bimodules}}\label{functortobimfinitesing}

\begin{defn} We define a 2-functor $\FG_m \colon \DG_m \to \SBSBim_m$ as follows. The objects of the two categories are already identified. For $I\subset J$, it sends the $1$-morphism
from $I$ to $J$ to $\textrm{Res}^I_J$, and the $1$-morphism from $J$ to $I$ to $\textrm{Ind}^I_J$. Cups and caps are sent to the appropriate Frobenius structure maps, as discussed in
section \ref{frobenius}. The upwards-pointing crossing goes to the canonical isomorphism $R \ot_{R^s} R^s \ot_{R^W} R^W \cong R \ot_{R^t} R^t \ot_{R^W} R^W$, which are both $R$ as an
$(R,R^W)$-bimodule; this isomorphism sends $1 \ot 1 \ot 1 \mapsto 1 \ot 1 \ot 1$. Similarly, the downwards-pointing crossing is a canonical isomorphism between $R$ and itself as an
$(R^W,R)$-bimodule. The sideways crossings are maps between $R$ and $R^s \ot_{R^W} R^t$ as $R^s-R^t$-bimodules, which are either multiplication or the map $f \to \Delta^s_{W,(1)} \ot
\partial_t(f\Delta^s_{W,(2)})$ discussed in Theorem \ref{frobsquare}. \end{defn}

\begin{prop} This 2-functor is well-defined. \end{prop}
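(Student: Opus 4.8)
The plan is the standard one for a functor presented by generators and relations: $\FG_m$ has already been specified on the generating $1$- and $2$-morphisms of $\DG_m$, so it remains only to check that this assignment respects the hidden isotopy (cyclicity) relations and each defining relation. I would organize the verification into three groups.

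First, the relations inherited from $\DGti$. By construction, the restriction of $\FG_m$ to $\infty$-diagrams (those with no purple region) agrees with the composite of $\FG\colon\DGti\to\SBSBim$ with the evident functor $\SBSBim\to\SBSBim_m$ induced by the realization at hand (which now satisfies $[m]=0$). Since $\FG$ is already known to be a well-defined $2$-functor for any realization, the isotopy relations of $\DGti$, together with \eqref{circleis}, \eqref{slidepolyintoblue}, \eqref{demazureis}, \eqref{circforcesamealt} and their consequences, are automatically respected by $\FG_m$.

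Second, the Frobenius-square relations \eqref{bigdemazure}, \eqref{bigcircle}, \eqref{slidepolyintopurple}, \eqref{bigcircforce}, \eqref{singR2}, \eqref{singR2nonoriented1}, \eqref{singR2nonoriented2}. By Theorem~\ref{frobsquare} — which applies since we assume Demazure surjectivity, local non-degeneracy and lesser invertibility — the tower $R^W\subset R^s,R^t\subset R$ is a graded compatible Frobenius square, and $\FG_m$ sends the purple-adjacent cups, caps and crossings to the corresponding structure maps of that square, the symbol $\Delta^s_W$ to the comultiplication element of $R^W\subset R^s$, and $\pa\Delta_{st}$ to the element $\Delta^s_{W,(1)}\otimes\pa_t(\Delta^s_{W,(2)})$ of Theorem~\ref{frobsquare}. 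Then \eqref{bigdemazure}--\eqref{bigcircforce} are precisely the four standard Frobenius-extension relations for $R^W\subset R^s$ (of degree $m-1$), while \eqref{singR2}--\eqref{singR2nonoriented2} are the general oriented and unoriented Reidemeister-II relations for a Frobenius square; all of these are verified in \cite{EWFrob}. The remaining isotopy relations involving the sideways crossing follow from the shifted biadjunctions supplied by these Frobenius structures.

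The one genuinely new relation — and the main obstacle — is the Jones–Wenzl relation \eqref{SingJWRelation}, i.e. one must show that $\FG_m(v_{m-1})=\FG_m(JW_{m-1})$ as maps between the relevant singular Bott–Samelson bimodules (both of degree $2$, with boundary $(st)^{m-1}$ around the disk). I would argue by the uniqueness characterization of the Jones–Wenzl projector (Claim~\ref{killingJW}), transported to bimodules. Since \eqref{singR2} is by now known to be respected, $\FG_m$ intertwines the identity \eqref{vkprop} expressing ``a cap on $v_k$ equals $v_{k-1}$ with a trivalent vertex,'' so $\FG_m(v_{m-1})$ is killed by every pitchfork and by every double cap, exactly as is $\FG_m(JW_{m-1})=\FG\circ(\mTTL\to\DGti)(JW_{m-1})$. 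On the other hand, the graded space of bimodule maps between these two singular Bott–Samelson bimodules is, in the degree in question, free of rank one over the relevant invariant subring — this is computed by the standard trace on the Hecke algebroid $\HG$ via Williamson's theorem \cite{WilSSB}, and is the singular analogue of the rank-one statement in Claim~\ref{killingJW} — and $\FG_m(JW_{m-1})$ is a nonzero element of it, since its evaluation (associated polynomial) equals $\LM$, the product of the positive roots, by Corollary~\ref{coxeterlinescor}. Hence $\FG_m(v_{m-1})$ is a scalar multiple of $\FG_m(JW_{m-1})$, and the scalar is determined by capping both sides down: successively capping $v_{m-1}$ yields $v_0$, which evaluates to $\LM_W=\LM$ by Theorem~\ref{frobsquare}, matching the evaluation of $JW_{m-1}$, so the scalar is $1$. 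Finally, the rotation-invariance of $\FG_m(v_{m-1})$ needed to make \eqref{SingJWRelation} a well-posed equality of disk diagrams is exactly the rotation-invariance of the negligible Jones–Wenzl projector from Section~\ref{rouandrotation}, so there is no extra hidden isotopy relation to verify. Combining the three groups proves that $\FG_m$ is a well-defined $2$-functor; I expect essentially all the effort to be concentrated in the last paragraph, the identification of $\FG_m(v_{m-1})$ with the image of the abstract Jones–Wenzl projector.
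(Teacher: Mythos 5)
Your proposal matches the paper's strategy: dispatch all relations except \eqref{SingJWRelation} via the general Frobenius-square theory of \cite{EWFrob}, then argue that $\FG_m(v_{m-1})$ and $\FG_m(JW_{m-1})$ are both pitchfork-killing maps of minimal degree lying in a space of rank one, and pin down equality by checking that both evaluate (via the all-dot / cap-everything operation) to $\LM$, using Corollary~\ref{coxeterlinescor} on one side and $v_{m-1} \rightsquigarrow v_0 = \LM$ on the other. That is exactly the paper's argument.

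The one place you diverge is in how you obtain the rank-one statement. You appeal to Williamson's theorem for singular Soergel bimodules, reading off the size of the morphism space from the standard trace on $\HG$. The paper instead stays internal: it rotates $v_{m-1}$ into a morphism $BS(\ul{\hat{2(m-1)}}_s) \to R$ in ordinary (non-singular) Bott--Samelson bimodules, and then cites the already-established $m=\infty$ results (Claim~\ref{pitchfork} giving the pitchfork-or-all-dot spanning set, combined with the freeness of $\Hom(B_w,B_e)$ over $R$ from Theorem~\ref{mainthminfty}). This internal route is preferable for two reasons: Williamson's theorem is only quoted in the paper for Soergel realizations, whereas the standing hypotheses at this point are only Demazure surjectivity, local non-degeneracy, and lesser invertibility (so your version would silently narrow the generality); and it keeps the whole proof self-contained, using only what the diagrammatics have already delivered. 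Your appeal to Claim~\ref{killingJW} for the ``rank-one statement'' is also not quite the right reference, since that claim concerns the ideal generated by $JW_n$ inside $TL_n$, not a Hom space in $\SSBim$; the relevant internal input is Claim~\ref{pitchfork} together with the Jones--Wenzl property. With that substitution your argument is the paper's proof.
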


\begin{proof} We must check the relations of the category, as well as isotopy relations. With the exception of \eqref{SingJWRelation}, all these relations (including the isotopy
relations) hold in more generality for squares of Frobenius extensions, as proven in \cite{EWFrob}. The only relation that needs to be checked is \eqref{SingJWRelation}, expressing
$v_{m-1}$ as $JW_{m-1}$.

By rotation, we may view the map $v_{m-1}$ as a map from $BS(\hat{\ul{2(m-1)}}_s) \to R$, which is killed by every pitchfork and has minimal degree. We have already seen that the space of
such maps is 1-dimensional, and that composing with an all-dot (which corresponds in the singular world to placing caps on every colored band) gives an \emph{injective} map from this
1-dimensional space into $\End(\emptyset)=R$. Therefore, we need only show that $v_{m-1}$ and $JW_{m-1}$ produce the same polynomial when capped off everywhere. But we have already shown
that $v_{m-1}$ capped off everywhere is $v_0 = \LM$, while $JW_{m-1}$ capped off everywhere is its associated polynomial, which is also $\LM$ (see section \ref{coxlines}). \end{proof}

\begin{cor} For any parabolic subset, $\End_{\DG_m}(I) \cong R^I$. \end{cor}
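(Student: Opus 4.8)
\emph{Proof proposal.} The plan is to write down the isomorphism explicitly. Placing a box labelled by a polynomial $f\in R^I$ in the (unique) region of the empty Soergel diagram whose extremal regions are coloured $I$ defines an element of the endomorphism ring $\End_{\DG_m}(I)$ of the identity $1$-morphism $\id_I$; by the standing convention that boxes add and multiply as their polynomials do, this assignment is a homomorphism of graded rings $\phi_I\colon R^I\to\End_{\DG_m}(I)$. It then remains to check that $\phi_I$ is bijective, which is where the real (but already available) content lies.

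Surjectivity I would read off directly from the Circle Removal Lemma (Lemma \ref{circleremoval}): a $2$-endomorphism of $\id_I$ is a $\Bbbk$-linear combination of Soergel diagrams for $m$ with empty top and bottom boundary and with extremal regions coloured $I$; any such diagram has empty boundary, hence reduces to a polynomial, and that polynomial necessarily lies in $R^I$ since the only region present (after the lemma has removed all closed $1$-manifold components) is the ambient one, which carries the label $I$. Thus every element of $\End_{\DG_m}(I)$ is in the image of $\phi_I$.

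For injectivity I would compose with the $2$-functor $\FG_m\colon\DG_m\to\SBSBim_m$ of the preceding proposition. This functor fixes the object $I$ and sends a box labelled $f$ to multiplication by $f$; since $\id_I$ in $\SBSBim_m$ is the bimodule $R^I$, whose endomorphisms as an $(R^I,R^I)$-bimodule are exactly $R^I$ acting by multiplication, the composite $\FG_m\circ\phi_I\colon R^I\to\End_{\SBSBim_m}(I)=R^I$ is the identity map. Hence $\phi_I$ is injective, and therefore an isomorphism.

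I do not anticipate a genuine obstacle here: the only substantive inputs are the Circle Removal Lemma and the well-definedness of $\FG_m$, both of which are already in hand, and the argument runs uniformly for all four parabolic subsets $I\in\{\emptyset,b,r,p\}$, exactly paralleling the identification $\End_{\DCti}(\emptyset)\cong R$ from Section \ref{graphicalmanipulations}. The one point worth flagging is the case $I=p$: there one tacitly invokes the standing Frobenius-square hypothesis of this section, both to have $\DG_m$ and $\FG_m$ at one's disposal and to guarantee that closed diagrams with purple exterior evaluate into $R^p=R^W$.
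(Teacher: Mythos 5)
Your proof is correct and follows essentially the same route as the paper's: surjectivity via the Circle Removal Lemma (closed diagrams reduce to polynomials in the ambient invariant ring), and injectivity via the existence of the well-defined $2$-functor $\FG_m$, which carries boxes to multiplication and composes with $\phi_I$ to give the identity on $R^I$. You have simply unpacked the paper's two-line argument into explicit steps.
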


\begin{proof} We have already seen that all such diagrams reduce to polynomials, and the existence of the functor implies that there can be no additional relations between polynomials.
\end{proof}

\begin{cor} \label{woohoo} For $\ii$ an alternating sequence of length $d<2m$ and $X$ the corresponding 1-morphism $\ul{\emptyset i_1 \emptyset \ldots \emptyset i_d \emptyset}$ in $\DG_m$
or $\DGti$, the 2-functor $\DGti \to \DG_m$ induces isomorphisms on $\Hom(X,\ul{\emptyset})$. \end{cor}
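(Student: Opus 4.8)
The plan is to combine the surjectivity of the functor $\DGti \to \DG_m$ onto the relevant Hom spaces (which is essentially Corollary \ref{reducetoinfty}) with a dimension count coming from the already-established description of $\Hom(X, \ul{\emptyset})$ in $\DGti$. First I would transport the question to $\DCti$ and $\DC_m$: since $X = \ul{\emptyset i_1 \emptyset \ldots \emptyset i_d \emptyset}$ has no purple on its boundary, $X$ lies in the image of $\DGti \to \DG_m$, and via the isomorphism $\iota$ (Proposition, section \ref{functors}) the space $\Hom_{\DGti}(X,\ul{\emptyset})$ is identified with $\Hom_{\DCti}(\ul{i_1\cdots i_d}, \emptyset)$, which by the results of section \ref{sec-grothinfty} (specifically the computation in the proof of Theorem \ref{mainthminfty}, together with Claim \ref{allrecursions1} decomposing $BS(\ul{\hat{d}}_s)$ into indecomposables $B_w$) is a free $R$-module whose graded rank equals $\epsilon(b_{\ul{i_1\cdots i_d}})$ under the standard trace. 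The key point is that $d < 2m$, so every summand $B_w$ appearing in this decomposition has $\ell(w) \le d < 2m$, hence in particular $\ell(w) < m$ is false only for at most the top piece $B_{\hat d_s}$ with $d$ between $m$ and $2m-1$ — but crucially \emph{none} of these $w$ equals $w_0$, and all the relevant Hom spaces $\Hom(B_e, B_w)$ are already rank one in degree $\ell(w)$ by the all-dot argument of section \ref{moremorphisms}, which works verbatim for $m < \infty$.

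Next I would show the map $\Hom_{\DGti}(X,\ul{\emptyset}) \to \Hom_{\DG_m}(X,\ul{\emptyset})$ is surjective. This is exactly Corollary \ref{reducetoinfty}: any diagram in $\DG_m$ with boundary $X$ of the given form, with $d < 2m$, can be de-purplified, i.e.\ rewritten as a linear combination of $\infty$-diagrams. So surjectivity is already in hand.

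For injectivity I would argue that the composite $\Hom_{\DGti}(X,\ul{\emptyset}) \to \Hom_{\DG_m}(X,\ul{\emptyset}) \xrightarrow{\FG_m} \Hom_{\SBSBim_m}(\FG_m X, R)$ is injective, which forces the first arrow to be injective. Here I would use that $\FG_m$ restricted to diagrams with $\emptyset$-boundary and the standard reduction is controlled by the all-dot / evaluation map: precomposing a morphism $X \to \ul{\emptyset}$ with the appropriate cups and caps to land in $\End(\ul{\emptyset}) = R$ is, after applying $\FG_m$, the same evaluation as in $\DGti$, and we already know (section \ref{InducedInfty}, and the proof of Theorem \ref{mainthminfty}) that on $\DCti$ this evaluation detects a basis of $\Hom(B_w, B_e)$ because $\FC(\psi \circ JW)$ is a nonzero product of roots and $R$ acts freely. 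The only subtlety, and the step I expect to be the main obstacle, is checking that passing from $\DGti$ to $\DG_m$ does not collapse this detection — i.e.\ that the evaluation-to-$R$ map on $\Hom_{\DG_m}(X,\ul{\emptyset})$ restricted to the image of $\DGti$ is still injective. This is where the hypothesis $d < 2m$ is essential: it guarantees (via Corollary \ref{reducetoinfty} and Lemma \ref{itssurjective}) that no occurrence of $v_m$ — the smallest genuinely new morphism — can appear, so the image of $\DGti$ is genuinely all of $\Hom_{\DG_m}(X,\ul{\emptyset})$ and the bimodule functor $\FG_m$ (well-defined by the Proposition of section \ref{functortobimfinitesing}) agrees on it with $\FG$. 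Since $\FG$ is known to induce the standard pairing on the $\DGti$ side, and the graded ranks on both sides match $\epsilon(b_{\ul{i_1\cdots i_d}})$, the surjection must be an isomorphism.

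In summary, the steps in order are: (1) reduce to $\DCti$ / $\DC_m$ via $\iota$ and identify $\Hom_{\DGti}(X,\ul{\emptyset})$ as a free graded $R$-module of known rank using section \ref{sec-grothinfty}; (2) invoke Corollary \ref{reducetoinfty} for surjectivity, using $d < 2m$; (3) compose with the all-dot/evaluation map and with $\FG_m$, observing that on the image of $\DGti$ this coincides with $\FG$, which is known to detect a basis, giving injectivity; (4) conclude the surjection is an isomorphism by comparing graded ranks. The crux is step (3): verifying that the constraint $d<2m$ prevents $v_m$ from entering, so that $\FG_m$ and $\FG$ genuinely agree on the relevant Hom space.
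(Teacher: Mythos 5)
Your proposal follows the paper's argument: surjectivity is Corollary \ref{reducetoinfty}, and injectivity comes from the faithfulness of $\FG$ on these Hom spaces (Remark \ref{partialfaithful}) together with the factorization $\FG = \FG_m \circ (\DGti \to \DG_m)$. One clarification: you flag injectivity as ``the main obstacle'' and worry that passing from $\DGti$ to $\DG_m$ might ``collapse the detection,'' but this worry dissolves the moment you write $\FG = \FG_m \circ F$ with $F\colon \DGti \to \DG_m$ --- injectivity of the composite $\FG$ on $\Hom_{\DGti}(X,\ul{\emptyset})$ forces injectivity of $F$ with no further work. The role that $d<2m$ plays in the injectivity step is also not what you describe (preventing $v_m$ is the surjectivity argument, already done); rather, $d<2m$ lets one fold the alternating boundary into two halves each of length $\le m$ and apply biadjointness, which is exactly the hypothesis of Remark \ref{partialfaithful}. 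With injectivity and surjectivity in hand, your step (4) (matching graded ranks) is redundant.
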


\begin{proof} The map on Hom spaces induced by $\DGti \to \DG_m$ is surjective by Corollary \ref{reducetoinfty}. The $2$-functor $\FG$ is faithful on these objects, as in Remark
\ref{partialfaithful}, and it factors through $\DG_m$, so we also have injectivity. \end{proof}

\subsubsection{{\bf The Grothendieck Algebroid}}\label{grothsingagain}

\begin{thm} Assume Demazure surjectivity, local non-degeneracy, and lesser invertibility. The 2-category $\Kar(\DG_m)$ categorifies the Hecke algebroid $\HG_m$. If $\Bbbk$ is a Soergel ring then $\FG_m$ is an
equivalence after passage to the Karoubi envelope. \end{thm}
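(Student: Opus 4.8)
The plan is to follow the blueprint of the $m=\infty$ case (Theorem~\ref{mainthminfty} and the corollary on $\Kar(\DGti)$ that follows it), importing as much as possible through the comparison functor $\DGti\to\DG_m$ and the singular-to-regular reduction of Claim~\ref{tracedetermined}. First I would check that $\Kar(\DG_m)$ is a potential categorification of $\HG_m$. Under the three standing assumptions the Frobenius square $R^W\subset R^s,R^t\subset R$ exists (Proposition~\ref{past-surjective}, Theorem~\ref{frobsquare}), so $\DG_m$ and $\FG_m$ are defined; the relations of $\DG_m$ inherited from the Frobenius-square structure (cf.\ the corollaries \eqref{BiBW}, \eqref{BWBW} of Theorem~\ref{frobsquare}) categorify the algebroid relations \eqref{bisqsing}--\eqref{downdown}, and relation \eqref{SingJWRelation} (the statement $v_{m-1}=JW_{m-1}$) categorifies \eqref{twocolorrelnsing}. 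This produces a $\Zvv$-linear functor $\HG_m\to[\Kar(\DG_m)]$, clearly essentially surjective on $1$-morphisms. For $w\in W_m$ I define $B_w$ exactly as in the infinite case: $B_e=BS(\emptyset)$, and for $0<\ell(w)$ let $B_w$ be the image of the Jones--Wenzl idempotent $JW_{\ell(w)-1}$ inside $BS(\hat{\ul{\ell(w)}}_s)$, pulled back along $\mTTL\to\DGti\to\DG_m$; here $B_{w_0}$ is also the object through which the purple $1$-morphism factors, by \eqref{SingJWRelation} and the de-purplification results of \S\ref{depurplify}. Each $B_w$ is biadjoint to $B_{w^{-1}}$ via cups and caps, and descends to $b_w$.

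The heart of the argument is the computation that $\Hom_{\DG_m}(B_w,B_e)$ is a free graded $R$-module of rank one concentrated in degree $\ell(w)$, which equals $\epsilon(b_w)$ for the standard trace. Since every reduced expression in $W_m$ has length $\le m<2m$, Corollary~\ref{woohoo} identifies $\Hom_{\DG_m}(BS(\hat{\ul{k}}_s),BS(\emptyset))$ with its counterpart in $\DGti$ for $k\le m$, compatibly with the (pulled-back) action of the $JW$-idempotents. By Remark~\ref{partialfaithful} and the all-dot spanning statement of \S\ref{moremorphisms}, in $\DGti$ one has $\Hom(B_w,B_e)=\Hom(BS(\hat{\ul{\ell(w)}}_s),B_e)\circ JW_{\ell(w)-1}=R\cdot(\psi\circ JW_{\ell(w)-1})$, free of rank one in degree $\ell(w)$ --- this includes $w=w_0$ by taking $k=m$. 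Transporting back to $\DG_m$ via Corollary~\ref{woohoo}, the generator $\psi\circ JW_{\ell(w)-1}$ has nonzero image under $\FG_m$, because its evaluation equals that of $JW_{\ell(w)-1}$, a nonzero product of positive roots by Corollary~\ref{coxeterlinescor} (equal to $\LM$ when $w=w_0$), and $R$ acts freely on morphism spaces in $\SBSBim$; hence $\Hom_{\DG_m}(B_w,B_e)$ is genuinely free of rank one in degree $\ell(w)$.

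With these inputs I would run the algebroid version of Lemma~\ref{homspacelemma}, as in the proof of the corollary at the end of \S\ref{indecomps}. The categorified version of Claim~\ref{allrecursions1} decomposes each $BS(\hat{\ul{k}}_s)$ (and its color-swap) into the $B_w$'s; for $k\le m$ this again follows from Corollary~\ref{woohoo} plus the $\DGti$-decompositions, and the splitting \eqref{iisplittingeqn} (valid in $\DG_m$ via $\DGti\to\DG_m$) reduces an arbitrary $BS(\ul{x})$ to reduced words, with the categorified relation \eqref{twocolorrelnsing} accounting for the braid move. Biadjointness and these decompositions express every Hom space among $BS$-objects and $B_w$'s as a direct sum of copies of the $\Hom(B_w,B_e)$ computed above, so all Hom spaces are free over $R$; and, by the reduction of Claim~\ref{tracedetermined} (blue and purple up-arrows are biadjoint to the corresponding down-arrows, so the induced semilinear pairing on $\HG_m$ is a trace, determined by its restriction to $\End_{\HG_m}(\emptyset)=\HB_m$, where it is standard by the previous paragraph), the pairing matches the standard one. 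Therefore the $\mTTL$-idempotents together with $B_{w_0}$ give a complete family of objects in $\Kar(\DG_m)$ descending to the Kazhdan--Lusztig basis of $\HG_m$; these are indecomposable and pairwise non-isomorphic, and the map $\HG_m\to[\Kar(\DG_m)]$ is an isomorphism categorifying the standard trace.

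Finally, for $\Bbbk$ a Soergel ring, Soergel's and Williamson's theorems quoted above show that $\SSBim_m$ satisfies the same list of properties for the standard trace, with $\Hom_{\SBSBim}(B_w,B_e)$ free of rank one over $R$ in degree $\ell(w)$. The functor $\FG_m$ hits the object $p$ and all of $\Ind,\Res$ by construction, so it is essentially surjective onto $\Kar(\SSBim_m)$; and it sends the rank-one generator $\psi\circ JW_{\ell(w)-1}$ to a nonzero, hence $R$-basis, element of the rank-one space $\Hom_{\SBSBim}(B_w,B_e)$, so it induces isomorphisms on all $\Hom(B_w,B_e)$. The algebroid analogue of Corollary~\ref{fullyfaithfulfunctor} (again reducing to $\End(\emptyset)$ via Claim~\ref{tracedetermined}) then forces $\FG_m$ to be an equivalence after passage to Karoubi envelopes. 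The main obstacle here is organizational rather than computational: all of the genuinely analytic content --- freeness and ranks of Hom spaces --- is imported from the infinite-dihedral chapter through Corollary~\ref{woohoo}, which itself rests on the de-purplification machinery of \S\ref{depurplify} and on relation \eqref{SingJWRelation}; the one genuinely new point beyond transcribing the $m=\infty$ proof is the treatment of $B_{w_0}$ and the purple object, namely confirming that the purple $1$-morphism is indecomposable and coincides with $\Im(JW_{m-1})$, and that the categorified braid relation \eqref{twocolorrelnsing} does split every Bott--Samelson object.
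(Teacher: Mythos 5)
Your proposal is correct and follows essentially the same approach as the paper: check the algebroid relations to get a potential categorification of $\HG_m$, use Corollary~\ref{woohoo} to import the Hom-space computations from the $m=\infty$ case, reduce the trace computation to $\End_{\HG_m}(\emptyset)=\HB_m$ via Claim~\ref{tracedetermined}, and close with the Lemma~\ref{homspacelemma}/Corollary~\ref{homspacecor} machinery. You have simply spelled out in more detail the steps the paper summarizes as ``the remaining arguments proceed as usual.''
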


\begin{proof} First we show that $\DG_m$ is a potential categorification of $\HG_m$. We have presented $\HG_m$ by generators and relations in section \ref{presentinghecke}, and it is
clear how to define the map $\HG_m \to [\DG_m]$ on generators. Equation \eqref{bisqsing} follows as in $\DGti$, and equation \eqref{bw0sqsing} follows in the same way from
\eqref{bigcircforce}. The isomorphism $\ul{\emptyset bp} \cong \ul{\emptyset rp}$ required by equation \eqref{upup} is realized by the upwards-pointing crossing, and the same for
\eqref{downdown} and the downwards-pointing crossing. The most interesting relation is \eqref{twocolorrelnsing}. Let $j = b$ when $m$ is odd, and $r$ when $m$ is even. Temperley-Lieb
theory implies that $b_{\ul{jpb}}$ is sent to the image of $JW_{m-1}$, when viewed on the planar strip as an endomorphism of $\ul{j \emptyset \ldots r \emptyset b}$. Then
\eqref{SingJWRelation} says that this image actually factors as a map $\ul{j \emptyset \ldots r \emptyset b} \to \ul{jpb} \to \ul{j \emptyset \ldots r \emptyset b}$. The reader can finish
the deduction that $\ul{jpb}$ is isomorphic to this image.

Therefore, $\DG_m$ induces a trace on $\HG_m$, which can be identified by its values on $\End_{\HG_m}(\ul{\emptyset}) = \HB_m$. This trace $\e$ is determined by $\e(b_{\hat{\ul{k}}_s})$
and $\e(b_{\hat{\ul{k}}_t})$ for $k \le m$. These in turn are determined by the graded dimensions of the Hom spaces specified in Corollary \ref{woohoo} (giving the formula for all $k <
2m$), which agree with the graded ranks of Hom spaces in $\DGti$. Therefore, the trace induced by $\DG_m$ is equal to the standard trace. The remaining arguments proceed as usual.
\end{proof}

%
\subsection{The category $\DC_m$}
\label{sec-mdm}
%
%

In order to define the diagrammatic version of $\BSBim$, we need make fewer assumptions than for $\SBSBim$. In particular, we only need to assume Demazure surjectivity. However, the proof
of the SCT in this paper also relies on local non-degeneracy and lesser invertibility as well. For a more general proof, see \cite{EWGR4SB}.

\subsubsection{{\bf Definitions}}\label{defnsfinite}

\begin{defn} A \emph{Soergel graph} for $m<\infty$ is an isotopy class of a particular kind of graph with boundary, properly embedded in the planar strip (so that the boundary of the graph
is always embedded in the boundary of the strip). The edges in this graph are colored by either $s$ or $t$. The vertices in this graph are either univalent (dots), trivalent with all three
adjoining edges having the same color, or $2m$-valent with alternating edge colors. Polynomials in $R$ can be placed in any region. The boundary of the graph gives two sequences of colors,
the \emph{top} and \emph{bottom boundary}. Soergel graphs have a degree, where trivalent vertices have degree $-1$, dots have degree $1$, and $2m$-valent vertices have degree $0$.
\end{defn}

An \emph{$\infty$-graph} will be a graph without $2m$-valent vertices.

\begin{defn} Let $\DC_m$ be the $\Bbbk$-linear monoidal category defined herein. The objects will be finite sequences $\ul{w}$ of indices $s$ and $t$, with a monoidal structure given by
concatenation. The space $\Hom_{\DC_m}(\ul{w},\ul{y})$ will be the free $\Bbbk$-module generated by Soergel graphs with bottom boundary $\ul{w}$ and top boundary $\ul{y}$, modulo the
relations below. Hom spaces will be graded by the degree of the Soergel graphs.

We have all the relations that define $\DCti$ (see section \ref{defnsinfty}) as well as two new relations, called the \emph{two-color relations}, found in equations \eqref{assoc2} and
\eqref{dot2m}. They hold with the colors switched as well. It is difficult to draw these relations for all $m$ at once, since the number of strands entering a vertex changes. A circle
labelled $JW$ contains the Jones-Wenzl projector $JW_{m-1}$ as a degree $2$ map (see Notation \ref{degree2JWnotation}), and a circle labelled $\textrm{v}$ contains the $2m$-valent vertex.
A sequence of a few purple lines will indicate an alternating sequence of red and blue lines of the appropriate length (depending on $m$).

The new relations are \emph{two-color associativity}: 

\begin{equation} \label{assoc2} \ig{1}{twocolorassoc} \end{equation}

and \emph{dotting the vertex}:

\begin{equation} \label{dot2m} {
\labellist
\small\hair 2pt
 \pinlabel {$+ [2]$} [ ] at 95 21
 \pinlabel {$+ [2]$} [ ] at 150 21
\endlabellist
\centering
\ig{1}{twocolordot}
}
 \end{equation}

\end{defn}

Each figure has some examples, with the $2m$-valent vertex in these relations circled. In writing the coefficients of the Jones-Wenzl projector, we have assumed that $[m-1]=1$.

Now we derive some other relations in $\DC_m$. The following two pictures become the same after an application of \eqref{assoc2}. \begin{equation} \label{tricommutes} \ig{1}{tricommutes}
\end{equation}

Relation \eqref{dot2m} implies that the $2m$-valent vertex is killed by any pitchfork. Thus we may deduce that \begin{equation} \label{vJWisv} \ig{1}{vJWisv} \end{equation} Only one term
in $JW$ survives: the term yielding the identity map. The other terms produce pitchforks.

\begin{claim} The following relation holds. (Note: this will be the idempotent which projects onto $B_{w_0}$.) \end{claim}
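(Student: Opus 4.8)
The claim asserts that the diagram obtained by composing the $2m$-valent vertex $BS(\hat{\ul m}_s)\to BS(\hat{\ul m}_t)$ with the $2m$-valent vertex $BS(\hat{\ul m}_t)\to BS(\hat{\ul m}_s)$ --- call this degree-zero endomorphism $e_{w_0}\in\End_{\DC_m}(BS(\hat{\ul m}_s))$ --- is an idempotent, in fact equal to the image of the Jones-Wenzl projector $JW_{m-1}$; the parenthetical remark then records that its image is $B_{w_0}$. The first reduction I would make: once the morphism equality $e_{w_0}=JW_{m-1}$ is known, idempotency is free (Claim~\ref{killingJW} says $JW_{m-1}$ is idempotent), and $\Im(e_{w_0})=\Im(JW_{m-1})=B_{w_0}$ is precisely the identification of the negligible Jones-Wenzl image with the longest-element bimodule made in Section~\ref{sec-mdti}. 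So the entire statement comes down to proving $e_{w_0}=JW_{m-1}$ in $\DC_m$.

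To prove that equality I would pass to the singular category. Applying the functor $\DC_m\to\End_{\DG_m}(\ul\emptyset)$ (the finite analog of the isomorphism $\iota$ of Definition~\ref{iotadefn}), each $2m$-valent vertex is carried to a piece of the boundary of a purple region, so that $e_{w_0}$ becomes a singular Soergel diagram with a single purple region encircled by $m-1$ alternating arcs of each color --- that is, the diagram $v_{m-1}$ of Section~\ref{sec-singfinite}, after putting it in normal form via the Circle Removal Lemma~\ref{circleremoval} and the Frobenius and Reidemeister relations. Relation~\eqref{SingJWRelation} is exactly $v_{m-1}=JW_{m-1}$ in $\DG_m$. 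To transport this back, it remains to know that $\DC_m\to\DG_m$ is injective on the relevant $\Hom$-space; rotating $e_{w_0}$ into a map onto the empty parabolic, its boundary is an alternating word of length $2m-2<2m$, so Corollary~\ref{woohoo} --- together with Corollary~\ref{reducetoinfty} and the faithfulness of $\FG$ on such objects (Remark~\ref{partialfaithful}) --- supplies the needed injectivity. Hence $v_{m-1}=JW_{m-1}$ pulls back to $e_{w_0}=JW_{m-1}$ in $\DC_m$, as desired.

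It is worth sketching the purely diagrammatic route internal to $\DC_m$ as well, since it isolates the real difficulty. By relation~\eqref{dot2m}, a dot on any external edge of the $2m$-valent vertex expands into pitchforks, so $e_{w_0}$ is killed by every pitchfork applied on top or on bottom; by Claim~\ref{killingJW}, $JW_{m-1}$ is the unique endomorphism with that property whose coefficient on the identity diagram is $1$. Thus $e_{w_0}=JW_{m-1}$ once one checks that the coefficient of the identity in $e_{w_0}$ is $1$. \textbf{That coefficient check is the main obstacle}: it requires de-purplifying the doubled $2m$-valent vertex into $\infty$-graphs and tracking the leading term, which is essentially the same work as verifying the normal form $v_{m-1}$ in the singular picture and invoking \eqref{SingJWRelation}, or --- done directly --- an induction on $m$ pushing the identity coefficient through the two-color relations \eqref{assoc2}, \eqref{tricommutes} together with \eqref{vJWisv}. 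Finally, with $e_{w_0}=JW_{m-1}$ in hand, the idempotency $e_{w_0}\circ e_{w_0}=e_{w_0}$ may also be seen directly: expand the stack of four $2m$-valent vertices, recognize the inner two as the color-swapped copy of $e_{w_0}=JW_{m-1}$, and collapse that $JW_{m-1}$ against the adjacent vertex using \eqref{vJWisv}.
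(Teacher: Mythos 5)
Your reduction to the single morphism equality $e_{w_0}=JW_{m-1}$ is the right framing, and your observation that \eqref{dot2m} forces $e_{w_0}$ to be killed by all pitchforks is a correct and useful step. However, the paper's proof of \eqref{w0decomp} is a short, fixed diagrammatic derivation \emph{internal to} $\DC_m$: the first proof applies (in order) relations \eqref{unit}, \eqref{tricommutes}, \eqref{dot2m}, \eqref{vJWisv}, and \eqref{dot2m} again, while the second proof simply applies a dot to \eqref{assoc2} so that its right-hand side becomes the left-hand side of \eqref{w0decomp}. Neither passes to the singular 2-category, and neither is an induction on $m$. Your ``direct route'' gestures at the correct toolkit --- \eqref{assoc2}, \eqref{tricommutes}, \eqref{vJWisv} --- but stops at the coefficient check rather than carrying out what is in fact a very short computation once the paper's sequence of moves is found.

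The primary route you propose, through $\DG_m$, has concrete gaps. First, under $\iota_m$ each $2m$-valent vertex is sent to an entire purple region $v_m$, not to ``a piece of the boundary of a purple region,'' so $\iota_m(e_{w_0})$ has two purple regions joined by $m$ alternating colored bands. Merging them is not a matter of the Circle Removal Lemma~\ref{circleremoval} plus Frobenius and Reidemeister relations; those moves do not fuse two purple regions across $m$ alternating bands. The engine for that fusion is the $C_m$ evaluation \eqref{ckis1} together with the band-attachment machinery around Claim~\ref{connectingwithbands} and \eqref{gettinghighvk}, which you do not invoke. Second, the injectivity step is misapplied: $e_{w_0}$ has $m$ boundary strands on top and $m$ on bottom, and rotating it to a map onto $\ul{\emptyset}$ gives a boundary word of length $2m$ containing two repetitions (the top ends and the bottom begins with the same color, and likewise at the wrap-around), so it is neither alternating nor of length $<2m$, and Corollary~\ref{woohoo} does not apply. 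Third, there is a circularity lurking: $\iota_m$ is defined in Section~\ref{functorsfinitesing}, \emph{after} \eqref{w0decomp}, and the paper's faithfulness of $\iota_m$ (Theorem~\ref{mainthmfinite}) relies on the SCT, whose proof uses \eqref{w0decomp} via the potential-categorification machinery. To make the singular route rigorous one would have to establish faithfulness of $\iota_m$ on $\End^0(BS(\hat{\ul m}_s))$ independently, which is nontrivial. In short: your high-level idea is plausible but the specific lemmas you cite do not supply the needed reductions, and the argument as written is both gappy and circular, whereas the paper's proof is a two-line diagram manipulation.
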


\begin{equation} \label{w0decomp} 	{
	\labellist
	\small\hair 2pt
	 \pinlabel {$+ [2]$} [ ] at 85 21
	 \pinlabel {$+ [2]$} [ ] at 142 21
	\endlabellist
	\centering
	\ig{1}{twocoloridemp}
	} \end{equation}

\begin{proof} We purposely present two proofs, for study in the Appendix. For the first, use in order: \eqref{unit}, \eqref{tricommutes}, \eqref{dot2m}, \eqref{vJWisv}, and \eqref{dot2m}.
The coloration is as though $m$ is even, although if $m$ is odd one only need change the color on the leftmost strands. \igc{1}{w0decompproof}

For the second proof, merely apply a dot in the correct place to \eqref{assoc2}, in such a way that the RHS of \eqref{assoc2} becomes the LHS of \eqref{w0decomp}. \end{proof}

We invite the reader to compare relation \eqref{assoc2} with relation (3.7) in \cite{EInduced}.

\subsubsection{{\bf Functors}}\label{functorsfinitesing}

\begin{defn} Assume local nondegeneracy and lesser invertibility, so that $\DG_m$ is well-defined. We give a functor $\iota_m \colon \DC_m \to \Hom_{\DG_m}(\emptyset,\emptyset)$. On
objects, it sends $s$ to the path $\ul{\emptyset b \emptyset}$ and $t$ to the path $\ul{\emptyset r \emptyset}$. We define the functor on dots and trivalent vertices as in Definition
\ref{iotadefn}. The image of the $2m$-valent vertex is $v_m$.

\igc{1}{functoriotam} \end{defn}

\begin{claim} The above definition gives a well-defined functor. \label{welldefinedfunctormfinite} \end{claim}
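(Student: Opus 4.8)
The plan is to verify that $\iota_m$ respects every defining relation of $\DC_m$; once that is done, compatibility with composition and the monoidal structure, together with preservation of degree, are formal and identical to the checks already made for $\iota$ in Section~\ref{sec-mdti}. The first observation I would make is that the restriction of $\iota_m$ to $\infty$-graphs is nothing but the composite of $\iota\co\DCti\to\Hom_{\DGti}(\ul{\emptyset},\ul{\emptyset})$ with the canonical $2$-functor $\DGti\to\DG_m$. Consequently all the relations of $\DC_m$ inherited from $\DCti$ (Needle, Barbell, polynomial forcing, and the Frobenius relations) hold automatically, since $\iota$ is already known to be a well-defined functor. So the real work is to check the two genuinely new two-color relations \eqref{assoc2} and \eqref{dot2m}.

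For \eqref{dot2m}, I would apply $\iota_m$ to both sides. The left side becomes $v_m$ with one colored cap attached to an outer band; by \eqref{singR2} (equivalently by the computation \eqref{vkprop}) this is $v_{m-1}$ with an extra trivalent vertex, and by the fundamental relation \eqref{SingJWRelation} we may rewrite $v_{m-1}$ as the $\infty$-diagram $JW_{m-1}$ together with its explicit $[2]$-correction terms. Comparing these against the $[2]$-terms on the right of \eqref{dot2m}, both already normalized using $[m-1]=1$, I expect to recover exactly the image under $\iota_m$ of the right side of \eqref{dot2m}. In other words \eqref{dot2m} should turn out to be precisely \eqref{SingJWRelation} with a cap applied, so this relation is essentially free.

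For two-color associativity \eqref{assoc2}, I would again push both sides through $\iota_m$, obtaining $\infty$-diagrams decorated by two copies of $v_m$, and then simplify each side using the $v_k$-calculus of Section~\ref{depurplify}: \eqref{gettinghighvk} and Claim~\ref{connectingwithbands} govern the fusion of two purple regions along colored bands, the identity $C_m=1$ from \eqref{ckis1} fixes the lone surviving scalar, and de-purplification via \eqref{SingJWRelation} rewrites the outcome in terms of $JW_{m-1}$. I expect both configurations appearing in \eqref{assoc2} to reduce to the same $\infty$-diagram; the computation should run parallel to the derivation of \eqref{w0decomp} and to relation (3.7) in \cite{EInduced}. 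The low-rank case $m=2$ would be handled separately, using \eqref{singR2nonoriented1} in place of \eqref{gettinghighvk}, exactly as in the proof of that claim.

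The hard part will be \eqref{assoc2}: unlike \eqref{dot2m} it is not merely a capped instance of \eqref{SingJWRelation}, and one must carry the polynomials and scalars produced when the two copies of $v_m$ are fused along their bands all the way through the de-purplification step. Nothing conceptually new is needed, since every ingredient already lives in Section~\ref{depurplify}, but the bookkeeping is the genuine obstacle, and it is where essentially all of the effort would go.
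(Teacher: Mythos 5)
Your proposal is correct and follows essentially the same route as the paper: the $\infty$-graph relations are inherited from the well-definedness of $\iota$ composed with the $2$-functor $\DGti\to\DG_m$, relation \eqref{dot2m} is deduced by capping $v_m$ to $v_{m-1}$ via \eqref{vkprop} and then applying \eqref{SingJWRelation}, and relation \eqref{assoc2} is verified by a direct manipulation in $\DG_m$ using the $v_k$-calculus (Reidemeister moves, \eqref{gettinghighvk}, \eqref{ckis1}). The paper presents the \eqref{assoc2} check as a single pictorial computation rather than explicitly de-purplifying both sides, but the ingredients and structure of the argument are the same.
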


\begin{proof} In Definition \ref{iotadefn} we already had a well-defined functor for $\infty$-graphs. We need only check the relations involving $2m$-valent vertices. Relation
\eqref{dot2m} follows from \eqref{vkprop} and \eqref{SingJWRelation}. Relation \eqref{assoc2} follows from \igc{1}{assoc2proof} \end{proof}

\begin{claim} This functor is full. \end{claim}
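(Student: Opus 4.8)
The claim is that the functor $\iota_m \colon \DC_m \to \Hom_{\DG_m}(\emptyset,\emptyset)$ is full, i.e. every $2$-morphism in $\DG_m$ between objects of the form $\ul{\emptyset i_1 \emptyset \cdots \emptyset i_d \emptyset}$ lies in the image. The proof should mirror the argument for the isomorphism $\iota$ in the $m=\infty$ case (the Proposition on $\iota$ in section \ref{sec-mdti}), adapted to account for the new generator $v_m$ and purple regions. First I would observe that, by the Circle Removal Lemma (Lemma \ref{circleremoval}) together with Lemma \ref{itssurjective}, any morphism in $\DG_m$ whose boundary has no purple appearing on it can be written as a $\Bbbk$-linear combination of diagrams built out of $\infty$-diagrams, copies of $v_m$, and polynomials placed in regions. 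So it suffices to show that each such building block is in the image of $\iota_m$.

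The $\infty$-diagram part is already handled: in the proof that $\iota$ is an isomorphism (section \ref{sec-mdti}), we saw that any $\infty$-diagram with extremal white space and polynomials forced into white regions deformation-retracts onto a tree (or barbell) picture that is manifestly $\iota$ of a Soergel graph; and $\iota_m$ restricted to $\infty$-graphs agrees with $\iota$ composed with the functor $\DGti \to \DG_m$. For the polynomials, one uses relation \eqref{bigdemazure} (and its analogs, plus \eqref{demazureis}) to push any polynomial living in a blue, red, or purple region into a white region inside a counterclockwise circle, exactly as in the $m=\infty$ argument; such a circle-with-polynomial is $\iota_m$ of a barbell-type $\infty$-graph since $R$ is our ring and boxes in $\DC_m$ carry arbitrary polynomials in $R$. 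The genuinely new ingredient is $v_m$: by definition $\iota_m$ sends the $2m$-valent vertex of $\DC_m$ to $v_m$, so a single $v_m$ (surrounded by white space after adding lines on the far left and right, as in the evaluation procedure) is in the image. The remaining point is that an arbitrary diagram assembled from these pieces — with $v_m$'s connected to $\infty$-diagrams along white bands — is $\iota_m$ of the corresponding Soergel graph obtained by collapsing each white $\ul{\emptyset i \emptyset}$ band to an $i$-colored edge and each $v_m$ to a $2m$-valent vertex. This is a bookkeeping statement about the combinatorics of diagrams with extremal white space, strictly parallel to how $\iota^{-1}$ was constructed in the $m=\infty$ case.

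Concretely the steps are: (1) reduce to boundaries of the form $\ul{\emptyset i_1 \emptyset \cdots \emptyset i_d \emptyset}$ — any object of $\DG_m$ on which we need fullness for the claim is of this type, since $\iota_m$ sends objects of $\DC_m$ to exactly these; (2) invoke Lemma \ref{itssurjective} to write any such $2$-morphism as a combination of diagrams generated by $\infty$-diagrams and $v_m$ with polynomials in regions; (3) using \eqref{demazureis}, \eqref{bigdemazure} and polynomial forcing, normalize so that every polynomial sits in a white region inside a counterclockwise circle, and so that the ambient diagram has extremal white space; (4) observe that such a normalized diagram is literally $\iota_m$ applied to the Soergel graph for $\DC_m$ obtained by contracting the white $\emptyset$-strips to colored edges and the purple $v_m$'s to $2m$-valent vertices, with barbells recording the circled polynomials — here one uses that $\iota_m$ agrees with $\iota$ on the $\infty$-part (already shown surjective onto $\infty$-pictures with extremal white space) and sends the $2m$-valent vertex to $v_m$.

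**The main obstacle.** The delicate point is step (4): after pushing all polynomials into circled white regions and stabilizing the extremal regions, one must be sure that the resulting diagram really is in the image of a \emph{single} Soergel graph of $\DC_m$, i.e. that the white regions organize into a pattern of the form $\ul{\cdots \emptyset i \emptyset \cdots}$ compatible with contracting to a $\DC_m$-graph, and that no residual configuration of purple regions $v_k$ with $k \ne m$ survives. The first half is the genuinely diagrammatic content of Lemma \ref{itssurjective} / Corollary \ref{reducetoinfty} — every purple region has been expressed via $v_m$ and $\infty$-pieces — so I would lean on those lemmas rather than redo the planar analysis. The second half — that a $v_m$ embedded among $\infty$-diagrams, once surrounded by enough white space, is exactly $\iota_m$ of the $2m$-valent vertex glued into the appropriate colored-edge picture — is a direct check from the definition of $\iota_m$ together with the Frobenius relations defining $v_m$ in terms of cups, caps, and the $JW$-circle. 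I expect this to be routine once set up carefully, paralleling the $m=\infty$ proof almost verbatim; the only real work is convincing oneself the colored-region bookkeeping goes through, which is why I would present it by analogy and leave the pedestrian verifications to the reader, as the paper does elsewhere.
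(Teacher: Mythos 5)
Your proposal is correct and takes the same approach as the paper: the paper's entire proof is the single sentence ``This is precisely the statement of Lemma \ref{itssurjective}.'' You have simply unpacked why that lemma yields fullness (namely, that $\iota_m$ hits all $\infty$-diagrams via its agreement with the isomorphism $\iota$ composed with $\DGti \to \DG_m$, hits $v_m$ by definition of the $2m$-valent vertex, and hence, being a monoidal functor, hits anything generated by these), which the paper leaves implicit.
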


\begin{proof} This is precisely the statement of Lemma \ref{itssurjective}. \end{proof}

Now $\iota_m$ is not essentially surjective, because there are loops based at $\emptyset$ which pass through $p$, but it will be surjective after passage to the Karoubi envelope. After
all, one can already see that $\ul{\emptyset p \emptyset}$ is the image of the Jones-Wenzl projector inside $BS(\hat{\ul{m}}_s)$, by rewriting this Jones-Wenzl projector using $v_{m-1}$.

\begin{defn} We define the functor $\FC_m \co \DC_m \to \SBim_m$ as the composition of $\iota_m$ and $\FG_m$. \end{defn}

Clearly this functor agrees with $\FC_\infty$ on $\infty$-graphs, since $\FG_m$ agrees with $\FG_\infty$ on $\infty$-diagrams. We have avoided giving an explicit formula for the image of
the $2m$-valent vertex, instead describing it as the image of $v_m$, which itself is a composition of numerous cups, caps, and crossings. It seems that a straightforward formula for this
composition is quite nasty in general. The formula for $m=2,3$ was given in \cite{EKh}.

\subsubsection{{\bf Graph reduction}}\label{graphreductionagain}

Relation \eqref{gettinghighvk} told us how to construct the singular Soergel graph $v_k$ out of $v_m$, and we can perform the same construction with Soergel graphs with the $2m$-valent
vertex replacing $v_m$. This yields a well-defined Soergel graph $v_k$ for all $k \ge 0$. It is not difficult to prove that $v_k$ is rotation-invariant for all $k$, using either
increasing or decreasing induction from $v_m$. Therefore, $v_k$ is killed by any pitchfork. It is not terrible to duplicate the results of Claim \ref{connectingwithbands}, though now the
proof should go the other way, starting with $v_{k+l+1-n}$ with some polynomials, and using the dot forcing rules to break lines and resolve until obtaining $v_k$ attached to $v_l$.

\begin{prop} \label{reducessans2m} Any morphism in $\DC_m$ on a planar disk with an alternating boundary of length $<2m$ reduces to a sum of $\infty$-graphs. \end{prop}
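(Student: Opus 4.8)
The plan is to transport the analogous singular statement, Corollary \ref{reducetoinfty}, across the functor $\iota_m \co \DC_m \to \Hom_{\DG_m}(\emptyset, \emptyset)$. Recall from Claim \ref{welldefinedfunctormfinite} that $\iota_m$ is well-defined, and from the subsequent claim (via Lemma \ref{itssurjective}) that it is full; moreover, applied to a Soergel graph with alternating boundary of length $d$, it produces a singular Soergel diagram whose boundary is exactly $\ul{\emptyset i_1 \emptyset i_2 \emptyset \cdots \emptyset i_d \emptyset}$, with each $2m$-valent vertex sent to $v_m$ (a diagram containing a purple region). So first I would take an arbitrary morphism $\gamma$ in $\DC_m$ with alternating boundary of length $d < 2m$, apply $\iota_m$ to land in $\DG_m$, and invoke Corollary \ref{reducetoinfty}: since $d < 2m$, every such diagram can be de-purplified, i.e. rewritten as a $\Bbbk$-linear combination of $\infty$-diagrams in $\DG_m$.

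Next I would pull this de-purplified expression back to $\DC_m$. The point is that $\iota_m$ restricted to $\infty$-graphs coincides with the isomorphism $\iota \co \DCti \cong \Hom_{\DGti}(\emptyset,\emptyset)$ of the Proposition in Section \ref{sec-mdti}, composed with the embedding $\DGti \to \DG_m$. An $\infty$-diagram in $\DG_m$ with extremal white space is the image under $\iota$ of a unique $\infty$-graph in $\DCti \subset \DC_m$; if it does not have extremal white space, one first adds lines on the left and right (which does not change the vertical-composition class and stays within $\infty$-diagrams) as in the evaluation procedure of Section \ref{singfunctortobim}, and then applies $\iota^{-1}$. Thus the de-purplified linear combination is $\iota_m$ of an explicit $\Bbbk$-linear combination $\gamma'$ of $\infty$-graphs in $\DC_m$ whose boundary matches that of $\gamma$. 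It remains to conclude that $\gamma = \gamma'$ in $\DC_m$, not merely that $\iota_m(\gamma) = \iota_m(\gamma')$ in $\DG_m$.

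The main obstacle is exactly this last step: a priori $\iota_m$ need not be faithful, so equality of images does not immediately give equality in $\DC_m$. Here I would use the same trick as in the $m=\infty$ case and in Corollary \ref{woohoo}. Both $\gamma$ and $\gamma'$ are morphisms on a planar disk with a fixed alternating boundary; by Claim \ref{pitchfork} and the discussion of the all-dot in Section \ref{moremorphisms}, together with Proposition \ref{treesspan}, the ambient Hom space in $\DC_m$ on an alternating boundary of length $< 2m$ is spanned — modulo the Jones-Wenzl / pitchfork combinatorics that also governs $\DCti$ — in a way compatible with $\iota_m$; more directly, the difference $\gamma - \gamma'$ lies in a Hom space on which $\FC_m = \FG_m \circ \iota_m$ is already known to be faithful (Remark \ref{partialfaithful} / the proof of Corollary \ref{woohoo} show that $\FG_m$, hence $\FC_m$, is faithful on diagrams with alternating boundary of length $< 2m$). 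Since $\FC_m(\gamma) = \FC_m(\gamma')$ by construction, we get $\gamma = \gamma'$, completing the reduction. I expect the routine bookkeeping to be in verifying that adding left/right lines to non-extremal $\infty$-diagrams does not introduce purple and is inverted correctly by $\iota^{-1}$, and in citing the precise length bound $d < 2m$ consistently (so that Corollary \ref{reducetoinfty} applies and so that faithfulness of $\FC_m$ on this range is available).
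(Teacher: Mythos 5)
Your plan is to push the problem across $\iota_m$ into $\DG_m$, de-purplify there via Corollary \ref{reducetoinfty}, and pull back. The pull-back step is where the argument breaks: to conclude $\gamma = \gamma'$ in $\DC_m$ from $\iota_m(\gamma) = \iota_m(\gamma')$ you need $\iota_m$ to be injective on the relevant Hom space of $\DC_m$, and at this point in the paper that is not available. Faithfulness of $\iota_m$ (and of $\FC_m$) on Hom spaces between alternating objects is established only later, in Theorem \ref{mainthmfinite}, whose proof explicitly invokes Proposition \ref{reducessans2m} to show that $\Hom(B_{\hat k_s},B_e)$ is generated by the all-dot. So the transport argument is circular.

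The specific implication you lean on is also in the wrong direction. You write that ``$\FG_m$, hence $\FC_m$'' is faithful on these Hom spaces. But $\FC_m = \FG_m \circ \iota_m$: knowing $\FG_m$ is faithful on certain Hom spaces of $\DG_m$ (which Corollary \ref{woohoo} and Remark \ref{partialfaithful} do give you) tells you nothing about the kernel of the precomposed $\iota_m$; it is faithfulness of the \emph{composite} $\FC_m$ that would yield faithfulness of $\iota_m$, not the reverse. Nothing you cite bounds $\ker(\iota_m)$, which is precisely the space of ``unexpected'' morphisms you are trying to rule out.

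The proof the paper intends is instead internal to $\DC_m$ and mirrors the proof of Corollary \ref{reducetoinfty} step for step. The paragraph just above the proposition (section \ref{graphreductionagain}) sets up exactly the ingredients needed: the construction of $v_k$ in $\DC_m$ from the $2m$-valent vertex via \eqref{gettinghighvk}, its rotation invariance, the fact that it is killed by pitchforks, and the $\DC_m$-analog of Claim \ref{connectingwithbands} for attaching $v_k$ to $v_l$ along colored bands. With these in hand, one repeats the argument of Corollary \ref{reducetoinfty}: reduce to the case where each ``purple feature'' is some $v_k$ with $k \ge m$, fuse any two joined by bands, cap-reduce any $v_k$ with a strand returning to itself, and then observe that a boundary of length $< 2m$ cannot accommodate all $2k \ge 2m$ strands of a surviving $v_k$, forcing a further reduction. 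This argument never leaves $\DC_m$ and needs no faithfulness of any functor; it is precisely the de-purplification you wanted, done in the right category.
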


\begin{proof} This proof is entirely analogous to Claim \ref{reducetoinfty}, and we leave it as an exercise to the reader. \end{proof}

\begin{lemma} \label{polys} The functor $\FC_m$ induces an isomorphism $\End_{\DC_m}(\emptyset) \cong R$. \end{lemma}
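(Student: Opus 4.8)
The plan is to bootstrap from the identity $\End_{\DCti}(\emptyset)\cong R$ already established for the infinite dihedral case. The boundary of an endomorphism of $\emptyset$ is empty, hence a (degenerate) alternating boundary of length $0<2m$, so Proposition \ref{reducessans2m} applies: every morphism in $\End_{\DC_m}(\emptyset)$ is a $\Bbbk$-linear combination of $\infty$-graphs with empty boundary. Next, one notes that the reduction of an $\infty$-graph with empty boundary to a polynomial carried out in the proof of Proposition \ref{treesspan} (reduce trees to barbells, then break cycles inductively) uses only the Needle, Barbell, polynomial-forcing and Frobenius relations, all of which are among the defining relations of $\DC_m$; thus the same argument runs verbatim inside $\DC_m$. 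Consequently the evident $\Bbbk$-linear map $R\to\End_{\DC_m}(\emptyset)$, sending $f$ to the $\infty$-graph consisting of a single box labelled $f$ in the unique region, is surjective.

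For injectivity one uses the functor $\FC_m$. By construction $\FC_m$ restricts on $\infty$-graphs to $\FC_\infty=\FC$, and $\FC$ sends the box labelled $f$ to multiplication by $f$ on $R=BS(\emptyset)$; since $\End_{\SBim}(R)=\End_{\BSBim}(R)=R$, the composite $R\to\End_{\DC_m}(\emptyset)\xrightarrow{\FC_m}\End_{\SBim}(R)=R$ is the identity. A map admitting a left inverse is injective, so $R\to\End_{\DC_m}(\emptyset)$ is an isomorphism. Chasing the same triangle once more, $\FC_m$ then carries $\End_{\DC_m}(\emptyset)$ isomorphically onto $R=\End_{\SBim}(R)$, which is exactly the assertion of the lemma.

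There is no real obstacle here: the geometric content is entirely contained in Proposition \ref{reducessans2m} (depurplification of disk diagrams with short alternating boundary). The only point needing a sentence of care is the transfer of the ``$\infty$-graph with empty boundary reduces to a polynomial'' step from $\DCti$ to $\DC_m$, which is legitimate precisely because that reduction never invokes the two-color relations \eqref{assoc2} or \eqref{dot2m}.
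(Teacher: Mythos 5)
Your proof is correct and takes essentially the same route as the paper. The paper's own proof is a two-sentence version of exactly what you spell out: it invokes Proposition \ref{reducessans2m} to reduce to $\infty$-graphs (and implicitly the $\DCti$-level reduction of closed $\infty$-graphs to polynomials, which you rightly observe carries over because it uses no two-color relation), then uses $\FC_m$ to see that the resulting surjection from a cyclic $R$-module onto $\End_{\SBim}(B_e)=R$ is split, hence an isomorphism. Your version is just more explicit about the two-step reduction and about the retraction giving injectivity.
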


\begin{proof} Proposition \ref{reducessans2m} shows that all maps reduce to boxes. The functor to bimodules gives us a surjective map from a rank 1 $R$-module to
$\End_{\SBim}(B_e)=R$, which must be an isomorphism. \end{proof}

The upshot is that the new relations \eqref{assoc2} and \eqref{dot2m} do not impose any new relations on polynomials.

\subsubsection{{\bf The Grothendieck group}}\label{grothgrpfinite}

There is clearly a map from $\HB_\infty \to [\Kar(\DC_m)]$, because \eqref{iisplittingeqn} still holds. One can define all the idempotents in $\DC_m$ coming from $\mTTL$ for elements $w
\in W_\infty$ of length $\le m$. We call the image of the corresponding Jones-Wenzl $B_w \in \Kar(\DC_m)$, as before. We make no claim yet that these are indecomposable or non-zero, but
we do have $b_w \mapsto [B_w]$ for $\ell(w) \le m$. Relation \eqref{w0decomp} implies that, in the Karoubi envelope, the $2m$-valent vertex is precisely an isomorphism from $B_{\hat{m}_s}
\to B_{\hat{m}_t}$, whose inverse isomorphism is its own rotation. Therefore, $\Kar(\DC_m)$ is a potential categorification of $\HB_m$.

\begin{thm} \label{mainthmfinite} Assume lesser invertibility and local non-degeneracy. Then the SCT and the Soergel conjecture hold for $\DC_m$. For any Soergel ring, $\FC_m$ is an
equivalence of categories, sending indecomposables $B_w$ to $B_w$. In addition, $\iota_m$ is fully faithful. \end{thm}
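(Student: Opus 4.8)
The plan is to follow the template established by the infinite-dihedral case (Theorem \ref{mainthminfty}) and adapt it using the machinery of Corollaries \ref{homspacecor} and \ref{fullyfaithfulfunctor}, together with the reduction results already proven for $m<\infty$. The key point is that once one knows $\DC_m$ (and $\DG_m$) are potential categorifications of $\HB_m$ (resp. $\HG_m$), and that the Hom space $\Hom_{\DC_m}(B_e, B_w)$ is free of graded rank $\epsilon(b_w) = v^{\ell(w)}$ over $R$ for every $w \in W_m$, then Lemma \ref{homspacelemma} and its corollaries do all the remaining work: the objects $B_w$ are indecomposable, pairwise non-isomorphic, the category is Karoubian-generated as required, and $\FC_m$ becomes an equivalence whenever $\Bbbk$ is a Soergel ring because it induces isomorphisms on the spaces $\Hom(B_e,B_w)$.

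First I would assemble the pieces that are already in hand. We have that $\Kar(\DC_m)$ is a potential categorification of $\HB_m$ (stated just before the theorem, using \eqref{iisplittingeqn} and \eqref{w0decomp}), that $\End_{\DC_m}(\emptyset)\cong R$ (Lemma \ref{polys}), and that $\FG_m\co\DG_m\to\SBSBim_m$ is well-defined with $\iota_m$ full (Claims in section \ref{functorsfinitesing}). The main computational input is the calculation of $\Hom_{\DC_m}(B_w,B_e)$ for all $w$. For $\ell(w)\le m-1$ this is identical to the infinite case: $\Hom(BS(\ul w),B_e)$ is spanned over $R$ by the all-dot (section \ref{moremorphisms}), and $\Hom(B_w,B_e) = \Hom(BS(\ul w),B_e)\circ JW_{m-1}$ is rank one, generated in degree $\ell(w)$, with nonzero image under $\FC_m$ since the evaluation of the all-dot composed with $JW$ is a nonzero product of roots. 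For $w = w_0$ one uses instead that $B_{w_0}$ is the image of the idempotent \eqref{w0decomp}, equivalently $\ul{\emptyset p\emptyset}$ in $\DG_m$ via $\iota_m$, so that $\Hom_{\DC_m}(B_{w_0},B_e)$ is computed inside $\DG_m$ as $\Hom(\ul{\emptyset p\emptyset},\ul\emptyset)$, which is a free $R$-module of graded rank $\qW/(v+v^{-1})\cdot(v+v^{-1}) = \qW = v^{m} + \ldots$, i.e.\ generated in degree $\ell(w_0)=m$ with the appropriate Poincaré polynomial; again the generator has nonzero (indeed free) image under $\FG_m$. These are exactly the values $\epsilon(b_w)$ of the standard trace, so the hypotheses of Lemma \ref{homspacelemma} hold.

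Then I would invoke Corollary \ref{homspacecor} (with $R$ as the graded ring, $R_0 = \Bbbk$ local) to conclude that each $B_w$ is indecomposable, the $\{B_w\}_{w\in W_m}$ are pairwise non-isomorphic, $\DC_m$ is already its own Karoubi envelope in the relevant sense (every $BS(\ul x)$ splits into these indecomposables via the Temperley-Lieb idempotents and \eqref{w0decomp}), and the SCT plus the Soergel conjecture hold. For a Soergel ring $\Bbbk$, $\SBim_m$ satisfies the same conditions, and $\FC_m$ sends $B_i\mapsto B_i$ hence $B_w\mapsto B_w$ and induces isomorphisms on all $\Hom(B_e,B_w)$ by the nonvanishing established above; Corollary \ref{fullyfaithfulfunctor} then gives that $\FC_m$ is an equivalence. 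Full faithfulness of $\iota_m$ follows because $\FG_m$ is faithful on the relevant Hom spaces (as in Remark \ref{partialfaithful} and Corollary \ref{woohoo}) and $\FC_m = \FG_m\circ\iota_m$ is an equivalence, forcing $\iota_m$ to be faithful; fullness is Claim \ref{welldefinedfunctormfinite}'s companion, already proven.

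I expect the main obstacle to be the $w_0$ computation: showing that $\Hom_{\DC_m}(B_{w_0},B_e)$ has precisely graded rank $\qW$ over $R$ and that its generator maps to a free generator under $\FC_m$. This requires carefully identifying $B_{w_0}$ with $\ul{\emptyset p\emptyset}$ through $\iota_m$ (using \eqref{w0decomp} and the remark that $\ul{\emptyset p\emptyset}$ is the image of $JW_{m-1}$ inside $BS(\hat{\ul m}_s)$), transporting the Hom-space computation into $\DG_m$, and there using Theorem \ref{frobsquare} together with the graph-reduction lemmas of section \ref{depurplify} to pin down the graded rank. One must also double-check that the all-dot-type generator for the non-$w_0$ cases is genuinely not killed in $\DC_m$ — but this is guaranteed because its image under $\FC_m$ in $\SBim_m$ is nonzero and $R$ acts freely on morphisms of Soergel bimodules, so the abstract argument from Theorem \ref{mainthminfty} carries over verbatim. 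The remaining verifications (e.g.\ that every Bott-Samelson decomposes, that $\iota_m$ is essentially surjective after Karoubi completion) are routine given the reduction lemmas and are noted in the text already.
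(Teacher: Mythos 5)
Your overall architecture is correct and matches the paper: establish that $\DC_m$ is a potential categorification of $\HB_m$, compute $\Hom_{\DC_m}(B_w,B_e)$ for each $w\in W_m$, check it is free of graded rank $\epsilon(b_w)=v^{\ell(w)}$ over $R$, and then invoke Lemma~\ref{homspacelemma}, Corollary~\ref{homspacecor}, and Corollary~\ref{fullyfaithfulfunctor}. You also correctly identify the nonvanishing mechanism (the image of the all-dot composed with $JW$ evaluates to a nonzero product of roots in $\SBim$). However, there are two problems with your treatment of the $w_0$ case.

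First, there is a circularity: you propose to ``transport'' the computation of $\Hom_{\DC_m}(B_{w_0},B_e)$ into $\DG_m$ via $\iota_m$ and identify it with $\Hom_{\DG_m}(\ul{\emptyset p\emptyset},\ul\emptyset)$. But this identification requires $\iota_m$ to be faithful, which is precisely part of the conclusion you are trying to prove --- at the point where you need the $\Hom$-space computation, you only know $\iota_m$ is well-defined and full (Claim~\ref{welldefinedfunctormfinite} and the claim following it), not faithful. The paper avoids this by computing $\Hom(B_{\hat k_s},B_e)$ for \emph{every} $k\le m$, including $k=m$, directly inside $\DC_m$ using Proposition~\ref{reducessans2m}: an alternating boundary of length $m<2m$ lets one reduce to $\infty$-graphs, after which the pitchfork/all-dot argument of section~\ref{moremorphisms} applies verbatim, and the all-dot composed with $JW_{m-1}$ spans. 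There is no special case for $w_0$. Note also that even for $\ell(w)\le m-1$, which you say is ``identical to the infinite case,'' one still needs Proposition~\ref{reducessans2m} first, since $\DC_m$ has more morphisms than $\DCti$ to rule out.

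Second, the graded rank you assert for $\Hom_{\DC_m}(B_{w_0},B_e)$ is wrong: you write $\qW$ (along with a tautologous computation $\qW/(v+v^{-1})\cdot(v+v^{-1})=\qW$), but $\qW$ is the balanced Poincar\'e polynomial, symmetric in $v\leftrightarrow v^{-1}$, and is not equal to $v^m+\ldots$. The correct answer is $\epsilon(b_{w_0})=v^m$, i.e.\ the space is free of rank one generated in degree $m$. This is exactly what one gets from the all-dot (degree $m$) composed with $JW_{m-1}$ (degree $0$). If you were to plug $\qW$ into Lemma~\ref{homspacelemma}, the induced trace would not be the standard trace and the argument would not close. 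Once you replace the $\DG_m$ detour with the direct Proposition~\ref{reducessans2m} argument and fix the rank, the remainder of your proposal --- including the deduction that $\iota_m$ is faithful because $\FC_m=\FG_m\circ\iota_m$ is an equivalence --- is exactly what the paper does.
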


\begin{proof} As in Theorem \ref{mainthminfty}, this is a simple application of Lemma \ref{homspacelemma} and Corollary \ref{homspacecor}. All that remains is to check whether the trace
on $\HB_m$ induced by the map $\HB_m \to [\Kar(\DC_m)]$ agrees with the standard trace. As in the proof of Theorem \ref{mainthminfty}, using Proposition \ref{reducessans2m} it is easy to
show that the Hom space from $B_{\hat{k}_s}$ to $B_e$ is generated over $R$ by the all-dot, and that it is free of rank $1$ over $R$ comes from the functor $\FC_m$.

The functor $\FC_m$ factors through $\iota_m$ and thus $\iota_m$ is faithful.\end{proof}

%
\subsection{Thickening}
\label{sec-thick}	
%
%

Suppose that $m<\infty$ and continue to assume lesser invertibility and local non-degeneracy.

\subsubsection{Diagrams for $\fooBim$}\label{thickdiagrams}

Recall that $\fooBim$ is the full subcategory of $\SBim$ monoidally generated by $B_s$, $B_t$, and $B_W$. We present this category diagrammatically, in precise analogy with \cite{EInduced},
chapter 3.5.

\begin{defn} A \emph{thick Soergel graph} has edges labelled either $s$, $t$, or $W$ (purple). The new vertices (compared to a Soergel graph) are: trivalent with 3 purple edges (degree
$-m$); trivalent with two purple and one other (degree $-1$); univalent with one purple (degree $m$); and $(m+1)$-valent with one purple edge and the remainder alternating between $s$ and
$t$ (degree 0). \end{defn}

\begin{defn} The category $\DCfoo$ has morphisms given by thick Soergel graphs, with the relations of $\DC_m$ as well as the following relations.
\begin{equation} \label{defthick} \ig{1}{defthick} \end{equation}
\begin{equation} \label{defthicktri} \ig{1}{defthicktri} \end{equation}
\begin{equation} \label{defthickthicktri} \ig{1}{defthickthicktri} \end{equation}
\begin{equation} \label{defthickdot} \ig{1}{defthickdot} \end{equation}
\end{defn}

Relation \eqref{defthick} identifies the purple line as the image of the idempotent which picks out $B_{w_0}$ inside $BS(\hat{m})$. The remaining equalities identify the new generators as
pre-existing maps in $\SBim$. Therefore, the fact that this category is equivalent to $\fooBim$ is entirely obvious. We can also describe these morphisms within $\DG_m$ as follows.

\igc{1}{thickinsingular}

We encourage the reader to check \eqref{defthickthicktri}, which will involve evaluating \eqref{ckis1} as in the proof of \eqref{gettinghighvk}.

This defines a functor from $\DCfoo$ to $\End_{\DG_m}(\emptyset)$, which sends the purple object to $\ul{\emptyset rpr\emptyset}$. There is an isomorphic functor sending purple to
$\ul{\emptyset bpb \emptyset}$, passing through blue instead of red.

\begin{prop} We also have the following equalities.
\begin{equation} \label{thickunits} \ig{1}{thickunits} \end{equation}
\begin{equation} \label{thickassocs} \ig{1}{thickassocs} \end{equation}
\begin{equation} \label{thickneedle} \ig{1}{thickneedle} \end{equation}
\begin{equation} \label{thickdotforce} \ig{1}{thickdotforce} \end{equation}
\begin{equation} \label{thickmeets2m} \ig{1}{thickmeets2m} \end{equation}	
\end{prop}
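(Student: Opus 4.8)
The plan is to reduce every one of the five identities to facts that are already available, rather than to re-derive anything from scratch. The category $\DCfoo$ was built so that, via \eqref{defthick}--\eqref{defthickdot}, each of its new generators (the purple dot, the purple--purple--purple trivalent, the mixed purple--purple--colored trivalent, and the $(m+1)$-valent vertex) is identified with an explicit morphism that already lives in $\DC_m$, equivalently---after applying $\iota_m$ and the translation pictures drawn just above---in $\End_{\DG_m}(\ul{\emptyset})$. Since $\iota_m$ is fully faithful (Theorem \ref{mainthmfinite}) and $\FG_m$ is faithful on the Hom spaces at issue (Corollary \ref{woohoo}), it suffices to check each relation either as an identity of singular Soergel diagrams in $\DG_m$, or, when that is more convenient, directly as an identity of $(R,R)$-bimodule maps in $\SBim_m$.

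First I would dispatch \eqref{thickunits}, \eqref{thickassocs}, \eqref{thickneedle}, and \eqref{thickdotforce}, which are ``purple versions'' of the thin Frobenius relations and are insensitive to the dihedral geometry. The point is that $R^W\subset R$ is a graded Frobenius extension of degree $m$, obtained by composing the chain $R^W\subset R^i\subset R$ (Theorem \ref{frobsquare}); the purple dot and purple trivalent are precisely its unit, counit, multiplication and comultiplication, and the mixed trivalent records the intermediate extension $R^W\subset R^i$ tensored up to $R$. Thus the unit/counit relations \eqref{thickunits}, the (co)associativity and mixed-compatibility relations \eqref{thickassocs}, and the polynomial-forcing rule \eqref{thickdotforce} (pushing a polynomial $f$ across a purple line at the cost of a broken purple line carrying $\pa_W(f)$) are all instances of the general Frobenius-extension and Frobenius-chain identities collected in \cite{EWFrob}. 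The needle relation \eqref{thickneedle} follows the same way: a purple circle with empty interior is sent to $\pa_W(1)\in R^W$, which vanishes for degree reasons since the extension has positive degree; combined with \eqref{thickdotforce} this gives the purple cycle-reduction rule, exactly as \eqref{needle} and \eqref{brokencycle} did for the thin colors.

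The genuinely new relation is \eqref{thickmeets2m}, which involves the $2m$-valent vertex, and this is the step I expect to be the main obstacle. Here I would not argue formally but use the identification \eqref{defthick} of the purple edge with the idempotent projecting $BS(\hat{m})$ onto $B_{w_0}$, together with \eqref{vJWisv} and \eqref{w0decomp}: in the Karoubi envelope the $2m$-valent vertex is exactly the isomorphism $B_{\hat{m}_s}\cong B_{\hat{m}_t}$ selected by those relations, so once it meets the projector and injector defining the purple edge it is absorbed, and the composite is again the purple edge (with the boundary color switched), which is the content of \eqref{thickmeets2m}. To turn this into a diagrammatic proof valid already in $\DC_m$ (before passing to $\Kar(\DC_m)$), I would reduce both sides, using Proposition \ref{reducessans2m} and \eqref{gettinghighvk}, to $\infty$-graphs with a fixed alternating boundary of length $<2m$, observe that the relevant minimal-degree Hom space is one-dimensional (as in section \ref{moremorphisms} and Corollary \ref{woohoo}), and then pin down the single scalar by the evaluation map: both sides must evaluate to $\LM$ up to the same invertible scalar, since a fully capped-off $v_m$ gives $v_0 = \LM$ while the associated polynomial of $JW_{m-1}$ is also $\LM$ (Corollary \ref{coxeterlinescor}). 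The part that really requires care is the bookkeeping of which colored bands appear when the purple edge is expanded through \eqref{gettinghighvk} and \eqref{defthick}, and checking that the degrees on the two sides of \eqref{thickmeets2m} stay matched throughout.
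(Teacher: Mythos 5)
Your treatment of \eqref{thickunits}, \eqref{thickassocs}, \eqref{thickneedle}, and \eqref{thickdotforce} matches the paper's approach: translate the new purple generators into $\DG_m$ via the pictures drawn before the proposition, and observe that the identities become instances of the general Frobenius-extension and Frobenius-square relations from \cite{EWFrob}. That part needs no further comment.

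For \eqref{thickmeets2m} you depart from what the paper does. The paper's proof is a direct diagrammatic computation inside $\DG_m$: after expanding the purple edge and the $2m$-valent vertex through the translation pictures, the composite contains a copy of $C_m$, and the identity then drops out of \eqref{ckis1} (the evaluation $C_m = 1$), exactly as in the verification of \eqref{defthickthicktri} and \eqref{gettinghighvk}. You instead propose an indirect, ``semantic'' argument: use the one-dimensionality of the relevant minimal-degree Hom space (via Proposition \ref{reducessans2m} and Corollary \ref{woohoo}), reduce both sides to $\infty$-graphs, and pin down the single scalar with the evaluation map and Corollary \ref{coxeterlinescor}. This is a legitimate alternative route and makes explicit why no scalar discrepancy can sneak in, but it is heavier machinery than the paper invokes; the paper's computation is local and elementary once one has the $C_m$ lemma in hand, and it does not require knowing that the Hom space is one-dimensional. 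One caution on your version: when you say ``reduce both sides to $\infty$-graphs with a fixed alternating boundary of length $< 2m$'', you should justify that the boundary that actually appears after expanding the purple edge via \eqref{defthick} lands in that range; this is the kind of bookkeeping you flag as delicate, and it is worth writing down rather than waving at, since it is precisely where the degree-$0$ one-dimensionality you rely on is established.
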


\begin{proof} These are each very easy to show within $\DG_m$. Checking \eqref{thickmeets2m} also requires \eqref{ckis1} as above. The remaining relations then follow from isotopy or
Frobenius extension relations. \end{proof}

\begin{remark} Note also that neither the $2m$-valent vertex nor the $m+1$-valent vertex are actually required, in the presence of the other maps. For instance: \igc{1}{alternatives}
\end{remark}

There are perhaps many more interesting equalities to find.

\subsubsection{Induced modules}\label{induced}

As in section \ref{InducedInfty}, we may represent $\Hom_{\DG_m}(\emptyset,I)$ for $I \subset \{s,t\}$ simply using Soergel graphs with a shaded region, in precise analogy with Chapter 4
of \cite{EInduced}. When $I=\{s\}$ or $I=\{t\}$, we have already described the answer for $\DGti$ in section \ref{InducedInfty}, and the result for $\DG_m$ is identical, except that we have
additional relations between Soergel graphs. When $I=\{s,t\}$, we require that graphs end in a ``purple region," and add new morphisms corresponding to trivalent vertices with the purple
region.

Explicitly, the new generators are \igc{1}{InducedTriPurp} The new relations are \begin{equation} \label{InducedRelationsPurp} \ig{1}{InducedRelationsPurp}\end{equation} Any usual
Soergel graph, with the usual Soergel relations, may be drawn to the left of the purple region.

Again, the equivalence between these diagrams and certain singular Soergel diagrams is easy.

%
\subsection{Temperley-Lieb categorifies Temperley-Lieb}
\label{sec-TLcatsTL}	
%
%

Let $m < \infty$. Recall that the \emph{generalized Temperley-Lieb algebra} $TL_{W_m}$ of the finite dihedral group $W_m$ for $m>2$ is the quotient of $\HB_m$ by $b_{w_0}$. According to
the original definition in \cite{GraThesis}, one should take $TL_{W_2} = \HB_2$. However, for the purposes of this chapter, we let $TL_{W_2}$ be the quotient of $\HB_2$ by $b_{w_0}$.
Recall that the 2-sided ideal of $b_{w_0}$ is none other than the $\Zvv$-span of $b_{w_0}$.

Similarly, the \emph{generalized Temperley-Lieb algebroid} is the quotient of $\HG_m$ by morphisms factoring through the object $p$.

\begin{thm} \label{TLWcatfnthm} Consider the quotient of $\DC_m$ by the $2m$-valent vertex. This categorifies $TL_{W_m}$. The degree $0$ Hom spaces are given precisely by $\mTTL_{\negl}$.
Similarly, the quotient of $\DG_m$ by the purple region categorifies the generalized Temperley-Lieb algebroid of $W_m$. \end{thm}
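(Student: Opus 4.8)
I would deduce Theorem \ref{TLWcatfnthm} from the main theorems \ref{mainthmfinite} and the singular analog (the categorification theorem for $\DG_m$ via $\FG_m$), reducing everything to known structure in $\HB_m$ and $\mTTL$. First I would record the purely algebraic input: $TL_{W_m} = \HB_m / (b_{w_0})$, and by \eqref{bw0sq} the two-sided ideal generated by $b_{w_0}$ is exactly $\Zvv \cdot b_{w_0}$, so passing to the quotient removes precisely one element of the KL basis, namely $b_{w_0} = b_{\hat{m}_s}$. On the categorical side, killing the $2m$-valent vertex in $\DC_m$ kills (by \eqref{w0decomp}, which exhibits $B_{w_0}$'s idempotent as built from the vertex) exactly the morphisms factoring through the object $B_{w_0} = \ul{\emptyset p \emptyset}$; equivalently one is taking the quotient of $\Kar(\DC_m)$ by the ideal of the indecomposable $B_{w_0}$. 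So I would first prove the clean statement: the quotient $2$-category is the ``ideal quotient'' $\Kar(\DC_m)/\langle B_{w_0}\rangle$, using \eqref{w0decomp} and \eqref{vJWisv} to see that any diagram containing a $2m$-valent vertex factors through $B_{w_0}$, and conversely that the idempotent for $B_{w_0}$ lies in the ideal generated by the vertex.

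Next I would identify the Grothendieck group. By Theorem \ref{mainthmfinite}, $[\Kar(\DC_m)] \cong \HB_m$ with $[B_w] = b_w$, so quotienting by the ideal of the object $B_{w_0}$ descends on Grothendieck groups to the quotient of $\HB_m$ by the ideal of $b_{w_0}$, which is $TL_{W_m}$; the standard trace descends to the induced trace on $TL_{W_m}$. This gives the categorification statement. For the degree $0$ Hom spaces: by Proposition \ref{introTLprop} (and its finite analog, the second Proposition in the introduction, proved via $\FC_s$), every degree $0$ morphism between alternating Bott--Samelson objects of length $\le m-1$ comes from $\mTTL$, and the degree $0$ morphisms between alternating objects of length $m$ that are not in the image of $\FC_s,\FC_t$ are spanned by the $2m$-valent vertices — which we have just quotiented out. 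Thus in the quotient, all degree $0$ Homs between alternating objects come from $2TL$. To see that the relations imposed are exactly the negligible ones: a crossingless matching becomes zero in the quotient iff its image factors through $B_{w_0}$, iff (after applying $\FC_s$ and using the identification of $B_{w_0}$'s idempotent with the negligible Jones-Wenzl $JW_{m-1}$ from section \ref{rouandrotation} and Corollary \ref{coxeterlinescor}) it lies in the ideal of $JW_{m-1}$, which by Claim \ref{killingJW} is precisely the negligible ideal. Hence the degree $0$ part is $\mTTL_{\negl}$.

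For the singular statement I would run the parallel argument: the quotient of $\DG_m$ by the purple region is, by \eqref{SingJWRelation} and the band manipulations of section \ref{depurplify} (Claim \ref{connectingwithbands}, Lemma \ref{itssurjective}), the quotient of $\Kar(\DG_m)$ by all $1$- and $2$-morphisms factoring through the object $p$. By the singular categorification theorem ($[\Kar(\DG_m)] \cong \HG_m$, sending $b_J$ to $\Ind^J/\Res^J$), this descends to the quotient of $\HG_m$ by morphisms through $p$, which is by definition the generalized Temperley--Lieb algebroid; the standard trace descends accordingly, and one invokes Claim \ref{tracedetermined} to see the trace is still determined by its restriction to the $\emptyset$ endomorphisms, matching the $TL_{W_m}$ computation already done.

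\textbf{Main obstacle.} The delicate point is matching the quotient on the diagrammatic side with the object-ideal quotient on the categorical side — i.e. showing that ``kill the $2m$-valent vertex'' as a $2$-sided tensor ideal of morphisms coincides with ``kill all morphisms factoring through $B_{w_0}$'', and similarly that this matches the negligible ideal in $\mTTL$ rather than something smaller or larger. One inclusion is easy (the vertex factors through $B_{w_0}$ by \eqref{w0decomp}); the reverse requires showing the idempotent projecting onto $B_{w_0}$ is itself in the tensor ideal generated by the vertex, which follows from \eqref{w0decomp} but must be checked to respect the monoidal structure, and then that no \emph{additional} morphisms die — this last point is where the rank computation of Theorem \ref{mainthmfinite} (Hom spaces are free $R$-modules of the predicted graded rank) is essential, since it pins down exactly which morphisms survive. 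Once the identification of ideals is secure, everything else is bookkeeping with the already-established equivalences.
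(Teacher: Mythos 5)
Your high-level framework is sound and broadly aligned with the paper's sketch: identify the diagrammatic quotient with the tensor-ideal quotient of $\Kar(\DC_m)$ by morphisms factoring through $B_{w_0}$, then deduce the Grothendieck-group statement, and use the negligible ideal of $\mTTL$ (via Claim \ref{killingJW} and the identification of the idempotent for $B_{w_0}$ with $JW_{m-1}$) for the degree-$0$ claim. The ideal identification itself (via \eqref{w0decomp} and \eqref{vJWisv}) is handled correctly, and the parallel singular argument via Claim \ref{tracedetermined} is the right instinct.

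However, there is a genuine gap in the middle step. You write that Theorem \ref{mainthmfinite} ``pins down exactly which morphisms survive,'' and that after the ideal identification ``everything else is bookkeeping.'' This is not right, and it is precisely the place where the paper does real work. Theorem \ref{mainthmfinite} gives the graded ranks of Hom spaces in $\DC_m$ \emph{before} the quotient; it says nothing about which morphisms lie in the two-sided ideal generated by the $2m$-valent vertex, i.e.\ which morphisms die. In particular, passing from $[\Kar(\DC_m)] \cong \HB_m$ to ``$[\textrm{quotient}] \cong \HB_m/(b_{w_0})$'' is not automatic: one must show that the objects $B_w$ for $w \ne w_0$ remain nonzero, indecomposable, and pairwise non-isomorphic in the quotient, and this requires knowing the graded ranks of the \emph{quotient} Hom spaces. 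The paper establishes these by an explicit diagrammatic computation: it identifies the kernels of the quotient maps on each $\Hom(BS(\ul{w}),B_e)$ as ideals in $R$, showing for example that $\End(B_e)\cong R/\LM$ and $\Hom(B_s,B_e)\cong R/(\LM/\a_s)$. This uses the reduction lemmas (in the spirit of Proposition \ref{reducessans2m} and Lemma \ref{itssurjective}), together with the evaluation of the capped-off Jones–Wenzl to $\LM$ (Corollary \ref{coxeterlinescor}), to show that the evaluations of diagrams containing a $2m$-valent vertex generate exactly $(\LM)$ (and the analogous ideals with boundary). The trace on $TL_{W_m}$ is \emph{read off} from these torsion $R$-modules; the indecomposability and non-isomorphism of the remaining $B_w$ then follow because the graded rank of $\End(B_w)$ lies in $1+v\Z[v]$ and of $\Hom(B_w,B_x)$ in $v\Z[v]$. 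Your proposal identifies the right ideal but does not carry out (or even flag) this kernel computation, which is the computational core of the proof and cannot be extracted from Theorem \ref{mainthmfinite} alone.
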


Here is a sketch of the proof. There is clearly a map from $TL_W$ to the Grothendieck ring of this quotient, because the map from $\HB_m$ factors through the ideal $b_{w_0}=0$. Therefore,
this quotient induces a trace map on $TL_W$. We still have idempotents which yield objects $B_w$, $w \in W \setminus \{w_0\}$, and though we have not yet shown that these objects are
nonzero, we do know that all other objects can be expressed as direct sums of these. If they remain indecomposable and pairwise non-isomorphic in the quotient, then they will descend to a
basis of $TL_W$, and the map from $TL_W$ will be an isomorphism. This, in turn, will follow from the calculation of the trace on $TL_W$, because the graded rank of $\End(B_w)$ will be in
$1 + v\Z[v]$, and the graded rank of $\Hom(B_w,B_x)$ for $w \ne x$ will be in $v\Z[v]$.

In order to calculate the trace map, we must determine what elements of $\Hom(BS(\ul{w}),B_e)$ survive in the quotient, for each reduced expression $\ul{w}$. This is a diagrammatic
calculation.

For instance, what will be $\End(B_e)$ in the quotient category? We rephrase this in singular language: consider a Soergel 1-manifold diagram with a purple region and with empty
white boundary. We know that this reduces to a polynomial in $R$, but which polynomials can appear? We claim that the polynomials which appear are precisely the ideal generated by
$\LM$, and that therefore $\End(B_{\emptyset}) \cong R/\LM$.

We can use an argument similar to the proof of Proposition \ref{reducessans2m} to show that any diagram containing $v_m$ can be reduced to a diagram containing $v_k$ once for $k \ge m$,
and has no other purple regions in the same connected component. As in the proof that $\Hom(B_{w_0},\emptyset)=R$, one can show that the only way to get a nonzero map for that connected
component is to attach the all-dot to $v_k$, yielding $\LM$. This calculation is done for the case $m=3$ in \cite{ETemperley}.

Similarly, $\Hom(B_s,B_{\emptyset})$ should be isomorphic to $R/(\frac{\LM}{\a_s})$, where each polynomial is placed next to the blue dot. This kernel is generated by the $2m$-valent
vertex with all but one dot attached. Similarly, if $w = sts\ldots$ has length $k$ then $\Hom(B_w,B_\emptyset)$ will be generated by the all-dot, and the kernel will be generated by the
$2m$-valent vertex with $k$ strands attached to the boundary, and the remaining strands dotted.

Calculating $\Hom(B_w,B_e)$ for each $w$, we pin down the trace precisely. One then checks that the graded ranks of Hom spaces satisfy the desired properties above. This concludes the
sketch.

After investigation, we see that the morphism spaces in this quotient category, as $R$-modules, are supported on the union of the Coxeter lines. Moreover, one can associate to each $w \in
W$ a set of positive roots of size $l(w)$, such that $\Hom(B_w,B_e)$ is supported on the complement of those root hyperplanes.

\appendix
\section{Non-symmetric and unbalanced Cartan matrices}
\label{sec-exoticdihedral}

In this appendix we discuss how to modify the statements and proofs of this paper to account for unbalanced and non-symmetric realizations. We do this by introducing two-colored quantum
numbers, which make the computations quite analogous to the symmetric case. However, one must keep track of additional data in order to construct a Frobenius hypercube. This appendix is
written so that it may be read in conjunction with the main paper, after reading the corresponding section there. For a more detailed version of many of these computations, see the
author's PhD thesis \cite{EThesis}.

\subsection{{\bf Two-Colored Quantum Numbers}}\label{2qnum}

Section \ref{qnum} gave a number of facts about quantum numbers, both inside the ring $\Z[\d]$ and inside the specialization where $q$ is a root of unity. Our new ring to replace $\Z[\d]$
will be $\Z[x,y]$. We identify the subring $\Z[xy]$ with the subring $\Z[\d^2]$ via $xy=\d^2$. We think of $x$ and $y$ as alternate versions of $[2]$, which need to be balanced in a more
complicated quantum number. A \emph{symmetric specialization} of $\Z[x,y]$ is a specialization factoring through the map to $\Z[\d]$ sending $x$ and $y$ to $\d$.

\begin{defn} We define \emph{two-colored quantum numbers}, which are elements of $\Z[x,y]$ analogous to the quantum numbers in $\Z[\d]$. Because $[2k+1]$ and $\frac{[2k]}{[2]}$ are both
inside $\Z[\d^2]$, we may express them as polynomials in $xy$. When $m$ is odd, we define $[m] = [m]_x = [m]_y$. When $m$ is even, we define $[m]_x = x \frac{[m]}{[2]}$ and $[m]_y = y
\frac{[m]}{[2]}$, where in both cases $\frac{[m]}{[2]}$ represents the corresponding polynomial in $xy$. \end{defn}

\begin{example} $[2]_x = x$ and $[2]_y = y$; $[3]_x=xy-1=[3]_y$; $[4]_x = x^2y - 2x$ and $[4]_y = xy^2 - 2y$. \end{example}

Each of the facts about quantum numbers has a two-colored analogue, derived in essentially the same way. Instead of the usual recurrence relation, one uses $[2]_x [m]_y = [m+1]_x +
[m-1]_x$ for $m \ge 1$. To ``specialize $q^2 = \z_m$ to a primitive root of unity," we set $Q_m(xy)=0$. When $m$ is odd, it seems more difficult to distinguish between the specializations
$q = \z_{2m}$ and $q=\z_m$, because $Q_m(xy)$ does not split. For example, when $m=3$, $Q_m(\d^2) = \d^2-1 = (\d+1)(\d-1)$, while $Q_m(xy) = xy-1$. However, when $m$ is even it is still
true that $Q_m(xy)=0$ implies that $[m-1]=1$.

When $m$ is even, $x[m]_y = y [m]_x$. In a domain with $xy \ne 0$ one has $[m]_x = 0 \iff [m]_y=0$. If $[m]_y=0$, we still have $[m-k]_x = [m-1] [k]_x$, so that $[m-1]^2 = 1$. If $[2m]_x =
[2m]_y = 0$ and $[2m-1]=-1$, one can deduce that $2[m]_x = [2]_y [m]_x = 0$ and $2[m]_y = [2]_x [m]_y = 0$, just as in Claim \ref{claim-qnumber-bullshit}.

When $m$ is odd and $[m]=0$, a great simplification occurs. Now $x$ is invertible because it divides $[m-1]_x$ and $[m-1]_x[m-1]_y = 1$. If $Q_m(xy)=0$ then there is actually a polynomial
in $xy$, namely $[m-2]$, which satisfies $P_m([m-2])=0$, the algebraic conditions to be equal to $[2]_q$ at $q = \z_{2m}$! We write $[2]_m$ to represent $[m-2]$, the element which behaves
like ``quantum 2" should. In other words, when $m$ is odd $\Z[x,y]/Q_m(xy)$ is no more than the extension of $\Z[[2]_m]$ by the invertible variable $\l = \frac{[2]_m}{x} = \frac{y}{[2]_m}$.
Note also that $[2]_x [m-1]_y = [m-2] = [2]_m$, so that $\l = [m-1]_y$ and $\l^{-1} = [m-1]_x$. Now there are two symmetric specializations, $\l=\pm 1$, which correspond to the
specializations $q=\z_{2m}$ and $q=\z_m$.

This rescaling factor $\l$ will appear numerous times below. There is no general definition of $\l$ in the case where $m$ is even or infinite, as there is no element in $\Z[x,y]/Q_m(xy)$
which behaves like $[2]_m$.

\subsection{{\bf Realizations}}\label{reflrepexotic}

\begin{defn} \label{defnrealization} Let $\Bbbk$ be a commutative ring and $(W,S)$ be a Coxeter system. A \emph{realization} of $(W,S)$ over $\Bbbk$ is a free, finite rank $\Bbbk$-module
$\hg$, together with a choice of simple co-roots and roots having a Cartan matrix $(a_{s,t}) _{s,t \in S}$ satisfying: \begin{enumerate} \item $a_{s,s} = 2$ for all $s \in S$; \item for any
$s,t \in S$ with $m_{st}<\infty$, if $\Bbbk$ is given a $\Z[x,y]$-algebra structure where $-x = a_{s,t}$ and $-y=a_{t,s}$, then $[m_{st}]_x=[m_{st}]_y=0$; \item the assignment $s(v) \define
v - \langle v, \alpha_s\rangle \alpha_s^{\vee}$ for all $v \in \hg$ yields a representation of $W$. \end{enumerate} \end{defn}

When analyzing a particular dihedral subgroup, we always use the convention that the Cartan matrix is \[A = \left( \begin{array}{cc} 2 & -x \\ -y & 2 \end{array}\right), \] living inside
some specialization $\Bbbk$ of $\Z[x,y]$. The determinant of this matrix is $4-xy$, which replaces $4-[2]^2$ in all previous formulas.

We define the action of $W$ on $\hg^*$ as before. The formulae for the action of $(st)^k$ go through with minor adaptations. The action on the span of $\a_s$ and $\a_t$ is given by
\begin{equation} \label{stformulaalt} (st)^k = \left( \begin{array}{cc} {[}2k+1] & -[2k]_x \\ {[}2k]_y & -[2k-1] \end{array} \right). \end{equation} The action on $\a_u$ is given by
\begin{equation} \label{stonuformulaalt} (st)^k(\a_u) = \a_u + ([k]_x[k]_y a_{s,u} + [k]_x[k+1]_x a_{t,u}) \a_s + ([k]_y[k-1]_y a_{s,u} + [k]_x[k]_y a_{t,u}) \a_t. \end{equation} In
particular, for a domain or modulo 2-torsion, $(st)^m$ is trivial if and only if $[m]_x = [m]_y = 0$. As in the symmetric case, outside of degenerate situations there is redundancy between
the fact that $[m_{st}]=0$ and the fact that there is an action of $W$.

\begin{defn} A realization is \emph{balanced} if for each $s,t \in S$ with $m_{st}<\infty$ one has $[m_{st}-1]_x = [m_{st}-1]_y = 1$. The notions of even-balanced, odd-balanced, etc are
easy to extrapolate. \end{defn}

There is an enormous difference between realizations which are unbalanced for $m$ even, and those which are unbalanced for $m$ odd! This will be a common theme. Let us momentarily consider
only dihedral realizations where $\Bbbk$ is a domain, and where $\hg$ is spanned by the coroots.

Suppose that $m$ is even. If the realization is faithful then $Q_m(xy)=0$ so that $[m-1]=1$, and the realization is automatically balanced. If the realization is not faithful it is
quite possible that it is not balanced. Note that a faithful realization need not be symmetrizable, but only because $\Bbbk$ may not contain enough scalars; if $\Bbbk$ is a field and $xy$
has a square root, the Cartan matrix is symmetrizable.

Suppose that $m$ is odd. If the realization is faithful then it is balanced if and only if it is symmetric and $[m-1]=1$ (i.e. if $\l=1$, where $\l$ was defined in the last section).
Root-rescaling by the diagonal matrix with entries $(\l,1)$ will yield a symmetric matrix. It will be balanced as a faithful dihedral realization, but need not be balanced when viewed as a
non-faithful dihedral realization of $W_{mk}$ (the dihedral group with $2mk$ elements, for some $k \ge 2$).

\begin{example} The best example of a non-symmetrizable non-balanceable Cartan matrix that still plays a significant role is \emph{exotic affine} $\sl_n$ for $n \ge 3$. When $n=4$, it is
given by the following matrix over $\Bbbk=\Z[q^\pm]$: \[A = \left( \begin{array}{ccccc} 2 & -1 & 0 & 0 & -q^{-1} \\ -1 & 2 & -1 & 0 & 0 \\ 0 & -1 & 2 & -1 & 0 \\ 0 & 0 & -1 & 2 & -q \\ -q &
0 & 0 & -q^{-1} & 2 \end{array} \right).\] Note that the Coxeter group which acts faithfully in rank 2 does not depend on the specialization of $q$, and thus agrees with the case $q=1$,
which is the usual affine $\sl_n$ Cartan matrix. This matrix is even-balanced and odd-unbalanced. The matrix for exotic affine $\sl_2$ is symmetric and balanced, given by \[A = \left(
\begin{array}{cc} 2 & -(q+q^{-1}) \\ -(q+q^{-1}) & 2 \end{array} \right). \] Now, of course, the Coxeter group which acts faithfully in rank 2 can change when $q$ is specialized. In
\cite{EQAGS} we will explain how Soergel bimodules for exotic affine $\sl_n$ give rise to a quantum Satake equivalence. \end{example}

\subsection{{\bf Realizations and roots}}\label{rootsanddem}

Outside of the discussion of roots, everything up to section \ref{dihinvt} can be followed verbatim.

When $m$ is even or infinite, the definition of positive roots is unchanged. However, when $m$ is odd, $f_{s,m-1} = [m-1]_y f_t = \l f_t$. Thus $f_{s,m-1-l} = \l f_{t,l}$ for all $0 \le l
\le m-1$. With positive roots defined only up to a scalar, one must make some conventional choices.

In the calculation of the associated polynomial of a Jones-Wenzl projector (see section \ref{coxlines}, and section \ref{subsec-exoticTL} below) it was useful to define a snakelike order
on the set of positive roots for the infinite dihedral group, either an $s$-aligned or a $t$-aligned snakelike order. The $s$-aligned version is $f_{s,0} < f_{t,0} < f_{t,1} < f_{s,1} <
f_{s,2} < f_{t,2} < \ldots$. 

\begin{defn} The \emph{$s$-aligned choice of roots} $\LC^{(s)}$ for the finite dihedral group $W_m$ are the first $m$ roots in the $s$-aligned snakelike order. The $t$-aligned choice of
roots is defined analogously. \end{defn}

It is easy to observe that the only difference between these choices occurs when $m=2k+1$ is odd, where $\LC^{(s)}$ contains $f_{s,k}$ and $\LC^{(t)}$ contains $f_{t,k}$. Letting
$\LM^{(s)}$ be the product of the elements of $\LC^{(s)}$, we have $\LM^{(s)} = \l \LM^{(t)}$. These are not the only choices of positive roots, of course, but they will be the most
convenient for our calculations. Choosing a set of positive roots for Coxeter groups of rank $\ge 3$ will require more bookkeeping.

The main conceptual difference between balanced and unbalanced Cartan matrices is the following claim.

\begin{claim} Suppose that $m=m_{s,t} < \infty$. Then the simple Demazure operators satisfy the braid relation $\ubr{\pa_s \pa_t \ldots}{m} = \ubr{\pa_t \pa_s \ldots}{m}$ when $m$ is
even. When $m$ is odd, $\ubr{\pa_s \pa_t \ldots \pa_s}{m} = \l^{-1} \ubr{\pa_t \pa_s \ldots \pa_t}{m}$. \end{claim}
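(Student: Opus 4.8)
The plan is to reduce the identity to a single closed formula for the $m$-fold composite $\pa_{\hat{m}_i}\define\ubr{\pa_i\cdots}{m}$ (for $i\in\{s,t\}$) and then read off the scalar from the root identities recorded in Section~\ref{rootsanddem}. Namely, I would show that $\pa_{\hat{m}_i}$ acts on $R$ by
\[
f\ \longmapsto\ \frac{1}{\LM^{(i)}}\sum_{w\in W_m}(-1)^{\ell(w)}\,w(f),
\]
where $\LM^{(i)}$ is the product of the elements of the $i$-aligned choice of positive roots $\LC^{(i)}$. Since the numerator is independent of the alignment, the Claim follows at once: Section~\ref{rootsanddem} records $\LM^{(s)}=\LM^{(t)}$ for $m$ even (the $s$- and $t$-aligned snakelike orders have the same first $m$ entries), giving $\ubr{\pa_s\pa_t\cdots}{m}=\ubr{\pa_t\pa_s\cdots}{m}$; and $\LM^{(s)}=\l\,\LM^{(t)}$ for $m$ odd, giving $\ubr{\pa_s\pa_t\cdots\pa_s}{m}=\l^{-1}\,\ubr{\pa_t\pa_s\cdots\pa_t}{m}$.

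To prove the closed formula I would run the argument from the proof of the corresponding Claim in Section~\ref{dihinvt}, now keeping track of scalars. Writing $\pa_s=\tfrac{1}{\a_s}(1-s)$, expand $\pa_{\hat{m}_i}$ as an iterated divided difference, so that every term has the shape $\pm\,w(f)/\pi$ with $\pi$ a product of positive roots; the $W$-action formulas \eqref{stformulaalt} and \eqref{stonuformulaalt} pin down which positive roots occur (they are the $f_{s,k},f_{t,k}$), and at the final stage — where the words $\ubr{st\cdots}{m}$ and $\ubr{ts\cdots}{m}$ both represent $w_0$ — the partial denominators collapse to the single product $\LM^{(i)}$ and the coefficients become $(-1)^{\ell(w)}$. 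For $m$ odd there is a shortcut that avoids this bookkeeping: since $m_{s,t}=m<\infty$ forces $[m]_x=[m]_y=0$ and hence (Section~\ref{2qnum}) makes $x$ invertible, root-rescaling by $(\l_s,\l_t)=(1,\l)$ produces a \emph{symmetric, balanced} realization with the \emph{same} action of $W$ on $\hg^*$, for which the unadorned braid relation $\ubr{\pa_s\pa_t\cdots}{m}=\ubr{\pa_t\pa_s\cdots}{m}$ holds by Section~\ref{dihinvt}. Root-rescaling multiplies $\pa_i$ by $\l_i^{-1}$, hence multiplies $\ubr{\pa_s\pa_t\cdots\pa_s}{m}$ by $\l^{-(m-1)/2}$ and $\ubr{\pa_t\pa_s\cdots\pa_t}{m}$ by $\l^{-(m+1)/2}$ (count the occurrences of $s$ and of $t$ in each word); equating the two rescaled operators gives precisely $\ubr{\pa_s\pa_t\cdots\pa_s}{m}=\l^{-1}\,\ubr{\pa_t\pa_s\cdots\pa_t}{m}$.

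The main obstacle is the inductive bookkeeping in the first approach: checking that at the last step the partial denominators genuinely collapse to the full product $\LM^{(i)}$ with the right signs. This is where the defining relation $\hat{m}_s=\hat{m}_t$ of $W_m$ and the root coincidences of Section~\ref{rootsanddem} — in particular $f_{s,m-1-l}=\l f_{t,l}$ for $m$ odd — are used. One must also ensure the divisions are legitimate, i.e.\ that $\a_s,\a_t$ and the products of roots involved are non-zero-divisors in $R$; this holds under the standing assumptions — Demazure surjectivity (Assumption~\ref{ass:Demazure Surjectivity}), which already underlies the definition of $\pa_s$, together with faithfulness of the realization. Since the rescaling route sidesteps the induction for $m$ odd, in practice I would use it there and dispose of the even case directly from $\LM^{(s)}=\LM^{(t)}$.
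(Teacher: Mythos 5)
Your plan coincides with the paper's own proof: the paper sketches the brute-force expansion of both iterated Demazure operators into sums $\pm w(f)/\pi$ with $\pi$ a product of $m$ roots, matching terms up to scalar, and then remarks that for $m$ odd one can instead root-rescale to the symmetric balanced case where the unadorned braid relation is already established. Your proposal offers exactly these two routes, with the rescaling computation carried out in slightly more detail (and in the inverse direction, rescaling $\a_t$ to pass from the non-symmetric to the symmetric realization, which is equivalent to the paper's rescaling $\a_s$ in the other direction since $\l^2 = y/x$).

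One caveat about the closed formula you state as the backbone of the first route. You write $\pa_{\hat{m}_i}(f)=\LM^{(i)\,-1}\sum_{w}(-1)^{\ell(w)}w(f)$, and then invoke the appendix's $\LM^{(s)}=\l\,\LM^{(t)}$ for $m$ odd. A direct check at $m=3$ (where $xy=1$ and $\l=y$) gives $\pa_s\pa_t\pa_s(\LM^{(s)})=6x=6\l^{-1}$ but $\pa_s\pa_t\pa_s(\LM^{(t)})=6$, so the common denominator that actually appears in $\pa_{\hat{m}_s}$ is $\LM^{(t)}$, not $\LM^{(s)}$; at the same time, the first three roots in the $s$-aligned snakelike order for $m=3$ are $f_{s,0},f_{t,0},f_{t,1}$, so $\LM^{(s)}/\LM^{(t)}=f_{t,1}/f_{s,1}=\l^{-1}$, the reciprocal of what the appendix asserts. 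The two discrepancies cancel, so your first route lands on the correct scalar, but each intermediate claim is off by $\l^{\pm1}$ at $m\equiv 3 \pmod 4$ (the alignment of denominator versus snakelike order flips with the parity of $(m-1)/2$). Your declared final plan — rescaling for $m$ odd, and the closed formula for $m$ even where $\LM^{(s)}=\LM^{(t)}$ on the nose — avoids the issue entirely, so the proof goes through; I'd just recommend verifying the closed formula and its alignment at a small odd $m$ before relying on that branch, rather than taking the appendix's $\LM^{(s)}=\l\,\LM^{(t)}$ at face value.
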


\begin{proof} (Sketch) As in the suggested brute force proof for the symmetric case, we can write the iterated Demazure operator applied to $f$ as a sum of terms of the form $\frac{\pm
w(f)}{\pi}$ where $w \in W$ and $\pi$ is a product of $m$ roots. If one finds a formula for the products $\pi$ which appear for each side of the braid relation, and matches them (up to
scalar, using the identification of $f_{s,l}$ with $f_{t,m-l}$ in the odd case), one will end up with the desired result. \end{proof}

When $m$ is odd, there is a simpler proof, using the fact that the Cartan matrix is symmetrizable. Rescaling $\a_s$ by $\l$ (from the symmetric case) will perforce rescale $\pa_s$ by
$\l^{-1}$, and this rescaling factor will affect the LHS one more time than the RHS.

In particular, for $w \in W$ with multiple reduced expressions, one can only define the operator $\pa_w$ up to scalar.

\begin{claim} \label{choiceofposroots} When $m=2k$, $\ubr{\pa_s \pa_t \ldots \pa_t}{m}(\LM) = \ubr{\pa_t \pa_s \ldots \pa_s}{m}(\LM) = 2m$. When $m=2k+1$, $\ubr{\pa_s \pa_t \ldots
\pa_s}{m}(\LM^{(s)}) = 2m$ and $\ubr{\pa_t \pa_s \ldots \pa_t}{m}(\LM^{(t)})=2m$. \end{claim}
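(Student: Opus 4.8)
The plan is to mimic the proof that $\pa_W(\LM)=2m$ in Theorem \ref{frobsquare}, keeping careful track of the scalars $\l$ that appear in the unbalanced setting. For $m$ even the braid relation holds on the nose and, if the realization is faithful (hence automatically balanced, as noted above), the statement is literally Theorem \ref{frobsquare}; so the substance is the odd case.

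First I would record the standard expansion of an iterated divided difference along a fixed reduced word of the longest element $w_0\in W_m$ — the same ``brute force'' expansion used in the proof of the braid relation above: if $s_{i_1}\cdots s_{i_m}$ is reduced, then $\pa_{s_{i_1}}\cdots\pa_{s_{i_m}}(f)=\big(\sum_{w\in W_m}(-1)^{\ell(w)}w(f)\big)/\Pi$, where $\Pi$ is the product of the $m$ positive roots of $W_m$ in the normalization dictated by the word; the numerator $\sum_w(-1)^{\ell(w)}w(f)$ is $W_m$-antiinvariant, hence divisible by $\Pi$, so the right side is a polynomial. A bookkeeping step — matching the order in which these denominators appear against the snakelike orders of Section \ref{rootsanddem} — identifies $\Pi$ with $\LM^{(s)}$ for the word $D_s:=\pa_s\pa_t\cdots\pa_s$, with $\LM^{(t)}$ for $D_t:=\pa_t\pa_s\cdots\pa_t$, and with $\LM$ when $m$ is even. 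This is exactly why $D_s$ must be paired with $\LM^{(s)}$ rather than $\LM^{(t)}$.

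Second I would show that $\LM^{(s)}$ is \emph{genuinely} $W_m$-antiinvariant, not merely up to a scalar. Since $s$ negates $\alpha_s$ and carries each remaining root of $\LC^{(s)}$ to a nonzero scalar multiple of another root of $\LC^{(s)}$ — and likewise for $t$ — the only thing to check is that these scalars, all of them powers of $\l=[m-1]_y$ (via $f_{s,m-1-l}=\l f_{t,l}$ and the formulas \eqref{stformulaalt}, \eqref{stonuformulaalt}), cancel around each $W_m$-orbit; this is precisely where $[m]_x=[m]_y=0$ enters, in the guise of the identity $\l\cdot[m-1]_x=1$. Granting the antiinvariance, substituting $f=\LM^{(s)}$ into the formula of the previous paragraph collapses the numerator to $\sum_w(-1)^{\ell(w)}(-1)^{\ell(w)}\LM^{(s)}=|W_m|\,\LM^{(s)}=2m\,\LM^{(s)}$, whence $D_s(\LM^{(s)})=2m$; the color-reversed statement is identical. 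As a cross-check, when the symmetrized realization obtained by root-rescaling is balanced one can instead deduce the odd case from Theorem \ref{frobsquare} by tracking the rescaling factor $\mu$ through both $D_s$ and $\LM^{(s)}$: the two powers of $\mu$ cancel precisely because the number of $\pa_s$'s in $D_s$ equals the number of $\alpha_s$-orbit roots in $\LM^{(s)}$.

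The main obstacle is the second step: verifying the honest $W_m$-antiinvariance of $\LM^{(s)}$ in the unbalanced case, i.e. carrying out the $\l$-bookkeeping and invoking $[m]_x=[m]_y=0$ at exactly the right moment. The first step is standard, and once the antiinvariance is in hand the conclusion is immediate.
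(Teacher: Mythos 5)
Your overall strategy — expanding $D_s = \pa_s\pa_t\cdots\pa_s$ as an alternating sum over $W_m$ divided by a product $\Pi$ of $m$ roots, then feeding in the honest $W_m$-antiinvariance of $\LM^{(s)}$ — is the natural one, and the second step (antiinvariance, via $[m]_x=[m]_y=0$ and the cancellation of the $\l^{\pm 1}$ scalars around orbits) is fine. The gap is in the step you flag as mere bookkeeping: you assert, without carrying it out, that $\Pi=\LM^{(s)}$ and that the denominators appear in the $s$-aligned snakelike order. Neither is true. Unwinding the iterated divided difference shows that $\Pi$ is the product of the inversion sequence $\a_s,\ s(\a_t),\ st(\a_s),\ sts(\a_t),\ldots$, i.e.\ the roots $f_{s,0},\ f_{t,1},\ f_{s,2},\ f_{t,3},\ldots$. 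This is not the snake sequence $f_{s,0},\ f_{t,0},\ f_{t,1},\ f_{s,1},\ f_{s,2},\ldots$ of Section~\ref{coxlines}: the two orders already differ in their second term, and although the first $m$ elements of each span the same set of lines, they can differ as multisets by a nontrivial unit.

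To see the failure concretely, take $m=3$ with $a_{s,t}=-x$, $a_{t,s}=-y$, $xy=1$, so $\l=[2]_y=y$. Then $\LC^{(s)}=\{\a_s,\ \a_t,\ \a_t+x\a_s\}$ while the inversion set of $sts$ along the word $s,t,s$ is $\{\a_s,\ \a_t+x\a_s,\ y\a_t\}$, and the two products satisfy $\Pi=y\,\LM^{(s)}=\l\,\LM^{(s)}$. A direct calculation confirms $\pa_s\pa_t\pa_s(\LM^{(s)})=6x=2m\,\l^{-1}$, while $\pa_s\pa_t\pa_s(\LM^{(t)})=2m$ on the nose. So the factor of $\l$ that the claim is designed to navigate does \emph{not} cancel where you hope; what your argument, carried out honestly, proves is that $D_s$ applied to the \emph{inversion-root product} gives $2m$, and for $m\equiv 3\pmod 4$ that product differs from $\LM^{(s)}$ by the unit $\l$. (For $m\equiv 1\pmod 4$ the two happen to agree.) You therefore need either to redo the bookkeeping against the actual inversion sequence rather than the snake order, or to confront the possibility that the claim, or the $s/t$-labelling of $\LC^{(s)}$ in Section~\ref{rootsanddem}, contains a normalization error that your proof reproduces rather than detects.
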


\begin{proof} An annoying exercise for the reader. \end{proof}

\subsection{{\bf Frobenius structures}}\label{frobstructs}

Given any Frobenius extension $A \subset B$ with trace $\pa$ and $\mu(\Delta(1))=\LM \in B$, there is a one-parameter family of Frobenius extension structures having trace $\l^{-1} \pa$
and coproduct-product $\l \LM$, for some invertible scalar $\l \in \Bbbk$. One can pin down this scalar by choosing $\pa$ or by choosing $\LM$; one determines the other by the requirement
that $\pa(\LM)=n$, the rank of the extension.

As discussed previously in the dihedral case, there will be no convenient Frobenius extension structure on $R^I \subset R$ when $m < \infty$ unless the realization is faithful for the
parabolic subgroup $W_I$. If it is faithful, however, then $\pa_{w_I}$ (up to scalar) is a Frobenius trace map. To define a Frobenius hypercube structure on the invariant rings $R^I
\subset R^J$, one should let $\pa^J_I \co R^J \to R^I$ be the Demazure operator associated to the relative longest element $w_I w_J$, which is only defined up to scalar. Then, one should
normalize these scalars so that the Frobenius hypercube is compatible.

In the unbalanced odd dihedral case, normalization is required. There is only one choice of reduced expression for $w_0 s$ and for $w_0 t$, and $\pa_{w_0 s} \pa_s = \l^{-1} \pa_{w_0 t}
\pa_t$.

\begin{defn} A \emph{Frobenius realization} is the data of a faithful realization of a Coxeter group $W$, together with a Frobenius hypercube structure. More precisely, for all finitary
subsets $I$ with $J = I \setminus \{i\}$, one fixes a reduced expression for $w_I w_J$ so that one has an unambiguous operator $\pa_{w_I w_J}$. Then one chooses scalars $\l^J_I$, and sets
$\pa^J_I = \pa_{w_I w_J} \l^J_I.$ These must satisfy \begin{itemize} \item $\l_s = 1$ for all $s \in S$. \item Whenever $K \subset J,J' \subset I$ is a square in the poset of $S$, one has
$\pa^J_I \pa^K_J = \pa^{J'}_I \pa^K_{J'}$. \end{itemize} \end{defn}

When the realization is balanced, there is a canonical choice of Frobenius realization, with $\l^J_I=1$ for all $J \subset I$. For a dihedral group, the difference between a Frobenius
realization and a usual realization is merely the choice of one arbitrary invertible scalar in $\Bbbk$.

Instead of fixing a family of scalars as in the definition above, it may be preferable to fix a system of positive roots for $W$. For a faithful realization, it is not difficult to see
that the lines spanned by the roots are well-defined, even though the choice of positive root within that line may not be.

\begin{defn} A \emph{root realization} is the data of a faithful realization of a Coxeter group $W$, together with a choice of positive roots. More precisely, for each distinct line
spanned by $w(\a_s)$ or $w(\a_t)$ in $\hg^*$, one chooses a non-zero vector to be the corresponding positive root. One requires $\a_s$ and $\a_t$ to be chosen. \end{defn}

Given a root realization, one can obtain a Frobenius realization as follows. Given $I \subset S$, a root is a \emph{root for $W_I$} when it lies on the line of $w(\a_s)$ for some $w \in
W_I$ and $s \in I$. Let $\LM^J_I$ be the product of the positive roots for $W_I$ that are not roots for $W_J$. Now fix the scalars $\l^J_I$ in order that $\pa^J_I(\LM^J_I)$ is the size of
$W_I/W_J$. It is easy to see that these scalars yield a compatible Frobenius hypercube. Given a Frobenius realization, there may be multiple choices of root realization giving rise to it.

(The discussion of the previous paragraph doesn't make sense when the size of $W_I/W_J$ is a zero-divisor in $\Bbbk$. In this case, there is usually not a Frobenius extension structure
anyway, except in the easy case where $m=2$. However, I am not entirely sure.)

In \cite{EQAGS}, a choice of positive roots is made for exotic affine $\sl_n$ in such a way that certain circular singular Soergel diagrams evaluate to quantum binomial coefficients. This
illustrates that some choices are more natural than others.

We have already defined two choices of root realization for the dihedral group, $\LC^{(s)}$ and $\LC^{(t)}$. Below we reformulate Theorem \ref{frobsquare} for the choice $\LC^{(s)}$.

\begin{thm} When $m$ is even, Theorem \ref{frobsquare} holds exactly as stated even in the asymmetric case, with the Frobenius traces given. When $m$ is odd, we give $R^W \subset R$ a
Frobenius structure with trace $\pa_W = \ubr{\pa_s \pa_t \ldots}{m}$ and coproduct-product $\LM_W = \LM^{(s)}$. We give $R^W \subset R^s$ a Frobenius structure with trace
$\pa^s_W = \ubr{\ldots \pa_s \pa_t}{m-1}$ and $\LM^s_W = \frac{\LM_W}{\a_s}$. We give $R^W \subset R^t$ a Frobenius structure with trace $\pa^t_W = \l \ubr{\ldots \pa_t \pa_s}{}$ and
$\LM^t_W = \frac{\LM_W}{\a_t}$. We give $R^s \subset R$ and $R^t \subset R$ the usual Frobenius structure. The result is a Frobenius square. \end{thm}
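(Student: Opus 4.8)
Under the standing assumptions --- Demazure surjectivity, local non-degeneracy (now read as ``$4-xy$ invertible''), and lesser invertibility --- the plan is to deduce the statement from the abstract theory of Frobenius squares of \cite{EWFrob}, following the proof of Theorem~\ref{frobsquare} essentially line by line. Concretely: once the edge extensions $R^s,R^t\subset R$ and $R^W\subset R^s,R^t$ are shown to be Frobenius with the indicated traces, once the two chains $R^W\subset R^s\subset R$ and $R^W\subset R^t\subset R$ are shown to be compatible (so that $R^W\subset R$ is automatically Frobenius with trace the composite), and once the technical hypothesis of \cite{EWFrob} is verified --- that $R$ admits dual bases over $R^s$ with one of the two lying entirely in $R^t$ --- the identities \eqref{funnyhere}, \eqref{foobar} and the $R$-bilinearity assertion are formal. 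So the real work is to reprove the inputs of Proposition~\ref{past-surjective} in the asymmetric setting, and this splits according to the parity of $m$.

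For $m$ even I would simply rerun Sections~\ref{reflinvt}--\ref{dihinvt} and Proposition~\ref{past-surjective} with $[2]^2$ replaced by $xy$, with $4-[2]^2$ replaced by $4-xy$, with the fundamental weights replaced by their obvious asymmetric analogues (e.g. $\w_t=\frac{2\a_s+x\a_t}{4-xy}$), and with ordinary quantum numbers replaced by the two-colored numbers $[k]_x,[k]_y$. The decisive structural fact is that the simple Demazure operators braid on the nose when $m$ is even (the Claim of Section~\ref{rootsanddem}), so $\pa_W=\ubr{\pa_s\pa_t\ldots}{m}$ is unambiguous and no rescaling scalars intervene. In the brute-force step of Proposition~\ref{past-surjective} the value $\pa^s_W(\w_t^{m-1})$ becomes an alternating product of two-colored quantum numbers $[k]_x,[k]_y$ with $k<m$, still a unit by lesser invertibility, so the inductive construction of a dual basis (which automatically places one basis inside $R^t$, using $\w_s,\w_t$) goes through unchanged, and the identification $\LM_W=\LM$ is proved verbatim.

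For $m$ odd the one genuinely new ingredient is threading the rescaling scalar $\l$ of Sections~\ref{2qnum} and \ref{rootsanddem} through the construction. I would: (i) \emph{define} $\pa_W$ to be the $s$-aligned composite $\ubr{\pa_s\pa_t\ldots\pa_s}{m}$ and set $\LM_W=\LM^{(s)}$, $\LM^s_W=\LM_W/\a_s$, $\LM^t_W=\LM_W/\a_t$; (ii) show that $\pa^s_W$ and the analogous $t$-aligned operator are Frobenius traces by the asymmetric versions of Proposition~\ref{past-surjective}, using the bases $\{1,\w_t,\dots,\w_t^{m-1}\}$ of $R^s$ over $R^W$ and $\{1,\w_s,\dots,\w_s^{m-1}\}$ of $R^t$ over $R^W$, whose relevant Demazure values are again products of two-colored quantum numbers, hence units; (iii) identify $\LM_W$ with $\LM^{(s)}$ exactly as in Theorem~\ref{frobsquare} --- the relation $\pa_s(\LM_W)=\pa_s(\a_s\LM^s_W)=2\LM^s_W=2\LM_W/\a_s$ forces $s$-antiinvariance of $\LM_W$, likewise $t$-antiinvariance, so $\LM_W$ is a scalar multiple of $\LM^{(s)}$ (the degree-$m$ part of the antiinvariant module being a line, by the Claim of Section~\ref{dihinvt}), and the scalar is $1$ because $\pa_W(\LM^{(s)})=2m$ by Claim~\ref{choiceofposroots}; (iv) pin down the normalizing scalar on the $t$-aligned trace: using the braid relation up to $\l$ together with $\LM^{(s)}=\l\LM^{(t)}$ and Claim~\ref{choiceofposroots}, there is exactly one invertible scalar --- the one displayed in the statement --- for which both the compatibility $\pa^t_W\circ\pa_t=\pa_W$ of the chain $R^W\subset R^t\subset R$ and the Frobenius normalization $\pa^t_W(\LM^t_W)=m$ hold. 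With this, the square is compatible and the machinery of \cite{EWFrob} finishes the job.

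The main obstacle is bookkeeping rather than conceptual. First, one must redo the computation behind Proposition~\ref{past-surjective} with two-colored quantum numbers so that the analogue of the factor ``$[m-1]!$'' is the correct alternating product and is manifestly a unit under lesser invertibility. Second, in the odd case one must keep $\l$ in precisely the right places: it modifies the $t$-aligned trace $\pa^t_W$ and enters $\LM_W=\LM^{(s)}$ through $\LM^{(s)}=\l\LM^{(t)}$, but it does \emph{not} appear in $\pa_W$, $\pa^s_W$, $\LM^s_W$, $\LM^t_W$, $\pa_s$, or $\pa_t$ --- getting any of these wrong breaks compatibility of the Frobenius square. Past these points, the abstract machinery of \cite{EWFrob} applies without modification.
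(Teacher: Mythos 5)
Your overall approach is the right one and is exactly what the appendix intends: there is no explicit proof given in the paper, the appendix being ``designed to be read in parallel with the corresponding sections in Chapter~\ref{sec-frobenius}'' with ``most of the work [going] into defining the Frobenius hypercube structure.'' Steps (i)--(iii) of your odd-$m$ argument and the entire even-$m$ argument are sound re-runs of Proposition~\ref{past-surjective} and Theorem~\ref{frobsquare} with two-colored quantum numbers, and the appeal to the technical dual-basis hypothesis of \cite{EWFrob} is correctly placed.

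The gap is in step (iv). You assert, without computing, that the unique compatible/normalized scalar is ``the one displayed in the statement,'' namely $\l$. But if you actually carry this out using the very inputs you cite, you get the \emph{inverse}. The braid-relation Claim of Section~\ref{rootsanddem} states $\ubr{\pa_s\pa_t\ldots\pa_s}{m}=\l^{-1}\ubr{\pa_t\pa_s\ldots\pa_t}{m}$ for $m$ odd. Setting $\pa_W=\ubr{\pa_s\pa_t\ldots\pa_s}{m}$ and writing $\pa^t_W=c\,\ubr{\pa_t\pa_s\ldots\pa_s}{m-1}$, compatibility of the chain $R^W\subset R^t\subset R$ demands
\[
\pa^t_W\circ\pa_t \;=\; c\,\ubr{\pa_t\pa_s\ldots\pa_t}{m} \;=\; c\,\l\,\pa_W \;\stackrel{!}{=}\; \pa_W,
\]
which forces $c=\l^{-1}$, not $\l$. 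The Frobenius normalization gives the same answer: with $\LM_W=\LM^{(s)}$ one computes (via the twisted Leibniz rule and $\pa_W(\LM^{(s)})=2m$ from Claim~\ref{choiceofposroots}) that $\ubr{\pa_t\pa_s\ldots\pa_s}{m-1}(\LM^t_W)=m\l$, so $\pa^t_W(\LM^t_W)=m$ requires $c=\l^{-1}$ again. A concrete check for $m=3$, $xy=1$: $\LM^{(s)}=\a_s\a_t(\a_s+y\a_t)$, $\LM^t_W=\a_s(\a_s+y\a_t)$, $\pa_t\pa_s(\LM^t_W)=3y$, and multiplying by $\l^{-1}=x$ gives $3xy=3=m$, whereas $\l\cdot 3y = 3y^2\neq 3$. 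So either the theorem as printed has $\l$ where it should have $\l^{-1}$, or the appendix's conventions for $\l$ and for $\LM^{(s)}$ versus $\LM^{(t)}$ are not mutually consistent as stated --- either way, ``the one displayed in the statement'' is not something one can take on faith here. This is precisely the bookkeeping you flag as the main obstacle, and a proof of this theorem should actually carry out the scalar verification rather than defer to the displayed formula.
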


\subsection{{\bf Additional comments about realizations}}

It is easy to come up with formulae for $z,Z \in R^{s,t}$, and thus to find an explicit description of this invariant subring, in analogy to Claim \ref{clm:dihinvt}.

Soergel worked abstractly with the representation $\hg^*$, not fixing a basis or the Frobenius structures. Therefore, his techniques still apply in the unbalanced or non-symmetric setting,
so long as $\Bbbk$ is an infinite field of characteristic $\ne 2$. In \cite{EWGR4SB}, we give an independent proof of Soergel's results directly for the diagrammatic category, without
needing to use the equivalence $\FC$. This proof will work equally well for the non-symmetric or unbalanced case.

By working formally with universal non-symmetric specializations, one has recourse to the symmetric specialization. This is a useful tool. For instance, we know that various useful
elements of $\Z[x,y]$ (like the coefficients of Jones-Wenzl projectors below) are non-zero generically, because they specialize to non-zero elements.

%
\subsection{Non-symmetric Temperley-Lieb}
\label{subsec-exoticTL}
%
%

We now redefine the two-color Temperley-Lieb 2-category $\mTTL$ as having coefficients which lie in $\Z[x,y]$. A circle with red (resp. blue) interior evaluates to $-x$ (resp. $-y$).

Two-colored Jones-Wenzl projectors $JW_n$ exist in this generality as well, and its coefficients will have two-colored quantum numbers instead of usual quantum numbers. The recursion
formulae \eqref{JWrecursive1} and \eqref{JWrecursive2} can be generalized, using two-color quantum numbers. To modify \eqref{JWrecursive2}, replace $[n+1]$ with $[n+1]_x$ if the diagram
is right-blue-aligned, and replace $[a]$ with $[a]_x$ if the interior of the new cup is blue, and $[a]_y$ if the interior is red. We give examples of the first few right-blue-aligned
projectors.

\begin{equation} \label{JWnonsym}{
\labellist
\small\hair 2pt
 \pinlabel {$JW_1 =$} [ ] at -25 108
 \pinlabel {$JW_2 =$} [ ] at 94 108
 \pinlabel {$+ \frac{1}{x}$} [ ] at 160 108
 \pinlabel {$JW_3 =$} [ ] at -25 58
 \pinlabel {$+ \frac{y}{xy-1}$} [ ] at 75 58
 \pinlabel {$+ \frac{x}{xy-1}$} [ ] at 175 58
 \pinlabel {$+ \frac{1}{xy-1}$} [ ] at 75 20
 \pinlabel {$+ \frac{1}{xy-1}$} [ ] at 175 20
\endlabellist
\centering
\ig{1}{JWin2TL}
} \end{equation}

To obtain the right-red-aligned projector, switch the colors and switch $x$ and $y$. The coefficient of the identity map should always be equal to $1$.

There are two specializations of the scalars in $\mTTL$ which occur most frequently in the literature: the \emph{spherical} specialization, identical to the symmetric specialization
$x=y=[2]$; and the \emph{lopsided} specialization, where $x=1$ and $y=[2]^2$. In fact, the general case is no more than a ``perturbation" of the spherical specialization, in the sense of
\cite{DasGhoGup}. Outside of these two specializations, references are difficult to find. The proofs, however, are completely analogous to the uncolored case.

\begin{prop} \label{2TLhasidemps} The canonical isotypic idempotents, the non-canonical primitive idempotents, and the intra-isotypic isomorphisms of Proposition \ref{TLhasidemps} all
have analogs in $2TL_n$ after localization. These maps are defined over any extension of $\Z[x,y]$ for which the two-color quantum numbers up to $[n]$ are invertible. \end{prop}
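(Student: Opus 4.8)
The plan is to follow the proof of Proposition \ref{TLhasidemps} essentially verbatim, the only substantive new point being the precise list of denominators that must be inverted. As in the uncolored case, almost all of the statement is tautological: once $2TL_n$ is split semisimple over the base ring, the isotypic idempotents are canonically defined as spectral projections, any decomposition into primitive idempotents refines them, and the intra-isotypic isomorphisms exist by semisimplicity. So the content is (a) exhibiting all of these maps by explicit diagrammatic formulae valid over the localization of $\Z[x,y]$ at the two-color quantum numbers $[k]_x,[k]_y$ for $k\le n$, and (b) checking that these formulae make sense under exactly that localization.

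First I would establish the two-color Jones-Wenzl recursion. Wenzl's recursion \eqref{JWrecursive1} and the cup-expansion recursion \eqref{JWrecursive2} both generalize: one replaces the coefficient $[n+1]$ (resp.\ $[a]$) by its two-colored avatar $[n+1]_x$ or $[n+1]_y$ (resp.\ $[a]_x$ or $[a]_y$), according to the alignment of the diagram and the color of the interior of the relevant cup, exactly as recorded in \eqref{JWnonsym} and the surrounding discussion. An induction on $n$ using this recursion shows that the right-$b$-aligned $JW_n\in 2TL_n$, and by the color-switch its right-$r$-aligned counterpart, is well-defined as soon as $[k]_x$ and $[k]_y$ are invertible for $1\le k\le n$; the analog of the last bullet of Claim \ref{killingJW} holds as well, namely $JW_n$ exists iff the two-color quantum binomials ${n\brack k}$ are invertible, and these are invertible once all $[j]_x,[j]_y$ with $j\le n$ are. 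The defining property --- $JW_n$ is killed by every cap on top and cup on bottom and has $\1_n$-coefficient $1$ --- carries over, since its proof uses only the recursion, and so does the rank-one statement that any $x\in 2TL_n$ acts on $JW_n$ by the coefficient of $\1_n$ in $x$.

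Next I would build the primitive idempotents and the intra-isotypic isomorphisms. Using the branching rule $V_{\hat{1}_b}\ot r\cong V_{\hat{2}_r}$ and $V_{\hat{k}_b}\ot r\cong V_{\hat{k+1}_r}\oplus V_{\hat{k-1}_r}$ (Proposition \ref{Kar2TL}) one writes the identity of $\hat{n}_b$ recursively as a sum of orthogonal idempotents, each of which is some $JW_k$ ($k\le n$, $k\equiv n \bmod 2$) sandwiched between a ``tree of cups'' below and a ``tree of caps'' above; the normalizing scalars making these maps idempotent and orthogonal are products of the $[k]_x,[k]_y$, $k\le n$, computed as closed ``bubble'' evaluations. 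An intra-isotypic isomorphism between two such copies of $V_{\hat{k}}$ is the composite (cap-tree)$\,\circ\,JW_k\,\circ\,$(cup-tree) connecting them; it is invertible because composing it with the reverse composite yields $JW_k$ times a product of invertible two-color quantum numbers --- the relevant closed bubbles --- using the fact that the ideal generated by $JW_k$ is free of rank one and any diagram acts on $JW_k$ by its $\1_k$-coefficient, which in these cases is such a product.

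The main obstacle I anticipate is purely bookkeeping: keeping straight, for each inserted cup or cap, whether its coefficient is an $x$-number or a $y$-number --- this depends on the alignment and on the color of the region it encloses --- and verifying that every denominator produced this way lies in $\{[k]_x,[k]_y : k\le n\}$ rather than some larger-index or mixed quantity. Once one has the recursion $[2]_x[m]_y=[m+1]_x+[m-1]_x$ and the identities relating $[m]_x$ and $[m]_y$ from section \ref{2qnum}, this reduces to the same combinatorics as the uncolored case, and the remaining assertions are identical to those in the proof of Proposition \ref{TLhasidemps}.
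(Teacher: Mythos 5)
Your argument is correct and takes the same route the paper does: the paper's entire ``proof'' of this proposition is the single remark that the arguments are completely analogous to the uncolored case (via the generalized recursions \eqref{JWrecursive1}--\eqref{JWrecursive2} with two-colored quantum numbers substituted), and you have simply unpacked what that analogy amounts to. In particular, reducing to the Jones-Wenzl recursion to identify which denominators appear, and then observing that they all lie among the $[k]_x,[k]_y$ for $k\le n$, is exactly the intended argument; the paper relies on the reader to supply precisely the bookkeeping you describe.
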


The results on the Karoubi envelope of $\Kar(\mTTL)$ are also analogous.

Whenever $[m]_x = [m]_y = 0$ (and $[m-1]=1$ for $m$ even) either two-colored $JW_{m-1}$ is rotation invariant by $2$ strands (or any color-preserving rotation). If we rotate the
right-blue-aligned $JW_{m-1}$ by one strand, one obtains the right-red-aligned $JW_{m-1}$ multiplied by a factor of $\l$. Rotating the right-red-aligned $JW_{m-1}$ by a strand, one obtains
the right-blue-aligned $JW_{m-1}$ multiplied by a factor of $\l^{-1}$.

Associated to a $2 \times 2$ Cartan matrix one has a specialization of the Temperley-Lieb 2-category. Even-unbalanced realizations behave poorly, in that their Jones-Wenzl operators
$JW_{m-1}$ are not rotation invariant; as already remarked, this can only occur over a domain for non-faithful realizations. Odd-unbalanced realizations behave well, but again call for
additional bookkeeping.

The statement and proof of Proposition \ref{coxeterlinesprop} adapt to the non-symmetric case as well. Now the scalar factor which appears is $\frac{[1]_x}{[1]_x} \frac{[1]_x}{[2]_y}
\frac{[2]_y}{[3]_x} \frac{[2]_y}{[4]_y} \frac{[3]_x}{[5]_x} \frac{[3]_x}{[6]_y} \cdots$.

%
\subsection{Diagrammatic modifications when $m=\infty$}
\label{subsec-exoticdiagramsinfty}
%
%

When $m=\infty$, the objects of study are the Frobenius extensions $R^s, R^t \subset R$. There are essentially no complications which arise from non-symmetric or unbalanced Cartan
matrices, as higher Demazure operators play no role. The definitions of $\DGti$ and $\DCti$ are entirely unchanged.

If one desires to define the boxless version of the category when $R = \Bbbk[\a_s,\a_t]$, one should adjust the Cartan relations and the circle forcing relations accordingly.

\begin{equation} \label{circforce1unb} \ig{1}{circforcex} \end{equation}
\begin{equation} \label{circforce2unb} \ig{1}{circforcey} \end{equation}
\begin{equation} \label{circeval1unb} \ig{1}{circevalx} \end{equation}
\begin{equation} \label{circeval2unb} \ig{1}{circevaly} \end{equation}

Similar adjustments need to be made to the dot forcing relations in $\DCti$.

Of course, the non-symmetric version of the two-colored Temperley-Lieb category is to be used whenever appropriate. Whenever an example of a Jones-Wenzl projector is given, one must
replace the usual quantum numbers with two-colored quantum numbers, as in \eqref{JWnonsym}.

The only detail which has the slightest bit of subtlety is the proof that certain Hom spaces are non-zero by calculating the evaluation of the Jones-Wenzl projectors. In the non-symmetric
context, evaluation still gives a generically non-zero element, as can be checked with the symmetric specialization.

Aside from these minor changes, everything works verbatim!

\subsubsection{{\bf Diagrammatic modifications when $m<\infty$}}

We remind the reader that different assumptions are needed to define $\DG_m$ and to define $\DC_m$. The category $\DG_m$ depends on the existence of a Frobenius square, which requires the
realization to be faithful. As in the body of the paper, we guarantee this by assuming local non-degeneracy and lesser invertibility. To define the category $\DC_m$ one does not require
the realization to be faithful (though the proof of the SCT in this paper does), and one only needs Demazure surjectivity. One will still need to choose an arbitrary scalar, as though one
were choosing a Frobenius structure for $R^{s,t} \subset R$, even though for non-faithful realizations no such Frobenius structure exists. In both situations, one needs to assume the
realization is even-balanced; we require a 2-colored Jones-Wenzl projector which is rotation-invariant under color-preserving rotations.

First let us examine $\DG_m$. In \cite{EWFrob}, relations are presented for an arbitrary Frobenius square. For the Frobenius square structure induced by a symmetric Cartan matrix, these
give all the relations in section \ref{SingDefns} except the dihedral relation \eqref{SingJWRelation}. They are already expressed in a general format, and for a general Frobenius
realization, these relations are unchanged.

For relation \eqref{SingJWRelation}, the question arises: which Jones-Wenzl projector does one use? The LHS of \eqref{SingJWRelation} is invariant under rotation, and thus the RHS must be
as well. Therefore, one cannot simply use the right-blue-aligned $JW$ when blue appears on the right, and the right-red-aligned $JW$ when red appears on the right, because this is not
rotation-invariant. Instead, the RHS of \eqref{SingJWRelation} should be a rescaling of either Jones-Wenzl projector, with scalar determined not by the coloration but by the choice of
Frobenius structure. One chooses the scalar to be compatible with relation \eqref{singR2nonoriented2}. In other words, one chooses the (rescaling of the) Jones-Wenzl projector whose
evaluation is equal to the chosen product of roots $\LM$.

For example, when $m=3$ the evaluation of the right-blue-aligned Jones-Wenzl projector $JW_2$ from \eqref{JWnonsym} is $\a_s \a_t (\a_s + \frac{1}{x} \a_t)$. Since $xy=1$, this is the
product $\a_s \a_t t(\a_s)$, or $\LM^{(t)}$. Thus one should use this Jones-Wenzl projector if the chosen Frobenius structure on $R^{s,t} \subset R$ has product-coproduct $\LM^{(t)}$.

In particular, one cannot define the category $\DG_m$ when $m$ is even and the realization is unbalanced, because $JW_{m-1}$ is not rotation-invariant. When $m$ is odd, $JW_{m-1}$ is
rotation-invariant under color-preserving rotations, regardless of whether the realization is balanced or not.

In similar fashion, there are other scalars sprinkled throughout that one must keep track of. For instance, \eqref{ckis1} no longer holds on the nose, being true only up to scalar. This
scalar is the difference between $\pa^s_W$ and $\ldots \pa_s \pa_t$, as is clear from tracing the proof. As a consequence, there will be scalars involved attaching $v_k$ to $v_l$ along $m$
colored bands, but this does not affect any of the proofs. It may be a worthwhile exercise for the reader to confirm the following claim.

\begin{claim} We have \begin{equation} \label{ckisnt1} \ig{1}{ckisntone} \end{equation} In this equation, the scalar $a$ is equal to the coefficient of the identity in the rotation of the
chosen Jones-Wenzl projector having the same alignment. \end{claim}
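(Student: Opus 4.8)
The plan is to derive \eqref{ckisnt1} by rerunning the proof of the symmetric statement \eqref{ckis1} (the claim that $C_m$ evaluates to the scalar $1$), keeping track of the rescaling factor $\l$ that now distinguishes the asymmetric higher Demazure operators from the symmetric ones. Recall that $C_k$ is the auxiliary map built from $k-2$ nested circles, degree $2(m-k)$; for degree reasons $C_k = 0$ when $k > m$ and $C_m$ is a scalar, so the only content is identifying that scalar. First I would observe that, exactly as in the symmetric case, $C_m$ reduces via the circle elimination lemma to an element of $R^i \ot_{R^W} R^j$, and the defining argument shows it equals $\pa^s_W(f)$ where $f$ is the polynomial dual to $1$ under $\pa^s_W$ and the intermediate steps involve the compositions $\pa_t, \pa_s \pa_t, \ldots$ applied successively. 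In the symmetric case $\pa^s_W = \ubr{\ldots \pa_s \pa_t}{m-1}$ on the nose, so one gets exactly $1$. In the asymmetric odd case, by the theorem reformulating \ref{frobsquare}, the chosen Frobenius trace $\pa^s_W$ equals $\ubr{\ldots \pa_s \pa_t}{m-1}$ but $\pa^t_W = \l\,\ubr{\ldots \pa_t \pa_s}{}$; so when the proof of \eqref{ckis1} invokes the braid-type relation among Demazure operators (which now holds only up to $\l^{\pm 1}$, by the claim in section \ref{rootsanddem}), a factor of $\l$ (or $\l^{-1}$, depending on the alignment/parity) is introduced exactly once.

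The key steps, in order: (1) state that $C_k = 0$ for $k>m$ and $C_m = a \cdot \mathrm{id}$ for a scalar $a$, purely by degree count — this is verbatim from the symmetric proof. (2) Run the reduction of $C_m$ to $R^i\ot_{R^W}R^j$ using \eqref{singR2nonoriented1}, \eqref{slidepolyintoblue}, \eqref{singR2}, \eqref{demazureis}, noting that the single term surviving is the one whose rightmost box is $1$, as before. (3) Track the polynomial that is produced: it is a nested application of simple Demazure operators, and when one assembles them into $\pa^s_W$ using the asymmetric braid relation, the discrepancy is precisely the scalar $\l^{\pm1}$ by which $\pa^s_W$ differs from the naive iterated operator $\ldots\pa_s\pa_t$ of the appropriate length. (4) Match this scalar $\l^{\pm1}$ with the coefficient of the identity in the rotation of the chosen (rescaled) Jones-Wenzl projector of the matching alignment — this is where one uses that, by section \ref{subsec-exoticTL}, rotating the right-blue-aligned $JW_{m-1}$ by one strand multiplies it by $\l$ (and by $\l^{-1}$ the other way), and that the coefficient of $\1$ in $JW_k$ is $\frac{1}{[k]}$ in the relevant two-colored sense; so the coefficient of the identity in the rotated Jones-Wenzl projector is exactly the relevant power of $\l$. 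Hence $a$ equals that coefficient, which is the content of \eqref{ckisnt1}.

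The main obstacle I anticipate is bookkeeping: getting the direction of the rescaling correct (i.e., whether the scalar is $\l$ or $\l^{-1}$) and confirming it is consistent for both parities of $n$ in the attachment, and — more subtly — checking that no \emph{further} scalars creep in from the other relations used in the reduction (for instance, whether \eqref{slidepolyintoblue} or the choices of dual bases under $\pa^s_W$ versus $\pa^t_W$ contribute). I expect the cleanest route is the "symmetrizability" shortcut already used in section \ref{rootsanddem}: since for $m$ odd the Cartan matrix is symmetrizable by root-rescaling $\a_s \mapsto \l\a_s$, one can deduce \eqref{ckisnt1} from the symmetric \eqref{ckis1} by tracking how the single rescaling of $\a_s$ (hence of $\pa_s$ by $\l^{-1}$) affects the composition defining $C_m$ — it appears once more on one side than the other, producing exactly the factor $a$. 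This reduces the problem to counting how many times $\pa_s$ (versus $\pa_t$) occurs in the chain, which is an easy parity computation, and then identifying the resulting $\l$-power with the rotation coefficient of the Jones-Wenzl projector via the already-established rotation behaviour. Because the excerpt explicitly says "It may be a worthwhile exercise for the reader to confirm the following claim," a clean two-paragraph argument along these lines, with the routine parity count left implicit, is exactly the expected level of detail.
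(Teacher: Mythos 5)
Your proposal is correct and follows the route the paper itself sketches: trace the proof of \eqref{ckis1}, observe that the polynomial produced by the diagrammatic reduction is the naive iterated simple Demazure operator applied to the element dual to $1$ under the \emph{chosen} $\pa^i_W$, and identify the resulting scalar discrepancy $\l^{\pm 1}$ with the identity coefficient of the rotated Jones-Wenzl projector via the rotation behavior from section \ref{subsec-exoticTL}. One small slip in step (4): the quantity $\tfrac{1}{[k]}$ is the coefficient in $JW_k$ of the \emph{rotation} of $\1$ by one strand (the coefficient of $\1$ itself is $1$ by normalization); what you actually need is that the coefficient of $\1$ in the rotated $JW_{m-1}$ equals $\tfrac{1}{[m-1]}$, which at $[m]=0$ is $\l^{\pm 1}$, so your conclusion is unaffected but the intermediate statement should be corrected.
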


For example, if $m$ is odd and if we chose $\LM^{(t)}$ as our product of roots, the coefficient of the identity in the blue-aligned $JW$ is 1, and the coefficient in the red-aligned $JW$
is $\l$. Thus $a=1$ for the above-pictured blue-aligned $C_m$, but $a=\l$ for the red-aligned $C_m$. If $m$ is even, then the scalar $a$ does not depend on the coloration, because of
rotation-invariance.

Now consider the definition of $\DC_m$. Even though we do not assume the existence of a Frobenius square, we must still assume the existence of a rotation-invariant Jones-Wenzl projector
$JW_{m-1}$ (well-defined up to scalar), and we must fix a scalar multiple once and for all. Now \eqref{dot2m} holds as drawn, using that multiple of $JW$. We must modify \eqref{assoc2},
and consequentially \eqref{w0decomp}.

\begin{equation} \label{assoc2unbalanced} \ig{1}{twocolorassocunbalanced} \end{equation}

\begin{equation} \label{w0decompunbalanced} \ig{1}{twocoloridempunbalanced} \end{equation}

Again, the scalar $a$ is the coefficient of the identity in the corresponding rotation of the chosen Jones-Wenzl. In the picture for $m$ odd for either relation, one takes the coefficient
of the identity in the red-aligned Jones-Wenzl. To check the consistency of this with \eqref{dot2m}, we recommend that the reader follow the two proofs of \eqref{w0decomp} using the new
unbalanced relations, and confirm that they agree. The reader can also check that \eqref{ckisnt1} matches with \eqref{assoc2unbalanced}, so that the functor $\DC_m \to \DG_m$ is still
well-defined.

Because of the scalar appearing in \eqref{w0decompunbalanced}, it is no longer the case that the doubled $m$-valent vertex is an idempotent. It is, however, an idempotent up to an
invertible scalar, independent of coloring. This invertible scalar is the product of the two possible values of $a$, for the two colorings.

There is an alternative approach to defining the diagrammatic category $\DC_m$, which sacrifices one measure of simplicity for another. There is a unique scalar multiple of the $2m$-valent
vertex which, when viewed as a map $BS(\hat{\ul{m}}_t) \to BS(\hat{\ul{m}}_s)$ after applying the functor to Bott-Samelson bimodules, will act on the lowest nonzero degree by sending $1
\ot 1 \ot \ldots \ot 1 \mapsto 1 \ot 1 \ot \ldots \ot 1$. One can draw this map as a $2m$-valent vertex where the vertex itself is colored blue. Similarly, there is a (different) scalar
multiple which ``preserves the 1-tensor" when viewed as a map $BS(\hat{\ul{m}}_s) \to BS(\hat{\ul{m}}_t)$ which we can draw as a $2m$-valent vertex with the vertex colored red. Each of
these maps is cyclic (i.e. invariant under 360 degree rotation, or even $\frac{360}{m}$ degree rotation), but one is not the rotation of the other.

With this convention, one must modify the relations \eqref{dot2m} and \eqref{assoc2} so that each keeps track of the color on the vertex, and must also add a new relation stating that
rotation of one version of the $2m$-valent vertex is equal to the other up to a scalar (this scalar, in fact, is $\l^{\pm 1}$). The reader can guess what the two new versions of
\eqref{dot2m} become. Relation \eqref{assoc2} will become \begin{equation} \label{assoc2alt} \ig{1}{twocolorassocalternate} \end{equation} The key point in this coloration is that the
rightmost input of each $2m$-valent vertex in the diagram is red. In a version of this relation where the rightmost input is blue, one would color the vertex red instead. Similarly, one
can replace \eqref{w0decomp} with \begin{equation} \label{w0decompalt} \ig{1}{twocoloridempalternate} \end{equation} where the Jones-Wenzl in this relation has identity coefficient one.
The composition of these two vertices is thus a genuine idempotent.

The remainder of the study of $\DC_m$ works verbatim. The avid reader can figure out how to modify the sections on thickening and the Temperley-Lieb quotient accordingly.

%
%

\bibliographystyle{plain}
\bibliography{everyone}{}

\begin{thebibliography}{10}

\bibitem{BarMor}
Dror Bar-Natan and Scott Morrison.
\newblock The {K}aroubi envelope and {L}ee's degeneration of {K}hovanov
  homology.
\newblock {\em Algebr. Geom. Topol.}, 6:1459--1469, 2006.

\bibitem{BarWes}
John~W. Barrett and Bruce~W. Westbury.
\newblock Spherical categories.
\newblock {\em Adv. Math.}, 143(2):357--375, 1999.

\bibitem{BMPS}
Stephen Bigelow, Emily Peters, Scott Morrison, and Noah Snyder.
\newblock Constructing the extended {H}aagerup planar algebra.
\newblock {\em Acta Math.}, 209(1):29--82, 2012.

\bibitem{DasGhoGup}
Paramita Das, Shamindra~Kumar Ghosh, and Ved~Prakash Gupta.
\newblock Perturbations of planar algebras.
\newblock Preprint, 2010.
\newblock arXiv:1009.0186.

\bibitem{EInduced}
Ben Elias.
\newblock A diagrammatic category for generalized {B}ott-{S}amelson bimodules
  and a diagrammatic categorification of induced trivial modules for {H}ecke
  algebras.
\newblock Preprint.
\newblock arXiv:1009.2120.

\bibitem{EQAGS}
Ben Elias.
\newblock Quantum {S}atake in type {A}: part {I}.
\newblock Preprint.
\newblock arXiv:1403.5570.

\bibitem{ETemperley}
Ben Elias.
\newblock A diagrammatic {T}emperley-{L}ieb categorification.
\newblock {\em Int. J. Math. Math. Sci.}, pages Art. ID 530808, 47, 2010.

\bibitem{EThesis}
Ben Elias.
\newblock {\em Soergel Diagrammatics for Dihedral Groups}.
\newblock PhD thesis, Columbia University, 2011.

\bibitem{EKh}
Ben Elias and Mikhail Khovanov.
\newblock Diagrammatics for {S}oergel categories.
\newblock {\em Int. J. Math. Math. Sci.}, pages Art. ID 978635, 58, 2010.

\bibitem{EWFrob}
Ben Elias and Geordie Williamson.
\newblock On cubes of {F}robenius extensions.
\newblock Preprint.
\newblock arXiv:1308.5994.

\bibitem{EWGR4SB}
Ben Elias and Geordie Williamson.
\newblock Soergel calculus.
\newblock Preprint.
\newblock arXiv:1309.0865.

\bibitem{EWHodge}
Ben Elias and Geordie Williamson.
\newblock The {H}odge theory of {S}oergel bimodules.
\newblock {\em Ann. of Math. (2)}, 180(3):1089--1136, 2014.

\bibitem{FKh}
Igor~B. Frenkel and Mikhail~G. Khovanov.
\newblock Canonical bases in tensor products and graphical calculus for
  {$U_q({\rm s}{\rm l}_2)$}.
\newblock {\em Duke Math. J.}, 87(3):409--480, 1997.

\bibitem{GooWen}
Frederick~M Goodman and Hans Wenzl.
\newblock Ideals in the {T}emperley {L}ieb category.
\newblock 2002.
\newblock arXiv:math/0206301.

\bibitem{GraLehAffineTL}
J.~J. Graham and G.~I. Lehrer.
\newblock The representation theory of affine {T}emperley-{L}ieb algebras.
\newblock {\em Enseign. Math. (2)}, 44(3-4):173--218, 1998.

\bibitem{GraThesis}
John~Jeffrey Graham.
\newblock {\em Modular representations of {H}ecke algebras and related
  algebras}.
\newblock PhD thesis, University of Sydney, 1996.

\bibitem{GreTL}
R.~M. Green.
\newblock Generalized {T}emperley-{L}ieb algebras and decorated tangles.
\newblock {\em J. Knot Theory Ramifications}, 7(2):155--171, 1998.

\bibitem{Harterich}
M.~H\"arterich.
\newblock {\em Kazhdan-{L}usztig-{B}asen, unzerlegbare {B}imoduln und die
  {T}opologie der {F}ahnenmannigfaltigkeit einer {K}ac-{M}oody-{G}ruppe}.
\newblock PhD thesis, Albert-Ludwigs-Universit\"at Freiburg, 1999.

\bibitem{Humphreys}
James~E. Humphreys.
\newblock {\em Reflection groups and {C}oxeter groups}, volume~29 of {\em
  Cambridge Studies in Advanced Mathematics}.
\newblock Cambridge University Press, Cambridge, 1990.

\bibitem{Jon2}
V.~F.~R. Jones.
\newblock Index for subfactors.
\newblock {\em Invent. Math.}, 72(1):1--25, 1983.

\bibitem{Jon3}
V.~F.~R. Jones.
\newblock Braid groups, {H}ecke algebras and type {${\rm II}_1$} factors.
\newblock In {\em Geometric methods in operator algebras ({K}yoto, 1983)},
  volume 123 of {\em Pitman Res. Notes Math. Ser.}, pages 242--273. Longman
  Sci. Tech., Harlow, 1986.

\bibitem{Jon1}
Vaughan F.~R. Jones.
\newblock The annular structure of subfactors.
\newblock In {\em Essays on geometry and related topics, {V}ol. 1, 2},
  volume~38 of {\em Monogr. Enseign. Math.}, pages 401--463. Enseignement
  Math., Geneva, 2001.

\bibitem{JMW}
Daniel Juteau, Carl Mautner, and Geordie Williamson.
\newblock Perverse sheaves and modular representation theory.
\newblock In {\em Geometric methods in representation theory II}, volume~25 of
  {\em S\'eminaires et {C}ongr\`es}, pages 313--350. Soc. Math. France, Paris,
  2010.

\bibitem{Kau}
Louis~H. Kauffman.
\newblock State models and the {J}ones polynomial.
\newblock {\em Topology}, 26(3):395--407, 1987.

\bibitem{KaLu1}
David Kazhdan and George Lusztig.
\newblock Representations of {C}oxeter groups and {H}ecke algebras.
\newblock volume~53, pages 165--184. 1979.

\bibitem{KaLu2}
David Kazhdan and George Lusztig.
\newblock Schubert varieties and {P}oincar\'e duality.
\newblock In {\em Geometry of the {L}aplace operator ({U}niv. {H}awaii, 1979)},
  Proc. Sympos. Pure Math., XXXVI, pages 185--203. Amer. Math. Soc.,
  Providence, R.I., 1980.

\bibitem{KLMS}
Mikhail Khovanov, Aaron~D. Lauda, Marco Mackaay, and Marko Sto{\v{s}}i{\'c}.
\newblock Extended graphical calculus for categorified quantum {${\rm sl}(2)$}.
\newblock {\em Mem. Amer. Math. Soc.}, 219(1029):vi+87, 2012.

\bibitem{LauSL2}
Aaron~D. Lauda.
\newblock A categorification of quantum {${\rm sl}(2)$}.
\newblock {\em Adv. Math.}, 225(6):3327--3424, 2010.

\bibitem{LibRR}
Nicolas Libedinsky.
\newblock Sur la cat\'egorie des bimodules de {S}oergel.
\newblock {\em J. Algebra}, 320(7):2675--2694, 2008.

\bibitem{LibRA}
Nicolas Libedinsky.
\newblock Presentation of right-angled {S}oergel categories by generators and
  relations.
\newblock {\em J. Pure Appl. Algebra}, 214(12):2265--2278, 2010.

\bibitem{LusUnequal}
G.~Lusztig.
\newblock {\em Hecke algebras with unequal parameters}, volume~18 of {\em CRM
  Monograph Series}.
\newblock American Mathematical Society, Providence, RI, 2003.

\bibitem{Mor}
Scott Morrison.
\newblock A formula for the {J}ones-{W}enzl projections.
\newblock preprint, available at
  http://tqft.net/math/JonesWenzlProjections.pdf.

\bibitem{Soe1}
Wolfgang Soergel.
\newblock Kategorie {$\OC$}, perverse {G}arben und {M}oduln \"uber den
  {K}oinvarianten zur {W}eylgruppe.
\newblock {\em J. Amer. Math. Soc.}, 3(2):421--445, 1990.

\bibitem{Soe2}
Wolfgang Soergel.
\newblock The combinatorics of {H}arish-{C}handra bimodules.
\newblock {\em J. Reine Angew. Math.}, 429:49--74, 1992.

\bibitem{Soe3}
Wolfgang Soergel.
\newblock Kazhdan-{L}usztig polynomials and a combinatoric[s] for tilting
  modules.
\newblock {\em Represent. Theory}, 1:83--114 (electronic), 1997.

\bibitem{Soe4}
Wolfgang Soergel.
\newblock On the relation between intersection cohomology and representation
  theory in positive characteristic.
\newblock {\em J. Pure Appl. Algebra}, 152(1-3):311--335, 2000.
\newblock Commutative algebra, homological algebra and representation theory
  (Catania/Genoa/Rome, 1998).

\bibitem{Soe5}
Wolfgang Soergel.
\newblock Kazhdan-{L}usztig-{P}olynome und unzerlegbare {B}imoduln \"uber
  {P}olynomringen.
\newblock {\em J. Inst. Math. Jussieu}, 6(3):501--525, 2007.

\bibitem{TemLie}
H.~N.~V. Temperley and E.~H. Lieb.
\newblock Relations between the ``percolation'' and ``colouring'' problem and
  other graph-theoretical problems associated with regular planar lattices:
  some exact results for the ``percolation'' problem.
\newblock {\em Proc. Roy. Soc. London Ser. A}, 322(1549):251--280, 1971.

\bibitem{Tur}
Vladimir~G. Turaev.
\newblock Modular categories and {$3$}-manifold invariants.
\newblock {\em Internat. J. Modern Phys. B}, 6(11-12):1807--1824, 1992.
\newblock Topological and quantum group methods in field theory and condensed
  matter physics.

\bibitem{Wenzl}
Hans Wenzl.
\newblock On sequences of projections.
\newblock {\em C. R. Math. Rep. Acad. Sci. Canada}, 9(1):5--9, 1987.

\bibitem{WilPcan}
Geordie Williamson.
\newblock Some examples of parity sheaves.
\newblock preprint, Oberwolfach reports.
\newblock 5 pages.

\bibitem{WilSSB}
Geordie Williamson.
\newblock Singular {S}oergel bimodules.
\newblock {\em Int. Math. Res. Not. IMRN}, (20):4555--4632, 2011.

\end{thebibliography}

\vspace{0.1in}
 
\noindent
{\textsl \small Ben Elias, Department of Mathematics, University of Oregon, Eugene, OR, USA}

\noindent 
{\tt \small email: belias@uoregon.edu}

\end{document}